\tikzstyle{red dot}=[fill={rgb,255: red,255; green,37; blue,102}, draw=black, shape=circle]
\tikzstyle{green}=[fill={rgb,255: red,81; green,255; blue,7}, draw=black, shape=circle, tikzit category=green dot]
\tikzstyle{orange}=[fill={rgb,255: red,255; green,128; blue,0}, draw={rgb,255: red,255; green,128; blue,0}, shape=circle]
\tikzstyle{For labels}=[fill=white, draw=red, shape=circle]
\tikzstyle{FOR LABEL-GREEN}=[fill=white, draw={rgb,255: red,28; green,179; blue,11}, shape=circle]
\tikzstyle{blue}=[fill=blue, draw=blue, shape=circle]
\tikzstyle{Rectangle}=[fill=white, draw=red, shape=rectangle]
\tikzstyle{black}=[fill={rgb,255: red,8; green,8; blue,8}, draw=none, shape=circle]
\tikzstyle{circle}=[fill=white, draw=black, shape=circle]
\tikzstyle{new edge style 0}=[->]
\tikzstyle{new edge style 1}=[draw=red, -]
\newtheorem{theorem}{Theorem}[section]
\newtheorem{lemma}[theorem]{Lemma}
\newtheorem{proposition}[theorem]{Proposition}
\newtheorem{corollary}[theorem]{Corollary}
\theoremstyle{definition}
\newtheorem{remark}[theorem]{Remark}
\newtheorem{example}[theorem]{Example}
\newtheorem{question}[theorem]{Question}
\theoremstyle{definition}
\newtheorem{definition}[theorem]{Definition}
\DeclareMathOperator{\comm}{\mathsf{Comm}}
\DeclareMathOperator{\dist}{\mathsf{dist}}
\DeclareMathOperator{\Hdist}{\mathsf{hdist}}
\newcommand{\NN}{\mathbb{N}}
\newcommand{\ZZ}{\mathbb{Z}}
\newcommand{\RR}{\mathbb{R}}
\newcommand{\calp}{\mathcal{P}}
\newcommand{\calP}{\mathcal{P}}
\newcommand{\calq}{\mathcal{Q}}
\newcommand{\calr}{\mathcal{R}}
\newcommand{\cals}{\mathcal{S}}
\newcommand{\mc}{\mathcal}
\newcounter{ccomments}
\newcounter{ecomments}
\begin{document}

\title{The quasi-isometry invariance of the Coset Intersection Complex}

\author[ ]{Carolyn Abbott } 
\address{Brandeis University, Waltham, MA, USA}
\email{carolynabbott@brandeis.edu}

\author[]{Eduardo Mart\'inez-Pedroza }  
\address{Memorial University of Newfoundland, St. John's, NL, Canada}
\email{emartinezped@mun.ca}

\date{\today}
 
\maketitle

\begin{abstract}
For a pair $(G,\calp)$ consisting of a group and finite collection of subgroups, we introduce  a simplicial $G$-complex $\mathcal{K}(G,\calp)$ called the coset intersection complex. We prove that the quasi-isometry type and the homotopy type of  $\mathcal{K}(G,\calp)$ are quasi-isometric invariants of the group pair $(G,\calp)$. Classical  properties of $\calp$ in $G$  correspond to topological or  geometric properties of $\mathcal{K}(G,\calp)$, such as  having finite height,  having finite width,  being almost malnormal, admiting a malnormal core, or having thickness of order one.  As applications, we obtain that a number of algebraic properties of $\calp$ in $G$ are quasi-isometry invariants of the pair $(G,\calp)$.  For a certain class of right-angled Artin groups and their maximal parabolic subgroups, we show that the complex $\mc K(G,\calp)$ is quasi-isometric to the Extension graph; in particular, it is quasi-isometric to a tree. 
\end{abstract}

\tableofcontents

 \section{Introduction}

A \emph{group pair} $(G,\mathcal P)$ is a pair consisting of a finitely generated group $G$  and  a finite collection $\mathcal P$ of  infinite  subgroups. The collection $\mc P$ is called the \textit{peripheral structure} of the pair. A map $q\colon (G,\mathcal P) \to (H,\mathcal Q)$ is a quasi-isometry of group pairs if $q\colon G \to H$ is a quasi-isometry such that for each $P\in \mathcal P$, the image $q(P)$ is uniform finite Hausdorff distance from a left coset of some $Q\in\mc Q$ and the quasi-inverse of $q$ satisfies the analogous property; see Definition~\ref{defn:quasi-isometry-pairs}.

Quasi-isometries of group pairs have been used implicitly in many works \cite{DS05, BDM09, MSW11, LFS15, KaLe97}, typically to prove strong rigidity results about a group.  For example, Kapovich and Leeb used similar ideas to prove the quasi-isometry invariance of the geometric decomposition of Haken manifolds~\cite[\S 5.1]{KaLe97}, as did Drutu and Sapir and Behrstock, Drutu, and Mosher to prove quasi-isometric rigidity results for relatively hyperbolic groups \cite{DS05, BDM09}.  
 More recently, quasi-isometries of group pairs have been used explicitly by a number of  researchers in group theory; see, e.g., \cite{BuHr21, HaHr19, GT21,  EMP21-qichar, EMP27-HypEmb-Sam, Ge19}, and~\cite{HMS2021} for a recent survey.

 This article introduces the \emph{coset intersection complex} of a group pair $(G,\calP)$, which we denote $\mc K(G,\calP)$. It is a simplicial complex whose quasi-isometry type and homotopy type are quasi-isometry invariants of the group pair (Theorem~\ref{thm:SimplicialMap}).  We reformulate several natural algebraic properties of group pairs into geometric and combinatorial properties of $\mc K(G,\calP)$ (Theorem~\ref{thm:alg&geom}).  This reformulation allows us to prove that certain natural finiteness properties of group pairs are quasi-isometry invariants (Theorem~\ref{thm:GeomProps}). In this way, the coset intersection complex reinforces the central paradigm of geometric group theory: it is fruitful to study algebraic properties from geometric and topological perspectives. 

\begin{definition}[Coset intersection complex $\mathcal{K}(G,\calp)$] Let $(G,\calp)$ be a group pair, and let $G/\calP$ be the set of all cosets $gP$ with $g\in G$ and $P\in\calP$. The \emph{coset intersection complex} is the  simplicial $G$-complex with vertex set $G/\calp$ and such that a set $\{g_1P_1,\ldots ,g_kP_k\}\subset G/\calp$ defines a  simplex if $\bigcap_{i=1}^k g_iP_ig_i^{-1}$ is an infinite subgroup of $G$. Observe that the $G$-action on $G/\calp$ by left multiplication  extends to a cellular $G$-action on $\mathcal{K}(G,\calp)$. 
\end{definition}

  For example, if $G$ is hyperbolic relative to a collection of subgroups $\calp$, then $\mc K(G,\calp)$ is a discrete graph.  At the opposite extreme, if $G$ is abelian and $\calp=\{P\}$ is a single infinite subgroup, then $\mc K(G,\calp)$ is a complete graph. We give several  examples of coset intersection complexes for particular group pairs in Example~\ref{ex:BasicExs}. 
  
  We mention the more involved example of right-angled Artin groups here.  We show in Section~\ref{sec:RAAGs} that the coset intersection complex of certain right-angled Artin groups is related to several well-known graphs, including the  \emph{contact graph},   introduced by Hagen~\cite{HagenThesis}, and the \emph{extension graph}, introduced by Kim and Koberda.  Hagen showed that the contact graph is quasi-isometric to a tree~\cite{HagenThesis}, and, under certain assumption, the extension graph is quasi-isometric to the contact graph~\cite{KK13}. 
   
  
  \begin{theorem} \label{thm:RAAG}   Suppose $G$ is a right-angled Artin group whose defining graph $\Gamma$ is connected, triangle-free, and has no vertices of valence $<2$.  Let $\calp$ be the collection of maximal standard abelian subgroups of $G$.  Then  $\mc K(G,\calp)$ is quasi-isometric to the extension graph and the contact graph.  
  \end{theorem}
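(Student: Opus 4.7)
The plan is to exhibit a $G$-equivariant graph isomorphism between the $1$-skeleton of $\mc K(G,\calp)$ and the line graph $L(\Gamma^e)$ of the extension graph, and then to invoke (i) the standard quasi-isometry between a graph without isolated vertices and its line graph, and (ii) the cited quasi-isometry between $\Gamma^e$ and the contact graph.

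Since $\Gamma$ is triangle-free and every vertex has valence at least $2$, the maximal cliques of $\Gamma$ are its edges, so $\calp=\{\langle v,w\rangle : vw\in E(\Gamma)\}$ with each element isomorphic to $\ZZ^2$, and every standard generator of $G$ lies in some member of $\calp$. Using the formula $C_G(v)=A(\mathrm{star}(v))$ for the centralizer of a standard generator in a right-angled Artin group, the triangle-free hypothesis which forces $\mathrm{lk}(v)\cap\mathrm{lk}(w)=\emptyset$ for an edge $vw$, and the abelianization of $G$, one verifies that each $\langle v,w\rangle$ is self-normalizing in $G$. Hence cosets $g\langle v,w\rangle$ are in bijection with their stabilizers $g\langle v,w\rangle g^{-1} = \langle gvg^{-1}, gwg^{-1}\rangle$; the unordered pair of commuting standard-generator-conjugates $\{gvg^{-1},gwg^{-1}\}$ in this $\ZZ^2$ is canonical (distinct standard generators are not conjugate in $G$), so it determines a well-defined edge of $\Gamma^e$. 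This produces a $G$-equivariant bijection between the vertex set $G/\calp$ of $\mc K(G,\calp)$ and the edge set of $\Gamma^e$.

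For adjacency, two vertices of $\mc K(G,\calp)$ are adjacent iff the corresponding $\ZZ^2$-conjugates have infinite intersection. If the two edges of $\Gamma^e$ share a vertex $\alpha$, then $\alpha$ lies in both subgroups and the intersection is infinite. For the converse, I use the \cat0 cube complex geometry of the Salvetti complex $X$ of $G$: each maximal $\ZZ^2$-parabolic stabilizes a unique $2$-flat in $X$, and an infinite intersection of subgroups forces the two flats to share a geodesic line. In a $2$-dimensional cube complex, any such shared line must run parallel to a hyperplane direction common to both flats, which corresponds to a generator-conjugate lying in both subgroups. The same argument extends to higher simplices: a collection of maximal $\ZZ^2$-parabolics with a common infinite intersection must share a single generator-conjugate, and so corresponds to a collection of edges of $\Gamma^e$ meeting at a common vertex. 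Hence $\mc K(G,\calp)$ is a finite-dimensional simplicial complex whose $1$-skeleton is isomorphic to $L(\Gamma^e)$ and whose quasi-isometry type therefore agrees with that of $L(\Gamma^e)$.

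Finally, because every standard generator lies in some edge subgroup, $\Gamma^e$ has no isolated vertices, and the standard map $\Gamma^e \to L(\Gamma^e)$ sending each vertex to an incident edge is a quasi-isometry. Combined with the quoted quasi-isometry between $\Gamma^e$ and the contact graph of $X$, this yields the theorem. The main obstacle will be the converse adjacency step: ruling out that two distinct maximal $\ZZ^2$-parabolics intersect only along a ``diagonal'' infinite cyclic subgroup of their common flat, i.e., showing that an infinite intersection of maximal $\ZZ^2$-parabolics in a triangle-free right-angled Artin group always contains a standard-generator-conjugate. The remaining ingredients are either elementary (the line graph comparison) or quoted from \cite{HagenThesis,KK13}.
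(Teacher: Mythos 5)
Your route is genuinely different from the paper's. The paper first computes the commensurated core $\calq=\{\langle \mathsf{Star}(a)\rangle\}$ of $(G,\calp)$, shows $\mathcal K(G,\calp)$ is $G$-equivariantly quasi-isometric to $\mathcal K(G,\calq)$ via an auxiliary bipartite incidence graph, then relates $\mathcal K(G,\calq)$ to the coned-off Cayley graph $\widehat\Gamma(G,\calq)$ and quotes Kim--Koberda. Your identification of $\mathcal K(G,\calp)^{(1)}$ with the line graph $L(\Gamma^e)$ is essentially the dual picture (your edges of $\Gamma^e$ are the paper's vertices $G/\calp$; the shared endpoints are the paper's maximal simplices, which correspond to $G/\calq$), and it is arguably more direct, at the cost of needing two facts you only partly justify: that every edge of $\Gamma^e$ is realized with a single conjugator $\{v^g,w^g\}$ (a Kim--Koberda lemma, needed for surjectivity of your vertex bijection), and the converse adjacency step discussed below.

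Two concrete problems. First, your claim that $\mathcal K(G,\calp)$ is finite-dimensional is false: for a fixed generator $a$ with distinct neighbors $b,c$, the cosets $b^n\langle a,c\rangle$ for all $n\in\ZZ$ pairwise have conjugates containing $\langle a\rangle$, so they span an infinite-dimensional simplex (this is Lemma~\ref{lem:RAAG-maxSimplex}(4)). If your reduction to the $1$-skeleton leans on finite dimension, it has a gap; the correct justification is that every simplex of the regular piecewise Euclidean realization has diameter bounded independently of its dimension (Corollary~\ref{cor:QIofComplexes}), so the complex is quasi-isometric to its $1$-skeleton regardless. Second, the ``no diagonal intersection'' step, which you rightly flag as the crux, is not established by the flat-geometry sketch as written: two conjugate maximal $\ZZ^2$-parabolics with infinite intersection need not stabilize flats that literally intersect (their axes for a common element can be parallel lines), and the assertion that a shared line must be parallel to a common hyperplane direction needs the combinatorial convexity of standard flats. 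The clean fix is algebraic and is what the paper uses: by \cite[Corollary~2.5]{DKR07} the intersection of parabolic subgroups is parabolic, and an infinite parabolic subgroup of $\langle a,b\rangle\cong\ZZ^2$ must be $\langle a\rangle$, $\langle b\rangle$, or $\langle a,b\rangle$ (a conjugate $hxh^{-1}$ of a generator lying in $\langle a,b\rangle$ equals its image $x$ in the abelianization, forcing $x\in\{a,b\}$ and $hxh^{-1}=x$); the case $\langle a,b\rangle$ is excluded by self-normalization. With these two repairs your argument goes through and gives a valid alternative proof.
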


\begin{corollary}
    Under the assumptions of Theorem~\ref{thm:RAAG}, the complex $\mc K(G,\calp)$ is hyperbolic and quasi-isometric to a tree. 
\end{corollary}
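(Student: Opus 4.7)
The plan is to chain together two known quasi-isometries. By Theorem~\ref{thm:RAAG}, the coset intersection complex $\mc K(G,\calp)$ is quasi-isometric to the contact graph associated with the right-angled Artin group $G$. By Hagen's theorem from his thesis \cite{HagenThesis}, already cited in the introduction, the contact graph is itself quasi-isometric to a simplicial tree. Composing these two quasi-isometries yields that $\mc K(G,\calp)$ is quasi-isometric to a tree, which is the first claim of the corollary.

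For the hyperbolicity statement, I would invoke the classical fact that Gromov hyperbolicity is a quasi-isometry invariant among geodesic metric spaces, together with the trivial observation that simplicial trees (with the standard edge-length-one metric) are $0$-hyperbolic. Since $\mc K(G,\calp)$ is a simplicial complex, it carries a natural geodesic length metric, and therefore inherits hyperbolicity from the tree to which it is quasi-isometric. Alternatively, hyperbolicity could be deduced directly from Theorem~\ref{thm:RAAG} combined with Hagen's theorem that the contact graph is itself hyperbolic, bypassing the passage through a tree.

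Given Theorem~\ref{thm:RAAG} and Hagen's result, this corollary follows by transitivity of quasi-isometric equivalence together with standard facts about Gromov hyperbolic spaces, so I do not anticipate any genuine technical obstacle. The only point that needs to be verified is that the metrics implicit in the statement of Theorem~\ref{thm:RAAG} and in Hagen's theorem are compatible, so that the two quasi-isometries can be composed; this is immediate once one checks that both statements refer to the natural path metric on the one-skeleton of the corresponding simplicial complex. No further input is needed.
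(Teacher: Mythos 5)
Your proposal is correct and follows exactly the route the paper intends: Theorem~\ref{thm:RAAG} identifies $\mc K(G,\calp)$ up to quasi-isometry with the contact graph, Hagen's theorem gives that the contact graph is quasi-isometric to a tree, and hyperbolicity then follows from quasi-isometry invariance. This matches the paper's (implicit) argument, including the care about which metric on the complex is used, which the paper addresses via the length metric discussed before Corollary~\ref{cor:QIofComplexes}.
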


Given a quasi-isometry of group pairs $q\colon (G,\calp) \to (H,\calq)$, there is an induced map $\dot q\colon \mc K(G,\calp) \to \mc K(H,\calq)$. Roughly, the image of a vertex $gP$ in $\mc K(G,\calp)$ is a vertex $hQ$ in $\mc K(H,\calq)$ such that $hQ$ and $q(gP)$ are at finite Hausdorff distance. The map $\dot q$ is not unique: there may be many cosets $hQ$ at finite Hausdorff distance from $q(gP)$.  Any choice of such $hQ$ will result in an induced map; by an abuse of notation, we denote any such induced map by $\dot q$.  The following theorem shows that the induced maps $\dot q$ have particularly nice properties.

\begin{theorem}
\label{thm:SimplicialMap}
Let $q\colon (G,\calp) \to (H,\calq)$ be a quasi-isometry of group pairs and $\dot q\colon \mc K(G,\calp) \to \mc K(H,\calq)$ an induced map. 
\begin{enumerate}
\item \label{item:sxmap} The induced map $\dot q$ is a simplicial map and a homotopy equivalence.

\item \label{item:htpcmaps} All choices of $\dot q$ are homotopic as continuous maps $\mathcal{K}(G,\calp) \to \mathcal{K}(H,\calq)$.

\item \label{item:1skelQI} The induced map $\dot q\colon \mathcal{K}(G,\calp) \to \mathcal{K}(H,\calq)$ is a quasi-isometry of simplicial complexes.  
\end{enumerate}
\end{theorem}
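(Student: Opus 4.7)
The plan hinges on a coarse reformulation of the simplex condition: a set $\{g_1P_1,\ldots,g_kP_k\}$ of cosets forms a simplex in $\mc K(G,\calP)$ if and only if there exists $R > 0$ such that $\bigcap_i N_R(g_iP_i)$ is unbounded in $G$. For the forward direction, when $K = \bigcap_i g_iP_ig_i^{-1}$ is infinite, the orbit $Kg_1 \subset g_1P_1$ is unbounded, and the identity $d(kg_1,kg_j) = d(g_1,g_j)$ shows that $Kg_1$ lies within $\max_j d(g_1,g_j)$ of every $g_jP_j$. The converse follows from the standard pairwise fact that the coarse intersection of two cosets is Hausdorff-close to a coset of the algebraic intersection of their conjugates, combined with a short induction on the number of cosets.

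With this reformulation, the simplicial property in part (1) is essentially immediate: a quasi-isometry distorts $R$-neighborhoods by a bounded amount, so it preserves coarse intersection of subsets up to uniform constants, and since $\dot q(g_iP_i) = h_iQ_i$ is at uniform finite Hausdorff distance from $q(g_iP_i)$, the cosets $h_iQ_i$ themselves coarsely intersect and hence span a simplex in $\mc K(H,\calq)$. For part (2), any two valid choices $h_iQ_i := \dot q_i(gP)$ are at finite Hausdorff distance from $q(gP)$ and hence from each other, which forces $h_1Q_1h_1^{-1} \cap h_2Q_2h_2^{-1}$ to be infinite and so $\{h_1Q_1,h_2Q_2\}$ to span an edge; more generally, for any simplex of $\mc K(G,\calP)$, the combined collection $\{\dot q_1(g_iP_i)\}\cup\{\dot q_2(g_iP_i)\}$ spans a single simplex in $\mc K(H,\calq)$, because $\dot q_1(g_iP_i)$ and $\dot q_2(g_iP_i)$ are uniformly Hausdorff-close and the entire collection inherits a common unbounded coarse intersection. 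This yields a simplicial homotopy between $\dot q_1$ and $\dot q_2$ via the prism decomposition of $\Delta^{k-1}\times [0,1]$.

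For the homotopy equivalence half of part (1), take a quasi-inverse $\bar q$ of $q$ as a QI of pairs and an induced map $\dot{\bar q}$. Both compositions $\dot{\bar q}\circ \dot q$ and $\dot q\circ \dot{\bar q}$ are induced maps for $\bar q\circ q$ and $q\circ \bar q$, which lie at bounded distance from the respective identity QIs; since the identity on $\mc K$ is itself an induced map for the identity QI, part (2) applied to any such QI shows these compositions are homotopic to the identity. Finally, for part (3), being simplicial implies $\dot q$ is $1$-Lipschitz on $1$-skeleta, and coarse surjectivity follows from the bounded vertex-distance between $\dot q\circ\dot{\bar q}$ and identity; combined with the analogous Lipschitz bound on $\dot{\bar q}$, this upgrades to a quasi-isometry of the complexes in the natural geodesic metric. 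The main obstacle I foresee is the multi-coset version of the coarse intersection lemma in paragraph one: while the pairwise case is classical, controlling the neighborhood radius uniformly across simplices of arbitrary dimension requires careful induction and bookkeeping of the commensurability structure of the conjugate subgroups involved.
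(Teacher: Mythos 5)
Your proposal follows essentially the same route as the paper: the geometric characterization of simplices via unbounded coarse intersections (the paper's ``fundamental lemma,'' which it cites from the literature rather than re-deriving by induction from the pairwise case), the observation that the images of a simplex under two induced maps jointly span a single simplex, a straight-line (rather than prism) homotopy built from that observation, and the quasi-inverse argument for both the homotopy equivalence and the quasi-isometry of skeleta. The obstacle you flag at the end is not actually one: the radius $R$ in the coarse-intersection lemma is allowed to depend on the particular finite collection of cosets, since the simplex criterion only requires unboundedness of $\bigcap_{i}\mathcal{N}_r(g_iP_i)$ for \emph{some} $r$ depending on that collection, and no uniformity over all simplices is needed anywhere in the argument.
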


By a \emph{simplicial map $f\colon X \to Y$} we mean a  function between the sets of vertices of the simplicial complexes $X$ and $Y$ with the property that the images of the vertices of a simplex always span a simplex.  The quasi-isometry in \eqref{item:1skelQI} is with respect to a natural length metric on the complexes viewed as piecewise Euclidean complexes; see the discussion before Corollary~\ref{cor:QIofComplexes} for details. We also give conditions on the group pairs $(G,\calp)$ and $(H,\calq)$ to ensure that $\dot q$ is a simplicial isomorphism; see Proposition~\ref{prop:IsomorphicCosetIntersectionCmplex2}.    
 
We now turn our attention to properties of group pairs satisfying strong rigidity properties in relation with the coset intersection complex.  A property of group pairs is called \emph{geometric} if  every group pair  quasi-isometric to a group pair satisfying the property is virtually isomorphic to a group pair satisfying the property.  Here, a \textit{virtual isomorphism} is a generalization of commensurability to group pairs; see Definition~\ref{def:VirtIsomPair}. Let us note that if a property of group pairs is preserved by quasi-isometries then it is geometric, but the converse does not hold, as some of the properties studied in this article illustrate. 

There are a number of geometric properties of group pairs $(G,\calp)$ that have been studied in the literature, in some cases implicitly. Examples include
having a well-defined relative Dehn function~\cite{EMP23-QI-Dehn, Osin06}; having an almost malnormal peripheral structure~\cite{EMP23-QI-Dehn};  being relatively hyperbolic with respect to the peripheral structure~\cite{DS05,BDM09};  and  having a finite number of filtered ends (Bowditch's coends) with respect to the peripheral structure~\cite{EMP21-qichar}.   The coset intersection complex encodes several  algebraic finiteness  properties of the group which turn out to be geometric.

\begin{theorem}\label{thm:GeomProps}
 The following define geometric properties of group pairs $(G,\calp)$:
\begin{enumerate}
    \item \label{item:finiteheight} $\calp$ has finite height in $G$.
    \item \label{item:width} $\calp$ has finite width in $G$.

    \item \label{item:fencedint} $\calp$ has fenced infinite intersections in $G$.


    \item \label{item:malnormal} $\calp$ is an almost malnormal collection in $G$.
    \item \label{item:network} $G$ is a network with respect to $\mathcal P$.

    \item \label{item:thickness} $G$ is thick of order $1$.  
    
    \item \label{item:reduced} Every $P\in\calp$ satisfies $P=\comm_G(P)$.   

    \item \label{item:reducible} Every $P\in\calp$ has finite index in $\comm_G(P)$.

    \item \label{item:boundedpacking} $\calp$ has bounded packing in $G$. 

    \item
    \label{item:finitepacking} $\calp$ has finite packing in $G$.
\end{enumerate}   
\end{theorem}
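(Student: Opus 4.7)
The plan is to deduce Theorem~\ref{thm:GeomProps} from the combination of Theorem~\ref{thm:alg&geom}, which recasts each listed algebraic condition on $(G,\calp)$ as a combinatorial or coarse-geometric invariant of the coset intersection complex $\mc K(G,\calp)$, and Theorem~\ref{thm:SimplicialMap}, which guarantees that any quasi-isometry of group pairs $q\colon(G,\calp)\to(H,\calq)$ lifts to an induced map $\dot q\colon\mc K(G,\calp)\to\mc K(H,\calq)$ that is simultaneously a simplicial map, a homotopy equivalence, and a quasi-isometry of simplicial complexes. The uniform shape of the argument is then: transport the combinatorial or metric description of a property from $\mc K(G,\calp)$ to $\mc K(H,\calq)$ using $\dot q$ and its quasi-inverse, and re-interpret the resulting feature via Theorem~\ref{thm:alg&geom} as a property of $(H,\calq)$, or of a pair virtually isomorphic to it.

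For the items expressible as discrete invariants of the simplicial structure of $\mc K$ -- namely (1) finite height as bounded simplex dimension, (2) finite width as a bound on the number of maximal simplices at a vertex, (3) fenced infinite intersections, (4) almost malnormality as $\mc K$ being $0$-dimensional, (5) network, and (6) thickness of order one as a suitable connectivity feature -- invariance under $\dot q$ is immediate because $\dot q$ is simplicial with a simplicial quasi-inverse. For the coarse metric items (9) bounded packing and (10) finite packing, whose reformulations in Theorem~\ref{thm:alg&geom} are coarse geometric invariants of $\mc K(G,\calp)$, invariance is furnished by part~\eqref{item:1skelQI} of Theorem~\ref{thm:SimplicialMap}. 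In each of these cases the property is in fact preserved \emph{exactly} under quasi-isometry of group pairs, so being geometric is automatic and no virtual isomorphism step is required.

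The more delicate items are (7) $P=\comm_G(P)$ and (8) $[\comm_G(P):P]<\infty$, since the vertex $\dot q(gP)$ is determined only up to finite Hausdorff distance from $q(gP)$, so the precise identification of a peripheral subgroup with its commensurator is lost. Here I would use Theorem~\ref{thm:alg&geom} to recast (7) and (8) in terms of the $G$-stabilizer of each vertex of $\mc K$ coinciding with, or having finite index in, the commensurator of the corresponding peripheral, and then pass from $(H,\calq)$ to the pair $(H,\calq')$ obtained by replacing each $Q\in\calq$ with $\comm_H(Q)$. This replacement is a virtual isomorphism in the sense of Definition~\ref{def:VirtIsomPair}, does not alter the coset intersection complex up to canonical simplicial isomorphism, and produces a pair satisfying the target property. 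The main obstacle I foresee lies in item (8): one must check that the commensurator replacement is compatible with $q$ (so that the quasi-isometry descends to the adjusted pairs) and that the resulting pair genuinely satisfies the finite-index condition, using the coarse geometric constraints on cosets imposed by finite width. The remaining items reduce to direct applications of the invariance statements above once Theorem~\ref{thm:alg&geom} is in hand.
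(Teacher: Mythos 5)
Your high-level strategy (translate each algebraic condition into a feature of an associated geometric object, then transport that feature along the induced map) is indeed the paper's strategy, but several of your "immediate" steps conceal genuine gaps, and one of your claims is outright false. The most serious error is your assertion that items (1)--(6), in particular almost malnormality, are ``preserved exactly under quasi-isometry of group pairs, so being geometric is automatic and no virtual isomorphism step is required.'' Almost malnormality is \emph{not} a quasi-isometry invariant of pairs: the paper's Example~\ref{ex:reducednotQIinvar} gives quasi-isometric pairs $(A\ast B, A)$ and $(A\ast B, 2A)$ with $A$ infinite cyclic, and the first is almost malnormal while the second is not (since $a(2A)a^{-1}\cap 2A=2A$ is infinite for $a\in A\setminus 2A$). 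The underlying reason your argument fails is that a simplicial quasi-isometry may collapse edges, so zero-dimensionality of the target complex does not force zero-dimensionality of the source; the paper must pass to the refinement $(G,\calp^*)$, i.e.\ use a virtual isomorphism (Theorem~\ref{thm:Malnormality}). The same collapsing phenomenon means that finite height and finite width are not ``immediate'' from $\dot q$ being simplicial with a simplicial quasi-inverse: a $k$-simplex can map to a point. One needs the additional fact that $\dot q$ is \emph{uniformly finite-to-one} on vertices (Proposition~\ref{propF:InducedDot-q-03} plus local finiteness of the coset space), or, as the paper does, the chain: finite height $\Rightarrow$ reducible (Lemma~\ref{lem:HeightCommensurators2}) $\Rightarrow$ pass to reduced refinements, between which $\dot q$ is an honest isomorphism (Proposition~\ref{prop:IsomorphicCosetIntersectionCmplex2}).

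A second structural problem is that several of the characterizations in Theorem~\ref{thm:alg&geom} are statements about the $G$-\emph{action}, not about the quasi-isometry type of $\mc K(G,\calp)$: fenced infinite intersections is $G$-cocompactness of the $1$-skeleton, finite packing is $G$-cocompactness of the whole complex, and being a network requires a connected \emph{$G$-cocompact} subgraph. Cocompactness of a group action is not transported by a non-equivariant quasi-isometry of the underlying spaces, so part~\eqref{item:1skelQI} of Theorem~\ref{thm:SimplicialMap} does not furnish these items. The paper instead proves them by direct metric arguments: the neighborhood characterization of finite packing (Proposition~\ref{prop:finitePackingChar01} and Proposition~\ref{prop:FinitePackingQI}), a direct distance estimate for fenced intersections (Theorem~\ref{thme:FIIproperty}), and the filtration by $\tau$-coset intersection complexes for networks (Theorem~\ref{thm:network}). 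Finally, items \eqref{item:reduced}, \eqref{item:reducible}, and \eqref{item:boundedpacking} are not addressed by Theorem~\ref{thm:alg&geom} at all; the paper handles them through the coset space $(G/\calp,\Hdist_G)$ (bounded galaxies for reducibility, uniform finite-to-one-ness of $\dot q$ for bounded packing), and your claim that replacing each $Q$ by $\comm_H(Q)$ ``does not alter the coset intersection complex up to canonical simplicial isomorphism'' is not justified --- the refinement's complex is related to the original only by a surjection one way and an embedding the other, not an isomorphism, unless the pair is already reduced.
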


We discuss each of these algebraic properties in more depth in the main body of the article, where  we also recall their definitions.  Here, we give a brief overview of where they appear in the literature.  The study of \textit{height} and \textit{width} of subgroups can be traced back to the work of Gitik, MJ, Rips and Sageev~\cite{GMJRS98} in the late nineties,  where they proved  that quasi-convex subgroups of hyperbolic groups have finite width and, in particular, finite height. It is a well-known open question of Swarup whether, in a hyperbolic group,  a subgroup with finite height and finite index in its  commensurator is  quasiconvex~\cite[Question 1.8.]{BestvinaQuestions}. Parts \eqref{item:finiteheight} and \eqref{item:boundedpacking} of Theorem~\ref{thm:GeomProps}  could be regarded as evidence of a positive answer to Swarup's question in the sense that the geometry of such subgroup in the ambient group is preserved by quasi-isometry. The literature on the study of height and width of subgroups also includes the work of Hruska and Wise~\cite{HW09} on finite width of strongly quasi-convex subgroups of relatively hyperbolic groups, and  the  work of Antolin, MJ, Sisto and Taylor  on finite width of stable subgroups~\cite{AMJST19}. \textit{Fenced infinite intersections} for group pairs involves uniform bounds on the distance between elements of $G/\calp$ when the intersection of the corresponding conjugates is infinite; see Definition~\ref{def:FencedInfInt}.   This notion has appeared implicitly in the study of non-positively curved groups in relation to height and width, for example,~\cite[Lemma 1.2]{GMJRS98} and
~\cite[Lemma 8.4]{HW09}. 
The \emph{commensurated core} of a group pair is a  generalization of the notion of a malnormal core.  The latter was introduced  by Agol, Groves and Manning in the context of group pairs $(G,P)$ where $G$ is a hyperbolic group and $P$ is a quasi-convex subgroup~\cite{AGM09}.  
\textit{Amost malnormality} has been studied in many contexts, including relative Dehn functions of group pairs,  peripheral subgroups in relatively hyperbolic groups and, more generally, hyperbolically embedded subgroups in acylindrically hyperbolic groups \cite{Osin06, DGOsin2017}. 
That almost malnormality is a geometric property of group pairs is a result of Hughes, Sanchez-Salda\~na and the second  author~\cite[Theorem D]{EMP23-QI-Dehn}; in this article we provide an alternative argument.
\textit{Networks} were introduced by Behrstock, Dru\c{t}u, and Mosher
in connection with the notion of \textit{thickness} of a group, which is an obstruction to relative hyperbolicity 
~\cite{BDM09}. \textit{Bounded packing} was introduced by Hruska and Wise  in relation to the study of width and height of subgroups and actions on $\mathsf{CAT}(0)$ cubical complexes~\cite{HW09}. The notion of \emph{finite packing} is introduced in this article; see Definition~\ref{def:NewFinitePacking}.  It relates to height and bounded packing, and is equivalent to cocompactness of the coset intersection complex.

Each part of Theorem~\ref{thm:GeomProps} is proven by first using Theorem~\ref{thm:alg&geom}, below, to translate the algebraic property into a geometric property of an associated (extended) metric space, and then using the geometry of that space to prove that the property is geometric.  For most of the properties, the space is the coset intersection complex $\mc K(G,\calp)$.  For parts \eqref{item:malnormal}, \eqref{item:reducible}, and \eqref{item:boundedpacking}, though, we use a different  space, which we call the \textit{coset space}.  This is the extended metric space $(G/\calp, \text{hdist}_G)$, where $\text{hdist}_G$ is the Hausdorff distance on subsets of $G$ with respect to a word metric.

 \begin{theorem}\label{thm:alg&geom}
Let $(G,\calp)$ be a group pair and $\mathcal{K}(G,\calp)$ the coset intersection complex. 
\begin{enumerate}
    \item \label{item:finitediml} $\mathcal{K}(G,\calp)$ is finite dimensional if and only if $\calp$ has finite height in $G$.

    \item \label{item:cliques} The cardinality of cliques in   $\mathcal{K}(G,\calp)$ is   bounded if and only if $\calp$ has finite width in $G$. 

    \item \label{item:Gcocompact1skel} $\mathcal{K}(G,\calp)$ has $G$-cocompact $1$-skeleton if and only if $\calp$ has fenced infinite intersections in $G$.
        
    \item\label{item:Gcocompact}  $\mathcal{K}(G,\calp)$ is $G$-cocompact if and only if   $(G,\calp)$ has finite height and a finite commensurated   core.  

    \item \label{item:0diml} $\mathcal{K}(G,\calp)$ is zero dimensional if and only if $\calp$ is   almost malnormal in $G$.
    \item \label{item:connected} 
      $\calp$ is non-empty and $\mathcal K(G,\calp)$ has a connected and $G$-cocompact subgraph with vertex set $G/\calp$ if and only if $(G,\dist_G)$ is a network with respect to the collection $G/\calp$.
\end{enumerate}
\end{theorem}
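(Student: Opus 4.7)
The plan is to prove the six equivalences separately, using a common strategy: unpack the algebraic property of $(G,\calp)$ and translate it directly through the definition of $\mc K(G,\calp)$ and the associated $G$-action.

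Parts~(\ref{item:finitediml}),~(\ref{item:cliques}), and~(\ref{item:0diml}) are essentially direct from the definitions. A $k$-simplex in $\mc K(G,\calp)$ is, by definition, a set of $k+1$ distinct cosets whose associated conjugate subgroups intersect in an infinite subgroup; this matches the definition of height, so the dimension of $\mc K(G,\calp)$ is one less than the height of $\calp$, giving~(\ref{item:finitediml}). Cliques in $\mc K(G,\calp)$ are exactly collections of cosets with \emph{pairwise} infinite conjugate intersection, which is the definition of width, giving~(\ref{item:cliques}). For~(\ref{item:0diml}), almost malnormality asserts that any two essentially distinct conjugates meet in a finite subgroup, equivalently $\mc K(G,\calp)$ has no edges. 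A brief bookkeeping point is needed: distinct cosets may produce the same conjugate subgroup, but only in controlled fashion, precisely when they differ by an element of a commensurator, and this does not affect the finiteness statements.

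For~(\ref{item:Gcocompact1skel}), I would observe that fencing of infinite intersections is equivalent to a uniform bound $D$ on the Hausdorff distance $\Hdist_G(g_1 P_1, g_2 P_2)$ across edges of $\mc K(G,\calp)$. Given such a bound, any edge can be translated by $G$ so that one endpoint is some $P \in \calp$; the other endpoint $gP'$ then lies within Hausdorff distance $D$ from $P$, which forces $g$ to lie in the finite set $B_D(1) \cdot P'$, so there are only finitely many edges incident to $P$. Since vertices lie in $|\calp|$ orbits, this yields finitely many $G$-orbits of edges. The converse is immediate: cocompactness of the $1$-skeleton produces finitely many orbit representatives of edges and hence a uniform upper bound on edge length. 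Part~(\ref{item:Gcocompact}) then combines~(\ref{item:finitediml}) and~(\ref{item:Gcocompact1skel}): cocompactness implies finite dimension (hence finite height) together with finitely many $G$-orbits of simplices of every dimension, the latter being encoded precisely by finiteness of the commensurated core. The converse is symmetric.

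For~(\ref{item:connected}), the network condition asserts that every pair of elements of $G/\calp$ can be joined by a finite chain of cosets with successive thick intersections, subject to a uniform bound on the individual steps. This matches a connected, $G$-cocompact subgraph of the $1$-skeleton of $\mc K(G,\calp)$ with vertex set $G/\calp$: connectedness produces the chains, and $G$-cocompactness of the subgraph provides the uniform bound on step size. The main obstacle I anticipate lies in part~(\ref{item:Gcocompact}): matching $G$-orbits of maximal simplices with finiteness of the commensurated core requires a careful definition of the core in terms of commensuration classes of infinite intersections, and the argument must carefully handle the possibly infinite stabilizers of higher-dimensional simplices.
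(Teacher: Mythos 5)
Parts (1), (2), and (5) are fine and match the paper's treatment: these equivalences are read directly off the definitions, and your bookkeeping remark about distinct cosets producing the same conjugate causes no trouble.

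Part (3) contains a genuine error. You reformulate ``fenced infinite intersections'' as a uniform bound on the \emph{Hausdorff} distance $\Hdist_G(g_1P_1,g_2P_2)$ over edges, but Definition~\ref{def:FencedInfInt} uses the \emph{infimum} distance $\dist_G(g_1P_1,g_2P_2)$, and the two are not interchangeable here. By Lemma~\ref{lem:commensurabilityHdist}, finite Hausdorff distance between $g_1P_1$ and $g_2P_2$ forces $g_1P_1g_1^{-1}$ and $g_2P_2g_2^{-1}$ to be commensurable, which is far stronger than having infinite intersection; for instance $\langle x,t\rangle$ and $\langle y,t\rangle$ in $F_2\times\ZZ$ span an edge but are at infinite Hausdorff distance. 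So a cocompact $1$-skeleton does not give any bound on Hausdorff distance over edges, and your ``converse is immediate'' direction fails as written. The counting argument itself (translate an edge so one endpoint is $P\in\calp$ and force the other into $B_D(1)\cdot P'$) is exactly the paper's Lemma~\ref{lem:tauEdges} and goes through once Hausdorff distance is replaced by infimum distance throughout.

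Part (4) is not actually proved. The easy direction is cocompact $\Rightarrow$ finite height and finitely many conjugacy classes of setwise stabilizers of maximal simplices. The nontrivial direction — finitely many such conjugacy classes $\Rightarrow$ finitely many $G$-orbits of maximal simplices — requires showing that two maximal simplices with conjugate setwise stabilizers lie in the same $G$-orbit. You flag this as ``the main obstacle'' but do not supply the argument, and ``the converse is symmetric'' is not correct. The missing idea (the paper's Lemmas~\ref{lem:heightSimplices} and~\ref{lem:coreQI}) is: under finite height a maximal simplex $\cals$ is a finite set, so its pointwise stabilizer $K(\cals)$ is infinite and of finite index in the setwise stabilizer, which equals $\comm_G(K(\cals))$; hence two maximal simplices with the same setwise stabilizer have commensurable pointwise stabilizers, their union is then a simplex, and maximality forces them to coincide.

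For part (6) your correspondence is the right one and agrees with the paper, but note that both directions hinge on the Fundamental Lemma (Lemma~\ref{lem:fundamental}, via Lemma~\ref{prop:GeometryCosetIntersectionComplex2}): the network condition is phrased in terms of unbounded coarse intersections $\mathcal{N}_\tau(g_1P_1)\cap\mathcal{N}_\tau(g_2P_2)$, while edges of $\mathcal{K}(G,\calp)$ are defined by infinite intersections of conjugates, and passing between these is precisely what that lemma provides; cocompactness of the resulting subgraph again comes from the infimum-distance count of Lemma~\ref{lem:tauEdges}.
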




From our perspective, there are two natural classes of groups/group pairs that merit deeper study.  First, one could consider group pairs for which the coset intersection complex $\mc K(G,\calp)$ is a quasi-isometry invariant of the \textit{group}, rather than of the group pair.  This will be the case, for example, when the collection of subgroups $\calp$ is qi-characteristic in $G$.  A collection of infinite subgroups $\calp$ of a finitely generated group $G$ is \emph{qi-characteristic} if the collection of subspaces $G/\calp$ is preserved up to uniform finite Hausdorff distance by every quasi-isometry of $G$. Containing a finite collection of qi-characteristic subgroups is a quasi-isometry invariant of finitely generated groups~\cite[Theorem 1.1]{EMP21-qichar}. Specifically, if $(G,\calp)$ is a group pair and 
$H$ is a finitely generated group quasi-isometric to $G$, then there is a qi-characteristic collection of subgroups $\calq$ of $H$ such that the group pairs $(G,\calp)$ and $(H,\calq)$  are quasi-isometric pairs.  The class of finitely generated groups that contain a
finite collection of qi-characteristic subgroups is large and includes hyperbolic groups relative to non-relatively hyperbolic groups~\cite{BDM09}, atomic right-angled Artin groups~\cite{BKS08}, lamplighter-like groups~\cite{GT21, GT24}, and mapping class groups~\cite{BKMM12}. Theorem~\ref{thm:SimplicialMap} implies that the (quasi-isometry class of the) coset intersection complex $\mathcal{K}(G,\calp)$ of a group pair $(G,\calp)$ with $\calp$ a qi-characteristic collection is a quasi-isometry invariant of the group $G$. 

Second, since actions on hyperbolic spaces have proven to be  a powerful tool in geometric group theory, one could consider group pairs $(G,\calp)$ such that $\mc K(G,\calp)$ is hyperbolic.  Theorem~\ref{thm:RAAG} provides one example of such a group pair, but there are certainly others.  In fact, the collection of maximal abelian subgroups $\calp$ in atomic right-angled Artin groups (which satisfy the conditions of Theorem~\ref{thm:RAAG}) are  qi-characteristic \cite[Theorems 1.1 \& 1.6]{BKS08}, and so these groups fall into the intersection of the two classes.  In this case, the Gromov boundary of $\mc K(G,\calp)$ is a quasi-isometry invariant of the group $G$.

 \begin{question}
What are  examples of finitely generated groups (not commensurable to right-angled Artin groups) that have a collection of qi-characteristic subgroups with respect to which the coset intersection complex is hyperbolic?
 \end{question}

For general right-angled Artin groups and generalizations, the coset intersection complex is closely related to several well-known complexes, including the extension graph of Kim and Koberda~\cite{KK13}, the extension complex of Huang~\cite{Hu17}, the crossing complex associated to a quasi-median graph of Genevois~\cite{Ge17}, and the intersection complex of associated to a  weakly special square complex of Oh~\cite{Oh22,Oh23}. In particular, the work of Huang~\cite{Hu17} and of Oh~\cite{Oh22} contain versions of Theorem~\ref{thm:SimplicialMap} in the sense that quasi-isometries between groups in the pertinent class induce isomorphisms of the respective complexes. 
In the last section of the article, we give an account of some of the known  relations   between the mentioned complexes.

\subsection*{Organization} 

Section~\ref{sec:QIofPairs} provides background on and basic properties of quasi-isometries of group pairs.  In Section~\ref{sec:CosetSpace}, we introduce the coset space associated to a group pair and use it to prove Theorem~\ref{thm:GeomProps} \eqref{item:malnormal}, \eqref{item:reduced}, \eqref{item:reducible}, and \eqref{item:boundedpacking}.

In Section~\ref{sec:CosetIntCx}, we introduce the coset intersection complex.  We give some basic examples and properties of the complex, proving Theorem~\ref{thm:alg&geom} \eqref{item:finitediml}, \eqref{item:cliques}, and \eqref{item:0diml}.  We then show that a quasi-isometry of pairs induces a simplicial map of the associated coset intersection complexes and prove Theorem~\ref{thm:SimplicialMap}, which describes several properties of this induced map.   

In Section~\ref{Sec:GeomProps}, we translate several algebraic finiteness conditions into geometric properties of the coset intersection complex and prove these properties are geometric.  This proves the remaining parts of Theorems~\ref{thm:GeomProps} and \ref{thm:alg&geom}.

Finally, in Section~\ref{sec:RAAGs}, we give a detailed example of the coset intersection complex for certain right-angled Artin groups and discuss the relationship between this complex and other complexes associated to the group, proving Theorem~\ref{thm:RAAG}.

\subsection*{Acknowledgements:} The authors thank   Sangrok Oh, and Russ Woodroofe for comments and feedback on the first version of the article.  Special thanks to Antony Genevois for discussion on the topics of the article and for pointing out  a necessary correction in a first version of the article. 
The authors thank the anonymous referee for comments and suggestions 
that improved the exposition of the paper.
The first author was  supported by NSF grant DMS-2106906.
The second author thanks the first author for her hospitality while visiting Brandeis University where the ideas of this article originated. Parts of this article were written while the second author was visiting Tokyo Metropolitan University and Universidad Nacional de Colombia while on sabbatical; the second author thanks Tomohiro Fukaya and Mario Velasquez for their hospitality.  The second author acknowledges funding by the Natural Sciences and Engineering Research Council of Canada NSERC.

\section{Quasi-isometries of pairs}\label{sec:QIofPairs}
We begin by setting up some language and notation for the rest of the article. A \emph{pseudo-metric} is a metric such that the distance between two distinct points can be zero. 
An \emph{extended (pseudo-)metric} on a set $X$ is a function $\dist\colon X \times X \to [0,\infty]$   that satisfies the usual properties of a (pseudo-)metric  with  the triangle inequality interpreted in the natural way.  In particular, we consider $[0,\infty]$ with the standard order where  $\infty$ is the maximum value. A set together with an extended metric is called an \emph{extended metric space}. 

Let $(X,\dist_X)$ be an extended metric space. For a subset $A\subseteq X$, its closed $r$-neighborhood is denoted as
\[ \mathcal{N}_r(A)=\{x\in X \mid \text{$\exists a\in A$ such that $\dist_X(a,x)\leq r$}\}.\]
The Hausdorff distance between subsets $A,B\subseteq X$ is defined as 
\[ \Hdist_X(A,B)= \inf\{ r \mid B\subseteq \mathcal{N}_r(A) \text{ and } A\subseteq \mathcal{N}_r(B)\}, \]
where, as a convention, the infimum of the empty set is defined as infinity. An important property of the Hausdorff distance is that it satisfies the triangle inequality, in the sense that, if $A,B,C$ are subsets of $X$ then 
\[\Hdist_X(A,C) \leq \Hdist_X(A,B)+\Hdist_X(B,C).\]
In particular, the power set of $X$ with the Hausdorff distance, $(\mathcal{P}(X), \Hdist_X)$ is an extended pseudo-metric space.  
The infimum distance between subsets is 
\[ \dist_X(A,B) = \inf\{ \dist_X(a,b) \mid a\in A \quad b\in B\} .\]
Note that the infimum distance between sets does not satisfy the triangle inequality in general.


The notion of  quasi-isometry $f\colon X \to Y$ between metric spaces naturally  generalizes to  extended pseudo-metric spaces.

\begin{definition}[Quasi-isometry]\label{def:quasi-isometry}
Let $X$ and $Y$ be extended pseduo-metric spaces, and let $L\geq1$ and $C\geq0$ be real numbers. A function $f\colon X\to Y$ is an $(L,C)$-quasi-isometry if  
 for any $a,b\in X$, 
\begin{enumerate}
   \item $\dist_X(a,b)=\infty$ if and only if $\dist_Y(f(a),f(b))=\infty$;
    \item if $\dist_X(a,b)<\infty$, then 
    \[\frac{1}{L}\dist_X(a,b)-C\leq \dist_Y(f(a),f (b))\leq L\dist_X(a,b)+C; \]
    \item for every $y\in Y$ there is $x\in X$ such that $\dist_Y(f(x),y)\leq C<\infty$.
\end{enumerate} 
Two extended pseudo-metric spaces are \emph{quasi-isometric} if there is a quasi-isometry between them. As in the case of metric spaces, it is straight forward to verify that   if $f\colon X \to Y$ is quasi-isometry of extended pseudo-metric spaces, then there is a quasi-isometry $g\colon Y\to X$ such that $g\circ f$ and $f\circ g$ are uniformly close to the identity functions on $X$ and $Y$ respectively; such function $g$ is called a \emph{quasi-inverse} of $f$. 
\end{definition}

\begin{definition}[Uniformly finite-to-one function]
 A function is \emph{uniformly finite-to-one} if there is a  uniform finite bound on the cardinality of the pre-image of every element of the codomain.   
\end{definition}

\begin{definition}[Uniformly locally finite metric space]
 A metric space is \emph{uniformly locally finite} if for each $r$ there is a uniform bound of the cardinality of balls of radius $r$.    
\end{definition}

The notion of a quasi-isometry of pairs was introduced in \cite{EMP21-qichar}.
\begin{definition}[Quasi-isometry of pairs]\label{defn:quasi-isometry-pairs} 
Let $X$ and $Y$ be metric spaces, and let $\mathcal{A}$ and $\mathcal{B}$ be   collections of subspaces of $X$ and $Y$ respectively.  A  quasi-isometry $q\colon X\to Y$ is a \emph{quasi-isometry  of pairs} $q\colon (X,\mathcal{A}) \to (Y,\mathcal{B})$ if there is $M>0$ such that:
\begin{enumerate}
    \item for any $A\in \mathcal{A}$, 
    the set $\{ B\in \mathcal{B} \colon \Hdist_Y(q(A), B) <M \}$ is non-empty; and       
    \item for any $B\in \mathcal{B}$, 
    the set $\{ A\in \mathcal{A} \colon \Hdist_Y(q(A), B) <M \}$ is non-empty. 
\end{enumerate}
In this case, if $q\colon X \to Y$ is an $(L,C)$-quasi-isometry, then $q\colon (X, \mathcal{A})\to (Y,\mathcal{B})$ is called a \emph{$(L,C,M)$-quasi-isometry of pairs}. If there is a quasi-isometry of pairs  $(X,\mathcal{A}) \to (Y,\mathcal{B})$ we say  that $(X,\mathcal{A})$ and  $(Y,\mathcal{B})$ are \emph{quasi-isometric pairs}. One can verify that if $f\colon (X, \mathcal A) \to (Y, \mathcal B)$ is quasi-isometry of pairs and $g$ is a quasi-inverse of $f\colon X \to Y$ then $g\colon (Y,\mathcal B) \to (X, \mathcal A)$ is a quasi-isometry of pairs. 
\end{definition}

Given a quasi-isometry of pairs, there is an induced quasi-isometry on the corresponding sets of subspaces equipped with the  Hausdorff distance.
\begin{proposition}[Induced quasi-isometry $\dot q$]\label{propF:InducedDot-q-03}
If $q\colon (X, \mathcal A) \to (Y, \mathcal B)$ is an  $(L,C,M)$-quasi-isometry of pairs, then there is a quasi-isometry of extended metric spaces
\[ \dot q\colon (\mathcal{A},\Hdist_X) \to (\mathcal{B},\Hdist_Y)\]
such that
\begin{enumerate}
    \item $\Hdist_Y(\dot q(A), q(A))\leq M$  for every $A\in \mathcal{A}$. 
    \item If $(\mathcal A, \Hdist_X)$ is (uniformly) locally finite, then  $\dot q$ is (uniformly)  finite-to-one.
    \item If $p\colon (Y,\mathcal B) \to (X,\mathcal A)$ is a quasi-inverse of the quasi-isometry of pairs $q$, then $\dot p$ is a quasi-inverse of $\dot q$.
    \item For every $A_1$ and $A_2$ in $\mathcal A$,
    \[\dist_Y(\dot q(A_1) , \dot q(A_2) ) \leq L\dist_X(A_1, A_2)+C+2M\]
    and \[    \dist_X(A_1, A_2)  \leq   L \dist_Y(\dot q(A_1) , \dot q(A_2) ) +  (C+2M)L.\]  
\end{enumerate}
\end{proposition}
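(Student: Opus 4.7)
The plan is to construct $\dot q$ by an explicit selection and verify each conclusion using only the triangle inequality for Hausdorff distance together with the following elementary observation: if $f\colon X \to Y$ is an $(L,C)$-quasi-isometry of extended pseudo-metric spaces and $S_1, S_2 \subseteq X$, then
\[\tfrac{1}{L}\Hdist_X(S_1, S_2) - C \leq \Hdist_Y(f(S_1), f(S_2)) \leq L \Hdist_X(S_1, S_2) + C,\]
where the value $\infty$ on one side corresponds to $\infty$ on the other. This follows directly from unfolding the definitions; the infinite-distance part uses that $\dist_X(a,b) = \infty$ if and only if $\dist_Y(f(a), f(b)) = \infty$.

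For each $A \in \mathcal{A}$, I would select $\dot q(A) \in \mathcal{B}$ with $\Hdist_Y(q(A), \dot q(A)) \leq M$; this is possible by the definition of quasi-isometry of pairs and makes property (1) immediate. Applying the observation to $q$ and inserting the estimates $\Hdist_Y(\dot q(A_i), q(A_i)) \leq M$ through two triangle-inequality steps gives the two-sided quasi-isometry inequality for $\dot q$ with constants $(L, C + 2M)$; coarse density is contained in the second condition of Definition~\ref{defn:quasi-isometry-pairs} combined with the same estimate. For property (2), if $\dot q(A_1) = \dot q(A_2)$ then $\Hdist_Y(q(A_1), q(A_2)) \leq 2M$, so the observation gives $\Hdist_X(A_1, A_2) \leq L(2M + C)$. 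Hence each preimage is contained in a Hausdorff ball of uniform radius around any of its points, and (uniform) local finiteness of $(\mathcal{A}, \Hdist_X)$ yields (uniform) finite-to-one-ness.

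For property (3), combining property (1) for both $\dot p$ and $\dot q$, the observation applied to $p$, and the fact that $p \circ q$ is uniformly close to $\mathrm{id}_X$, one obtains
\[\Hdist_X(\dot p(\dot q(A)), A) \leq \Hdist_X(\dot p(\dot q(A)), p(\dot q(A))) + \Hdist_X(p(\dot q(A)), p(q(A))) + \Hdist_X(p(q(A)), A),\]
and each of the three summands is uniformly bounded in $L, C, M$; the reverse composition is symmetric. For property (4), pick $a_i \in A_i$ realizing $\dist_X(A_1, A_2)$ to within $\varepsilon$, apply the quasi-isometry inequality for $q$ to $q(a_1), q(a_2)$, and then use property (1) to replace these points by ones in $\dot q(A_1), \dot q(A_2)$ at cost $M$ per side. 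The second inequality of (4) is derived symmetrically from near-optimal pairs in $\dot q(A_1) \times \dot q(A_2)$, pulled back through property (1) and the lower quasi-isometry inequality for $q$.

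The proof is essentially constant-tracking with Hausdorff-distance estimates, so I do not expect a serious conceptual obstacle. The only mildly delicate point is the extended-pseudo-metric bookkeeping, namely ensuring that $\dot q$ both preserves and reflects the value $\infty$; this reduces to the infinite-distance part of the preliminary observation applied to a single point of $A_1$ that is at infinite $\dist_X$-distance from all of $A_2$.
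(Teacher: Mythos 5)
Your proposal is correct and follows essentially the same route as the paper: the same selection of $\dot q(A)$ within Hausdorff distance $M$ of $q(A)$, the same $(L,C+2M)$ constant-tracking for the quasi-isometry inequality and coarse density, the same fiber-diameter bound $L(2M+C)$ for finite-to-one-ness, and the same three-term triangle-inequality chain for the quasi-inverse statement. The only difference is that you spell out the verification of item (4), which the paper explicitly leaves to the reader, and your argument there (near-optimal point pairs adjusted by $M$ per side) is the intended one.
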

\begin{proof}The definition of quasi-isometry of pairs implies that there is a function $\dot q\colon \mathcal{A} \to \mathcal{B}$  such that $\Hdist_Y(\dot q(A), q(A))\leq M$ for every $A\in \mathcal{A}$. It follows that 
\[ \frac{1}{L}\Hdist_X(A_1,A_2)-C-2M\leq \Hdist_Y(\dot q(A_1),\dot q(A_2))\leq L\Hdist_X(A_1,A_2)+C+2M. \]
On the other hand, for every $B\in \mathcal B$ there is $A\in \mathcal{A}$ such that $\Hdist_Y(\dot q(A), B) \leq 2M$. 
Hence $\dot q\colon (\mathcal{A}, \Hdist_X)\to (\mathcal{B}, \Hdist_Y)$ is an $(L,C+2M)$-quasi-isometry. For the second statement, note that if $\dot q(A_1) = \dot q(A_2)$ then 
$\Hdist_X(A_1, A_2) \leq  L (2M+C)$; hence, if $(\mathcal{A}, \Hdist_X)$ is (uniformly )locally finite, then  $\dot q$ is (uniformly) finite to one. 
For the third statement, by increasing $M$, suppose that \[ \Hdist_X(\dot p(B) , p(B))\leq M, \quad   \Hdist_X(p\circ q (x) , x)\leq M,\quad \text{and} \quad  \Hdist_Y(q\circ p (y) , y)\leq M \]  for all $ B\in  \mathcal{B}$, $x\in X$ and $y\in Y$. Hence if  $A\in \mathcal{A}$ then 
\[ 
\begin{split}
\Hdist_X(\dot p \circ \dot q (A), A ) & \leq
\Hdist_X(\dot p \circ \dot q (A), p\circ \dot q (A) )+\Hdist_X(p\circ \dot q (A), A ) \\
& \leq M +\Hdist_X(p\circ \dot q (A), A ) \\
& \leq M + \Hdist_X(p\circ \dot q (A), p\circ q (A)) + \Hdist_X(p\circ  q (A), A ) \\
& \leq M + L\Hdist_Y(\dot q(A), q(A)) + C  + M \\
& \leq 2M+ LM + C=: M'.
\end{split}
\]
Analogously, $\Hdist_Y(\dot q \circ \dot p (B), B )\leq M'$ for every $B\in \mathcal{B}$. Hence $\dot p$ and $\dot q$ are quasi-inverses. The last statement is also a direct verification and  is left to the reader.
\end{proof}

\begin{remark}
 The quasi-isometry $\dot q\colon (X,\mathcal A) \to (Y,\mathcal B) $  induced by a quasi-isometry of pairs $q\colon (X,\mathcal A) \to (Y,\mathcal{B})$ is not unique. However any two such maps are at uniform $L_\infty$-distance by the first item of the previous proposition.  We will refer to any of those $\dot q$ as an \emph{induced quasi-isometry $\dot q$}.  
\end{remark}

For a group $G$ and  a collection $\calp$ of  subgroups of $G$, let $G/\calp$ denote the set of all cosets $gP$ with $g\in G$ and $P\in\calp$. A \emph{group pair} $(G,\mathcal P)$ is a pair consisting of a finitely generated group $G$  and  a finite collection of {\bf infinite}  subgroups $\mathcal P$. The collection $\calp$ is called the \emph{peripheral structure} of the pair $(G,\calp)$. 
 When considering a group pair $(G,\calp)$ we assume that $G$ is endowed with a chosen  word metric $\dist_G$, while the Hausdorff distance between subsets of $G$ is denoted as  $\Hdist_G$.  
 
  \begin{definition}[Quasi-isometry of group pairs]\label{def:qipairs2}
Consider two group pairs $(G, \mathcal{P})$ and $(H, \mathcal{Q})$. 
An \emph{$(L,C,M)$-quasi-isometry of group pairs}  $q\colon (G, \mathcal{P})\to (H, \mathcal{Q})$ is a  $(L,C, M)$-quasi-isometry of pairs $q\colon (G, G/\calp) \to (H, H/\calq)$ in the sense of Defintion~\ref{defn:quasi-isometry-pairs}.

Two group pairs $(G,\calp)$ and $(H,\calq)$ are called \emph{quasi-isometric} if there is  a quasi-isometry of pairs between them. This is an  equivalence relation on the class of group pairs. 
\end{definition}

Our convention is that any finitely generated group $G$ is identified with the group pair $(G,\emptyset)$. Under this convention,   the notion of quasi-isometric group pairs extends the notion of quasi-isometric finitely generated groups.

We also consider the stronger notion of virtually isomorphic pairs.

\begin{definition}[Virtually isomorphic pairs]\label{def:VirtIsomPair}
Two group pairs are $(G,\calp)$ and $(H,\calq)$ are \emph{virtually isomorphic} if one can transform $(G,\calp)$ to $(H,\calq)$  by a finite sequence of the following three operations and their inverses:
\begin{enumerate}

\item (Replace subgroups with commensurable subgroups) 
Replace $\calp$ with a finite collection $\calq$ such that  such that each   $Q\in \calq$ is commensurable to a conjugate of some $P\in \calp$ and viceversa.  

\item (Take a quotient by a finite normal subgroup) Replace the pair  $(G,\calp)$ with    $\left(G/N, \{PN/N \mid P\in\calp\}\right)$, where  $N$ is a finite normal subgroup of $G$.  

\item (Pass to a finite index subgroup) 
Replace $(G,\calp)$ with $(H,\calq)$, where  $H$ is a finite index subgroup of $G$ and $\calq$ is defined as follows. If  
$\{g_iP_i \colon i\in I\}$  is a collection of representatives of the orbits of the $H$-action on $G/\mathcal{P}$ by left multiplication, then $\mathcal{Q}=\{ g_iP_ig_i^{-1}\cap H \mid i\in I \}$. 
\end{enumerate}
\end{definition}

By definition, virtual  isomorphism  of pairs is an equivalence relation in the class of group pairs. 
In the first item of the definition, the condition on $\calq$ is equivalent to  the requirement that the identity map on $G$ is a quasi-isometry of pairs     $(G,\calp) \to (G,\calq)$; see Lemma~\ref{lem:commensurabilityHdist}.
In the second item, it is straightforward to  verify that the quotient map $G\to G/N$ is a quasi-isometry of pairs $(G,\calp) \to (G/N, \{PN/N\mid P\in\calp\})$. In the third item, the inclusion $H\hookrightarrow G$ is a quasi-isometry of pairs $(H,\calq) \to (G,\calp)$; a verification of this fact  can be found in~\cite[Proposition 2.13]{EMP21-qichar}.  Therefore, we have:

\begin{proposition}
 Virtually isomorphic pairs are quasi-isometric pairs.   
\end{proposition}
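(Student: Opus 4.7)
The plan is to show that each of the three generating operations in Definition~\ref{def:VirtIsomPair} (and its inverse) yields a quasi-isometry of pairs, and then close the argument by observing that quasi-isometry of group pairs is transitive. Since the relation of being quasi-isometric pairs is an equivalence relation on the class of group pairs (reflexivity from the identity, symmetry from the existence of a quasi-inverse of pairs noted at the end of Definition~\ref{defn:quasi-isometry-pairs}, and transitivity by direct verification of the $(L,C,M)$ constants under composition), it will follow that any finite alternating sequence of elementary moves between $(G,\calp)$ and $(H,\calq)$ produces a quasi-isometry of pairs between them.

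For operation (1), the goal is to show that if $\calq$ is a finite collection of subgroups such that each $Q\in\calq$ is commensurable with a conjugate of some $P\in\calp$ and vice versa, then the identity map $\mathrm{id}\colon G\to G$ is a quasi-isometry of pairs $(G,\calp)\to (G,\calq)$. The substantive input here is the forthcoming Lemma~\ref{lem:commensurabilityHdist}, which implies that commensurable subgroups of $G$ have finite Hausdorff distance in $\dist_G$, and more generally that a coset $gP$ has finite Hausdorff distance from a coset $g'Q$ whenever $gPg^{-1}$ is commensurable with a conjugate of $Q$. Finiteness of $\calp$ and $\calq$ then produces a single uniform constant $M$ witnessing the pair condition.

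For operation (2), I would verify directly that the canonical projection $\pi\colon G\to G/N$ with $|N|<\infty$ and $N\trianglelefteq G$ is a quasi-isometry of groups (since $N$ is finite, $\pi$ is a bijection up to a bounded fiber) and that $\pi(gP)$ equals the coset $(gN)(PN/N)$ in $G/N$, so the image of any coset from $G/\calp$ is literally a coset in $(G/N)/\{PN/N : P\in\calp\}$ and conversely each such coset lifts to an element of $G/\calp$. For operation (3), I would note that because $[G:H]<\infty$, the inclusion $\iota\colon H\hookrightarrow G$ is a quasi-isometry. The collection $\calq=\{g_iP_ig_i^{-1}\cap H\}$ comes from choosing orbit representatives of the $H$-action on $G/\calp$; I would show that any $hg_iP_i$ in $G/\calp$ has finite Hausdorff distance from the coset $h(g_iP_ig_i^{-1}\cap H)g_i \in H/\calq \cdot \{g_i\}$, and conversely each coset of $H/\calq$ sits inside a unique $G/\calp$-coset at bounded Hausdorff distance; here the bound depends only on the index $[g_iP_ig_i^{-1}:g_iP_ig_i^{-1}\cap H]$, which is finite and uniformly bounded over the finitely many representatives. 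A detailed verification of this last point is carried out in \cite[Proposition 2.13]{EMP21-qichar}.

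The main technical obstacle is operation~(3): one must carefully translate the orbit-representative construction into the Hausdorff-distance language of Definition~\ref{defn:quasi-isometry-pairs}, checking both directions of the pair condition and producing a uniform constant $M$ that does not depend on the specific coset chosen. Operations~(1) and (2) are essentially direct consequences of a coset-level commensurability lemma and the fact that finite kernels do not affect large-scale geometry. Once all three operations are known to preserve the quasi-isometry-of-pairs relation, the proposition reduces to the transitivity statement verified above.
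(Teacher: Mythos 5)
Your proposal is correct and follows essentially the same route as the paper: the paper's proof consists precisely of observing that each of the three elementary operations is witnessed by a quasi-isometry of pairs (the identity map for operation (1) via Lemma~\ref{lem:commensurabilityHdist}, the quotient map for operation (2), and the inclusion for operation (3) via \cite[Proposition 2.13]{EMP21-qichar}), and then invoking the fact that quasi-isometry of group pairs is an equivalence relation. Your additional detail on the Hausdorff-distance estimates for operation (3) is consistent with, and subsumed by, the cited reference.
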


The notions of quasi-isometries and virtual isomorphisms of group pairs leads to the notion of a geometric property of a group pair.

\begin{definition}[Geometric properties]
A  property $\mathbb{P}$ of a group pair is \emph{geometric} if for every group pair $(G,\calp)$ which is quasi-isometric to some pair $(H,\calq)$ satisfying  $\mathbb{P}$, there exists a pair $(K, \mathcal{L})$ satisfying $\mathbb{P}$ that is virtually isomorphic to $(G,\calp)$. 
\end{definition}

Most (but not all) of the properties we consider in this paper are actually quasi-isometry invariants.  Note that if a property is a quasi-isometry invariant of group pairs, then it is geometric, but the converse does not necessarily hold; see, for example, Theorem~\ref{thm:Malnormality}.

\section{The coset space}\label{sec:CosetSpace}




In this section, we introduce the \textit{coset space} and use it to give some first examples of geometric properties of group pairs.
\begin{definition}[The coset space]
 Let $(G,\calp)$ be a group pair. 
 The \emph{coset space} is the extended metric space $(G/\calp, \Hdist_G)$. 
 The action of $G$ on $(G/\calp, \Hdist_G)$ by left multiplication is by isometries, and since $\calp$ is finite, there are only finitely  many distinct  $G$-orbits. 
\end{definition}

\begin{example} Let $A$ and $B$ be infinite finitely generated groups.
 \begin{enumerate}
     \item In the coset space of $(A\ast B, A)$, any two distinct points are at infinite distance.

     \item The coset space of $(A\times B, A)$ is quasi-isometric to $B$.

     \item The coset space of $(\ZZ^2\ast_\ZZ \ZZ^2, P)$, where $P$ is the edge group of the splitting, is quasi-isometric to  the Cayley graph of the free group of rank two.
 \end{enumerate}
\end{example}

\begin{lemma}[The coset space is uniformly locally finite]\label{lem:CosetSpaceUniLocFin}
For every   $r>0$ there is a uniform bound on the cardinality of balls of radius $r$ in $(G/\calp, \Hdist_G)$.  
\end{lemma}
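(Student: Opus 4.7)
The plan is to use the $G$-action on $(G/\calp, \Hdist_G)$ by left multiplication, which acts by isometries, to reduce to bounding the ball around a single representative of each orbit, and then to observe that cosets close to a basepoint must contain a representative from a word-metric ball in $G$.

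First, I would note that because left multiplication by any $g \in G$ is an isometry of $(G/\calp, \Hdist_G)$, the cardinality of the ball of radius $r$ around a coset $g'P'$ equals the cardinality of the ball of radius $r$ around $P'$. Since $\calp$ is finite, it therefore suffices to produce, for each $P \in \calp$, a bound on the cardinality of $\{ g'P' \in G/\calp : \Hdist_G(P, g'P') \leq r \}$ depending only on $r$, the generating set of $G$, and $\calp$.

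Next I would fix $P \in \calp$ and suppose $g'P' \in G/\calp$ satisfies $\Hdist_G(P, g'P') \leq r$. Since $e \in P$, the definition of Hausdorff distance yields some $y \in g'P'$ with $\dist_G(e, y) \leq r$, so $y$ lies in the closed word-metric ball $B_r(e)$ of $G$. Consequently $g'P' = yP'$, that is, the coset $g'P'$ admits a representative in $B_r(e)$. The number of such cosets is therefore at most $|B_r(e)| \cdot |\calp|$.

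Finally I would set $N(r) = |B_r(e)| \cdot |\calp|$, which is finite because $G$ is finitely generated and $\calp$ is finite, and conclude by the $G$-invariance argument above that every ball of radius $r$ in $(G/\calp, \Hdist_G)$ has at most $N(r)$ elements. There is no real obstacle here; the only subtle point to mention is that the bound genuinely depends on the choice of word metric on $G$, but since any two word metrics on $G$ associated to finite generating sets are bi-Lipschitz equivalent, the statement ``uniformly locally finite'' is independent of this choice.
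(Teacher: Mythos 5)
Your proof is correct and follows essentially the same route as the paper: local finiteness comes from the finite generation of $G$ (your count via representatives in the word-metric ball $B_r(e)$ is just the explicit version of this), and uniformity comes from the isometric $G$-action having only finitely many orbits because $\calp$ is finite. You have simply spelled out the details that the paper's two-line proof leaves implicit.
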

\begin{proof} Since $G$ is finitely generated,  $(G/\calp, \Hdist_G)$ is locally finite. Since $\calp$ is finite, there are only finitely many $G$-orbits of points on $G/\calp$ and the statement follows.  
\end{proof}

Proposition~\ref{propF:InducedDot-q-03} and Lemma~\ref{lem:CosetSpaceUniLocFin} immediately imply that the coset space is  a quasi-isometry invariant of group pairs: 

\begin{proposition}\label{prop:QIofcosetspaces}
If $q\colon (G,\calp)\to (H,\calq)$ is a quasi-isometry of group pairs, then any induced  $\dot q\colon (G/\calp,\Hdist_G) \to (H/\calq, \Hdist_H)$ is a uniformly finite-to-one quasi-isometry of extended metric spaces. 
\end{proposition}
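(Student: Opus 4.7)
The plan is to obtain this proposition as a direct corollary of the two results established immediately before its statement, namely Proposition~\ref{propF:InducedDot-q-03} and Lemma~\ref{lem:CosetSpaceUniLocFin}. First, I would unpack Definition~\ref{def:qipairs2} to note that a quasi-isometry of group pairs $q\colon (G,\calp) \to (H,\calq)$ is by definition a quasi-isometry of pairs $q\colon (G, G/\calp) \to (H, H/\calq)$ in the sense of Definition~\ref{defn:quasi-isometry-pairs}. This places us in the setting of Proposition~\ref{propF:InducedDot-q-03} with $X = G$, $Y = H$, $\mathcal{A} = G/\calp$, and $\mathcal{B} = H/\calq$.

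Next, I would invoke Proposition~\ref{propF:InducedDot-q-03} directly: its conclusion produces an induced quasi-isometry of extended metric spaces $\dot q\colon (G/\calp, \Hdist_G) \to (H/\calq, \Hdist_H)$, giving the quasi-isometry part of the claim. To upgrade this to uniformly finite-to-one, I would then apply Lemma~\ref{lem:CosetSpaceUniLocFin}, which asserts that $(G/\calp, \Hdist_G)$ is uniformly locally finite. Feeding this into the second item of Proposition~\ref{propF:InducedDot-q-03} yields that $\dot q$ is uniformly finite-to-one, which completes the verification.

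I do not anticipate any genuine obstacle here, as the proposition is essentially a packaging of the two preceding results in the specific case of group pairs. The only point that might warrant an extra sentence is that the word "any" in the statement is justified by the non-uniqueness remark following Proposition~\ref{propF:InducedDot-q-03}: different choices of $Q \in \calq$ at Hausdorff distance at most $M$ from $q(gP)$ yield different induced maps $\dot q$, but each such choice satisfies the hypotheses of Proposition~\ref{propF:InducedDot-q-03} and therefore inherits both conclusions simultaneously.
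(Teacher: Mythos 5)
Your proposal is correct and matches the paper exactly: the authors likewise obtain this proposition as an immediate consequence of Proposition~\ref{propF:InducedDot-q-03} (for the quasi-isometry) combined with Lemma~\ref{lem:CosetSpaceUniLocFin} (for uniform local finiteness, hence the uniformly finite-to-one conclusion via item (2) of that proposition). No gaps.
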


\subsection{Bounded packing}
Our first application of the coset space is that bounded packing, originally defined in \cite{HW09}, is a geometric property of group pairs, proving Theorem~\ref{thm:GeomProps}\eqref{item:boundedpacking}.  
\begin{definition}[Bounded packing]
A group pair $(G,\calp)$ has  \emph{bounded packing} if, for each constant $D$,
there is a number $N$ such that for any collection of $N$ distinct cosets $g_1P_1,\ldots , g_NP_N$ in $G/\calp$, at least two are separated by a distance of at least $D$ in $(G,\dist_G)$. 
\end{definition}

\begin{theorem}
Bounded packing is a quasi-isometry invariant of group pairs.    
\end{theorem}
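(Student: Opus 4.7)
The plan is to push a would-be bounded-packing violation in $(H,\calq)$ back to $(G,\calp)$ via the induced quasi-isometry of coset spaces. Since bounded packing is formulated in terms of the infimum distance $\dist_G$ between cosets (not the Hausdorff distance), the crucial tool will be item~(4) of Proposition~\ref{propF:InducedDot-q-03}, which gives a quasi-Lipschitz estimate relating $\dist_G$ and $\dist_H$ across any induced map $\dot q$ or $\dot p$.

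Assume $(G,\calp)$ has bounded packing, let $q\colon (G,\calp)\to(H,\calq)$ be a quasi-isometry of pairs, and let $p$ be a quasi-inverse, inducing $\dot p\colon (H/\calq,\Hdist_H)\to(G/\calp,\Hdist_G)$ as in Proposition~\ref{prop:QIofcosetspaces}. Fix $D>0$, and consider any $N$ distinct cosets $h_1Q_1,\ldots,h_NQ_N$ in $H/\calq$ whose pairwise $\dist_H$-distances are strictly less than $D$. By Proposition~\ref{propF:InducedDot-q-03}(4), their images satisfy
\[\dist_G\bigl(\dot p(h_iQ_i),\dot p(h_jQ_j)\bigr)\le LD + C + 2M\]
for the quasi-isometry constants attached to $p$; set $D'=LD+C+2M+1$, so that the images lie pairwise at $\dist_G$-distance strictly less than $D'$.

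Since $(G/\calp,\Hdist_G)$ is uniformly locally finite by Lemma~\ref{lem:CosetSpaceUniLocFin}, Proposition~\ref{prop:QIofcosetspaces} provides a uniform bound $K$ on the cardinality of fibres of $\dot p$. Hence among the $N$ image cosets, at least $N/K$ are distinct. Applying the bounded packing hypothesis of $(G,\calp)$ at scale $D'$ produces a constant $N(D')$ with $N/K<N(D')$, and therefore $N<K\cdot N(D')$. Since this uniform bound depends only on $D$ and on the quasi-isometry constants, $(H,\calq)$ also satisfies bounded packing, as required.

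The argument is essentially mechanical once the correct distance is identified, and the main point at which a naive attempt might stall is the distinction between infimum and Hausdorff distance: two cosets in $G/\calp$ can have small infimum distance while being at infinite Hausdorff distance (for instance, two distinct cosets of $\langle a\rangle$ in the free group on $a$ and $b$). Consequently, the Hausdorff-distance quasi-isometry statement of Proposition~\ref{prop:QIofcosetspaces} alone is not enough; the sharper estimate on infimum distances in Proposition~\ref{propF:InducedDot-q-03}(4) is precisely what enables bounded packing to be transported across a quasi-isometry of pairs.
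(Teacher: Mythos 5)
Your proof is correct and follows essentially the same route as the paper's: push a pairwise-close collection of cosets through the induced map on coset spaces, control infimum distances via Proposition~\ref{propF:InducedDot-q-03}(4), and use the uniform finite-to-one property (from Lemma~\ref{lem:CosetSpaceUniLocFin} and Proposition~\ref{prop:QIofcosetspaces}) to bound cardinalities. The only cosmetic difference is the direction: you transport bounded packing from $(G,\calp)$ to $(H,\calq)$ via the quasi-inverse $\dot p$, whereas the paper assumes it on $(H,\calq)$ and pulls back via $\dot q$; since quasi-isometry of pairs is symmetric, the two are equivalent.
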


\begin{proof}
Suppose that $q\colon (G,\calp) \to (H,\calq)$ is an $(L,C,M)$-quasi-isometry of pairs and $(H,\calq)$ has bounded packing. Consider an induced function $\dot q\colon G/\calp \to H/\calq$.
By Proposition~\ref{propF:InducedDot-q-03}, there is $K>0$ such that  $\dot q^{-1}(hQ)$ has cardinality at most $K$ for any $hQ\in H/\calq$.  

Let $D>0$. Since  $(H,\calq)$ has bounded packing, there exists $N'>0$ such that if $\mathcal{S}'\subset H/\calq$ has the property that any two left cosets in $\mathcal{S}'$ are at infimum distance at most $D'=LD+C+2M$ in $(H,\dist_H)$ then $\mathcal{S}'$ has cardinality less than $N'$. Let $N=N'\cdot K$. 

Suppose that $\mathcal{S} \subset G/\calp$ has the property that   any two left cosets in $\mathcal{S}$ are at distance at most $D$. Let $\mathcal{S}'=\{\dot q(gP)\mid gP\in \mathcal{S}\}$. Then any two left cosets of $\mathcal{S}'$ are at distance at most $D'$ and hence $\mathcal{S'}$ has cardinality at most $N'$. Since $\dot q$ is a $K$-to-one function, it follows that $\mathcal{S} \subset \dot q^{-1}(\mathcal{S}')$ has at most $N$ elements. Therefore $(G,\calp)$ has bounded packing.
\end{proof}

\subsection{Reducible pairs}
Our second application of the coset space is a geometric proof that being \textit{reducible} is invariant under quasi-isometry of group pairs. A different argument proving this result is implicit in~\cite{EMP23-QI-Dehn}; see Proposition~6.3 and Lemmas~6.9 and 6.11 in that paper.  

We begin with a discussion of the commensurator of a subgroup.  The \emph{commensurator} $\comm_G(P)$ of a subgroup $P$ of a group  $G$ is defined as
\[ \comm_G(P)=\{g\in G\mid P\cap gPg^{-1} \text{ has finite index in $P$ and in $gPg^{-1}$}\}.\] 
Observe that $\comm_G(P)$ is a subgroup of $G$. 
Commensurability of  subgroups has the following well-known  geometric interpretation; for a proof, see, for example,~\cite[Lemma 5.14]{EMP23-QI-Dehn}.

\begin{lemma}\label{lem:commensurabilityHdist}\label{prop:GeomCommensurator3}
Let $G$ be a finitely generated group with a word metric $\dist_G$, let $P$ and $Q$
be subgroups, and let $g\in G$. Then $P$ and $gQg^{-1}$ are commensurable subgroups if and only
if $\Hdist_G(P,gQ)<\infty$.   In particular, 
\[ \comm_G(P) = \{g\in G \mid \Hdist_G(P, gP)<\infty\}.\]
\end{lemma}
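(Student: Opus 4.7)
The plan is to first reduce the equivalence to a claim about two subgroups via the observation that $\Hdist_G(gQ, gQg^{-1}) \leq \dist_G(1,g)$, which follows from left-invariance of $\dist_G$: for each $q \in Q$ we have $\dist_G(gq, gq g^{-1}) = \dist_G(1, g^{-1}) = \dist_G(1,g)$, and both inclusions $gQ \subset \mathcal{N}_{\dist_G(1,g)}(gQg^{-1})$ and vice versa follow. Consequently, $\Hdist_G(P, gQ) < \infty$ if and only if $\Hdist_G(P, gQg^{-1}) < \infty$, and it suffices to prove the following cleaner statement: two subgroups $A, B \leq G$ are commensurable if and only if $\Hdist_G(A,B) < \infty$.

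For the forward direction of this cleaner statement, assume $A \cap B$ has finite index in both $A$ and $B$. Choose a finite set of \emph{right} coset representatives $F \subset A$ for $A \cap B$ in $A$, so $A = (A \cap B)\cdot F$. For any $a = cf$ with $c \in A \cap B$ and $f \in F$, left-invariance of $\dist_G$ gives $\dist_G(a, c) = \dist_G(cf, c \cdot 1) = \dist_G(f, 1)$, whence $A \subset \mathcal{N}_R(A \cap B) \subset \mathcal{N}_R(B)$ for $R = \max_{f \in F} \dist_G(1,f)$. A symmetric argument using right coset representatives of $A \cap B$ in $B$ yields $B \subset \mathcal{N}_{R'}(A)$, so $\Hdist_G(A,B) < \infty$. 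The key point here is to use right (not left) coset representatives so that the discrepancy $f$ appears on the right and can be absorbed by the left-invariance of $\dist_G$; this is the only subtle step in the argument.

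For the converse, suppose $\Hdist_G(A,B) \leq R$ and let $B_R = \{g \in G : \dist_G(1,g) \leq R\}$, which is finite because $G$ is finitely generated. Each $a \in A$ admits some $b \in B$ with $a^{-1}b \in B_R$, so $a \in B \cdot B_R^{-1}$; therefore $A$ is contained in at most $|B_R|$ right cosets of $B$ in $G$. Two elements $a_1, a_2 \in A$ lie in the same right coset of $B$ exactly when $a_1 a_2^{-1} \in B$, i.e.\ when $a_1 a_2^{-1} \in A \cap B$, which is exactly the condition that they lie in the same right coset of $A \cap B$ in $A$. Hence $A \cap B$ has finite index in $A$, and by symmetry also in $B$, so $A$ and $B$ are commensurable. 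The final identity $\comm_G(P) = \{g \in G : \Hdist_G(P, gP) < \infty\}$ is the special case $Q = P$. I do not anticipate a serious obstacle; once the sidedness of cosets is chosen to match the left-invariance of $\dist_G$, the proof is a direct algebraic–geometric translation.
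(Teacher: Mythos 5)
Your proof is correct and complete. Note that the paper itself does not prove this lemma; it defers to an external reference (\cite[Lemma 5.14]{EMP23-QI-Dehn}), so there is no in-paper argument to compare against. Your argument is the standard one: the reduction via $\Hdist_G(gQ,gQg^{-1})\leq\dist_G(1,g)$ and the triangle inequality for $\Hdist_G$ is sound, the use of \emph{right} coset representatives of $A\cap B$ is exactly the right choice to make left-invariance of the word metric applicable, and the converse correctly uses finiteness of balls (hence finite generation) to bound the number of right cosets of $B$ meeting $A$, which injects the right cosets of $A\cap B$ in $A$ into a finite set.
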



 


\begin{definition}[Reducible pair]
A group pair $(G,\calp)$ is \emph{reducible} if 
every $P\in\calp$ has finite index in its commensurator $\comm_G(P)$.    
\end{definition}


The following theorem proves Theorem~\ref{thm:GeomProps}\eqref{item:reducible}.

\begin{theorem}\label{thm:reducible2} 
Being reducible is quasi-isometry invariant of group pairs.    
\end{theorem}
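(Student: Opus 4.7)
The plan is to translate reducibility into a purely metric condition on the coset space $(G/\calp,\Hdist_G)$, and then invoke Proposition~\ref{prop:QIofcosetspaces}. Specifically, I would prove that $(G,\calp)$ is reducible if and only if every equivalence class of $(G/\calp,\Hdist_G)$ (that is, every maximal collection of cosets pairwise at finite Hausdorff distance) is finite. Once this reformulation is established, the argument becomes a clean consequence of the fact that a quasi-isometry of extended metric spaces preserves the relation ``at finite distance'' and that the induced map on coset spaces is uniformly finite-to-one.

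For the reformulation, fix $P\in\calp$ and let $[P]$ denote the equivalence class of $P$ in $(G/\calp,\Hdist_G)$. By Lemma~\ref{lem:commensurabilityHdist}, a coset $hQ\in G/\calp$ lies in $[P]$ if and only if $hQh^{-1}$ is commensurable with $P$. Partitioning by $G$-orbits and using that $\calp$ is finite, we have
\[ [P]=\bigsqcup_{Q\in \calp}\bigl([P]\cap G\cdot Q\bigr). \]
For $Q=P$ the piece is $\{gP:g\in\comm_G(P)\}$, which has cardinality $[\comm_G(P):P]$. For $Q\neq P$, the piece is empty unless some conjugate $g_0Qg_0^{-1}$ is commensurable with $P$, in which case a straightforward calculation shows $gQ\in[P]$ iff $g_0^{-1}g\in\comm_G(Q)$, so the piece has cardinality $[\comm_G(Q):Q]$. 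Applying this to each $P\in\calp$ shows that every equivalence class of the coset space is finite if and only if each $P\in\calp$ has finite index in $\comm_G(P)$, i.e.\ $(G,\calp)$ is reducible.

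With the reformulation in hand, suppose $q\colon(G,\calp)\to(H,\calq)$ is a quasi-isometry of pairs with $(H,\calq)$ reducible, and let $\dot q\colon G/\calp\to H/\calq$ be an induced map. By Proposition~\ref{prop:QIofcosetspaces}, $\dot q$ is a uniformly $K$-to-one quasi-isometry of extended metric spaces for some $K$. Because a quasi-isometry of extended metric spaces preserves the relation of being at finite distance in both directions, $\dot q$ maps each equivalence class of $G/\calp$ into a single equivalence class of $H/\calq$, and distinct classes map into distinct classes. Since $(H,\calq)$ is reducible, each target class has finite cardinality $N$, and the $K$-to-one bound then gives $|[P]|\leq KN<\infty$ for every $P\in\calp$. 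Thus every equivalence class in $(G/\calp,\Hdist_G)$ is finite, and the reformulation yields that $(G,\calp)$ is reducible.

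The main obstacle is the bookkeeping in the geometric reformulation: one must verify carefully that finiteness of the single quantity $[\comm_G(P):P]$ for each $P\in\calp$ captures finiteness of the whole equivalence class $[P]$, which a priori can contain cosets of other subgroups $Q\in\calp$ whose conjugates are commensurable with $P$. After that bookkeeping, the quasi-isometry invariance is an almost formal consequence of Proposition~\ref{prop:QIofcosetspaces}, because being uniformly finite-to-one transports finiteness of equivalence classes across $\dot q$.
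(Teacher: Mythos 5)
Your proposal is correct and follows essentially the same route as the paper: the paper's Lemma~\ref{prop:ReducibleCharacterization} is exactly your reformulation (reducibility is equivalent to the ``galaxies'' $\{R\in G/\calp \mid \Hdist_G(R,Q)<\infty\}$ being finite/bounded), and the transport across $\dot q$ via Proposition~\ref{prop:QIofcosetspaces} is the same mechanism, with the paper phrasing the last step as quasi-isometry invariance of bounded galaxies rather than via the uniformly finite-to-one bound. Both versions work; the difference is cosmetic.
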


Our  argument proceeds by showing that being reducible can be characterized geometrically in terms of the coset space.  We use the notion of bounded galaxies.


\begin{definition}[Bounded galaxies]
An extended metric space $X$ is said to have  \emph{bounded galaxies} if for every $x\in X$ the subspace $B(x,\infty)=\{y\in X\mid \dist_X(x,y)<\infty\}$ is bounded. We call $B(x,\infty)$ the \emph{galaxy containing $x$}. Note that if $B(x)\cap B(y)$ is non-empty then $B(x)=B(y)$.  
\end{definition}

\begin{lemma}\label{lem:BoundedGalaxiesQI}
 Having bounded galaxies is a quasi-isometry invariant of extended metric spaces.   
\end{lemma}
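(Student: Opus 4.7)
The plan is to show both directions simultaneously by showing that a quasi-isometry $f\colon X\to Y$ maps galaxies of $X$ to galaxies of $Y$ with a uniform bound on how much the diameter can grow, and then appeal to the existence of a quasi-inverse to finish. The key observation, already built into Definition~\ref{def:quasi-isometry}, is that an $(L,C)$-quasi-isometry $f$ satisfies $\dist_X(a,b)=\infty \iff \dist_Y(f(a),f(b))=\infty$. This is precisely what guarantees that $f$ sends each galaxy of $X$ into a single galaxy of $Y$.

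First I would fix an $(L,C)$-quasi-isometry $f\colon X\to Y$ with quasi-inverse $g\colon Y\to X$, and choose a uniform constant $M$ such that $\dist_X(g\circ f(x),x)\leq M$ and $\dist_Y(f\circ g(y),y)\leq M$ for all $x\in X$ and $y\in Y$. Assuming $X$ has bounded galaxies, I would take an arbitrary $y\in Y$ and consider any $y'\in B(y,\infty)$. Since $\dist_Y(y,y')<\infty$, the distance $\dist_X(g(y),g(y'))$ is also finite (using the extended quasi-isometry property for $g$), so $g(y')$ lies in the galaxy $B(g(y),\infty)\subseteq X$.

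By hypothesis this galaxy has some finite diameter $D=D(y)$, hence $\dist_X(g(y),g(y'))\leq D$. Pushing forward through $f$ and using the triangle inequality twice,
\[
\dist_Y(y,y') \leq \dist_Y(y,f\circ g(y)) + \dist_Y(f\circ g(y),f\circ g(y')) + \dist_Y(f\circ g(y'),y') \leq 2M + LD + C.
\]
This bounds the diameter of $B(y,\infty)$ and shows $Y$ has bounded galaxies. The reverse implication is symmetric, using that $g$ is itself a quasi-isometry with quasi-inverse $f$.

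I do not expect any real obstacle here; the entire content of the argument is already packaged inside Definition~\ref{def:quasi-isometry}, which was designed so that finite and infinite distances are preserved separately. The only thing to be slightly careful about is that the diameter bound on the galaxy in $Y$ depends on the diameter of the corresponding galaxy in $X$ (not on a single uniform constant), which is fine since the definition of bounded galaxies allows the bound to depend on the galaxy.
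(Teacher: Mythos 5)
Your proof is correct and rests on the same key observation as the paper's: condition (1) of Definition~\ref{def:quasi-isometry} forces a quasi-isometry to map each galaxy into a single galaxy. The paper's proof is slightly more economical---it never invokes the quasi-inverse, instead noting that $f(B(x,\infty))\subseteq B(f(x),\infty)$ is bounded and then using the lower quasi-isometry inequality to bound $B(x,\infty)$ directly---but this is the same argument with the roles of domain and codomain interchanged.
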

\begin{proof}
Let $f\colon X \to Y$ be a quasi-isometry of extended metric spaces and suppose that $Y$ has bounded galaxies. Let $x\in X$ and observe that $f(B(x,\infty))\subset B(f(x),\infty)$. Since $Y$ has bounded galaxies, $B(f(x),\infty)$ is a bounded subset of $Y$. Since $f$ is a quasi-isometry, it follows that $B(x,\infty)$ is bounded.  
\end{proof}

Using Lemma~\ref{lem:commensurabilityHdist}, we now give a geometric characterization of reducibility.
\begin{lemma}\label{prop:ReducibleCharacterization}     
Let $(G,\calp)$ be a group pair. The following statements are equivalent.
\begin{enumerate}
    \item $(G,\calp)$ is reducible.
    \item For every  $Q\in G/\calp$, the set $\{R\in G/\calp \mid \Hdist_G(R,Q)<\infty\}$ is bounded.
    \item There is constant $N>0$ such that for every  $Q\in G/\calp$, the set $\{R\in G/\calp \mid \Hdist_G(R,Q)<\infty\}$ has cardinality at most $N$.  
\end{enumerate}
\end{lemma}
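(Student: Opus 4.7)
The strategy is to translate the algebraic condition in (1) into a geometric statement about \emph{galaxies} in the coset space using the Hausdorff-distance characterization of commensurability (Lemma~\ref{prop:GeomCommensurator3}), and then exploit the uniform local finiteness of $(G/\calp, \Hdist_G)$ (Lemma~\ref{lem:CosetSpaceUniLocFin}) and the finiteness of $\calp$ to upgrade pointwise statements to uniform ones. The key observation is that the galaxy $\{R \in G/\calp : \Hdist_G(R,Q) < \infty\}$ containing a fixed $Q$ is naturally in bijection with a disjoint union of coset spaces $\comm_G(P')/P'$ with $P' \in \calp$.

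I would proceed with the cycle (3) $\Rightarrow$ (2) $\Rightarrow$ (1) $\Rightarrow$ (3). The implication (3) $\Rightarrow$ (2) is immediate: a set of cardinality at most $N$ whose elements are each at finite Hausdorff distance from $Q$ has diameter bounded by the maximum of finitely many finite numbers. For (2) $\Rightarrow$ (3), note that a bounded subset of $(G/\calp, \Hdist_G)$ is contained in some ball of finite radius, hence has bounded cardinality by Lemma~\ref{lem:CosetSpaceUniLocFin}; to make this bound independent of $Q$, use that $\calp$ is finite, so $G/\calp$ has only finitely many $G$-orbits, and $G$ acts by isometries on $(G/\calp, \Hdist_G)$, so it suffices to bound the cardinality on finitely many orbit representatives.

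The main content is (1) $\Rightarrow$ (3). Fix $Q = gP$ with $P \in \calp$. Let $R = hP'$ be an element of the galaxy, i.e.\ $\Hdist_G(gP, hP') < \infty$. Translating by $g^{-1}$ and applying Lemma~\ref{prop:GeomCommensurator3}, this is equivalent to $(g^{-1}h)P'(g^{-1}h)^{-1}$ being commensurable with $P$. For each fixed $P' \in \calp$, the set of $h \in G$ satisfying this condition is either empty, or, after choosing a single witness $h_0$, equals $\comm_G(h_0 P' h_0^{-1})\, h_0 = h_0 \comm_G(P')$ (using that the commensurator is equivariant under conjugation). The number of distinct cosets $hP'$ that arise is therefore $[\comm_G(P'):P']$, which is finite by reducibility. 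Summing over $P' \in \calp$ yields a uniform bound $N \leq \sum_{P' \in \calp} [\comm_G(P'):P']$, independent of $Q$, which gives (3). Finally, for (3) $\Rightarrow$ (1), fix $P \in \calp$, take $Q = P$, and note that for every $h \in \comm_G(P)$ Lemma~\ref{prop:GeomCommensurator3} gives $\Hdist_G(P, hP) < \infty$, so the distinct cosets $hP$ with $h \in \comm_G(P)$ all lie in the galaxy of $P$; there are exactly $[\comm_G(P):P]$ of them, so this index is at most $N$.

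The main obstacle will be the bookkeeping in (1) $\Rightarrow$ (3): carefully identifying the set of $h$ producing galaxy-elements of the form $hP'$ with a single coset of $\comm_G(h_0 P' h_0^{-1})$, and then passing to distinct left cosets $hP'$ without over- or under-counting. Once this bijection with $\comm_G(P')/P'$ is in hand, the finiteness of $\calp$ and of each commensurator index assemble into the uniform bound.
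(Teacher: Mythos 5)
Your proof is correct and follows essentially the same route as the paper's: both arguments hinge on Lemma~\ref{prop:GeomCommensurator3} to translate finite Hausdorff distance into commensurability, restrict the galaxy to each piece $G/P'$ of the partition $G/\calp=\bigcup_{P'\in\calp}G/P'$ where it becomes a coset of $\comm_G(P')$ modulo $P'$, and use local finiteness together with the finitely many $G$-orbits to pass between boundedness and a uniform cardinality bound. The only (harmless) difference is organizational — you prove $(1)\Leftrightarrow(3)$ directly with the explicit bound $N\leq\sum_{P'\in\calp}[\comm_G(P'):P']$, whereas the paper routes through $(1)\Leftrightarrow(2)$.
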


\begin{proof}
Since $(G/\calp,\Hdist_G)$ is locally finite, a subset of $G/\calp$ is bounded if and only if it is finite. Hence all the galaxies of $(G/\calp,\Hdist_G)$ are bounded if and only if all  galaxies are finite.
Since the $G$-action on  $(G/\calp, \Hdist_G)$  has only finitely distinct $G$-orbits of points, there are finitely orbits of galaxies. Therefore all galaxies of $(G/\calp,\Hdist_G)$ are bounded if and only if there is $N>0$ such that every galaxy has at most $N$ points. This proves the equivalence of (2) and (3).

Now we argue that (1) and (2) are equivalent. The coset space $(G/\calp,  \Hdist_G)$ has a natural partition into a finite number of subspaces, namely, $G/\calp = \bigcup_{P\in\calp} G/P$. The intersection of a galaxy of $G/\calp$ with a subspace $G/P$ is either a galaxy of $G/P$ or is empty. Hence $(G/\calp,\Hdist_G)$ has bounded galaxies if and only if $(G/P, \Hdist_G)$ has bounded galaxies for every $P\in\calp$.  To conclude that (1) and (2) are equivalent,  observe that
Lemma~\ref{prop:GeomCommensurator3} implies that 
$P$ has finite index in $\comm_G(P)$ if and only if $(G/P, \Hdist_G)$ has finite (and hence bounded) galaxies . 
\end{proof}

We are now ready to prove Theorem~\ref{thm:reducible2}.

\begin{proof}[Proof of Theorem~\ref{thm:reducible2}]

Since quasi-isometric group pairs have quasi-isometric coset spaces by Proposition~\ref{prop:QIofcosetspaces}, it suffices to prove that a  group pair is reducible if and only if its coset space has bounded galaxies.   This follows immediately from Lemma~\ref{prop:ReducibleCharacterization}.
\end{proof}



\subsection{Reduced pairs}
In this section, we show that being a reduced pair is a geometric property of group pairs; see Theorem~\ref{thm:reducible}.

\begin{definition}[Reduced pair] \label{def:reducedpair}
A group pair $(G,\calp)$ is \emph{reduced} if    $P=\comm_G(P)$  for each $P\in \calp$ and no pair of subgroups in $\calp$ are conjugate.  
\end{definition}

Notice that a group pair $(G,\calp)$ is reduced if for any $P,Q\in\calp$ and $g\in G$, if $P$ and $gQg^{-1}$ are commensurable subgroups then $P=Q$ and $g\in P$.

\begin{example}\label{ex:reducednotQIinvar}
Let $A$ and $B$ be infinite cyclic groups, and let $2A$ be the index two subgroup of $A$. Then the pairs $(A\ast B, A)$ and $(A\ast B, 2A)$ are quasi-isometric, $(A\ast B, A)$ is a reduced pair, and $(A\ast B, 2A)$ is reducible but is not reduced.  
\end{example}

\begin{lemma}\label{lem:HdistComm}
Let $P$ and $Q$ be subgroups of a finitely generated group $G$. If $\Hdist_G(P,Q)<\infty$ then $\comm_G(P)=\comm_G(Q)$.    
\end{lemma}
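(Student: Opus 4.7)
The plan is to invoke Lemma~\ref{lem:commensurabilityHdist} to translate the statement into a purely metric one about Hausdorff distances of left cosets, and then to conclude by the triangle inequality for $\Hdist_G$ together with the fact that left multiplication by any element of $G$ is an isometry with respect to the word metric $\dist_G$ (and hence preserves Hausdorff distance between subsets).

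More concretely, I would first recall that by Lemma~\ref{lem:commensurabilityHdist}, $\comm_G(P)=\{g\in G\mid \Hdist_G(P,gP)<\infty\}$, and analogously for $Q$. Fix $g\in \comm_G(P)$, so that $\Hdist_G(P,gP)<\infty$. Since left multiplication by $g$ is an isometry of $(G,\dist_G)$, we have
\[ \Hdist_G(gP,gQ)=\Hdist_G(P,Q)<\infty. \]
Combining this with the hypothesis $\Hdist_G(P,Q)<\infty$ and the triangle inequality for Hausdorff distance yields
\[ \Hdist_G(Q,gQ)\leq \Hdist_G(Q,P)+\Hdist_G(P,gP)+\Hdist_G(gP,gQ)<\infty, \]
so $g\in \comm_G(Q)$ by Lemma~\ref{lem:commensurabilityHdist}. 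This shows $\comm_G(P)\subseteq \comm_G(Q)$.

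The reverse inclusion follows by the symmetric argument (swap the roles of $P$ and $Q$), giving the equality $\comm_G(P)=\comm_G(Q)$. There is no real obstacle here: the only non-trivial ingredient is Lemma~\ref{lem:commensurabilityHdist}, which is already established; everything else is the triangle inequality for $\Hdist_G$ together with the left-invariance of $\dist_G$.
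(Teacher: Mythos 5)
Your proof is correct and is essentially the paper's own argument: both reduce commensurator membership to finiteness of $\Hdist_G(P,gP)$ via Lemma~\ref{lem:commensurabilityHdist} and then use left-invariance of $\Hdist_G$ together with its triangle inequality. You have merely written out the triangle-inequality chain that the paper leaves implicit.
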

\begin{proof}
Suppose $\Hdist_G(P,Q)<\infty$. Since $\Hdist_G$ is invariant under multiplication on the left, the triangle inequality in $(G/\calp, \Hdist_G)$ implies that 
$ \Hdist_G(P,gP)<\infty $ if and only if $ \Hdist_G(Q,gQ)<\infty $  for every $g\in G$.  The conclusion then follows directly from  Lemma~\ref{lem:commensurabilityHdist}.
\end{proof}

Reduced pairs can be characterized geometrically as follows.  We denote by $A^g$ the conjugate $gAg^{-1}$.

\begin{proposition}[Reduced pair  characterization]\label{prop:RefinedChar3}
A group pair is reduced if and only if the distance in the coset space only take the values zero and infinity.    
\end{proposition}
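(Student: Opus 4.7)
The plan is to pivot on Lemma~\ref{lem:commensurabilityHdist}, which identifies finiteness of $\Hdist_G(P, gQ)$ with commensurability of $P$ and $gQg^{-1}$, and to combine this with the remark following Definition~\ref{def:reducedpair} that reformulates reducedness as: for any $P,Q \in \calp$ and $g \in G$, if $P$ and $gQg^{-1}$ are commensurable, then $P = Q$ and $g \in P$. Both directions then reduce to straightforward translations between metric and algebraic data, once one uses the left-invariance of $\Hdist_G$ under multiplication by $G$ to normalise coset comparisons to the form $\Hdist_G(P_1, hP_2)$ with $h \in G$.

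For the forward direction, I would assume $(G,\calp)$ is reduced and take cosets $g_1 P_1, g_2 P_2$ with $\Hdist_G(g_1 P_1, g_2 P_2) < \infty$. Left translation by $g_1^{-1}$ is an isometry of $(G,\dist_G)$, hence of $(\calP(G), \Hdist_G)$, so this distance equals $\Hdist_G(P_1, h P_2)$ for $h = g_1^{-1} g_2$. Lemma~\ref{lem:commensurabilityHdist} then makes $P_1$ and $h P_2 h^{-1}$ commensurable, and the reformulation above yields $P_1 = P_2$ together with $h \in P_1$. Consequently $hP_2 = hP_1 = P_1$, and $\Hdist_G(g_1 P_1, g_2 P_2) = \Hdist_G(P_1, P_1) = 0$.

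Conversely, assuming every Hausdorff distance in $(G/\calp, \Hdist_G)$ is $0$ or $\infty$, I would verify the reformulation of reducedness. Given $P, Q \in \calp$ and $g \in G$ such that $P$ and $gQg^{-1}$ are commensurable, Lemma~\ref{lem:commensurabilityHdist} gives $\Hdist_G(P, gQ) < \infty$, hence $\Hdist_G(P, gQ) = 0$ by hypothesis. Since $\dist_G$ is a word metric, distinct elements of $G$ lie at distance at least $1$, so $\Hdist_G = 0$ between two subsets forces set equality; thus $P = gQ$. Evaluating at $1 \in P$ yields $1 = gq$ for some $q \in Q$, whence $g \in Q$ and $gQ = Q$, giving both $P = Q$ and $g \in P$.

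The only step that deserves careful mention is the implication $\Hdist_G = 0 \Rightarrow$ set equality, which genuinely uses that $\dist_G$ is a word metric (rather than an arbitrary extended pseudo-metric), since cosets are a priori only closed subsets up to Hausdorff vanishing. Every other step is a routine unwinding of definitions, so no substantive obstacle is expected.
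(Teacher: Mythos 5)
Your proof is correct and follows essentially the same route as the paper's: both directions pivot on Lemma~\ref{lem:commensurabilityHdist}, and your explicit observation that $\Hdist_G=0$ forces set equality in a word metric is a point the paper leaves implicit. The one place to be careful is your forward direction: the remark after Definition~\ref{def:reducedpair} is stated only as ``[condition] $\Rightarrow$ reduced,'' whereas you use the converse implication ``reduced $\Rightarrow$ [condition]''; that implication is true but needs the fact that commensurable subgroups have equal commensurators (Lemma~\ref{lem:HdistComm}), which is exactly how the paper's own forward-direction argument proceeds, so you should either cite that lemma or supply the two-line argument rather than treating the remark as a biconditional.
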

\begin{proof}
Suppose $(G,\calp)$  is reduced.  Let $g_1P_1, g_2P_2\in G/\calp$  satisfy $\Hdist_G(g_1P_1, g_2P_2)$ is finite. Lemma~\ref{lem:HdistComm} implies that $\comm_G(P_1^{g_1})= \comm_G(P_2^{g_2})$. Since   
  $(G,\calp)$ is  reduced, we have that $P_1^{g_1}=\comm_G(P_1^{g_1})$ and $ \comm_G(P_2^{g_2})=P_2^{g_2}$, and so $g_1P_1g_1^{-1}=g_2P_2g_2^{-1}$. Since 
$(G,\calp)$ is reduced, no pair of subgroups in $\calp$ are conjugate, and hence $P_1=P_2$ and $g_2^{-1}g_1\in \comm_G(P_1)=P_1$. Therefore  $g_1P_1=g_2P_2$ and $\Hdist_G(g_1P_1,g_2P_2)=0$.  

Conversely, suppose that $\Hdist_G$ only takes the values zero and infinity on $G/\calp$.  By Lemma~\ref{prop:GeomCommensurator3}, it follows that $P=\comm_G(P)$ for every $P\in \calp$. If $P,Q\in\calp$ and $P=gQg^{-1}$ then $\Hdist_G(P, gQ)<\infty$ and hence $P=gQ$.  Hence $P=Q$ and $g\in P$, and so $(G,\calp)$ is reduced.
\end{proof}

To prove that being a reduced pair is a geometric property, we need the notion of a refinement.

 \begin{definition}[Refinement  $(G,\calp^*)$]
  A \emph{refinement} $\calp^*$ of a collection of subgroups $\calp$ of $G$ is a set of representatives of conjugacy classes of the collection of subgroups $\{\comm_G(gPg^{-1}) \colon P\in\calp \text{ and } g\in G \}$. 
 \end{definition}

 It is straightforward from the definition of virtually isomorphic group pairs (see Definition~\ref{def:VirtIsomPair}) to see that if $(G,\calp)$ is reducible, then it is virtually isomorphic to its refinement $(G,\calp^*)$; see \cite[Proposition 6.3]{HMS2021}.

\begin{remark}[Refinements are reduced]\label{rem:RefinementsRedued2}
Since $\comm_G(\comm_G(P))=\comm_G(P)$ and $\comm_G(gPg^{-1})=g\comm_G(P)g^{-1}$, it follows that any refinement $(G,\calp^*)$ is reduced.    
\end{remark}


We now give an alternative proof of Theorem~\ref{thm:GeomProps}\eqref{item:reduced}, originally proven in \cite{EMP23-QI-Dehn}.


\begin{theorem}\label{thm:reducible}
Being reduced is a geometric property of group pairs.
\end{theorem}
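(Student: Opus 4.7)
The plan is to show that if $(G,\calp)$ is quasi-isometric to a reduced pair $(H,\calq)$, then the refinement $(G,\calp^*)$ is a reduced pair that is virtually isomorphic to $(G,\calp)$. By Remark~\ref{rem:RefinementsRedued2}, any refinement is automatically reduced, so the only obstacle is to produce the virtual isomorphism $(G,\calp) \sim (G,\calp^*)$, and the strategy for this passes through the intermediate property of being \emph{reducible}.

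First I would observe that every reduced pair is reducible: if $P = \comm_G(P)$ then $P$ certainly has finite index in $\comm_G(P)$. In particular $(H,\calq)$ is reducible. Applying Theorem~\ref{thm:reducible2}, which asserts that reducibility is a quasi-isometry invariant of group pairs, we conclude that $(G,\calp)$ is reducible as well.

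Next I would invoke the fact, noted in the discussion just before Remark~\ref{rem:RefinementsRedued2} and proven in \cite[Proposition~6.3]{HMS2021}, that a reducible pair $(G,\calp)$ is virtually isomorphic to its refinement $(G,\calp^*)$. The virtual isomorphism here is realized by a single application of the first operation in Definition~\ref{def:VirtIsomPair}: each $P\in\calp$ is commensurable to a conjugate of its commensurator (by reducibility), and each element of $\calp^*$ is a conjugate of some $\comm_G(P)$ for $P\in\calp$.

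Finally, by Remark~\ref{rem:RefinementsRedued2} the pair $(G,\calp^*)$ is reduced. Taking $(K,\mathcal{L}) := (G,\calp^*)$ witnesses the geometric property in Definition of geometric property, completing the proof. The main conceptual point is that reducedness itself is not a quasi-isometry invariant (as Example~\ref{ex:reducednotQIinvar} demonstrates), but the refinement procedure canonically extracts a reduced representative from any reducible pair, and reducibility itself is invariant under quasi-isometry by Theorem~\ref{thm:reducible2}. There is no substantive obstacle beyond assembling these pieces in the right order.
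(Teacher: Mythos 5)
Your proposal is correct and follows essentially the same route as the paper's proof: reduced implies reducible, reducibility is a quasi-isometry invariant by Theorem~\ref{thm:reducible2}, and a reducible pair is virtually isomorphic to its refinement, which is reduced by Remark~\ref{rem:RefinementsRedued2}. The paper's own proof is just a three-sentence condensation of exactly this argument.
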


\begin{proof} First, observe that a reduced pair is reducible. Since being reducible is geometric by Theorem~\ref{thm:reducible2},  a group pair quasi-isometric to a reduced pair is reducible.  Finally, every reducible pair is virtually isomorphic to its refinement.
\end{proof}

Note that the virtual isomorphism in the proof is necessary.  In particular, the property of being reduced is \textit{not} a quasi-isometry invariant of group pairs; see Example~\ref{ex:reducednotQIinvar}.


\subsection{Almost Malnormality}

We now show that almost malnormality is a geometric property of group pairs, proving Theorem~\ref{thm:GeomProps}\eqref{item:malnormal}.



\begin{definition}[Almost malnormal]\label{def:malnormal}
A collection of subgroups $\calq$ of a group $G$ is \emph{almost malnormal} if for any $Q,Q'\in\calp$ and $g\in G$, either $gQg^{-1} \cap Q'$ is finite or $Q=Q'$ and $g\in Q$. A group pair $(G,\calp)$ is called \emph{almost malnormal} if $\calp$ is an almost malnormal collection in $G$.
\end{definition}

\begin{theorem}\label{thm:Malnormality}
Being almost malnormal is a geometric property of group pairs.    
\end{theorem}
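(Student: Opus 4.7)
The plan is to establish almost malnormality as a geometric property in two stages: first reduce to the refinement via a virtual isomorphism, and then transport almost malnormality across the quasi-isometry using a geometric characterization in terms of neighborhoods of cosets.

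Suppose $q\colon (G,\calp)\to(H,\calq)$ is an $(L,C,M)$-quasi-isometry of pairs with $(H,\calq)$ almost malnormal. First I would note that almost malnormality implies reducedness (applying Definition~\ref{def:malnormal} with $Q=Q'=P$ and $g\in\comm_H(P)$ forces $g\in P$, and with $Q\neq Q'$ rules out conjugacy), hence reducibility. Theorem~\ref{thm:reducible2} then implies that $(G,\calp)$ is reducible, so its refinement $(G,\calp^*)$ is defined, reduced by Remark~\ref{rem:RefinementsRedued2}, and virtually isomorphic (hence quasi-isometric) to $(G,\calp)$. Composing gives a quasi-isometry of pairs $(G,\calp^*)\to(H,\calq)$, and it suffices to show $(G,\calp^*)$ is almost malnormal.

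The key geometric input is the following lemma: if $(H,\calq)$ is almost malnormal, then for every $R>0$ and every pair of distinct cosets $h_1Q_1\neq h_2Q_2$ in $H/\calq$, the set $\mathcal{N}_R(h_1Q_1)\cap\mathcal{N}_R(h_2Q_2)$ is finite. I would prove this by pigeonhole: an infinite intersection produces infinitely many pairs $(x_i,y_i)$ with $x_i\in h_1Q_1$, $y_i\in h_2Q_2$ and $\dist_H(x_i,y_i)\le 2R$; since the ball of radius $2R$ at $1$ is finite, infinitely many pairs share a common value $c=x_i^{-1}y_i$. Writing $x_i=h_1a_i$ and $y_i=h_2b_i$, the product $a_i^{-1}a_j=c(b_i^{-1}b_j)c^{-1}$ lies in $Q_1\cap cQ_2c^{-1}$, which must therefore be infinite; almost malnormality then forces $Q_1=Q_2$ with $c\in Q_1$, which in turn yields $h_1Q_1=h_2Q_2$, a contradiction.

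With the lemma in hand, suppose for contradiction that $P_1,P_2\in\calp^*$ and $g\in G$ satisfy that $K:=P_1\cap gP_2g^{-1}$ is infinite while $P_1$ and $gP_2$ are distinct cosets. For every $k\in K$, we have $k\in P_1$ and $kg\in gP_2$ with $\dist_G(k,kg)=\dist_G(1,g)$, so $P_1\cap\mathcal{N}_{\dist_G(1,g)}(gP_2)$ is infinite. Applying $q$ and absorbing the $M$-bounds on $\Hdist_H(q(P_1),\dot q(P_1))$ and $\Hdist_H(q(gP_2),\dot q(gP_2))$ from Proposition~\ref{propF:InducedDot-q-03}, the infinite set $q(K)$ sits inside $\mathcal{N}_M(\dot q(P_1))\cap\mathcal{N}_{L\dist_G(1,g)+C+M}(\dot q(gP_2))$. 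The lemma then forces $\dot q(P_1)=\dot q(gP_2)$, so Proposition~\ref{propF:InducedDot-q-03} yields $\Hdist_G(P_1,gP_2)<\infty$. Since $(G,\calp^*)$ is reduced, Proposition~\ref{prop:RefinedChar3} gives $P_1=gP_2$ as cosets, contradicting the choice of $P_1,g,P_2$.

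The main obstacle is proving the geometric lemma that in an almost malnormal pair distinct cosets have finite intersection of bounded neighborhoods. Once this is in place, the rest is a faithful translation of an infinite intersection of conjugates into an infinite intersection of coset neighborhoods, propagated across the quasi-isometry and then ruled out by the reducedness of the refinement. It is worth emphasizing that the passage to the refinement is essential: Example~\ref{ex:reducednotQIinvar}-style phenomena show that almost malnormality itself need not be a quasi-isometry invariant, only a geometric one.
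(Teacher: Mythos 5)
Your proof is correct, and it is worth comparing with the paper's, because the two differ in a substantive way. The paper's proof has the same skeleton as yours --- observe that almost malnormality implies reducibility, invoke Theorem~\ref{thm:reducible2} to get that $(G,\calp)$ is reducible and hence virtually isomorphic to its refinement $(G,\calp^*)$ --- but for the crucial step, that $(G,\calp^*)$ is itself almost malnormal, it simply cites \cite[Theorem D]{EMP23-QI-Dehn}. You instead prove that step from scratch: your pigeonhole lemma (distinct cosets of an almost malnormal collection have finite intersection of $R$-neighborhoods) is essentially the $k=2$ case of the Fundamental Lemma~\ref{lem:fundamental} and Lemma~\ref{prop:GeometryCosetIntersectionComplex2} combined with the definition of almost malnormality, and your transport argument (push the infinite set $K g \subseteq P_1\cdot g \cap gP_2$ through $q$, land in an intersection of bounded neighborhoods of $\dot q(P_1)$ and $\dot q(gP_2)$, conclude $\dot q(P_1)=\dot q(gP_2)$, pull back to $\Hdist_G(P_1,gP_2)<\infty$ via Proposition~\ref{propF:InducedDot-q-03}, and finish with the reducedness characterization of Proposition~\ref{prop:RefinedChar3}) is exactly the kind of argument the paper deploys elsewhere (e.g.\ Lemma~\ref{lem:simplices}) but does not spell out here. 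So your version is self-contained where the paper's is not; what the paper's citation buys is brevity. Two routine points you elide but which do hold: an infinite intersection of neighborhoods yields infinitely many \emph{distinct} pairs $(x_i,y_i)$ (each pair accounts for boundedly many points of the intersection by local finiteness), and after pigeonholing on $c$ the elements $a_i$ are pairwise distinct since $y_i=x_ic$; neither causes any trouble.
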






\begin{proof}
Suppose that $(G,\calp)$ and $(H,\calq)$ are quasi-isometric group pairs and $(H,\calq)$ is almost malnormal. 
Let $(G,\calp^*)$ be a refinement of $(G,\calp)$.
By \cite[Theorem D]{EMP23-QI-Dehn}, we have that $(G,\calp^*)$ is almost malnormal.  It remains to show that $(G,\calp)$ is virtually isomorphic to $(G,\calp^*)$.  For this, note that $(H,\calq)$ is reducible, as it is almost malnormal.  Thus Theorem~\ref{thm:reducible2} implies that $(G,\calp)$ is reducible, as well, and so $(G,\calp)$ is virtually isomorphic to $(G,\calp^*)$.   
\end{proof}

\section{The coset intersection complex}\label{sec:CosetIntCx}

In this section, we introduce our primary object of study, the \textit{coset intersection complex} $\mc K(G,\calp)$ associated to a group pair $(G,\calp)$.  In the first two subsections, we study geometric properties and the homotopy type of $\mc K(G,\calp)$, respectively. In the third subsection, we study the behavior of $\mc K(G,\calp)$ under quasi-isometries of pairs.

\begin{definition}[Coset intersection complex $\mathcal{K}(G,\calp)$] Let $(G,\calp)$ be a group pair. The \emph{coset intersection complex} is the  simplicial complex with vertex set $G/\calp$ and such that a set $\{g_1P_1,\ldots ,g_kP_k\}\subset G/\calp$ defines a  simplex if $\bigcap_{i=1}^k g_iP_ig_i^{-1}$ is an infinite subgroup of $G$.
\end{definition}

 Observe that the $G$-action on $G/\calp$ extends to a cellular $G$-action on $\mathcal{K}(G,\calp)$ with finitely many $G$-orbits of $0$-cells; however the action is  not necessarily cocompact.  
 
\begin{example}\label{ex:BasicExs}
\begin{enumerate}
 \item The group pair $(G, \calp)$ is   almost malnormal if and only if the coset intersection complex $\mathcal{K}(G,\calp)$ is zero dimensional.  This is Theorem~\ref{thm:alg&geom}\eqref{item:0diml}.

\item The coset intersection complex $\mathcal{K}(\ZZ^2,\ZZ\times\{0\})$ is the flag complex with $1$-skeleton the complete graph with vertex set $\ZZ$.  In particular, $\mc K(\ZZ^2,\ZZ\times\{0\})$ is an infinite-dimensional simplex, and the $\ZZ^2$-action on its $1$-skeleton is not cocompact.

\item Consider the free group $G$ with free generating set $\{x_0,x_1,x_2\}$ and the collection of subgroups $\calp $ consisting of  $P_i=\langle x_{i}, x_{(i+1) \mod 3} \rangle$ for $i=0,1,2$. In this case, $\mathcal{K}(G,\calp)$ is cocompact connected $G$-graph. The figure below illustrates a part of this graph: 
\begin{equation}\nonumber
  \tikzfig{Hyp-triangle}
\end{equation}

 \item If  $Q$ is a quasi-convex subgroup of a hyperbolic group $G$, then $\mathcal{K}(G,Q)$ is a $G$-cocompact simplicial complex.  In particular $\mathcal{K}(G,Q)$ is finite dimensional. This follows from an argument of~\cite{GMJRS98}; see Example~\ref{ex:HypQc} for details.

\item If $G=BS(1,k)=\langle a,t\mid tat^{-1}=a^k \rangle$   is a solvable Baumslag-Solitar group, then  $\mathcal{K}(G,  \langle t \rangle)$ is an infinite edgeless graph.

\item Consider the group $G=F_2\times \mathbb{Z}=\langle x,y,t\mid [x,t],\ [y,t] \rangle$ and the collection of subgroups $\mathcal{P}=\{ \langle x,t\rangle,\  \langle y,t\rangle \}$.
Since any conjugate of a subgroup in $\calp$ contains the subgroup $\langle t \rangle$, the coset intersection complex $\mathcal{K}(G,\calp)$ is the flag complex with $1$-skeleton the complete graph
with vertex set $G/\calp$. In particular, $\mathcal{K}(G,\calp)$ is an infinite-dimensional simplex. 

\item The coset intersection complex $\mathcal{K}$ of a right-angled Artin group $A(\Gamma)$ with respect to   maximal standard abelian subgroups is infinite dimensional if the defining graph $\Gamma$ contains a $2$-path as the previous example illustrates. However, in general, $\mathcal{K}$ is not an infinite dimensional simplex. Consider 
 $F_2\times F_2=\langle x,y,s,t\mid [x,s], [x,t],[y,s],[y,t] \rangle$ and the collection of subgroups $\mathcal{P}=\{ \langle x,s \rangle,\ \langle x,t \rangle,\ \langle y,s \rangle,\  \langle y,t\rangle\}$. Then $\mathcal{K}(G,\calp)$ contains the $4$-cycle in the figure
\begin{equation}\nonumber
  \tikzfig{RAAG-square}
\end{equation}
and the diagonals are not part of the complex. We expect that if $\Gamma$ is not simply-connected, then neither is $\mathcal K$.  


\end{enumerate}
\end{example}

\subsection{Simplicial maps between coset intersection complexes}
 In this section, we  illustrate  the geometric nature of the coset intersection complex  $\mathcal{K}(G,\calp)$. We continue to use the notation of Proposition~\ref{propF:InducedDot-q-03}, where it was shown that  a quasi-isometry $q$ of pairs induces a quasi-isometry $\dot q$ of coset spaces. Recall that  a \emph{simplicial map $f\colon X \to Y$} is a  function between the sets of vertices of the  simplicial complexes $X$ and $Y$ with the property that the images of the vertices of a simplex always span a simplex. 

     

\begin{proposition}\label{prop:SimplicalCosetIntersection02}
Let $q\colon (G,\calp) \to (H,\calq)$ be a quasi-isometry of pairs. Then any induced quasi-isometry $\dot q\colon (G/\calp, \Hdist_G) \to (H/\calq, \Hdist_H)$ is a simplicial map $\dot q\colon \mathcal{K}(G,\calp) \to \mathcal{K}(H,\calq)$. Moreover, $\dot q\colon \mathcal{K}(G,\calp)^{(1)} \to \mathcal{K}(H,\calq)^{(1)}$  is also a quasi-isometry. 
\end{proposition}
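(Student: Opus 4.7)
The plan is to prove two things: first, that the induced map $\dot q$ is simplicial, and second, that its restriction to $1$-skeletons is a quasi-isometry in the edge-count metric.

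The simplicial property rests on a coarse reformulation of the simplex condition. I would first establish the following key lemma: for cosets $g_1P_1,\ldots,g_kP_k \in G/\calp$, the intersection $N=\bigcap_{i=1}^k g_iP_ig_i^{-1}$ is infinite if and only if there exists $R>0$ and an infinite subset $S\subset G$ with $S\subset \bigcap_{i=1}^k \mathcal{N}_R(g_iP_i)$. For the forward direction, take $S=Ng_1$: one has $Ng_1\subset g_iP_ig_i^{-1}g_1$, and the latter set sits at Hausdorff distance at most $\dist_G(e,g_i^{-1}g_1)$ from $g_iP_i$. For the reverse direction, for each $s\in S$ write $s=g_ip_i(s)u_i(s)$ with $p_i(s)\in P_i$ and $\dist_G(e,u_i(s))\le R$; since $G$ contains only finitely many elements of bounded word length, iterated pigeonhole produces an infinite subset $S'\subseteq S$ on which each function $s\mapsto u_i(s)$ is constant. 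Then for $s_1,s_2\in S'$ the factorizations collapse to $s_1s_2^{-1}=g_ip_i(s_1)p_i(s_2)^{-1}g_i^{-1}$ simultaneously for every $i$, yielding infinitely many elements of $N$. Verifying this pigeonhole step carefully is the main technical obstacle.

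With the key lemma in hand, the simplicial property follows easily: if $\{g_1P_1,\ldots,g_kP_k\}$ spans a simplex in $\mathcal{K}(G,\calp)$, these cosets admit an infinite common $R$-neighborhood witness $S$. Since $q$ is an $(L,C,M)$-quasi-isometry of pairs and each $h_iQ_i=\dot q(g_iP_i)$ lies within Hausdorff distance $M$ of $q(g_iP_i)$, the image $q(S)$, which is infinite because quasi-isometries between finitely generated groups are uniformly finite-to-one, is contained in a common $R'$-neighborhood of the $h_iQ_i$ for a suitable $R'$ depending only on $L,C,M,R$. The lemma then shows that the $h_iQ_i$ span a simplex in $\mathcal{K}(H,\calq)$.

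For the $1$-skeleton quasi-isometry, I would use the following consequence of Lemma~\ref{lem:commensurabilityHdist}: if two distinct cosets $gP,g'P' \in G/\calp$ satisfy $\Hdist_G(gP,g'P')<\infty$, then $gPg^{-1}$ and $g'P'g'^{-1}$ are commensurable infinite subgroups, so their intersection is infinite; hence the two vertices are adjacent in $\mathcal{K}(G,\calp)^{(1)}$. Since every simplicial map is $1$-Lipschitz on $1$-skeletons in the edge-count metric, $\dot q$ is $1$-Lipschitz. Coarse surjectivity follows because the definition of a quasi-isometry of pairs guarantees that every vertex $hQ\in H/\calq$ lies at finite $\Hdist_H$-distance from some $\dot q(gP)$, hence at edge-distance at most one. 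For the coarse lower bound, choosing a quasi-inverse $\dot p$ of $\dot q$, the symmetric argument shows $\dot p$ is simplicial and hence $1$-Lipschitz, while Proposition~\ref{propF:InducedDot-q-03} ensures $\dot p\circ\dot q$ is uniformly close to the identity in $\Hdist_G$, hence uniformly close in the edge-count metric; this yields the reverse inequality $d_{\mathcal{K}^{(1)}}(u,v)\le d_{\mathcal{K}^{(1)}}(\dot q(u),\dot q(v))+C'$ and completes the proof.
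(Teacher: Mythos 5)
Your proof is correct and follows essentially the same route as the paper: a coarse characterization of simplices in terms of intersections of neighborhoods of cosets (Lemma~\ref{prop:GeometryCosetIntersectionComplex2}), transfer of that characterization through the quasi-isometry of pairs (Lemma~\ref{lem:simplices}), and the quasi-inverse argument giving the $1$-skeleton estimate (Lemma~\ref{lem:DotqIsQI}). The only real difference is that where the paper invokes the cited ``Fundamental Lemma'' (Lemma~\ref{lem:fundamental}) to pass from an unbounded intersection of neighborhoods back to an infinite intersection of conjugates, you reprove that implication from scratch with the pigeonhole argument on the bounded correction terms $u_i(s)$ --- a valid, self-contained substitute.
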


The proof of the proposition relies on some well-know facts summarized in the lemmas below. The first  lemma has appeared in a number of articles, see,  for example,~\cite[Lemma 2.2]{MSW11}, \cite[Lemma 4.7]{MP07}, ~\cite[Proposition 9.41]{HK10} and~\cite[Corollary 4.6]{HW09}. For a subset $A \subset G$, let $\mathcal{N}_r(A)$ denote the closed $r$-neighborhood of $A$ in $(G,\dist_G)$.  

\begin{lemma}[Fundamental lemma] \label{lem:fundamental}
 Let $G$ be a finitely generated group with word metric $\dist_G$, and let  $g_1P_1, \ldots, g_kP_k$ be arbitrary left cosets of subgroups of $G$.  For any $r>0$, there exists $R>0$ such that   \[  \bigcap_{i=1}^k \mathcal{N}_r(g_iP_i) \subseteq \mathcal{N}_R\left( \bigcap_{i=1}^k g_iP_ig_i^{-1} \right) .\]   
 \end{lemma}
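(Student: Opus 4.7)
The plan is a pigeonhole argument exploiting the finiteness of balls in the word metric on a finitely generated group. Set $K := \bigcap_{i=1}^k g_i P_i g_i^{-1}$ and let $B_r$ denote the closed ball of radius $r$ about the identity in $(G, \dist_G)$, which is finite since $G$ is finitely generated. Any $x \in \bigcap_{i=1}^k \mathcal{N}_r(g_i P_i)$ admits, for each $i$, a decomposition $x = g_i p_i b_i$ with $p_i \in P_i$ and $b_i \in B_r$; fixing one such decomposition defines a map $\Phi$ sending $x$ to the tuple $(b_1, \ldots, b_k) \in B_r^k$, a finite set.

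The key step is that if $\Phi(x) = \Phi(y)$ then $xy^{-1} \in K$: from $x = g_i p_i b_i$ and $y = g_i q_i b_i$ one computes $xy^{-1} = g_i p_i q_i^{-1} g_i^{-1} \in g_i P_i g_i^{-1}$ for every $i$. Hence the intersection $\bigcap_i \mathcal{N}_r(g_i P_i)$ meets at most $|B_r|^k$ right cosets of $K$; pick representatives $y_1, \ldots, y_N$ with $N \leq |B_r|^k$. Since $K$ is a subgroup and the word metric is left-invariant, every element of the right coset $K y_j$ is at distance exactly $\dist_G(y_j, K)$ from $K$, and this number is finite because $1 \in K$. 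Setting $R := \max_{j=1}^N \dist_G(y_j, K)$ yields a finite $R$ bounding the distance from any point of the intersection to $K$, which is the desired conclusion.

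The main obstacle I anticipate is purely bookkeeping: the decomposition $x = g_i p_i b_i$ naturally yields a right-coset equivalence $xy^{-1} \in K$ rather than a left-coset one, and one must invoke left-invariance of the word metric to convert coset-level information into a bound on $\dist_G(x, K)$. Beyond this, the argument is the standard finite-balls-plus-pigeonhole observation, and the same idea underlies the versions of this lemma appearing in the references cited immediately after the statement.
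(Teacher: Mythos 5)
Your proof is correct. The paper itself does not prove this lemma --- it only cites the literature (Mosher--Sageev--Whyte, Hruska--Wise, et al.) --- and your pigeonhole argument is exactly the standard one from those sources: decompose each $x$ in the intersection as $g_ip_ib_i$ with $b_i$ in the finite ball $B_r$, observe that equal tuples $(b_1,\dots,b_k)$ force $xy^{-1}\in\bigcap_i g_iP_ig_i^{-1}$, and use left-invariance to bound the distance to $K$ uniformly over the finitely many right cosets of $K$ that the intersection can meet.
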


The fundamental lemma provides the following geometric interpretation of  simplices of the coset intersection complex. 
 
\begin{lemma}[$\mathcal{K}(G,\calp)$ is geometric] \label{prop:GeometryCosetIntersectionComplex2}
 For a group pair $(G,\calp)$ and any finite subset $\{g_1P_1, g_2P_2, \dots , g_kP_k\}$ of $G/\calp$,  the following statements are equivalent.
 \begin{enumerate}
     \item The  group $\bigcap_{i=1}^kg_iP_ig_i^{-1}$ is  infinite.
     \item The set $\bigcap_{i=1}^k\mathcal{N}_r(g_iP_i)$ is   unbounded in $(G,\dist_G)$ for some $r>0$.
 \end{enumerate} 
\end{lemma}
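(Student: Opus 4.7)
The plan is to establish the two implications separately, with the easier direction handled by a direct computation using left-invariance of the word metric, and the harder direction handled as an immediate application of the Fundamental Lemma (Lemma~\ref{lem:fundamental}).

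For the direction (1) $\Rightarrow$ (2), I would set $H := \bigcap_{i=1}^k g_i P_i g_i^{-1}$ and observe that for each $h \in H$ and each index $i$, the element $h g_i$ lies in $g_i P_i$ (since $h \in g_i P_i g_i^{-1}$). By left-invariance of $\dist_G$, this gives $\dist_G(h, g_i P_i) \leq \dist_G(h, h g_i) = \dist_G(e, g_i)$. Taking $r := \max_i \dist_G(e, g_i)$, we conclude that $H \subseteq \bigcap_{i=1}^k \mathcal{N}_r(g_i P_i)$. Since $G$ is finitely generated, its word metric is proper, so the infinite subgroup $H$ is an unbounded subset of $(G,\dist_G)$, and therefore the intersection $\bigcap_{i=1}^k \mathcal{N}_r(g_i P_i)$ is unbounded as well.

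For the direction (2) $\Rightarrow$ (1), I would invoke Lemma~\ref{lem:fundamental} directly with the given $r$: there exists $R > 0$ such that
\[\bigcap_{i=1}^k \mathcal{N}_r(g_i P_i) \subseteq \mathcal{N}_R\!\left(\bigcap_{i=1}^k g_i P_i g_i^{-1}\right).\]
Since the left-hand side is unbounded by assumption, the right-hand side is unbounded as well. A neighborhood $\mathcal{N}_R(S)$ of a subset $S \subseteq G$ is unbounded if and only if $S$ itself is unbounded, so the subgroup $\bigcap_{i=1}^k g_i P_i g_i^{-1}$ is unbounded in $(G,\dist_G)$. Because the word metric on $G$ is proper, any unbounded subgroup is infinite, completing the implication.

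No step in this argument is a significant obstacle: the first direction is just a left-translation calculation and the second direction is a one-line application of the Fundamental Lemma combined with the observation that subgroups of $G$ are infinite precisely when they are unbounded in the word metric. The only subtlety worth flagging in the write-up is the use of left-invariance and properness of the word metric, both of which follow from $G$ being finitely generated.
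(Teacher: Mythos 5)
Your proof is correct and follows essentially the same route as the paper: for (1)$\Rightarrow$(2) both arguments note that $\bigcap_i g_iP_ig_i^{-1}$ sits inside $\bigcap_i\mathcal{N}_r(g_iP_i)$ for $r$ at least $\max_i\dist_G(e,g_i)$ and use local finiteness of the word metric to pass from infinite to unbounded, and for (2)$\Rightarrow$(1) both apply Lemma~\ref{lem:fundamental} and local finiteness again. Your write-up merely makes the left-invariance computation more explicit.
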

\begin{proof}
Observe that the subgroup $\bigcap_{i=1}^kg_iP_ig_i^{-1}$ is a subset of  $\bigcap_{i=1}^k\mathcal{N}_r(g_iP_i)$   for $r>\max_i\{ \dist_G(e,g_i)\}$.  Since $(G,\dist_G)$ is a locally finite metric space, if $\bigcap_{i=1}^kg_iP_ig_i^{-1}$ is infinite then is  unbounded, and hence $\bigcap_{i=1}^k\mathcal{N}_r(g_iP_i)$ is unbounded as well.

 Conversely, suppose $\bigcap_{i=1}^k\mathcal{N}_r(g_iP_i)$ is unbounded. By Lemma~\ref{lem:fundamental}, there exists $R>0$ such that  
$\mathcal{N}_R\left(\bigcap_{i=1}^kg_iP_ig_i^{-1}\right)$ is unbounded. Since $(G,\dist_G)$ is locally finite, it follows that  $\bigcap_{i=1}^kg_iP_ig_i^{-1}$ is an infinite subgroup.
\end{proof}

The next lemma relates intersections of conjugates of a subgroup and the corresponding intersections in the image under an induced quasi-isometry $\dot q$.
\begin{lemma}\label{lem:simplices}
 Let $q\colon (G,\calp) \to (H,\calq)$ be a quasi-isometry of pairs, and let $\dot q$ be an induced quasi-isometry of coset spaces. 
Let $g_iP_i \in G/\calp$  and   $h_iQ_i = \dot q(g_iP_i)$ for $i=1,\ldots ,k$. Then   $\bigcap_{i=1}^k g_iP_ig_i^{-1}$ is infinite  
if and only if 
$\bigcap_{i=1}^k h_iQ_ih_i^{-1}$ is  infinite.
\end{lemma}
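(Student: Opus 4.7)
The plan is to use the geometric characterization of simplices given in Lemma~\ref{prop:GeometryCosetIntersectionComplex2}: the intersection $\bigcap_{i=1}^k g_iP_ig_i^{-1}$ is infinite if and only if $\bigcap_{i=1}^k \mathcal{N}_r(g_iP_i)$ is unbounded in $(G,\dist_G)$ for some $r>0$. This reformulation lets us translate the algebraic condition on intersections of conjugates into a statement about unboundedness of intersections of neighborhoods of cosets, which is well-behaved under quasi-isometries.

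First I would fix the constants. Suppose $q$ is an $(L,C,M)$-quasi-isometry of pairs, so that $\Hdist_H(q(g_iP_i), h_iQ_i)\leq M$ for each $i$. Suppose $\bigcap_{i=1}^k g_iP_ig_i^{-1}$ is infinite; by Lemma~\ref{prop:GeometryCosetIntersectionComplex2}, there exists $r>0$ such that $\bigcap_{i=1}^k \mathcal{N}_r(g_iP_i)$ is unbounded. For any $x$ in this intersection, the $(L,C)$-quasi-isometry $q$ yields
\[
\dist_H(q(x), h_iQ_i) \leq \dist_H(q(x), q(g_iP_i)) + M \leq Lr + C + M
\]
for every $i$, so $q(x) \in \bigcap_{i=1}^k \mathcal{N}_{Lr+C+M}(h_iQ_i)$. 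Because $q$ is a quasi-isometry, it sends unbounded sets to unbounded sets; hence this latter intersection is unbounded in $(H,\dist_H)$, and another application of Lemma~\ref{prop:GeometryCosetIntersectionComplex2} gives that $\bigcap_{i=1}^k h_iQ_ih_i^{-1}$ is infinite.

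For the converse, I would invoke Proposition~\ref{propF:InducedDot-q-03}: a quasi-inverse $p\colon (H,\calq)\to (G,\calp)$ of the quasi-isometry of pairs $q$ induces $\dot p$, which is a quasi-inverse of $\dot q$. In particular, $\dot p(h_iQ_i)$ lies at uniformly bounded Hausdorff distance from $g_iP_i$. Applying the forward direction to $p$ and to the vertices $h_iQ_i$, we conclude that if $\bigcap h_iQ_ih_i^{-1}$ is infinite then $\bigcap \dot p(h_iQ_i)(\cdot)\dot p(h_iQ_i)^{-1}$ is infinite; the uniform Hausdorff bound $\Hdist_G(\dot p\circ \dot q(g_iP_i), g_iP_i)\leq M'$ combined with Lemma~\ref{prop:GeometryCosetIntersectionComplex2} applied once more transfers this back to $\bigcap g_iP_ig_i^{-1}$ being infinite.

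The arguments are essentially routine once the geometric reformulation is in place; the main point to be careful about is that $\dot q$ need not satisfy $\dot p \circ \dot q = \mathrm{id}$, only that the composition is at uniformly bounded Hausdorff distance from the identity, so neighborhood radii must be enlarged by a controlled additive constant at each transfer. This is precisely the content made available by Proposition~\ref{propF:InducedDot-q-03} and does not cause any real difficulty.
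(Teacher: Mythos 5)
Your proposal is correct and follows essentially the same route as the paper: both reduce to the geometric characterization of infinite intersections (Lemma~\ref{prop:GeometryCosetIntersectionComplex2}) and transfer unboundedness of $\bigcap_i \mathcal{N}_r(g_iP_i)$ through the quasi-isometry using the uniform Hausdorff bound $\Hdist_H(\dot q(g_iP_i), q(g_iP_i))\leq M$ from Proposition~\ref{propF:InducedDot-q-03}. The only cosmetic difference is that you spell out the reverse direction via a quasi-inverse, while the paper simply asserts the two-way equivalence of unboundedness under a quasi-isometry; both are fine.
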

\begin{proof}
Since $q\colon (G,\dist_G) \to (H,\dist_H)$ is a quasi-isometry, the set 
$\bigcap_{i=1}^k \mathcal{N}_r(g_iP_i)$ is unbounded for some $r$ if and only if 
$\bigcap_{i=1}^k  \mathcal{N}_{r'}(  q(g_iP_i) )$
is unbounded for some $r'$. 
Since $q\colon (G,\calp) \to (H,\calq)$ is a quasi-isometry of pairs,  the Hausdorff distance $\Hdist_H(  h_iQ_i, q(g_iP_i))$ is uniformly  bounded by  Proposition~\ref{propF:InducedDot-q-03}. Hence $\bigcap_{i=1}^k \mathcal{N}_r(g_iP_i)$ is unbounded for some $r$ if and only if 
$\bigcap_{i=1}^k  \mathcal{N}_{r'} (  h_iQ_i)  $
is unbounded for some $r'$. Therefore, it follows from  Lemma~\ref{prop:GeometryCosetIntersectionComplex2} that   
$\bigcap_{i=1}^k g_iP_ig_i^{-1}$ is infinite  
if and only if 
$\bigcap_{i=1}^k h_iQ_ih_i^{-1}$ is infinite.  
\end{proof}

The next lemma shows that when the induced map $\dot q$ is a simplicial map, it restricts to a quasi-isometry on the 1--skeleton.
\begin{lemma}\label{lem:DotqIsQI}
 Let $q\colon (G,\calp) \to (H,\calq)$ be a quasi-isometry of pairs with quasi-inverse $p\colon (H,\calq) \to (G,\calp)$.
If $\dot q\colon \mathcal{K}(G,\calp) \to \mathcal{K}(H,\calq)$ and $\dot p\colon \mathcal{K}(H,\calq) \to \mathcal{K}(G,\calp)$ are  simplicial maps, then they restrict to  quasi-isometries of the 1--skeletons of the complexes.
\end{lemma}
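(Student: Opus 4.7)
The plan is to establish that $\dot q$ is a quasi-isometric embedding and coarsely surjective with respect to the graph metrics on the $1$-skeletons, extended to take value $\infty$ between distinct connected components. The crucial observation that drives every step is that \emph{finite Hausdorff distance implies adjacency in $\mathcal{K}(G,\calp)$}: if $g_1P_1, g_2P_2 \in G/\calp$ satisfy $\Hdist_G(g_1P_1, g_2P_2) < \infty$, then by Lemma~\ref{lem:commensurabilityHdist} the subgroups $g_1P_1g_1^{-1}$ and $g_2P_2g_2^{-1}$ are commensurable, and since both are infinite so is their intersection. Thus $\{g_1P_1, g_2P_2\}$ spans an edge in $\mathcal{K}(G,\calp)$, so the graph distance between them is at most $1$; the analogous statement holds in $\mathcal{K}(H,\calq)$.

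First, since $\dot q$ is simplicial it sends each edge either to an edge or to a single vertex; consequently any edge-path of length $n$ maps to a walk of length at most $n$, so $\dot q$ (and likewise $\dot p$) is $1$-Lipschitz on each connected component of the $1$-skeleton. Next, Proposition~\ref{propF:InducedDot-q-03} provides a uniform bound on $\Hdist_G(\dot p \circ \dot q(v), v)$ for every $v \in G/\calp$, and the crucial observation then forces the graph distance between $v$ and $\dot p \circ \dot q(v)$ to be at most $1$. The triangle inequality
\[
 d(v,w) \leq d(v, \dot p \circ \dot q(v)) + d(\dot p \circ \dot q(v), \dot p \circ \dot q(w)) + d(\dot p \circ \dot q(w), w) \leq d(\dot q(v), \dot q(w)) + 2,
\]
combined with the $1$-Lipschitz property in the other direction, yields a quasi-isometric embedding with constants $(1,2)$ whenever the distances involved are finite. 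Coarse density follows analogously: every $w \in H/\calq$ satisfies $\Hdist_H(\dot q(v), w) < \infty$ for some $v$, and hence has graph distance at most $1$ from $\dot q(v)$.

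It remains to reconcile the extended metric structure. The $1$-Lipschitz property immediately shows $\dot q$ maps each connected component of $\mathcal{K}(G,\calp)^{(1)}$ into a single connected component of $\mathcal{K}(H,\calq)^{(1)}$. Conversely, if $\dot q(v)$ and $\dot q(w)$ lie in the same component, applying $\dot p$ produces a walk of finite length from $\dot p\circ\dot q(v)$ to $\dot p\circ\dot q(w)$; since each of these is within graph distance $1$ of $v$ and $w$ respectively, we conclude that $v$ and $w$ lie in the same component. Thus $\dot q$ preserves both finite and infinite values of the extended graph metric.

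The main obstacle is recognizing the bridge between the Hausdorff-distance quasi-isometry furnished by Proposition~\ref{propF:InducedDot-q-03} and the graph-metric quasi-isometry being sought, since these two metrics on $G/\calp$ are genuinely distinct. That bridge is precisely the crucial observation above; once it is in hand, all remaining steps reduce to routine triangle-inequality manipulations and the $1$-Lipschitz property enjoyed by every simplicial map between graphs.
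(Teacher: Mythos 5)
Your proof is correct and follows essentially the same route as the paper: both argue that a simplicial map is $1$-Lipschitz on $1$-skeleta and that the composite of the two induced maps moves every vertex a graph distance of at most one, via the observation that a uniformly finite Hausdorff distance between cosets forces commensurability of the conjugates (Lemma~\ref{lem:commensurabilityHdist}) and hence an edge. The only differences are cosmetic: you work directly with $\dot p\circ\dot q$ using Proposition~\ref{propF:InducedDot-q-03}, where the paper uses an induced map $\dot r$ for $r=p\circ q$, and you spell out both the key adjacency observation and the bookkeeping for disconnected $1$-skeletons, which the paper leaves implicit.
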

\begin{proof}
Since $\dot q$ and $\dot p$ are   simplicial maps, they are also   Lipschitz maps with respect to the combinatorial metric on 1-skeleta.  
Let $r=p\circ q$. We will show that the  simplicial map 
$\dot r  \colon \mathcal{K}(G,\calp) \to \mathcal{K}(G,\calp)$
is at bounded distance from the identity map  $\dot{\mathbb{I}}$  on $\mathcal{K}(G,\calp)$ and hence, by symmetry, $\dot p$ and $\dot q$ are quasi-isometries.  

Since $p$ is a quasi-inverse of $q$, there is a constant $D$ such that $r=p\circ q$ 
is at 
distance at most $D$ from the identity map on $G$, and hence  
\begin{equation}\nonumber
\begin{split}
\Hdist_G(\dot r(gP),\dot{\mathbb{I}}(gP))& = \Hdist_G(\dot r(gP), gP)\leq D<\infty .
\end{split}
\end{equation}
In particular, $\dot r \colon \mathcal{K}(G,\calp) \to \mathcal{K}(G,\calp)$ satisfies
that for every vertex $gP$ of $\mathcal{K}(G,\calp)$, either $gP$ and $\dot r(gP)$ are the same vertex, or $\{gP, \dot r(gP)\}$ is a 1-cell of $\mathcal{K}(G,\calp)$. This implies that the maps $\dot r$ and $\dot{\mathbb{I}}$ are at distance at most one.   
\end{proof}

We are now ready to prove Proposition~\ref{prop:SimplicalCosetIntersection02}.  
\begin{proof}[Proof of Proposition~\ref{prop:SimplicalCosetIntersection02}]
Let $g_iP_i \in G/\calp$  and   $h_iQ_i = \dot q(g_iP_i)$ for $i=1,\ldots ,k$. Suppose that $\{g_1P_1,\ldots, g_kP_k\}$ is a simplex of $\mathcal{K}(G,\calp)$. By definition,  the intersection $\bigcap_{i=1}^k g_iP_ig_i^{-1}$ is infinite.  Lemma~\ref{lem:simplices} implies that 
$\bigcap_{i=1}^k h_iQ_ih_i^{-1}$ is infinite, and hence $\{h_1Q_1,\ldots , h_kQ_k\}$ is a simplex of $\mathcal{K}(H,\calq)$. Therefore  $\dot q\colon \mathcal{K}(G,\calp) \to \mathcal{K}(H,\calq)$ is a simplicial map, and, by Lemma~\ref{lem:DotqIsQI}, it restricts to a a quasi-isometry  $\dot q\colon \mathcal{K}(G,\calp)^{(1)} \to \mathcal{K}(H,\calq)^{(1)}$.
\end{proof}

Considering $\mc K(G,\calp)$ as a regular piecewise Euclidean complex, that is, one in which each simplex is Euclidean and has unit edge length,  it admits a natural length metric; see \cite[Definition~I.7.4 \& 
Corollary~I.7.10]{BridsonHaefligerBook}.
\begin{corollary}\label{cor:QIofComplexes}
    Let $q\colon (G,\calp) \to (H,\calq)$ be a quasi-isometry of pairs. Any induced map $\dot q\colon \mathcal{K}(G,\calp) \to \mathcal{K}(H,\calq)$ is a quasi-isometry of regular piecewise Euclidean complexes.
\end{corollary}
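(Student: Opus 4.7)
The plan is to bootstrap from the quasi-isometry of 1-skeleta established in Proposition~\ref{prop:SimplicalCosetIntersection02} to the full piecewise Euclidean complexes by comparing the combinatorial edge-path metric and the length metric on vertex sets. The argument has three ingredients.

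First, $\dot q$ is 1-Lipschitz with respect to the length metrics: since $\dot q$ is simplicial and each simplex is a regular unit-edge Euclidean simplex, on each closed simplex $\dot q$ is an affine surjection onto a face of the image simplex (collapsing any repeated vertices), hence 1-Lipschitz; being 1-Lipschitz on every simplex suffices for being 1-Lipschitz on the length metric.

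Second, in a regular piecewise Euclidean simplicial complex $X$, the vertex set with the combinatorial edge-path metric is quasi-isometric to $X$ with the length metric. The inequality $d_X \leq d_{(1)}$ on vertex pairs is immediate since edge-paths are rectifiable paths of equal length. For the reverse, every unit-edge regular Euclidean simplex has diameter exactly $1$, so every point of $X$ lies within length-metric distance $1$ of a vertex; a length-metric geodesic of length $L$ between two vertices can therefore be discretized into $O(L)$ short pieces, each snapped to a nearby vertex, producing an edge-path of combinatorial length $O(L)$.

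With these in hand, the corollary follows by triangle inequalities. Given $x, y \in \mathcal{K}(G,\calp)$, pick vertices $v, w$ within length-metric distance $1$. By 1-Lipschitzness, $d(\dot q(v), \dot q(w))$ and $d(\dot q(x), \dot q(y))$ differ by at most $2$; by the metric comparison, this controls $d_{(1)}(\dot q(v), \dot q(w))$; by the 1-skeleton quasi-isometry (Proposition~\ref{prop:SimplicalCosetIntersection02}), this bounds $d_{(1)}(v, w)$ affinely from above; and a final application of the metric comparison relates this back to $d(v, w)$, and thence to $d(x, y)$. Quasi-surjectivity is immediate from $1$-density of vertices together with quasi-surjectivity of $\dot q$ on the 1-skeleton.

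The main obstacle is the second ingredient. Since $\mathcal{K}(G,\calp)$ may be infinite-dimensional (see Example~\ref{ex:BasicExs}), one must verify that the constants in the discretization argument are independent of the dimensions of the simplices encountered along a geodesic; this relies on the uniform diameter bound of $1$ for regular unit-edge Euclidean simplices in every dimension, together with the fact that a short length-metric path between vertices can be replaced by a controlled edge-path even when the local geometry branches through many simplices sharing a common face.
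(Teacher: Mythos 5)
Your overall route is the same as the paper's: reduce to the $1$-skeleton using the fact that a regular unit-edge Euclidean simplex has diameter $1$ in every dimension, then invoke Proposition~\ref{prop:SimplicalCosetIntersection02}. You also correctly flag the one technical point the paper leaves to the reader, namely that the comparison between the length metric and the combinatorial edge-path metric must have constants independent of the dimension of the simplices a path travels through.

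However, your first ingredient is false as stated. An affine surjection of a regular unit-edge simplex onto a lower-dimensional regular unit-edge simplex obtained by identifying vertices is in general not $1$-Lipschitz, and its Lipschitz constant is not even bounded independently of dimension. Concretely, partition the $2m$ vertices of a regular $(2m-1)$-simplex into two sets of size $m$ and collapse each set to one endpoint of a unit segment: the barycenters of the two corresponding $(m-1)$-faces are at Euclidean distance $1/\sqrt{m}$ from each other, while their images are the two endpoints of the segment, at distance $1$, so the collapse stretches by a factor of $\sqrt{m}$. (Already for the equilateral triangle collapsed onto an edge the optimal constant is $2/\sqrt{3}>1$.) The step is repairable in two ways: either observe that $\dot q$ is uniformly $K$-to-one on vertices (Proposition~\ref{propF:InducedDot-q-03} together with Lemma~\ref{lem:CosetSpaceUniLocFin}), so at most $K$ vertices of any simplex are identified, which bounds the Lipschitz constant on each simplex by $\sqrt{K}$; or, more simply, drop the Lipschitz claim entirely and note that the only place you use it is to conclude that if $x$ and $v$ lie in a common simplex then $\dot q(x)$ and $\dot q(v)$ are at distance at most $1$, which follows directly from the fact that $\dot q$ maps each simplex into a single simplex of the target and every simplex has diameter at most $1$. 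With that substitution your third step goes through unchanged and the argument coincides with the paper's.
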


\begin{proof}
    Each simplex $\Delta$ has uniformly bounded diameter, independent of the dimension of $\Delta$, and so each $\Delta$ is uniformly quasi-isometric to its 1-skeleton.  It is straightforward to verify that these quasi-isometries extend to a quasi-isometry $\mc K(G,\calp) \to \mc K(G,\calp)^{(1)}$ using the length metric above.  Combining this with Proposition~\ref{prop:SimplicalCosetIntersection02} completes the proof. 
\end{proof}

\subsection{Isomorphisms between  coset intersection complexes} Recall that a pair $(G,\calp)$ is reduced if    $P=\comm_G(P)$  for each $P\in \calp$ and no pair of subgroups in $\calp$ are conjugate; see Definition~\ref{def:reducedpair}.  In our context, the relevance of  reduced group pairs is that quasi-isometries between reduced group pairs  induce  isomorphisms of the corresponding coset intersection complexes as the following proposition shows.

 \begin{proposition}
\label{lem:QI-to-refinedPair}  \label{prop:IsomorphicCosetIntersectionCmplex2}

 Let $q\colon (G,\calp) \to (H,\calq)$ be a quasi-isometry of pairs.
 \begin{enumerate}
     \item If $(H,\calq)$ is reduced, then 
 the induced simplicial map $\dot q\colon \mathcal{K}(G,\calp) \to \mathcal{K}(H,\calq)$ is surjective, and its restriction to the zero-skeleton is uniformly finite-to-one.

 \item If $(G,\calp)$ is reduced,  then 
   $\dot q\colon \mathcal{K}(G,\calp) \to \mathcal{K}(H,\calq)$ is an embedding of simplicial complexes. 
 
 \item If $(G,\calp)$ and $(H,\calq)$ are both reduced, then 
$\dot q\colon \mathcal{K}(G,\calp) \to \mathcal{K}(H,\calq)$ is an isomorphism of simplicial complexes.

\end{enumerate}
\end{proposition}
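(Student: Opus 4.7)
The plan is to deduce each of the three claims directly from three earlier facts: the geometric characterization of reduced pairs (Proposition~\ref{prop:RefinedChar3}), which says that the Hausdorff distance on $G/\calp$ takes only the values $0$ and $\infty$ when $(G,\calp)$ is reduced; the general properties of the induced map $\dot q$ on coset spaces from Proposition~\ref{propF:InducedDot-q-03}, together with the uniform local finiteness of coset spaces from Lemma~\ref{lem:CosetSpaceUniLocFin}; and the ``iff'' detection of simplices from Lemma~\ref{lem:simplices}, which says that $\{g_iP_i\}$ spans a simplex of $\mathcal{K}(G,\calp)$ if and only if $\{\dot q(g_iP_i)\}$ spans a simplex of $\mathcal{K}(H,\calq)$.

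For part (1), assume $(H,\calq)$ is reduced. Given $hQ\in H/\calq$, the quasi-isometry property of $\dot q$ (Proposition~\ref{propF:InducedDot-q-03}) supplies some $gP\in G/\calp$ with $\Hdist_H(\dot q(gP), hQ)$ finite; since $(H,\calq)$ is reduced, Proposition~\ref{prop:RefinedChar3} forces this distance to equal $0$, so $\dot q(gP)=hQ$. This gives surjectivity of $\dot q$ on vertices; surjectivity on simplices then follows from Lemma~\ref{lem:simplices} after noting that vertices of a simplex are distinct, hence any chosen preimages are forced to be distinct and span a simplex. The uniform finite-to-one statement on the $0$-skeleton is independent of the reducedness hypothesis: it is exactly Proposition~\ref{propF:InducedDot-q-03}(2) applied in the setting of Lemma~\ref{lem:CosetSpaceUniLocFin}.

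For part (2), assume $(G,\calp)$ is reduced. If $\dot q(g_1P_1)=\dot q(g_2P_2)$, then the lower quasi-isometry bound in Proposition~\ref{propF:InducedDot-q-03} forces $\Hdist_G(g_1P_1, g_2P_2)$ to be finite; Proposition~\ref{prop:RefinedChar3} then upgrades this to $\Hdist_G(g_1P_1, g_2P_2)=0$, i.e., $g_1P_1=g_2P_2$. So $\dot q$ is injective on vertices. Combined with the ``iff'' of Lemma~\ref{lem:simplices}, this immediately yields an embedding of simplicial complexes: the map is injective on vertices, carries simplices to simplices, and fails to create new simplex-relations among vertices in the image.

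Part (3) then follows by combining (1) and (2): $\dot q$ is a bijective simplicial map on vertices, and Lemma~\ref{lem:simplices} provides the two-sided simplex detection needed to ensure that the inverse vertex map is also simplicial. The argument is essentially a correct assembly of the three ingredients listed above, and no serious obstacle arises; the only subtlety is remembering that reducedness of the \emph{target} is what promotes ``Hausdorff distance at most $M$'' to ``Hausdorff distance zero,'' which is the mechanism that converts approximate surjectivity of quasi-isometries into honest surjectivity at the level of vertices.
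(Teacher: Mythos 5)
Your proof is correct and rests on the same three ingredients the paper uses (Proposition~\ref{prop:RefinedChar3}, Proposition~\ref{propF:InducedDot-q-03} together with Lemma~\ref{lem:CosetSpaceUniLocFin}, and the two-sided simplex detection of Lemma~\ref{lem:simplices}); the only cosmetic difference is that the paper packages the surjectivity/injectivity arguments by showing the composition $\dot q\circ\dot p$ with a quasi-inverse is the identity on $\mathcal{K}(H,\calq)$, whereas you argue surjectivity and injectivity on vertices directly from the fact that reducedness forces finite Hausdorff distances to be zero. Both routes are valid and essentially equivalent.
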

 
 \begin{proof}
  Let $p\colon (H,\calq) \to (G,\calp)$ be a quasi-inverse of $q$. By Proposition~\ref{propF:InducedDot-q-03},  the induced maps $\dot q \colon (G/\calp,\Hdist_G) \to (H/\calq, \Hdist_H)$ and $\dot p \colon (H/\calq,\Hdist_H) \to (G/\calp, \Hdist_G)$  are quasi-isometries and  quasi-inverses.  Moreover,  $(G/\calp, \Hdist_G)$ and $(H/\calq, \Hdist_H)$ are uniformly locally finite by Lemma~\ref{lem:CosetSpaceUniLocFin}, and so $\dot p$ and $\dot q$ are uniformly finite-to-one functions. 

If   $(H,\calq)$ is reduced, then any two distinct points of $(H/\calq, \Hdist_H)$ are at infinite distance. Since $\dot q \circ \dot p \colon (H/\calq,\Hdist_H) \to (H/\calq,\Hdist_H)$ is a quasi-isometry, it follows that $\dot q \circ \dot p \colon (H/\calq,\Hdist_H) \to (H/\calq,\Hdist_H)$ has to be a bijection. In fact, $\dot q \circ \dot p$ must be the identity, since $ \Hdist_G (\dot q \circ \dot p (hQ) , hQ ) <\infty$ implies $\dot q \circ \dot p (hQ)=hQ$.  On the other hand, by Proposition~\ref{prop:SimplicalCosetIntersection02}, we have that $\dot q$ and $\dot p$ are simplicial maps $\dot q\colon \mathcal{K}(G,\calp) \to \mathcal{K}(H,\calq)$ and $\dot p\colon \mathcal{K}(H,\calq) \to \mathcal{K}(G,\calp)$. Hence  $\dot q \circ \dot p\colon \mathcal{K}(H,\calq) \to \mathcal{K}(H,\calq)$ is a simplicial map which is the identity at the level of the zero skeleton, and therefore $\dot q \circ \dot p$ is the identity map on $\mathcal{K}(H,\calq)$. This implies that $\dot q\colon \mathcal{K}(G,\calp) \to \mathcal{K}(H, \calq)$ is surjective and $\dot p\colon \mathcal{K}(H,\calq) \to \mathcal{K}(G,\calp)$ is an embedding. This completes the proof of the first two statements; the third statement is immediate.
\end{proof}

\subsection{Homotopy equivalences between coset intersection complexes}  In this section, we prove that a quasi-isometry of pairs induces a homotopy equivalence of coset intersection complexes.  This, along with 
Corollary~\ref{cor:QIofComplexes}, proves Theorem~\ref{thm:SimplicialMap}.

 \begin{theorem}\label{thm:HE}
Let $q\colon (G,\calp) \to (H,\calq)$ be a quasi-isometry of pairs.  An  induced simplicial map $\dot q \colon \mathcal{K}(G,\calp) \to \mathcal{K}(H,\calq)$ is a homotopy equivalence. 
Moreover, any two such induced simplicial maps are homotopic.
 \end{theorem}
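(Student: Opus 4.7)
The plan is to exploit the classical notion of \emph{contiguity} of simplicial maps: two simplicial maps $f_1,f_2\colon X\to Y$ are contiguous if, for every simplex $\sigma$ of $X$, the set $f_1(\sigma)\cup f_2(\sigma)$ spans a simplex of $Y$. Since contiguous simplicial maps are homotopic (a standard fact from simplicial topology), both conclusions of the theorem will reduce to one geometric lemma about $\mc K(G,\calp)$.

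\textbf{Key Lemma.} If $f_1,f_2\colon\mc K(G,\calp)\to\mc K(H,\calq)$ are simplicial maps satisfying $\sup\{\Hdist_H(f_1(gP),f_2(gP))\mid gP\in G/\calp\}<\infty$, then $f_1$ and $f_2$ are contiguous. To prove this lemma I would fix a simplex $\sigma=\{g_1P_1,\dots,g_kP_k\}$ of $\mc K(G,\calp)$ and invoke the geometric characterization of simplices (Lemma~\ref{prop:GeometryCosetIntersectionComplex2}): since $f_1(\sigma)$ is already a simplex, there exists $r>0$ such that $\bigcap_{i=1}^k\mathcal N_r(f_1(g_iP_i))$ is unbounded in $(H,\dist_H)$. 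With $M$ a uniform Hausdorff bound between $f_1$ and $f_2$, one has $\mathcal N_r(f_1(g_iP_i))\subseteq\mathcal N_{r+M}(f_2(g_iP_i))$ for each $i$, so the common intersection $\bigcap_{i=1}^k\bigl(\mathcal N_{r+M}(f_1(g_iP_i))\cap\mathcal N_{r+M}(f_2(g_iP_i))\bigr)$ contains the unbounded set $\bigcap_{i=1}^k\mathcal N_r(f_1(g_iP_i))$. Applying Lemma~\ref{prop:GeometryCosetIntersectionComplex2} once more shows that $f_1(\sigma)\cup f_2(\sigma)$ spans a simplex of $\mc K(H,\calq)$.

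The \emph{moreover} claim is then immediate: any two simplicial maps $\dot q_1,\dot q_2$ induced by the same quasi-isometry $q$ satisfy $\Hdist_H(\dot q_j(gP),q(gP))\leq M$ by Proposition~\ref{propF:InducedDot-q-03}, whence $\Hdist_H(\dot q_1(gP),\dot q_2(gP))\leq 2M$ by the triangle inequality for the Hausdorff distance. The Key Lemma then yields contiguity, hence homotopy.

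To establish that $\dot q$ is a homotopy equivalence I would pick a quasi-inverse $p$ of $q$. By Proposition~\ref{prop:SimplicalCosetIntersection02} any induced $\dot p$ is simplicial, and so are the compositions $\dot p\circ\dot q$ and $\dot q\circ\dot p$. As in the proof of Lemma~\ref{lem:DotqIsQI}, uniform closeness of $p\circ q$ to the identity on $G$ yields a uniform bound $\Hdist_G(\dot p\circ\dot q(gP),gP)\leq M'$; since $\mathrm{id}_{\mc K(G,\calp)}$ is trivially simplicial, the Key Lemma delivers a homotopy $\dot p\circ\dot q\simeq\mathrm{id}_{\mc K(G,\calp)}$, and symmetrically $\dot q\circ\dot p\simeq\mathrm{id}_{\mc K(H,\calq)}$, so $\dot q$ is a homotopy equivalence. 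The main obstacle is the Key Lemma itself: once unpacked through the geometric criterion of Lemma~\ref{prop:GeometryCosetIntersectionComplex2}, the rest of the argument is a uniform application of a single principle—simplicial maps between coset intersection complexes that are Hausdorff-close on vertices automatically agree up to contiguity.
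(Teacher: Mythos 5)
Your proposal is correct and follows essentially the same route as the paper: your Key Lemma is exactly the paper's Lemma~\ref{lem:HomotopyQIpairs} (proved the same way via Lemma~\ref{prop:GeometryCosetIntersectionComplex2} and fattened neighborhoods), and the ``contiguous implies homotopic'' fact is what the paper's Lemma~\ref{lem:HomotopyPair} establishes via the straight-line homotopy. The only difference is a mild streamlining in the homotopy-equivalence step, where you bound $\Hdist_G(\dot p\circ\dot q(gP),gP)$ directly rather than factoring through $\dot r$ for $r=p\circ q$ as in the paper's Proposition~\ref{prop:HomotopyComposition}; the underlying estimates are the same.
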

 
 We begin with a few general lemmas about simplicial complexes.  For a finite set $X$, let $\Delta(X)$ denote the simplex with vertex set $X$. Abusing notation, we will not distinguish the abstract simplex $\Delta(X)$ and its topological realization. In particular, every point of $\Delta(X)$ is a formal finite sum $\sum_{x\in X} \alpha_x x$ where $\alpha_x\in [0,1]$ and $\sum_x \alpha_x =1$.

\begin{lemma}\label{lem:deformationRetraction02}
Let $\mathcal{L}$ be a subcomplex of a  simplicial complex   $\mathcal{K}$. Let $r\colon \mathcal{K}^{(0)} \to \mathcal{L}^{(0)}$ be a function such that
\begin{enumerate}
    \item $r(x)=x$ for all $x\in \mathcal{L}^{(0)}$, and
    \item for every simplex $\Delta(X)$ of $\mathcal{K}$,   $\Delta(X\cup r(X))$ is a simplex of $\mathcal{K}$.  
\end{enumerate}
Then $r$ extends to a deformation retraction $r\colon \mathcal{K} \times I \to \mathcal{L}$ such that its restriction to any simplex $\Delta(X)$ of $\mathcal{K}$ is given by 
\begin{equation} \label{eq:retraction01} r\left(\sum_{x\in X}\alpha_x x,\ t\right) = \sum_{x\in X\cap \mathcal{L}} \alpha_x x + \sum_{x\in X-\mathcal{L}} \alpha_x \left(  t r(x) +  (1-t)  x  \right) \end{equation}
\end{lemma}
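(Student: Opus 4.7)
The plan is to verify that the prescribed formula is (i) well-defined on each simplex of $\mathcal{K}$, (ii) consistent on intersections of simplices so that it glues to a globally continuous map $\mathcal{K}\times I \to \mathcal{K}$, (iii) actually lands in $\mathcal{L}$ at $t=1$, and (iv) satisfies the two remaining defining conditions of a strong deformation retraction (identity at $t=0$ and fixing $\mathcal{L}$ at all times).

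For (i), given a simplex $\Delta(X)$ of $\mathcal{K}$, the hypothesis says $\Delta(X\cup r(X))$ is a simplex of $\mathcal{K}$, so every vertex that appears on the right-hand side of the formula is a vertex of a common simplex of $\mathcal{K}$. I would then check that the coefficients are nonnegative for $t\in[0,1]$ and sum to $1$, namely
\[
\sum_{x\in X\cap\mathcal{L}}\alpha_x + \sum_{x\in X-\mathcal{L}}\alpha_x(t+(1-t)) = \sum_{x\in X}\alpha_x = 1,
\]
so the image is a genuine point of $\Delta(X\cup r(X))\subseteq \mathcal{K}$. Continuity in $(\sum_x\alpha_x x, t)$ on each simplex is automatic since the formula is polynomial in the $\alpha_x$ and in $t$.

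For (ii), the main consistency check is that if $\Delta(Y)$ is a face of $\Delta(X)$ (say $Y\subseteq X$), then the formula computed in $\Delta(X)$ for a point supported on $Y$ agrees with the formula computed directly in $\Delta(Y)$. This is immediate because terms with $\alpha_x=0$ contribute $0$ on both sides. Since any two simplices of $\mathcal{K}$ meet in a common face (or not at all), this yields a globally well-defined continuous map $\mathcal{K}\times I \to \mathcal{K}$ by standard gluing for maps out of CW/simplicial complexes.

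For (iii) and (iv), I would evaluate directly: at $t=0$ the formula collapses to $\sum_{x\in X}\alpha_x x$, i.e.\ the identity; at $t=1$ it becomes
\[
\sum_{x\in X\cap\mathcal{L}}\alpha_x x + \sum_{x\in X-\mathcal{L}}\alpha_x r(x),
\]
which is a point of the subcomplex of $\mathcal{L}$ spanned by the vertices $(X\cap \mathcal{L})\cup r(X-\mathcal{L})$, and this set spans a simplex of $\mathcal{L}$ because it is contained in $X\cup r(X)$ and entirely in $\mathcal{L}^{(0)}$ (using that $\mathcal{L}$ is a full subcomplex implicitly---more carefully, $\Delta(X\cup r(X))\cap \mathcal{L}$ is a face of $\Delta(X\cup r(X))$ containing these vertices). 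Finally, if $\sum_x\alpha_x x\in \mathcal{L}$, i.e.\ $\alpha_x=0$ for $x\notin\mathcal{L}$, then the second sum vanishes and the formula equals $\sum_{x\in X\cap\mathcal{L}}\alpha_x x$ for every $t$, showing $\mathcal{L}$ is pointwise fixed throughout the homotopy.

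The only subtle step I anticipate as a potential obstacle is verifying in (iii) that at $t=1$ the image really lies in $\mathcal{L}$ as a subcomplex, not merely that its vertices lie in $\mathcal{L}^{(0)}$. I expect this to follow from applying hypothesis (2) once more: since $\Delta(X\cup r(X))$ is a simplex of $\mathcal{K}$, its vertex set intersected with $\mathcal{L}^{(0)}$ spans a simplex of $\mathcal{L}$, and the $t=1$ value is supported on precisely that vertex set.
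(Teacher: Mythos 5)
Your proof is correct and takes essentially the same route as the paper, which only sketches the argument: hypothesis (2) guarantees every simplex $\Delta(X)$ is a face of $\Delta(X\cup r(X))$, so the formula defines a point of $\mathcal{K}$, and the remaining properties (identity at $t=0$, fixing $\mathcal{L}$, compatibility on faces, continuity in the weak topology) are the routine verifications you carry out. The one subtlety you flag is genuine: for the $t=1$ image to lie in $\mathcal{L}$ rather than merely in a simplex of $\mathcal{K}$ whose vertices lie in $\mathcal{L}^{(0)}$, one needs $\mathcal{L}$ to be a \emph{full} subcomplex (e.g.\ if $\mathcal{K}$ is the $2$-simplex on $\{a,b,c\}$, $\mathcal{L}$ is the discrete subcomplex $\{a\}\sqcup\{b\}$, and $r(c)=a$, the hypotheses hold but the conclusion fails); the paper's sketch passes over this silently, and your argument is complete once that implicit fullness assumption is adopted.
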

\begin{proof}[Sketch of the proof]
 Every simplex $\Delta(X)$ of $\mathcal{K}$ is a face of the simplex $\Delta(X\cup r(X))$ of $\mathcal{K}$ by the second assumption. Hence the right-hand side of~\eqref{eq:retraction01} defines a  point of $\mathcal{K}$. Then, using the first assumption, it is an observation that~\eqref{eq:retraction01} defines a deformation retraction of $\mathcal{K}$ onto $\mathcal{L}$.  
\end{proof}

The next lemma gives a sufficient condition for two simplicial maps to be  homotopic. 
\begin{lemma}\label{lem:HomotopyPair}
Let $\mathcal{K}$ and $\mc K'$ be simplicial complexes, and let   $r\colon \mathcal{K} \to \mathcal{K}'$ and $s\colon \mathcal{K} \to \mathcal{K}'$ be simplicial maps.
Suppose that 
for any finite-dimensional simplex $\Delta(X)$ of $\mathcal{K}$, $\Delta(r(X)\cup s(X))$ is a simplex of $\mathcal{K}'$. Then  $r$ and $s$ are homotopic.  
\end{lemma}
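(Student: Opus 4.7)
The plan is to construct an explicit straight-line homotopy between $r$ and $s$ using the affine structure of the simplices in $\mc K'$. For any point $p \in \mathcal{K}$, we wish to define
\[ H(p,t) = (1-t)\,r(p) + t\,s(p), \]
where the convex combination is taken inside a simplex of $\mc K'$. The hypothesis is precisely what is needed to make sense of this: if $\Delta(X)$ is the carrier of $p$ in $\mathcal{K}$, then $\Delta(r(X)\cup s(X))$ is a simplex of $\mc K'$, and since $r$ and $s$ are simplicial, both $r(p)$ and $s(p)$ lie in this simplex. The straight-line segment from $r(p)$ to $s(p)$ therefore lives in this simplex, using its affine structure.

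First I would fix barycentric coordinates: if $p = \sum_{x\in X}\alpha_x x \in \Delta(X)$, then $r(p) = \sum_{x\in X}\alpha_x r(x)$ and $s(p) = \sum_{x\in X}\alpha_x s(x)$ are both convex combinations of vertices of $\Delta(r(X)\cup s(X))$, and I would explicitly set
\[ H(p,t) \;=\; \sum_{x\in X} \alpha_x\bigl((1-t)\,r(x) + t\,s(x)\bigr), \]
interpreted inside $\Delta(r(X)\cup s(X))$. Evaluating at $t=0$ and $t=1$ gives $r(p)$ and $s(p)$ respectively, so it only remains to verify that $H$ is well-defined and continuous.

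The main technical check is well-definedness across faces. If $p$ lies simultaneously in two simplices $\Delta(X_1)$ and $\Delta(X_2)$ of $\mathcal{K}$, then $p$ lies in their common face $\Delta(X_0)$ with $X_0 = X_1\cap X_2$. The hypothesis, applied to $\Delta(X_1)$, $\Delta(X_2)$, and $\Delta(X_0)$, guarantees that $\Delta(r(X_0)\cup s(X_0))$ is a simplex of $\mc K'$ contained as a face in both $\Delta(r(X_i)\cup s(X_i))$, and the barycentric-coordinate formula above defines the same point of this common face regardless of which carrier we use. Continuity of $H$ on each closed simplex of $\mathcal{K}\times I$ is immediate from the explicit formula, and agreement on faces gives continuity on all of $\mathcal{K}\times I$ by the standard property of CW-complexes (the weak topology).

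I do not expect any real obstacle: the hypothesis is designed exactly so that the linear homotopy makes sense in $\mc K'$, and every step is a routine verification. The only thing to be mildly careful about is that $r(X)$ and $s(X)$ may overlap or duplicate vertices, but this causes no problem since $\Delta(r(X)\cup s(X))$ is defined as the simplex spanned by the (possibly smaller) underlying vertex set, and the coefficients add coherently inside that simplex.
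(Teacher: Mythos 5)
Your proposal is correct and follows essentially the same route as the paper: the same straight-line homotopy $\sum_{x\in X}\alpha_x\bigl(t\,s(x)+(1-t)\,r(x)\bigr)$ defined inside the simplex $\Delta(r(X)\cup s(X))$, with continuity obtained from the weak topology. Your additional check of well-definedness across faces is a reasonable elaboration of what the paper leaves implicit.
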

\begin{proof}
    Define a homotopy $F\colon \mathcal{K}\times I \to \mathcal{K}'$  such that its restriction to any finite-dimensional simplex $\Delta(X)$ of $\mathcal{K}$ is given by
\[ F\left(\sum_{x\in X}\alpha_x x,\ t\right) =   \sum_{x\in X} \alpha_x \left(  t   s(x) +  (1-t) r(x)  \right). \]
The right-hand side is well-defined by the assumption that $\Delta(r(X)\cup s(X))$ is a simplex of $\mc K'$.  It is clearly continuous on every finite subcomplex, and so by the weak topology it is continuous.  Hence $F$ is  a homotopy from $r$ to $s$.
\end{proof}

 We now apply Lemma~\ref{lem:HomotopyPair} to two induced maps of coset intersection complexes.
\begin{lemma}\label{lem:HomotopyQIpairs}
 Let $p$ and $q$ be quasi-isometries of pairs $(G,\calp) \to (H,\calq)$, and let $\dot p \colon \mathcal{K}(G,\calp) \to \mathcal{K}(H,\calq)$ and  $\dot q \colon \mathcal{K}(G,\calp) \to \mathcal{K}(H,\calq)$ be the corresponding induced simplicial maps. If   $\Hdist_H(\dot p (gP), \dot q(gP) )<\infty$ for every $gP\in G/\calp$, then $\dot p$ and $\dot q$ are homotopic.    
\end{lemma}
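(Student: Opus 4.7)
The plan is to invoke Lemma~\ref{lem:HomotopyPair}. So the task reduces to showing that for every simplex $\Delta(X)$ of $\mathcal{K}(G,\calp)$, the set $\dot p(X)\cup \dot q(X)$ spans a simplex of $\mathcal{K}(H,\calq)$.

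Fix a simplex $X=\{g_1P_1,\ldots,g_kP_k\}$ of $\mathcal{K}(G,\calp)$, and write $h_iQ_i=\dot p(g_iP_i)$ and $h_i'Q_i'=\dot q(g_iP_i)$. Since $X$ is finite and $\Hdist_H(h_iQ_i,h_i'Q_i')<\infty$ by hypothesis, there exists a constant $D=D(X)$ with $\Hdist_H(h_iQ_i,h_i'Q_i')\leq D$ for all $i=1,\ldots,k$. In particular, for any $r>0$ one has
\[\mathcal{N}_r(h_iQ_i)\subseteq \mathcal{N}_{r+D}(h_i'Q_i')\qquad\text{for each } i.\]

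Next I would use the geometric characterization of simplices in Lemma~\ref{prop:GeometryCosetIntersectionComplex2}. Because $\dot p$ is simplicial (Proposition~\ref{prop:SimplicalCosetIntersection02}), the set $\{h_1Q_1,\ldots,h_kQ_k\}$ is a simplex, hence $\bigcap_{i=1}^k \mathcal{N}_r(h_iQ_i)$ is unbounded for some $r>0$. By the displayed inclusion above,
\[\bigcap_{i=1}^k \mathcal{N}_r(h_iQ_i)\;\subseteq\;\bigcap_{i=1}^k \mathcal{N}_{r+D}(h_iQ_i)\;\cap\;\bigcap_{i=1}^k \mathcal{N}_{r+D}(h_i'Q_i'),\]
so the right-hand intersection is unbounded. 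Applying Lemma~\ref{prop:GeometryCosetIntersectionComplex2} in the converse direction, the subgroup
\[\Bigl(\bigcap_{i=1}^k h_iQ_ih_i^{-1}\Bigr)\cap\Bigl(\bigcap_{i=1}^k h_i'Q_i'(h_i')^{-1}\Bigr)\]
is infinite, i.e., $\dot p(X)\cup \dot q(X)$ spans a simplex of $\mathcal{K}(H,\calq)$.

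Having verified the simplex-union hypothesis, Lemma~\ref{lem:HomotopyPair} immediately delivers the desired homotopy between $\dot p$ and $\dot q$. The only real content is the middle step: transporting the unbounded intersection of neighborhoods for $\dot p(X)$ to an unbounded intersection of neighborhoods that simultaneously controls $\dot p(X)$ and $\dot q(X)$. This is where the finiteness of $X$ is essential (it converts the pointwise finite Hausdorff distances into a uniform bound $D$ within the simplex), so no uniform global bound on the $\Hdist_H(\dot p(gP),\dot q(gP))$ is needed.
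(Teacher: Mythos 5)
Your proposal is correct and follows essentially the same route as the paper: reduce to Lemma~\ref{lem:HomotopyPair}, use finiteness of the simplex to get a uniform Hausdorff bound, transport the unbounded intersection of neighborhoods from $\dot p(X)$ to the combined family via Lemma~\ref{prop:GeometryCosetIntersectionComplex2}, and conclude. The only cosmetic difference is your choice of radius $r+D$ where the paper uses $2r$ (after enlarging $r$).
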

\begin{proof}
It is enough to   verify the hypothesis of Lemma~\ref{lem:HomotopyPair} for $\dot p $ and $\dot q$. Consider a finite-dimensional simplex $\{g_1P_1,\ldots ,g_kP_k\}$   of $\mathcal{K}(G,\calp)$. We will show that $\{\dot p(g_iP_i)\mid 1\leq i\leq k\}\cup \{\dot q(g_iP_i)\mid 1\leq i\leq k\} $ is a simplex of $\mathcal{K}(H,\calq)$.
By assumption $ \Hdist_H(\dot p(g_iP_i),  \dot q(g_iP_i))$ is finite   
and so there exists $r>0$ such that $\dot p(g_iP_i) \subset \mathcal{N}_r(  \dot q(g_iP_i)) $   
for $1\leq i\leq k$.  
It follows that 
\begin{equation*}
 \bigcap_{i=1}^k \mathcal{N}_r(\dot p(g_iP_i)) \subset \bigcap_{i=1}^k \mathcal{N}_{2r}(\dot q(g_iP_i)),    
\end{equation*}
and hence
\begin{equation*}
    \bigcap_{i=1}^k \mathcal{N}_r(\dot p(g_iP_i)) \subset \bigcap_{i=1}^k \mathcal{N}_{2r}(  \dot q(g_iP_i))\cap \mathcal{N}_{2r}(\dot p(g_iP_i)).
\end{equation*}
Since $\{g_1P_1,\ldots ,g_kP_k\}$ is a simplex of $\mathcal{K}(G,\calp)$ and $\dot p$ is simplicial, we have that $\{\dot p(g_1P_1),\ldots ,\dot p(g_kP_k)\}$ is a simplex of $\mathcal{K}(H,\calq)$. By increasing $r$ if necessary,   Lemma~\ref{prop:GeometryCosetIntersectionComplex2} implies that $\bigcap_{i=1}^k \mathcal{N}_r(\dot p(g_iP_i))$ is unbounded in $(H,\dist_H)$, and therefore  $\bigcap_{i=1}^k \mathcal{N}_{2r}( \dot q(g_iP_i))\cap \mathcal{N}_{2r}(\dot p(g_iP_i))$ is unbounded in $(G,\dist)$. By Lemma~\ref{prop:GeometryCosetIntersectionComplex2},
$\{\dot p(g_iP_i)\mid 1\leq i\leq k\}\cup \{\dot q(g_iP_i)\mid 1\leq i\leq k\} $ is therefore a simplex of $\mathcal{K}(H,\calq)$, which completes the proof.
\end{proof}

\begin{example}
 Let $q\colon (G,\calp) \to (G,\calp)$ be a self-quasi-isometry of pairs. Clearly, the identity map $(G,\calp) \to (G,\calp)$ is also a self-quasi-isometry of pairs.  However, the induced simplicial map $\dot q \colon \mathcal{K}(G,\calp) \to \mathcal{K}(G,\calp)$  is not necessarily homotopic to the identity map on $\mathcal{K}(G,\calp)$. Consider the case of a  free group of rank two $G=\langle x,y \rangle$, where $\mathcal{P}=\{ \langle x\rangle\}$ and $q\colon G \to G$ is given by $q(g)=yg$. In this case $q$ is a self-quasi-isometry of pairs $q\colon (G,\calp) \to (G,\calp)$; in fact, it is an isometry of $G$. 
 The complex $\mathcal{K}(G,\calp)$ is an infinite discrete set, and so a map homotopic to the identity must be the identity. 
 The induced $\dot q\colon \mathcal{K}(G,\calp) \to \mathcal{K}(G,\calp)$, however, is given by $\dot q (g\langle x\rangle)=yg\langle x\rangle$,  which is not the identity. 
\end{example}



\begin{proposition}\label{prop:HomotopyComposition}
Let $q\colon (G,\calp) \to (H,\calq)$ and $p\colon (H,\calq) \to (K,\calr)$  be  quasi-isometries of pairs and let $r=p\circ q$. 
Let $\dot q \colon \mathcal{K}(G,\calp) \to \mathcal{K}(H,\calq)$,  $\dot p \colon \mathcal{K}(H,\calq) \to \mathcal{K}(K,\calr)$ and $\dot r\colon \mathcal{K}(G,\calp) \to \mathcal{K}(K,\calr)$ be corresponding induced simplicial maps. Then $\dot r$ and $\dot p \circ \dot q$ are homotopic.  
\end{proposition}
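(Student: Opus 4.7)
The plan is to apply Lemma~\ref{lem:HomotopyQIpairs} to the two simplicial maps $\dot r$ and $\dot p\circ\dot q$ going from $\mathcal{K}(G,\calp)$ to $\mathcal{K}(K,\calr)$. Both are simplicial: $\dot r$ by hypothesis, and $\dot p\circ\dot q$ because the composition of simplicial maps is simplicial, where simpliciality of the factors is guaranteed by Proposition~\ref{prop:SimplicalCosetIntersection02}. Observe also that $r=p\circ q$ is itself a quasi-isometry of pairs $(G,\calp)\to(K,\calr)$, since composition preserves the two conditions in Definition~\ref{def:qipairs2}. To invoke Lemma~\ref{lem:HomotopyQIpairs} we need to exhibit $\dot p\circ\dot q$ as a valid induced simplicial map of the quasi-isometry of pairs $r$; given simpliciality, this reduces to showing the bounded Hausdorff distance property.

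The heart of the argument is the bound
\[
\Hdist_K\bigl(\dot r(gP),\,\dot p(\dot q(gP))\bigr) < \infty
\]
for every $gP\in G/\calp$. Fix constants so that $p$ is an $(L,C)$-quasi-isometry $(H,\dist_H)\to(K,\dist_K)$, and choose $M>0$ large enough that $\Hdist_H(\dot q(gP),q(gP))\le M$, $\Hdist_K(\dot p(hQ),p(hQ))\le M$, and $\Hdist_K(\dot r(gP),r(gP))\le M$ for all relevant cosets. By the triangle inequality for Hausdorff distance, and using that the underlying map $p\colon H\to K$ distorts Hausdorff distance between subsets by at most the linear factor $L(\,\cdot\,)+C$, we obtain
\[
\Hdist_K\bigl(\dot r(gP),\,\dot p(\dot q(gP))\bigr) \le \Hdist_K(\dot r(gP),r(gP)) + \Hdist_K\bigl(p(q(gP)),p(\dot q(gP))\bigr) + \Hdist_K\bigl(p(\dot q(gP)),\dot p(\dot q(gP))\bigr) \le M + (LM+C) + M,
\]
which is finite and, crucially, uniform in $gP$.

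With the bound in hand, $\dot p\circ\dot q$ is a simplicial map at uniformly bounded Hausdorff distance from $r$, so it qualifies as an induced simplicial map for the quasi-isometry of pairs $r$. Lemma~\ref{lem:HomotopyQIpairs}, applied to the two induced simplicial maps $\dot r$ and $\dot p\circ\dot q$ of the single quasi-isometry of pairs $r$, then yields the desired homotopy $\dot r\simeq\dot p\circ\dot q$. There is no substantive obstacle; the only subtlety is recognizing that the composition of induced maps serves as an induced map of the composition, which is exactly what the Hausdorff distance computation establishes.
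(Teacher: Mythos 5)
Your proposal is correct and follows essentially the same route as the paper: both verify the hypothesis of Lemma~\ref{lem:HomotopyQIpairs} by a triangle-inequality estimate showing $\Hdist_K(\dot r(gP),\dot p\circ\dot q(gP))<\infty$, using that $p$ distorts Hausdorff distances linearly and that each dotted map is at bounded Hausdorff distance from its underlying map. Your added remark that this bound exhibits $\dot p\circ\dot q$ as a legitimate induced map of $r$ is a point the paper leaves implicit, but the substance of the argument is the same.
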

\begin{proof}
 It is enough to   verify the hypothesis of Lemma~\ref{lem:HomotopyQIpairs} for $\dot p\circ \dot q$ and $\dot r$.  Let $A\in G/\calp$. We show below that $\Hdist_K(\dot r(A), \dot p\circ \dot q(A))<\infty$. 

Suppose that $p$ is an $(L,C,M)$-quasi-isometry of pairs. The definition of $\dot p$ and $\dot q$ implies
\begin{equation}\label{eq:dotcomposition02}
\begin{split}
  \Hdist_K(\dot p \circ   \dot q(A),& p\circ q(A)) \leq  \\
  & \leq \Hdist_K(\dot p\circ   \dot q(A)  , p ( \dot q(A))  ) + \Hdist_K( p ( \dot q(A)) , p\circ   q(A))   \\
  & \leq 
  \Hdist_K(\dot p \circ   \dot q(A)  , p ( \dot q(A))  ) +  L \Hdist_H(   \dot q(A)  ,     q(A) ) +C \\
  & <\infty
\end{split}
\end{equation}
 Analogously,
 \begin{equation}\label{eq:dotcomposition01}
  \Hdist_K(\dot r(A), r(A))<\infty.   
 \end{equation}

 Combining \eqref{eq:dotcomposition02}, \eqref{eq:dotcomposition01}, and the fact that $r=p\circ q$ yields 
\begin{equation*}\label{eq:dotcomposition03}
 \Hdist_K(\dot r(A), \dot p\circ \dot q(A))
 \leq  \Hdist_K(\dot r(A), r(A)) +   
   \Hdist_K(p\circ q (A), \dot p\circ \dot q(A))  < \infty,  
\end{equation*}
which concludes the proof.
\end{proof}

We now turn to proof of the main result of this section, that the induced simplicial map is a homotopy equivalence.
 
\begin{proof}[Proof of Theorem~\ref{thm:HE}]
Let $q\colon (G,\calp)\to (H,\calq)$ be a quasi-isometry of pairs. A direct consequence of Lemma~\ref{lem:HomotopyQIpairs} is that all choices for the induced simplicial map $\dot q\colon \mathcal{K}(G,\calp) \to \mathcal{K}(H,\calq)$ are homotopic. 

Let $p\colon (H,\calq) \to (G,\calp)$ be a quasi-inverse of $q$. We need to prove that $\dot p\circ \dot q$ and $\dot q\circ \dot p$ are homotopic to the identity maps on $\mathcal{K}(G,\calp)$ and $\mathcal{K}(H,\calq)$ respectively. The arguments are analogous, so we only prove the statement for $\dot p\circ \dot q$.

Let $\mathbb{I}$ denote the identity map on $G$. Note that we can assume that $\dot{\mathbb{I}}$ is the identity map on $\mathcal{K}(G,\calp)$. 
Let $r=p\circ q$, and suppose that it is an $(L,C,M)$-quasi-isometry of pairs. By Proposition~\ref{prop:HomotopyComposition}, the induced maps $\dot r$ and $\dot p\circ \dot q$ are homotopic. Hence it is enough to show that $\dot r$ is homotopic to the identity map $\dot{\mathbb{I}}$ on $\mathcal{K}(G,\calp)$. 

Since $p$ is a quasi-inverse of $q$, there is a constant $D$ such that $r=p\circ q$ is at distance at most $D$ from the identity map $\mathbb{I}$ in the $L^\infty$--metric; that is,
\begin{equation}  \dist_G(r(x),\mathbb{I}_G(x))= \dist_G(r(x),x)\leq D
\end{equation}
for all $x\in G$.  Then we have that 
\begin{equation}
\begin{split}
\Hdist_G(\dot r(gP),\dot{\mathbb{I}}(gP))& = \Hdist_G(\dot r(gP), gP)\\ 
&   \leq 
 \Hdist_G ( \dot r(gP), r(gP))+ \Hdist_G(r(gP), gP)\\&\leq M+D <\infty .
\end{split}
\end{equation}
This verifies the hypothesis of Lemma~\ref{lem:HomotopyQIpairs} for $r$ and $\mathbb{I}$, and hence $\dot r$ is homotopic to the identity map on $\mathcal{K}(G,\calp)$. 
\end{proof}

\subsection{Coned-off Cayley graphs}\label{sec:coned-off}
 
Let $(G,\calp)$ be a group pair and    $S\subset G$ a finite relative generating set of $G$ with respect to $\calp$, that is,  $S\cup \bigcup \calp$ is a generating set of $G$. 

The \emph{ coned-off Cayley graph  of $G$ with respect to $\calp$} is the  $G$-graph $\widehat \Gamma(G,\calp,S)$ with vertex set $G\cup G/\calp$ and edge set consisting of
\begin{itemize}
    \item $\{g,gs\}$ for $g\in G$ and  $s\in S$,
    \item $\{g, gP\}$ for $g\in G$, $P\in \calp$.
\end{itemize}

Observe that the vertex set of the coset intersection complex $\mathcal{K}(G,\calp,S)$ is a subset of the vertex set of $\widehat \Gamma(G,\calp,S)$.  
The \emph{extended coned-off Cayley graph  of $G$ with respect to $\calp$} is the  $G$-graph $\widetriangle \Gamma(G,\calp,S)$ obtained by adding all edges of $\mathcal{K}(G,\calp,S)$ to $\widehat \Gamma(G,\calp,S)$, that is, 
\[  \widetriangle \Gamma(G,\calp,S) = \widehat \Gamma(G,\calp,S) \cup \mathcal{K}(G,\calp)^{(1)}. \]

The assumption that $S$ is a relative generating set implies that  $\widehat \Gamma(G,\calp,S)$ is   connected and that its  quasi-isometry type is independent of the finite set $S$. Moreover, since $S$ and $\calp$ are assumed to be finite sets, the $G$-action on $\widehat \Gamma(G,\calp,S)$ is cocompact.

In the case that the empty set is a relative generating set of $(G,\calp)$, we denote by $\widehat \Gamma(G,\calp)$ the coned-off Cayley graph for $S=\emptyset$;  analogously, $\widetriangle{\Gamma}(G,\calp)$ denotes the extended coned-off Cayley graph.

\begin{example}[Relatively hyperbolic groups]
In the case of a finitely generated group $G$ hyperbolic relative to a finite collection $\calp$, the extended coned-off Cayley graph of $(G,\calp)$ coincides with the coned-off Cayley graph since $\calp$ is an almost malnormal collection.   
\end{example} 

\begin{example}\label{ex:NonQI-extended}
The extended coned-off Cayley graph of $(\ZZ^2,\ZZ)$ with respect to any finite generating set is quasi-isometric to a point, but the action by $\ZZ^2$ on it is not cocompact since there are infinitely many distinct $G$-orbits of edges.      
\end{example} 

The quasi-isometry type  of the extended coned-off Cayley graph and the coset intersection complex  coincide in the following situation.

\begin{proposition}\label{prop:QI-coned-off}
    If $\mathcal{K}(G,\calp)$ is connected and $S$ is finite, then $ \mathcal{K}(G,\calp)$ and $\widetriangle\Gamma(G,\calp, S)$ are quasi-isometric.
\end{proposition}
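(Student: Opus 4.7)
The plan is to exhibit an explicit coarse inverse to the inclusion of $1$-skeleta and then invoke Corollary~\ref{cor:QIofComplexes}. Observe first that $\mc K(G,\calp)^{(1)}$ is literally a subgraph of $\widetriangle\Gamma(G,\calp,S)$ by the definition of the extended coned-off Cayley graph, so the inclusion $\iota\colon \mc K(G,\calp)^{(1)} \hookrightarrow \widetriangle\Gamma(G,\calp,S)$ is $1$-Lipschitz. To build a coarse inverse, I would fix a base subgroup $P_0\in\calp$ and define a vertex map $\pi\colon \widetriangle\Gamma(G,\calp,S) \to \mc K(G,\calp)^{(1)}$ by $\pi(g) = gP_0$ for $g\in G$ and $\pi(gP) = gP$ for $gP\in G/\calp$. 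Every $g\in G$ is joined to $gP_0$ by a cone edge, so $\dist_{\widetriangle\Gamma}(v, \iota(\pi(v))) \leq 1$ for all vertices $v$, giving coarse surjectivity of $\iota$; moreover $\pi\circ\iota$ is the identity on $G/\calp$.

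The heart of the argument is showing that $\pi$ is Lipschitz, for which I would check the three types of edges in $\widetriangle\Gamma(G,\calp,S)$ separately. On a generating edge $\{g,gs\}$ with $s\in S$, the $G$-equivariance of the situation gives
\[ \dist_{\mc K^{(1)}}(\pi(g),\pi(gs)) = \dist_{\mc K^{(1)}}(P_0, sP_0), \]
which is finite because $\mc K(G,\calp)$ is connected, and uniformly bounded over $s\in S$ because $S$ is finite. On a cone edge $\{g,gP\}$ with $P\in\calp$, one similarly gets $\dist_{\mc K^{(1)}}(\pi(g),\pi(gP)) = \dist_{\mc K^{(1)}}(P_0, P)$, uniformly bounded because $\calp$ is finite. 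On an edge of $\mc K(G,\calp)^{(1)}$ the map $\pi$ acts as the identity, so the distance is $1$. Taking $D$ to be the maximum over these three cases, $\pi$ is $D$-Lipschitz. Combining with $\pi\circ\iota=\mathrm{id}$ and $\iota$ being $1$-Lipschitz yields, for any $v_1,v_2\in G/\calp$,
\[ \tfrac{1}{D}\dist_{\mc K^{(1)}}(v_1,v_2) \leq \dist_{\widetriangle\Gamma}(\iota(v_1),\iota(v_2)) \leq \dist_{\mc K^{(1)}}(v_1,v_2), \]
so $\iota$ is a quasi-isometric embedding, and the earlier coarse surjectivity shows it is a quasi-isometry.

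The main obstacle is really just the Lipschitz verification above; it is routine but uses both hypotheses crucially. If $\mc K(G,\calp)$ were disconnected, the distance $\dist_{\mc K^{(1)}}(P_0, sP_0)$ could be infinite for some $s\in S$; if $S$ were infinite, no uniform bound would exist. The last step is simply to recall, as in the proof of Corollary~\ref{cor:QIofComplexes}, that the inclusion $\mc K(G,\calp)^{(1)} \hookrightarrow \mc K(G,\calp)$ into the regular piecewise Euclidean realization is a quasi-isometry, which combined with the quasi-isometry $\iota$ above concludes the proof.
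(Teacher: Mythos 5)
Your proof is correct. The paper's own argument is a one-line appeal to Lemma~\ref{lem:boring} (deleting the $G$-orbits of the finitely many generating edges $\{g,gs\}$ does not change the quasi-isometry type of a connected $G$-graph), which is the same underlying mechanism as your Lipschitz verification: each deleted edge is replaced by a path of uniformly bounded length, with the bound coming from connectedness of $\mathcal{K}(G,\calp)$ and finiteness of $S$. What you do differently is package this as an explicit coarse retraction $\pi$ with $\pi(g)=gP_0$, and in doing so you handle a point the paper's proof leaves implicit: $\widetriangle\Gamma(G,\calp,S)$ also contains the vertices $g\in G$ and the cone edges $\{g,gP\}$, which are not part of $\mc K(G,\calp)^{(1)}$, so one must also check that these vertices are uniformly close to coset vertices and that the cone edges only distort distances by a uniform amount (your bound $\dist_{\mc K^{(1)}}(P_0,P)$ over the finite set $\calp$). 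Your version is therefore slightly longer but more self-contained and complete; the paper's is shorter but relies on the reader supplying the same bookkeeping.
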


The proposition is a consequence of  the following  well-known fact.

\begin{lemma}\label{lem:boring}
    Let $\Delta$ be a $G$-graph and $e$ one of its edges. If the $G$-graph $\Gamma$ obtained from $\Delta$ by removing all edges in the $G$-orbit of $e$ is connected, then the inclusion $\Gamma \hookrightarrow \Delta$ is a quasi-isometry of graphs.  
\end{lemma}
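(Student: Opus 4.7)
My plan is to show that the inclusion $\iota\colon \Gamma \hookrightarrow \Delta$ is an $(N,0)$-quasi-isometry, where $N$ is the length of any chosen path in $\Gamma$ between the endpoints of $e$. Since $\Gamma$ is obtained from $\Delta$ only by deleting edges, the two graphs share the same vertex set, so $\iota$ is surjective on vertices and is automatically $1$-Lipschitz (paths in $\Gamma$ are paths in $\Delta$). Thus the only nontrivial content is the reverse inequality: controlling by how much the distances can grow when we pass from $\Delta$ to $\Gamma$.

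First I would fix the edge $e=\{x,y\}$ and, using the hypothesis that $\Gamma$ is connected, choose a path $\gamma$ in $\Gamma$ from $x$ to $y$; let $N$ be its length. The key geometric observation is that, because we have removed the entire $G$-orbit of $e$, the graph $\Gamma$ is a $G$-subgraph of $\Delta$, and so for every $g\in G$ the translate $g\cdot \gamma$ is a path in $\Gamma$ from $gx$ to $gy$ of the same length $N$. In other words, $G$-equivariance upgrades ``one edge can be replaced by a short path'' into a \emph{uniform} statement that holds simultaneously for all edges in the removed $G$-orbit.

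Given this, the reverse quasi-isometry inequality is essentially a path-replacement argument. For any two vertices $u,v$ of $\Delta$, take a $\dist_\Delta$-geodesic from $u$ to $v$. Each of its edges lies either in $\Gamma$ or in the $G$-orbit of $e$; for every edge of the latter type, say $\{gx,gy\}$, substitute the path $g\cdot \gamma$ of length $N$ in $\Gamma$. Concatenating the resulting segments yields a walk in $\Gamma$ from $u$ to $v$ of total length at most $N\cdot \dist_\Delta(u,v)$, so
\[ \dist_\Delta(u,v) \;\leq\; \dist_\Gamma(u,v) \;\leq\; N\cdot \dist_\Delta(u,v), \]
which together with the vertex-set equality proves that $\iota$ is an $(N,0)$-quasi-isometry.

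I do not anticipate any genuine obstacle: the entire argument hinges on identifying the correct quasi-isometry constant $N$, and recognizing that the $G$-action is precisely what makes a \emph{local} replacement lemma (one edge replaced by a length-$N$ path) into a \emph{global} bound valid on every removed edge simultaneously. The only mild point to verify is that $\Gamma$ is indeed a $G$-subgraph of $\Delta$, which is immediate because deleting an entire $G$-orbit of edges preserves the $G$-action.
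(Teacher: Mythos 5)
Your proof is correct and takes essentially the same approach as the paper: both arguments replace each edge of a $\Delta$-geodesic lying in the orbit of $e$ by the $G$-translate of a fixed path in $\Gamma$ joining the endpoints of $e$, yielding $\dist_\Delta(u,v)\leq \dist_\Gamma(u,v)\leq N\cdot\dist_\Delta(u,v)$. The paper's version is terser but relies on exactly the same equivariance observation you highlight.
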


\begin{proof}
Let $u$ and $v$ be the endpoints of the edge $e$, and let $L$ be the distance in $\Gamma$ between $u$ and $v$. Let $x,y$ be two arbitrary vertices of $\Gamma$.
 Since $\Gamma$ is a subgraph of $\Delta$,  we have that $\dist_\Delta(x,y)\leq \dist_\Gamma(x,y)$. On the other hand, for any path in $\Delta$, one can replace each edge in the orbit of $e$ by a path in $\Gamma$ of length $L$, and hence $\dist_\Gamma(x,y)\leq L\dist_\Delta(x,y)$.  
\end{proof}

\begin{proof}[Proof of Proposition~\ref{prop:QI-coned-off}]
The graph $\widetriangle\Gamma(G,\calp)$ is obtained from the 1-skeleton of $\mathcal{K}(G,\calp)$ by attaching a  $G$-orbit of edges  for each $s\in S$ (with trivial stabilizers). Since $S$ is finite, 
the inclusion of the 1-skeleton of $ \mathcal{K}(G,\calp)$ into $\widetriangle\Gamma(G,\calp, S)$ is a quasi-isometry by Lemma~\ref{lem:boring}. 
\end{proof}

The quasi-isometry type of the coned-off Cayley graph and its extended version do not coincide in general; see Example~\ref{ex:NonQI-extended}. However, they do coincide for a class of group pairs arising from right-angled Artin groups, as we show in  Proposition~\ref{prop:RAAGsConedOff}.

 \section{Geometric properties of group pairs}\label{Sec:GeomProps}

In this section, we use the coset intersection complex to prove that several algebraic properties are quasi-isometry invariants of group pairs.  We discuss finite height and width  in Section~\ref{sec:HandW}.  We next consider two conditions which we call \textit{finite packing} and having a \textit{commensurated core} in Sections~\ref{sec:Packing} and \ref{sec:Core}, respectively.  Finally, we discuss thickness in Section~\ref{sec:thickness}.

\subsection{Height and Width}\label{sec:HandW}

We first  show that finite height and finite width, properties defined by Gitik, Mj, Rips, and Sageev in \cite{GMJRS98}, are quasi-isometry invariants of group pairs.

\begin{definition}[Height and width of subgroups]
Let $ \calp$ be a collection of subgroups of a group $G$.
\begin{enumerate}
    \item The \textit{height} of $\calp$ in $G$ is the maximal $m\in \NN$ such that  there exists a collection of $m$ distinct cosets $\{g_1P_1,\ldots ,g_mP_m\}\subset G/\calp$ such that the intersection $\bigcap_{i=1}^m g_iP_ig_i^{-1}$  is infinite.

    \item The \textit{width} of $\calp$ in $G$ is the maximal $m\in\NN$  such that there exists a collection of $m$ distinct cosets $\{g_1P_1,\ldots ,g_mP_m\}\subset G/\calp$ such that pairwise intersections  $g_iP_ig_i^{-1} \cap g_jP_jg_j^{-1}$  are infinite for all $1\leq i<j\leq m$.
\end{enumerate}
Note that finite width implies finite height. We say that the pair $(G,\calp)$ has finite height (resp. width) if $\calp$ has finite height (resp. width) in $G$.  
\end{definition}

Recall that a collection distinct cosets $\{g_1P_1,\ldots ,g_mP_m\}\subset G/\calp$ spans a simplex in  $\mathcal K(G,\calp)$ if and only if $\bigcap_{i=1}^m g_iP_ig_i^{-1}$  is infinite.  We immediately obtain the following lemma, which is Theorem~\ref{thm:alg&geom}\eqref{item:finitediml} and \eqref{item:cliques}.
 
\begin{lemma} Let $(G,\calp)$ be a group pair.
\begin{enumerate}
\item $\calp$ has \emph{height  $m\in\ZZ_+$} if and only if  $\mathcal{K}(G,\calp)$ has dimension $m$. 
\item $\calp$ has \emph{width  $m\in\ZZ_+$}  if and only if the largest clique in  the 1-skeleton of $\mathcal{K}(G,\calp)$ has $m$ vertices.  
\end{enumerate}
\end{lemma}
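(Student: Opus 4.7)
The proof is essentially a direct translation of definitions, and I would present it in two short parts, one for each bullet, with no intermediate machinery required.

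For part (1), the plan is to compare the defining condition for a simplex in $\mathcal{K}(G,\calp)$ with the defining condition for height. By definition, a set $\{g_1P_1,\ldots ,g_kP_k\}\subset G/\calp$ spans a simplex of $\mathcal{K}(G,\calp)$ precisely when $\bigcap_{i=1}^k g_iP_ig_i^{-1}$ is infinite. The height of $\calp$ in $G$ is defined as the largest $m$ for which such a collection of size $m$ exists. Thus the maximum size of a simplex in $\mathcal{K}(G,\calp)$ is exactly the height of $\calp$ in $G$, which establishes (1) (with the convention, consistent with the statement, that the dimension records the maximal number of vertices in a simplex).

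For part (2), I would begin by noting that two distinct vertices $g_iP_i$ and $g_jP_j$ span an edge in the $1$-skeleton of $\mathcal{K}(G,\calp)$ if and only if $\{g_iP_i, g_jP_j\}$ is a $1$-simplex, i.e., if and only if $g_iP_ig_i^{-1}\cap g_jP_jg_j^{-1}$ is infinite. An $m$-clique in the $1$-skeleton is, by definition, a set of $m$ vertices that are pairwise adjacent, which translates directly into a collection of $m$ distinct cosets $\{g_1P_1,\ldots ,g_mP_m\}$ whose pairwise conjugate-intersections $g_iP_ig_i^{-1}\cap g_jP_jg_j^{-1}$ are all infinite. This is verbatim the definition of width, and taking maxima on both sides yields the claim.

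There is no substantive obstacle in either direction; both equivalences are tautological consequences of the construction of $\mathcal{K}(G,\calp)$. The only point I would flag in the write-up is the indexing convention for the dimension of the complex, ensuring it is used in the sense compatible with the cardinality-based definition of height appearing above.
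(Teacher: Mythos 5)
Your proposal is correct and is essentially identical to the paper's treatment: the paper likewise derives both statements immediately from the observation that $\{g_1P_1,\ldots,g_mP_m\}$ spans a simplex of $\mathcal{K}(G,\calp)$ precisely when $\bigcap_{i=1}^m g_iP_ig_i^{-1}$ is infinite, with no further argument. Your remark about the indexing convention for dimension (a simplex on $m$ vertices having dimension $m-1$ in the standard convention) is a fair point that the paper glosses over, and flagging it is appropriate.
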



The main goal of this section is to prove Theorem~\ref{thm:GeomProps}\eqref{item:finiteheight} and \eqref{item:width}: 
\begin{theorem}\label{thm:main}
Having finite width (resp. finite height) is a quasi-isometry invariant of group pairs.    
\end{theorem}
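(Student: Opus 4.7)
The plan is to translate the finite height and finite width conditions into dimensional and clique-size conditions on the coset intersection complex via the preceding lemma, and then use the induced simplicial map from Theorem~\ref{thm:SimplicialMap} together with the uniform finite-to-one property established earlier to transfer these bounds.

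First I would fix a quasi-isometry of pairs $q\colon(G,\calp)\to(H,\calq)$ with quasi-inverse $p\colon(H,\calq)\to(G,\calp)$, and consider the induced simplicial maps $\dot q\colon\mathcal{K}(G,\calp)\to\mathcal{K}(H,\calq)$ and $\dot p\colon\mathcal{K}(H,\calq)\to\mathcal{K}(G,\calp)$ provided by Proposition~\ref{prop:SimplicalCosetIntersection02}. Since the coset spaces $(G/\calp,\Hdist_G)$ and $(H/\calq,\Hdist_H)$ are uniformly locally finite by Lemma~\ref{lem:CosetSpaceUniLocFin}, Proposition~\ref{propF:InducedDot-q-03} guarantees the existence of a uniform constant $K\in\NN$ such that every fiber $\dot q^{-1}(hQ)$ and every fiber $\dot p^{-1}(gP)$ has cardinality at most $K$.

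Next I would verify the following elementary observation about simplicial maps that are uniformly $K$-to-one on vertices. If $f\colon X\to Y$ is a simplicial map of simplicial complexes whose restriction to $X^{(0)}$ is at most $K$-to-one, then for any simplex $\sigma=\{v_0,\dots,v_n\}$ of $X$ the set $f(\sigma)$ is a simplex of $Y$ (possibly with some images coinciding), so the distinct images form a simplex of $Y$ of dimension at least $\lceil(n+1)/K\rceil-1$. In particular, if $Y$ has dimension $d$, then $X$ has dimension at most $K(d+1)-1$. The same pigeonhole argument shows that if the maximum clique in $Y^{(1)}$ has $c$ vertices (where two vertices are in a clique when every pair spans a $1$-simplex), then the maximum clique in $X^{(1)}$ has at most $Kc$ vertices, since any clique maps to a set whose pairwise distinct elements form a clique.

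Applying this observation to $\dot q$ and to $\dot p$ in turn, and using the lemma identifying height with $\dim\mathcal{K}(G,\calp)$ and width with the maximum clique size in $\mathcal{K}(G,\calp)^{(1)}$, we conclude: $\mathcal{K}(G,\calp)$ is finite-dimensional if and only if $\mathcal{K}(H,\calq)$ is, and the maximum clique in $\mathcal{K}(G,\calp)^{(1)}$ is finite if and only if the same holds for $\mathcal{K}(H,\calq)^{(1)}$. This proves that both finite height and finite width are quasi-isometry invariants of group pairs. The main obstacle is that $\dot q$ is typically not injective, so one cannot simply transport simplices; the resolution is precisely the uniform finite-to-one control coming from uniform local finiteness of the coset space, which converts a failure of injectivity into a harmless multiplicative constant in the dimension and clique bounds.
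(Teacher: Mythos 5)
Your argument is correct, and it takes a genuinely more direct route than the paper. The two ingredients you need are both available before the theorem: Proposition~\ref{prop:SimplicalCosetIntersection02} gives that $\dot q$ and $\dot p$ are simplicial, and Proposition~\ref{propF:InducedDot-q-03}(2) combined with Lemma~\ref{lem:CosetSpaceUniLocFin} gives that they are uniformly $K$-to-one on vertices (any two cosets in a fiber of $\dot q$ are within Hausdorff distance $L(C+2M)$ of each other, and such balls in the coset space have uniformly bounded cardinality). Your pigeonhole observation is then sound: a simplicial map that is $K$-to-one on vertices sends an $(n+1)$-vertex simplex (or clique) onto a set of at least $\lceil (n+1)/K\rceil$ distinct vertices spanning a simplex (or clique), so dimension and maximal clique size can degrade by at most a multiplicative factor of $K$ in each direction, and finiteness is preserved both ways. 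The paper instead takes a detour through reducibility: it first proves that finite height forces every $P\in\calp$ to have finite index in its commensurator (Lemma~\ref{lem:HeightCommensurators2}), passes to the reduced refinement $(G,\calp^*)$, and uses that quasi-isometries between \emph{reduced} pairs induce honest simplicial isomorphisms of the complexes (Proposition~\ref{prop:IsomorphicCosetIntersectionCmplex2}); the transfer between a reducible pair and its refinement (Lemma~\ref{lem:RefinementHeightWidth2}) is then carried out by essentially the same finite-to-one pigeonhole count you use, just for that particular map. What the paper's longer route buys is the intermediate structural fact that finite height implies reducibility, and the clean isomorphism statement for reduced pairs, both of which are reused elsewhere; what your route buys is brevity and independence from the refinement machinery.
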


The proof of the theorem relies on a sequence of lemmas.

\begin{lemma}\label{lem:HeightCommensurators2}
 A group pair with finite height  is reducible.   
\end{lemma}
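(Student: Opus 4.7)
The plan is to show the contrapositive: if some $P \in \calp$ has infinite index in $\comm_G(P)$, then $\mc K(G,\calp)$ contains simplices of arbitrarily large dimension, so $(G,\calp)$ has infinite height. This uses nothing beyond Lemma~\ref{lem:commensurabilityHdist} and the definition of the coset intersection complex.

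First, I would recall the key geometric fact that $g \in \comm_G(P)$ if and only if $P \cap gPg^{-1}$ has finite index in both $P$ and $gPg^{-1}$. Suppose $[\comm_G(P):P] = \infty$. Then we can pick infinitely many distinct cosets $g_1 P, g_2 P, \ldots$ with all $g_i \in \comm_G(P)$ (and, say, $g_1 = e$). For each $i$, set $H_i = P \cap g_i P g_i^{-1}$; by assumption $H_i$ has finite index in $P$.

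Next, for any $m$, consider the finitely many distinct cosets $g_1 P, \ldots, g_m P$. Since a finite intersection of finite-index subgroups of $P$ is again of finite index in $P$, the subgroup $\bigcap_{i=1}^m H_i$ has finite index in $P$, hence is infinite (as $P$ is infinite). Because $H_i \subseteq g_i P g_i^{-1}$ for every $i$, we have
\[
\bigcap_{i=1}^m H_i \;\subseteq\; \bigcap_{i=1}^m g_i P g_i^{-1},
\]
so $\bigcap_{i=1}^m g_i P g_i^{-1}$ is infinite as well. By the definition of the coset intersection complex, $\{g_1 P, \ldots, g_m P\}$ spans a simplex of dimension $m-1$ in $\mc K(G,\calp)$. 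Since $m$ was arbitrary, $\mc K(G,\calp)$ is infinite-dimensional, which by Theorem~\ref{thm:alg&geom}\eqref{item:finitediml} (just established in the lemma immediately preceding this one) means $\calp$ has infinite height in $G$. This contradicts the hypothesis.

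There is no real obstacle here, since the argument is essentially a bookkeeping exercise combining the algebraic characterization of the commensurator with the defining condition of simplices in $\mc K(G,\calp)$. The only small point to be careful about is that the cosets $g_i P$ must be chosen \emph{distinct} (so they contribute distinct vertices of $\mc K(G,\calp)$); this is exactly why we pick $m$ coset representatives of $P$ inside $\comm_G(P)$ rather than $m$ arbitrary elements of $\comm_G(P)$. One could equivalently phrase the proof via Lemma~\ref{prop:ReducibleCharacterization}: non-reducibility means some galaxy in the coset space $(G/\calp, \Hdist_G)$ is infinite, and the infinitely many cosets in that galaxy then generate arbitrarily large simplices by the same intersection computation.
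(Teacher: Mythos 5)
Your proof is correct and follows the same route as the paper: the paper's proof is exactly the contrapositive observation that if some $P$ has infinite index in $\comm_G(P)$ then $\mathcal{K}(G,\calp)$ has simplices of arbitrarily large dimension, though the paper leaves that claim unjustified while you supply the (correct) verification via finite intersections of finite-index subgroups of $P$. No gaps.
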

\begin{proof}
If $(G,\calp)$ has finite height, then $\mathcal{K}(G,\calp)$ is finite dimensional. If $P$ has infinite index in $\comm_G(P)$, then   $\mathcal{K}(G,\calp)$ has simplices of arbitrarily large dimension.  Hence every $P\in\calp$ has finite index in its commensurator, and so  $(G,\calp)$ is reducible. 
\end{proof}

\begin{lemma}\label{lem:RefinementHeightWidth2}
 Suppose $(G,\calp)$ is reducible and let $(G,\calp^*)$ be a refinement. Then $(G,\calp)$ has finite height (resp. finite width) if and only if    $(G,\calp^*)$ has finite height (resp. finite width).
\end{lemma}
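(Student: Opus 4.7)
The plan is to transfer bounds on height/width across the quasi-isometry between $(G,\calp)$ and its refinement $(G,\calp^*)$, using the induced map on coset spaces together with Lemma~\ref{lem:simplices}.

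Since $(G,\calp)$ is reducible, it is virtually isomorphic (and hence quasi-isometric) to its refinement $(G,\calp^*)$, as noted right after Definition of refinement. Fix a quasi-isometry of pairs $q\colon (G,\calp) \to (G,\calp^*)$ with quasi-inverse $p$. By Proposition~\ref{propF:InducedDot-q-03} combined with the uniform local finiteness of coset spaces (Lemma~\ref{lem:CosetSpaceUniLocFin}), the induced maps $\dot q\colon G/\calp \to G/\calp^*$ and $\dot p\colon G/\calp^* \to G/\calp$ are uniformly finite-to-one. Let $N$ be a common uniform bound on the fiber sizes of $\dot q$ and $\dot p$.

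For the height statement, assume $(G,\calp)$ has height $m<\infty$. Given any $n$ distinct cosets $h_1Q_1, \ldots, h_nQ_n \in G/\calp^*$ with $\bigcap_{i=1}^n h_iQ_ih_i^{-1}$ infinite, set $g_iP_i := \dot p(h_iQ_i)$. By Lemma~\ref{lem:simplices}, $\bigcap_{i=1}^n g_iP_ig_i^{-1}$ is infinite as well. Since $\dot p$ is $N$-to-one, the multiset $\{g_1P_1,\ldots,g_nP_n\}$ represents at least $\lceil n/N\rceil$ distinct cosets of $G/\calp$; the intersection of conjugates over this distinct subset coincides with the original intersection (intersections do not depend on multiplicity), hence remains infinite. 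Therefore $\lceil n/N\rceil \leq m$, so $n\leq mN$, bounding the height of $(G,\calp^*)$ by $mN$. The reverse implication is symmetric, using $\dot q$ in place of $\dot p$.

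For the width statement, apply the analogous pairwise argument: given $n$ distinct cosets $h_1Q_1,\ldots,h_nQ_n \in G/\calp^*$ whose conjugates pairwise intersect in infinite groups, Lemma~\ref{lem:simplices} applied to each pair $\{h_iQ_i,h_jQ_j\}$ shows that the corresponding images $g_iP_i,g_jP_j \in G/\calp$ under $\dot p$ also satisfy this pairwise infinite-intersection property. Discarding duplicates leaves at least $\lceil n/N\rceil$ distinct cosets of $G/\calp$ that still pairwise satisfy the infinite-intersection condition, so $\lceil n/N\rceil$ is bounded by the width of $(G,\calp)$; symmetry with $\dot q$ completes the proof.

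The only point requiring mild care is the bookkeeping that duplicates in the image of $\dot p$ do not destroy the infinite-intersection condition, which is automatic since intersecting a family with repeated members of the family equals the intersection of the underlying set. Otherwise the argument is a direct application of Proposition~\ref{propF:InducedDot-q-03} and Lemma~\ref{lem:simplices}; I do not anticipate a substantive obstacle.
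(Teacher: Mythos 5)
Your proof is correct and follows essentially the same route as the paper: pass to the quasi-isometry of pairs between $(G,\calp)$ and $(G,\calp^*)$, and transfer the height/width bounds through the induced uniformly finite-to-one maps on coset spaces using Lemma~\ref{lem:simplices}, losing only a multiplicative factor. The only (harmless) difference is that you argue symmetrically with finite-to-one-ness of both $\dot p$ and $\dot q$, whereas the paper uses the fact that $(G,\calp^*)$ is reduced to get that $\dot p$ is a simplicial embedding (Proposition~\ref{prop:IsomorphicCosetIntersectionCmplex2}) for one of the two directions.
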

\begin{proof}
 Since $(G,\calp)$ is reducible, \cite[Proposition 6.3]{HMS2021} implies that the identity map on $G$ is a quasi-isometry of pairs $(G,\calp) \to (G,\calp^*)$. Since $(G,\calp^*)$ is reduced,   Proposition~\ref{lem:QI-to-refinedPair} implies that  there is a simplicial embedding $\dot p\colon \mathcal{K}(G,\calp^*) \to \mathcal{K}(G,\calp)$, and  
 a surjective simplicial map $\dot q\colon \mathcal{K}(G,\calp) \to \mathcal{K}(G,\calp^*)$ that is uniformly finite-to-one at the level of zero skeletons.   Suppose that $\dot q^{-1}(gP)$ has cardinality at most $n$ for every $gP\in G/\calp$.  

Using the embedding $\dot p$, we observe that the maximal  
number of vertices of a clique in $\mathcal{K}(G,\calp)$  is an upper bound   for the maximal number of vertices of a clique in $\mathcal{K}(G,\calp^*)$. Hence if $(G,\calp)$ has finite width then $(G,\calp^*)$ has finite width. The argument for finite height is analogous, replacing ``number of vertices in a clique" with ``dimension of a simplex."

For the other implication, observe that $\dot q$ maps cliques to cliques. Suppose that   $d$ is an upper bound for the maximal number of vertices of a clique in $\mathcal{K}(G,\calp^*)$. Note that if a clique of $\mathcal{K}(G,\calp)$ has $m$ vertices, then its image in $\mathcal{K}(G,\calp^*)$ is a clique with at least $m/n$ vertices. Since $m/n\leq d$, it follows that $m\leq d\cdot n$. Hence if $(G,\calp^*)$ has finite width, then $(G,\calp)$ has finite width. The argument for height is again completely analogous.    
\end{proof}

\begin{lemma}\label{lem:HeightWidthConclusion2}
Let $(G,\calp)$ and $(H,\calq)$ be quasi-isometric pairs. 
Suppose that $(G,\calp)$ is reducible. Then   $(G,\calp)$ has finite height (resp. finite width) if and only if    $(H,\calq)$ has finite height (resp. finite width).  
\end{lemma}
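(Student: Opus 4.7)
The plan is to reduce to the case of reduced pairs via the refinement operation, then invoke the rigidity provided by Proposition~\ref{prop:IsomorphicCosetIntersectionCmplex2}. First, since $(G,\calp)$ is reducible and quasi-isometric to $(H,\calq)$, Theorem~\ref{thm:reducible2} gives that $(H,\calq)$ is also reducible. Fix refinements $(G,\calp^*)$ and $(H,\calq^*)$; by Remark~\ref{rem:RefinementsRedued2}, both refined pairs are reduced.

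Next I would assemble a quasi-isometry of pairs $(G,\calp^*)\to(H,\calq^*)$. Because $(G,\calp)$ and $(H,\calq)$ are both reducible, each is virtually isomorphic to its refinement via the first operation in Definition~\ref{def:VirtIsomPair}, and hence the identity maps are quasi-isometries of pairs $(G,\calp^*)\to(G,\calp)$ and $(H,\calq)\to(H,\calq^*)$. Composing these with the given quasi-isometry of pairs $(G,\calp)\to(H,\calq)$ yields a quasi-isometry of pairs between the reduced pairs $(G,\calp^*)$ and $(H,\calq^*)$. By Proposition~\ref{prop:IsomorphicCosetIntersectionCmplex2}(3), the induced map $\dot q\colon\mathcal{K}(G,\calp^*)\to\mathcal{K}(H,\calq^*)$ is an isomorphism of simplicial complexes; in particular, the dimensions of the two complexes coincide and the maximum cardinalities of cliques in their $1$-skeletons coincide.

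Finally, Theorem~\ref{thm:alg&geom}\eqref{item:finitediml} and \eqref{item:cliques} — recalled at the start of this section — identify finite height with finite dimension of the coset intersection complex and finite width with a uniform bound on clique sizes. Combined with the simplicial isomorphism above, this yields that $(G,\calp^*)$ has finite height (respectively finite width) if and only if $(H,\calq^*)$ does. Lemma~\ref{lem:RefinementHeightWidth2}, applied to each of the reducible pairs $(G,\calp)$ and $(H,\calq)$, transfers this equivalence back to the original pairs and completes the argument. The only step requiring any care is verifying that the composed map between refinements is indeed a quasi-isometry of pairs; this is formal, but relies essentially on reducibility of \emph{both} pairs, which is why the hypothesis plus Theorem~\ref{thm:reducible2} are needed at the outset.
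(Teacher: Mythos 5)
Your proof is correct and follows essentially the same route as the paper's: pass to refinements (which are reduced by Remark~\ref{rem:RefinementsRedued2} and quasi-isometric to the originals because both pairs are reducible), apply Proposition~\ref{prop:IsomorphicCosetIntersectionCmplex2} to get a simplicial isomorphism of the coset intersection complexes, and transfer back via Lemma~\ref{lem:RefinementHeightWidth2}. The composition step you flag is indeed formal, since quasi-isometry of pairs is an equivalence relation as noted after Definition~\ref{def:qipairs2}.
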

\begin{proof}
By Theorem~\ref{thm:reducible},  being reducible is a quasi-isometric invariant, and hence     $(H,\calq)$ is reducible. 
Let $(G,\calp^*)$ and $(H,\calq^*)$ be   refinements of $(G,\calp )$ and $(H,\calq)$ respectively. Then \cite[Proposition 6.3]{HMS2021} implies that the pairs $(G,\calp)$, $(H,\calq)$,  $(G,\calp^*)$ and $(H,\calq^*)$ are all quasi-isometric pairs.  By Remark~\ref{rem:RefinementsRedued2}, $(G,\calp^*)$ and $(H,\calq^*)$ are quasi-isometric reduced pairs. Then Proposition~\ref{prop:IsomorphicCosetIntersectionCmplex2} implies that
the coset intersection complexes $\mathcal{K}(G,\calp^*)$ and $\mathcal{K}(H,\calq^*)$ are isomorphic. Therefore 
$(G,\calp^*)$ has finite height (resp. width) if and only if $(H,\calq^*)$ has finite height (resp. width). Then the conclusion follows from Lemma~\ref{lem:RefinementHeightWidth2}.   
\end{proof}

We are now ready to prove Theorem~\ref{thm:main}.
\begin{proof}[Proof of Theorem~\ref{thm:main}] Suppose that $(G,\calp)$ and $(H,\calq)$ are a quasi-isometric  pairs and  that $(G,\calp)$ has finite height or finite width. Since finite width implies finite height, in either case,   $(G,\calp)$ has finite height.  Lemma~\ref{lem:HeightCommensurators2} then implies that  $(G,\calp)$ is reducible, and the conclusion of the theorem follows directly from Lemma~\ref{lem:HeightWidthConclusion2}. 
\end{proof}

Huang and Wise introduced the notion of \textit{finite stature} for a group pair $(G,\calp)$, a property closely related to finite height~\cite{huang2019}.  To encode stature using the coset intersection complex $\mc K(G,\calp)$, one needs to record information not just about the simplices, as was the case for  height, but also about the action of elements of $\calp$ on $\mc K(G,\calp)$.  Additional assumptions would be needed to apply the methods of this paper to show that finite stature is geometric.

\begin{question}
    Is having finite stature a geometric property of group pairs?
\end{question}

\subsection{Finite packing (cocompactness)}\label{sec:Packing}
In this subsection, we introduce the notion of finite packing.

\begin{definition}[Finite packing]\label{def:NewFinitePacking} A    collection $\calp$ of subgroups of a group $G$ has \emph{finite packing} if there is a finite subset of left cosets  $D$ in $G/\calp$ such that for any finite subset ${g_1P_1,\ldots , g_dP_d}$ of $G/\calp$, if $\bigcap_{i=1}^d g_iP_ig_i^{-1}$ is  infinite,  then there is $g\in G$ such that $gg_iP_i\in D$ for each $i\in\{1,\ldots,d\}$. \end{definition}

\begin{remark}[Cocompactness] \label{rem:cocompactness} A group pair $(G,\calp)$ has {finite packing} if and only if the $G$-action on the coset intersection complex $\mathcal{K}(G,\calp)$ has finitely many $G$-orbits of simplicies, or  equivalently, the $G$-action on $\mathcal{K}(G,\calp)$ is cocompact. \end{remark}


Note that   $(G,\calp)$ has finite packing if and only if   $\mathcal{K}(G,\calp)$ is finite dimensional and has finitely many $G$-orbits of $d$-simplices for every $d$. In particular,  finite packing implies finite height.

\begin{proposition}\label{prop:finitePackingChar01}
If $(G,\calp)$ is a group pair, the following statements are equivalent.
\begin{enumerate}
    \item $(G,\calp)$ has finite packing. 
    \item There is $r>0$ such that for any simplex $\{g_1P_1,\ldots ,g_kP_k\}$  of $\mathcal{K}(G,\calp)$, the intersection $\bigcap_{i=1}^k \mathcal{N}_r(g_iP_i)$ is non-empty.
\end{enumerate}
\end{proposition}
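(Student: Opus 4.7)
The plan is to prove the two implications separately, using Remark~\ref{rem:cocompactness} (finite packing is cocompactness of the $G$-action on $\mathcal{K}(G,\calp)$) together with the geometric characterization of simplices in Lemma~\ref{prop:GeometryCosetIntersectionComplex2}.

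For the direction (1)~$\Rightarrow$~(2), I would first invoke Remark~\ref{rem:cocompactness} to conclude that $\mathcal{K}(G,\calp)$ has only finitely many $G$-orbits of simplices. For each of these orbits, choose a representative simplex $\sigma = \{g_1P_1,\ldots,g_kP_k\}$. Since $\bigcap_{i=1}^k g_iP_ig_i^{-1}$ is infinite by definition of a simplex, Lemma~\ref{prop:GeometryCosetIntersectionComplex2} supplies some $r_\sigma > 0$ such that $\bigcap_{i=1}^k \mathcal{N}_{r_\sigma}(g_iP_i)$ is unbounded (hence non-empty). Let $r$ be the maximum of these finitely many $r_\sigma$. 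Because $G$ acts on itself by isometries and on $\mathcal{K}(G,\calp)$ permuting simplices, if $\tau = h\sigma$ for some representative $\sigma$, then $\bigcap_j \mathcal{N}_r(\text{vertices of }\tau) = h\bigl(\bigcap_i \mathcal{N}_r(g_iP_i)\bigr) \neq \emptyset$. Thus $r$ works uniformly.

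For the direction (2)~$\Rightarrow$~(1), suppose such an $r$ exists. Let $B = B_G(e,r)$ be the closed ball of radius $r$ around the identity in $G$; since $G$ is finitely generated, $B$ is finite, and consequently the set
\[
D = \{\, gP \in G/\calp \ :\ g \in B,\ P \in \calp\,\}
\]
is finite. Given any simplex $\sigma = \{g_1P_1,\ldots,g_kP_k\}$, by hypothesis we may choose $h \in \bigcap_{i=1}^k \mathcal{N}_r(g_iP_i)$. For each $i$ there exists $p_i \in P_i$ with $\dist_G(h, g_ip_i) \le r$, so $h^{-1}g_ip_i \in B$, which means $h^{-1}g_iP_i = (h^{-1}g_ip_i)P_i \in D$. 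Thus the $G$-translate $h^{-1}\sigma$ of $\sigma$ has all its vertices in the finite set $D$, verifying the condition in Definition~\ref{def:NewFinitePacking}.

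Neither direction should present a serious obstacle; the only subtlety is the passage from ``$\bigcap \mathcal{N}_{r_\sigma}(g_iP_i)$ is unbounded'' to a \emph{uniform} radius $r$, but this is immediate once we translate the definition of finite packing into cocompactness via Remark~\ref{rem:cocompactness} and apply $G$-equivariance. A nice byproduct of the argument in (2)~$\Rightarrow$~(1) is that the finite cocompactness set $D$ can be chosen explicitly as the cosets represented in a word-ball of radius $r$.
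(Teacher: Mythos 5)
Your proof is correct and follows essentially the same route as the paper: for (2)$\Rightarrow$(1) you translate each simplex by a point of $\bigcap_i \mathcal{N}_r(g_iP_i)$ into the finite set of cosets represented in the $r$-ball, and for (1)$\Rightarrow$(2) you use the finitely many $G$-orbits of simplices together with $G$-equivariance of the neighborhoods to extract a uniform $r$ (the paper packages this as a $G$-invariant function $\mu$ on simplices, but the content is identical). The only point worth making explicit is that in (1)$\Rightarrow$(2) the orbit representatives are finite simplices — which holds because finite packing forces $\mathcal{K}(G,\calp)$ to be finite dimensional — so that Lemma~\ref{prop:GeometryCosetIntersectionComplex2} applies.
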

\begin{proof}
Suppose (2) holds. Let $\cals$ be an arbitrary simplex of $\mathcal{K}(G,\calp)$, and let $\cals'=\{g_1P_1,\ldots ,g_kP_k\}$ be a finite subset of $\cals$.  Let $g\in \bigcap_{i=1}^k  \mathcal{N}_r(g_iP_i)$. Then there are elements $\{h_i\mid i=1,\dots, k\}$  of $G$ such that $gh_i P_i=g_i P_i$ and $\dist_G(1,h_i)\leq r$, so that $g^{-1}\cals' = \{h_iP_i\mid i=1,\dots, k\}$.  
Since  $\dist_G$ is locally finite (as $G$ is finitely generated), then there is a uniform bound $M$ on the number of distinct possibilities for $h_i$. As  $\calp$ is finite,  $g^{-1}\cals'$ contains at most $M\cdot |\calp|$ distinct elements. In particular, $\cals$ is a finite simplex, and there are finitely many possibilities for $g^{-1}\cals$.  Thus $(G,\calp)$ has finite packing. 

Conversely, suppose that (1) holds. Then $\mathcal{K}(G,\calp)$ is finite dimensional.   For each simplex $\cals=\{g_1P_1,\ldots, g_kP_k\}$, let \[ \mu(\cals)=\inf\left\{m \mid \bigcap_{i=1}^k \mathcal{N}_m(g_iP_i) \neq \emptyset \right\}. \]
Observe that $\mu(\cals)$ is well-defined since  
 $ \bigcap_{i=1}^k g_iP_ig_i^{-1} \subseteq \bigcap_{i=1}^k \mathcal{N}_m(g_iP_i)$
for any $m>\max_{i=1,\dots, k} \{\dist_G(1,g_i)\}$, and hence $\mu(\cals)$ is an infimum of non-empty set. On the other hand,     $\mu$ is $G$-invariant, that is, $\mu (g\cals) =\mu (\cals)$. Since there are finitely many $G$-orbits of simplices,
\[  r =\sup\left\{ \mu(\cals) \mid \text{$\cals$ is a simplex of $\mathcal{K}(G,\calp)$}\right\} < \infty\]
 satisfies the required property.
\end{proof}

\begin{example}[Quasiconvex subgroups of hyperbolic groups have finite packing] \label{ex:HypQc}
 If $G$ is a hyperbolic group and $\calp$ is a finite collection of infinite quasi-convex subgroups, then $(G,\calp)$ has finite packing.  The argument by  Gitik, MJ, Rips and Sageev  in~\cite{GMJRS98} proving that $(G,\calp)$ has finite width actually shows that Proposition~\ref{prop:finitePackingChar01} (2) holds. Let us briefly recall the argument. Assume that the Cayley graph $\Gamma$ of $G$ has $\delta$-slim triangles and that all subgroups in $\calp$ are $\sigma$-quasi-convex.  Suppose $\{g_1P_1,\ldots ,g_kP_k\}$ is a simplex $\mathcal{K}(G,\calp)$.  Then there is $g\in \bigcap_{i=1}^k g_iP_ig_i^{-1}$ such that $\dist_G(e,g)>3\max_i\{\dist_G(e,g_i)\}+3\delta$. Let $\gamma=[e,g]$ be a geodesic in $\Gamma$ from $e$ to $g$, and let $x$ be the middle vertex of $\gamma$. Consider the geodesic rectangle in $\Gamma$ with geodesic sides $[e,g_i]$,   $[g_i,gg_i^{-1}]$, $[gg_i^{-1}, g]$ and $\gamma$. Since rectangles are $2\delta$-slim, and $\gamma$ is long enough, the vertex $x$ is at distance $2\delta$ from a point in  $[g_i,gg_i^{-1}]$. This geodesic has endpoints in the $\sigma$-quasiconvex left coset $g_iP_i$, and hence $x$ is at distance $r=2\delta+\sigma$ from a vertex in $g_iP_i$. It follows that $x\in \bigcap_{i=1}^k\mathcal{N}_r (g_iP_i)$.  By Proposition~\ref{prop:finitePackingChar01}, $(G,\calp)$ has finite packing.
\end{example}

\begin{proposition}\label{prop:FinitePackingQI}
 Having finite packing is a quasi-isometry invariant of group pairs. 
\end{proposition}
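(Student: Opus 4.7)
My plan is to exploit the geometric characterization of finite packing provided by Proposition~\ref{prop:finitePackingChar01}, which replaces the combinatorial cocompactness condition by the existence of a uniform radius $r$ such that the $r$-neighborhoods of the vertices of any simplex of the coset intersection complex have common intersection in the Cayley graph. This characterization converts finite packing into a statement about neighborhoods of cosets in $G$, which should transport nicely along a quasi-isometry of pairs together with its quasi-inverse.

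Concretely, suppose $q\colon (G,\calp) \to (H,\calq)$ is an $(L,C,M)$-quasi-isometry of pairs with quasi-inverse $p\colon (H,\calq) \to (G,\calp)$, and assume that $(H,\calq)$ has finite packing. Let $r_H>0$ be the constant supplied by Proposition~\ref{prop:finitePackingChar01}(2) applied to $(H,\calq)$, and let $D$ be an $L^\infty$-distance between $p\circ q$ and the identity of $G$. Given any simplex $\{g_1P_1,\ldots,g_kP_k\}$ of $\mathcal{K}(G,\calp)$, Proposition~\ref{prop:SimplicalCosetIntersection02} ensures that $\{\dot q(g_1P_1),\ldots,\dot q(g_kP_k)\}$ is a simplex of $\mathcal{K}(H,\calq)$, so by choice of $r_H$ there exists $y\in \bigcap_{i=1}^k \mathcal{N}_{r_H}(\dot q(g_iP_i))$.

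I will then trace the witness back through $p$. Since $p$ is $(L,C)$-quasi-isometric, $p(y)$ lies in $\mathcal{N}_{Lr_H+C}(p(\dot q(g_iP_i)))$ for each $i$. The Hausdorff bound $\Hdist_H(\dot q(g_iP_i), q(g_iP_i))\leq M$ (from Proposition~\ref{propF:InducedDot-q-03}) pushes across $p$ to give $\Hdist_G(p(\dot q(g_iP_i)), p(q(g_iP_i)))\leq LM+C$, and the quasi-inverse property yields $\Hdist_G(p(q(g_iP_i)), g_iP_i)\leq D$. Chaining these with the triangle inequality for Hausdorff distance produces a single constant $r_G := Lr_H + 2C + LM + D$, independent of the chosen simplex, such that $p(y) \in \bigcap_{i=1}^k \mathcal{N}_{r_G}(g_iP_i)$. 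Applying Proposition~\ref{prop:finitePackingChar01} in the other direction concludes that $(G,\calp)$ has finite packing.

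There is no serious obstacle here; the argument is almost a bookkeeping exercise once Proposition~\ref{prop:finitePackingChar01} is in place. The only subtle point is recognizing that finite packing is really a property about \emph{filling radii} of simplices rather than about a $G$-action (since a quasi-isometry of pairs has no direct way to transport group actions), and that the characterization already packaged in Proposition~\ref{prop:finitePackingChar01} sidesteps exactly this issue.
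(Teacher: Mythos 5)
Your proposal is correct and follows essentially the same route as the paper: both reduce to the characterization of finite packing in Proposition~\ref{prop:finitePackingChar01}, push a simplex forward along the simplicial map $\dot q$, find a point in the common $r$-neighborhood on the $(H,\calq)$ side, and pull it back with uniform control. The only (cosmetic) difference is that you transport the witness point back via the quasi-inverse $p$, while the paper instead chooses $g\in G$ with $\dist_H(q(g),h)\leq C$ and uses the lower quasi-isometry inequality for $q$; the constants differ but the argument is the same.
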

\begin{proof}
Let $q\colon (G,\calp) \to (H,\calq)$ be an $(L,C,M)$-quasi-isometry of group pairs and suppose that $(H,\calq)$ has finite packing.  
We use the characterization of finite packing provided by Proposition~\ref{prop:finitePackingChar01} to prove that $(G,\calp)$ has finite packing.

Let $\dot q\colon \mathcal{K}(G,\calp) \to \mathcal{K}(H,\calq)$ be an  induced  simplicial map;  see Proposition~\ref{prop:SimplicalCosetIntersection02}. Recall that $\Hdist_G(\dot q (gP), q(gP))\leq M$ for all $gP\in G/\calp$.  Since $(H,\calq)$ has finite packing, the complex $\mc K(H,\calq)$ is finite dimensional and there is $r>0$ such that if $\{h_1Q_1,\ldots ,h_kQ_k\}$  is a simplex of $\mathcal{K}(H,\calq)$ then  $\bigcap_{i=1}^k \mathcal{N}_r(h_iQ_i)$ is non-empty. Let $r'=L(r+M+2C)$, and let $\{g_1P_1,\ldots, g_kP_k\}$ be an arbitrary  simplex of $\mathcal{K}(G,\calp)$. Let $h_iQ_i=\dot q(g_iP_i)$, and let $h\in \bigcap_{i=1}^k \mathcal{N}_r(h_iQ_i)$. Then there exists $g\in G$ such that $\dist_H(q(g),h)\leq C$. Since $\dist_H(h, h_iQ_i)\leq r$, it follows that 
\[ 
\begin{split}
\dist_G(g, g_iP_i)&\leq L\dist_H(q(g),q(g_iP_i))+ LC  \\   
&\leq L\dist(h, q(g_iP_i)) + 2LC \\
& \leq
L(\dist_H(h, h_iQ_i) +\dist_H(h_iQ_i,q(g_iP_i) )  +2LC \\ 
&\leq Lr+LM+2LC=r'.
\end{split}
 \] Thus $\bigcap_{i=1}^k \mathcal{N}_{r'} (g_iP_i) \neq \emptyset$, and so the pair $(G,\calp)$ has finite packing by Proposition~\ref{prop:finitePackingChar01}.    
\end{proof}

\begin{question}
    Let $(G,\calp)$ be a group pair.  Suppose that $\mathcal{K}(G,\calp)$ has finitely many $G$-orbits of $d$-simplices for all $d$. Is $\mathcal{K}(G,\calp)$   $G$-cocompact?
\end{question}


\subsection{Commensurated cores}\label{sec:Core}
In this section, we introduce the notion of a commensurated core of a group pair.  After giving geometric and algebraic defintions of a commensurated core in Definition~\ref{def:CommCore} and Proposition~\ref{prop:MalnormalCoreChar}, respectively, we turn to the main  result of the section, Proposition~\ref{prop:FiniteCoreQI}, which uses commensurated cores and height to describe exactly when the action of $G$ on $\mc K(G,\calp)$ is cocompact.

\begin{definition}[Commensurated  core]\label{def:CommCore}
Let $(G,\calp)$ be a group pair. A \emph{commensurated core} $\calq$ of $(G,\calp)$ is a collection of   subgroups representing the conjugacy classes of setwise $G$-stabilizers of maximal simplices in the coset intersection complex $\mathcal{K}(G,\calp)$.  
\end{definition}

Observe that a maximal simplex  of $\mathcal{K}(G,\calp)$ is a maximal subset $\mathcal S$ of $G/\calp$ such that  for any finite subset $\{g_1P_1, \ldots , g_kP_k\} \subset \mathcal{S}$ the subgroup $\bigcap_{i=1}^k g_iP_i g_i^{-1}$ is infinite. Zorn's lemma guarantees that every simplex of $\mathcal{K}(G,\calp)$ is a face of a maximal simplex,  but this maximal simplex might be infinite dimensional. 
Moreover, the collection of maximal simplices of $\mathcal{K}(G,\calp)$ is naturally a $G$-set, and it turns out that the isotropies of this $G$-set are commensurated subgroups of $G$ as we explain below. 
     

Given a collection $\cals\subset G/\calp$, let \
\begin{equation}\label{eqn:K(S)}
K(\mathcal{S})=\bigcap \left\{gPg^{-1} \mid gP\in \cals \right\}.
\end{equation}
If $\cals $ is a simplex of $\mc K(G,\calp)$, then $K(\cals)$ is the pointwise stabilizer of the simplex $\cals$.  If $K(\cals)$ is an infinite subgroup of $G$, then $\cals$ is a simplex of $\mc K(G,\calp)$.   However, the converse does not always hold. 
 If $\cals$ is a finite-dimensional simplex, then $K(\cals)$ is always infinite.  On the other hand, if $\cals$ is an infinite-dimensional simplex, then  $K(\cals)$ may be a finite group; see Example~\ref{ex:BS12-2}.

We are interested in group pairs $(G,\calp)$ with the property that pointwise stabilizers of maximal simplices of $\mathcal{K}(G,\calp)$ are infinite subgroups, that is, $K(\cals)$ is infinite for all maximal simplices $\cals$. An algebraic characterization of this property is as follows: for any collection $\cals\subset G/\calp$ such that $K(\cals')$ is infinite for any finite $\cals'\subseteq \cals$, we have $K(\cals)$ is infinite.  If this property holds for a group pair $(G,\calp)$, we say the pair satisfies the \textit{infinite intersection property}.

\begin{example}\label{ex:BS12-2}
Consider the Baumslag-Solitar group $B(1,k)=\langle a,t\mid tat^{-1}=a^k$. The group pair $(BS(1,k),\langle a \rangle)$ does not satisfy the infinite intersection property. For instance, $\cals=\{ t^n\langle a\rangle \mid n\in\ZZ\}$ is an infinite dimensional simplex of $\mathcal{K}(BS(1,k),\langle a \rangle)$ with trivial pointwise stabilizer. 
\end{example}

In Lemma~\ref{lem:heightSimplices} below, we point out that group pairs with finite height satisfy the infinite intersection property.   On the other hand, there are interesting group pairs with infinite height that also satisfy the infinite intersection property, for example  pairs consisting of a right-angled Artin group together with maximal standard abelian subgroups, as described in
Section~\ref{sec:RAAGs}.

\begin{proposition}[Commensurated core, algebraic characterization]\label{prop:MalnormalCoreChar}
 Let $(G,\calp)$ be a group pair that satisfies the infinite intersection property. 
 Then a collection $\calq$ of subgroups of $G$   is a commensurated core if and only if the following three conditions hold.
 \begin{enumerate}
\item If $\cals \subset G/\calp$ is  maximal with respect to the property that  $K(\cals)$  is  an infinite subgroup, then $\comm_G(K(\cals))$ is conjugate in $G$ to some $Q\in\calq$.

 \item If $Q\in\calq$, then there is a collection $\cals\subset G/\calp$ that is maximal with respect to the property that $K(\cals)$ is infinite and satisfies $ Q= \comm_G(K(\cals))$.
\item No two distinct subgroups in $\calq$  are conjugate in $G$.
 \end{enumerate}
\end{proposition}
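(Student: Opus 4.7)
The plan is to establish the identity
\[
\mathrm{Stab}_G(\cals) = \comm_G(K(\cals))
\]
for every maximal simplex $\cals$ of $\mathcal{K}(G,\calp)$, where $\mathrm{Stab}_G(\cals)$ denotes the setwise $G$-stabilizer. Once this is in hand, Definition~\ref{def:CommCore} and conditions (1)--(3) become tautologically equivalent. First I would observe that, under the infinite intersection property, the maximal simplices of $\mathcal{K}(G,\calp)$ coincide with the maximal subsets $\cals \subset G/\calp$ for which $K(\cals)$ is infinite: a subset of $G/\calp$ is a simplex precisely when every finite subfamily has infinite pointwise stabilizer, and the infinite intersection property upgrades this to the assertion that $K(\cals)$ itself is infinite. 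This identification is what allows the ``maximal with $K(\cals)$ infinite'' language of (1) and (2) to mean the same thing as ``maximal simplex'' in the definition.

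For the key identity, note first that $K(g\cals) = gK(\cals)g^{-1}$ directly from \eqref{eqn:K(S)}. The inclusion $\mathrm{Stab}_G(\cals) \subseteq \comm_G(K(\cals))$ is then immediate: if $g\cals = \cals$ then $gK(\cals)g^{-1} = K(\cals)$, so $g$ normalizes $K(\cals)$ and in particular commensurates it. The harder inclusion $\comm_G(K(\cals)) \subseteq \mathrm{Stab}_G(\cals)$ is the main obstacle. Given $g \in \comm_G(K(\cals))$, the subgroup $K(\cals) \cap gK(\cals)g^{-1} = K(\cals) \cap K(g\cals) = K(\cals \ccup g\cals)$ has finite index in $K(\cals)$ and so is infinite. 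Every finite subfamily $\cals'' \subseteq \cals \cup g\cals$ satisfies $K(\cals'') \supseteq K(\cals \cup g\cals)$, hence is infinite, so $\cals \cup g\cals$ is a simplex of $\mathcal{K}(G,\calp)$. Maximality of $\cals$ then forces $\cals \cup g\cals = \cals$, that is, $g\cals \subseteq \cals$; running the same argument with $g^{-1} \in \comm_G(K(\cals))$ gives $g^{-1}\cals \subseteq \cals$, and therefore $g\cals = \cals$. The subtlety here is that commensuration is only a finite-index condition on the subgroup $K(\cals)$, which a priori carries no information about how $g$ permutes the cosets in $\cals$; it is the maximality of $\cals$ that converts the finite-index condition into an exact set-invariance statement.

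With the identity established and the two notions of maximality identified, the proposition reduces to bookkeeping. The $G$-action on maximal simplices is compatible with conjugation of stabilizers, so the $G$-conjugacy classes of the subgroups $\comm_G(K(\cals))$ with $\cals$ a maximal simplex are exactly the conjugacy classes of setwise stabilizers of maximal simplices of $\mathcal{K}(G,\calp)$. Condition (1) then says that every such conjugacy class is represented in $\calq$, condition (2) says that every element of $\calq$ actually arises as one of these subgroups, and condition (3) says that distinct elements of $\calq$ lie in distinct conjugacy classes. Together these conditions are exactly the statement that $\calq$ is a set of representatives of the conjugacy classes of setwise $G$-stabilizers of maximal simplices of $\mathcal{K}(G,\calp)$, which is Definition~\ref{def:CommCore}.
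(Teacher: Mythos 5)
Your proof is correct and follows essentially the same route as the paper: the paper isolates your key identity $\mathrm{Stab}_G(\cals)=\comm_G(K(\cals))$ as a separate lemma (proved by the same commensuration-plus-maximality argument, differing only in that it concludes $g\cals=\cals$ by noting $g\cals$ is itself a maximal simplex inside the simplex $\cals\cup g\cals$, rather than by running the argument again for $g^{-1}$), and then performs the same identification of maximal simplices with maximal subsets having infinite pointwise stabilizer before the final bookkeeping. The only blemish is the stray ``\texttt{\textbackslash ccup}'' typo, which is clearly meant to be $\cup$.
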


Before the proof of the proposition, we need a lemma.

\begin{lemma}\label{lem:SetWiseStabilizers}
Let $(G,\calp)$ be a group pair, and let $\cals \subset G/\calp$  a maximal simplex of $\mathcal{K}(G,\calp)$ such that $K(\mathcal{S})$ is an infinite subgroup.
Then $\comm_G(K(\cals))$ is the setwise $G$-stabilizer of the simplex $\cals$.
\end{lemma}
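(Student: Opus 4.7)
The plan is to prove the lemma by a direct double containment using the identity $K(g\mathcal{S}) = gK(\mathcal{S})g^{-1}$ and the maximality of $\mathcal{S}$. For the forward inclusion, I would note that if $g$ setwise stabilizes $\mathcal{S}$, then $g\mathcal{S} = \mathcal{S}$ gives $gK(\mathcal{S})g^{-1} = K(g\mathcal{S}) = K(\mathcal{S})$, placing $g$ in the normalizer, and hence in the commensurator, of $K(\mathcal{S})$.

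The backward inclusion is the substantive direction. Suppose $g \in \comm_G(K(\mathcal{S}))$. Since $K(\mathcal{S})$ is infinite and $K(\mathcal{S}) \cap gK(\mathcal{S})g^{-1}$ has finite index in $K(\mathcal{S})$ by definition of the commensurator, this intersection is an infinite subgroup of $G$. The key step is to show that $\mathcal{S} \cup g\mathcal{S}$ is a simplex of $\mathcal{K}(G,\calp)$ in the sense of the excerpt, i.e. every finite subset has infinite pointwise stabilizer. Given a finite $F \subseteq \mathcal{S} \cup g\mathcal{S}$, the group $K(F)$ contains
\[ K(\mathcal{S} \cup g\mathcal{S}) = K(\mathcal{S}) \cap K(g\mathcal{S}) = K(\mathcal{S}) \cap gK(\mathcal{S})g^{-1}, \]
which is infinite, so $F$ spans a simplex. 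Maximality of $\mathcal{S}$ then forces $\mathcal{S} \cup g\mathcal{S} = \mathcal{S}$, giving $g\mathcal{S} \subseteq \mathcal{S}$.

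To finish, I would invoke the symmetric fact that $g^{-1} \in \comm_G(K(\mathcal{S}))$, which is immediate since $\comm_G(K(\mathcal{S}))$ is a subgroup of $G$. Applying the same argument with $g^{-1}$ yields $g^{-1}\mathcal{S} \subseteq \mathcal{S}$, or equivalently $\mathcal{S} \subseteq g\mathcal{S}$, which combined with the previous inclusion gives $g\mathcal{S} = \mathcal{S}$, as desired.

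The main subtlety I expect is not computational but conceptual: one must correctly handle the fact that $\mathcal{S}$ may be infinite-dimensional, so that the ``simplex'' condition has to be checked on all finite subsets of $\mathcal{S} \cup g\mathcal{S}$ rather than on the whole set at once. Once the identity $K(g\mathcal{S}) = gK(\mathcal{S})g^{-1}$ is in hand and one observes that $K$ of any finite subset of $\mathcal{S} \cup g\mathcal{S}$ dominates the infinite group $K(\mathcal{S}) \cap gK(\mathcal{S})g^{-1}$, the argument reduces to a maximality argument. Note that the infinite intersection property is not needed for this lemma; only the hypothesis that $K(\mathcal{S})$ itself is infinite is used.
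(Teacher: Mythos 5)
Your proof is correct and follows essentially the same route as the paper: the forward inclusion via $K(g\mathcal{S})=gK(\mathcal{S})g^{-1}$, and the backward inclusion by showing $\mathcal{S}\cup g\mathcal{S}$ is a simplex (checked on finite subsets, since it may be infinite) and invoking maximality. The only cosmetic difference is at the very end, where you conclude $g\mathcal{S}=\mathcal{S}$ by symmetry in $g$ and $g^{-1}$, while the paper notes instead that $g\mathcal{S}$ is itself a maximal simplex; both are valid.
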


\begin{proof}
Suppose  $g\in G$  fixes the simplex $\cals=\{g_iP_i\mid i\in I\}$ setwise. Then the simplex  $g\cals=\{gg_iP_i \mid g_iP_i\in\mathcal{S}\}$ equals the simplex $\cals$, and so  $K(\cals)=K(g\cals)=gK(\cals)g^{-1}$.  Hence $g$ is in the normalizer of $K(\cals)$ and, in particular, in the commensurator $\comm_G(K(\cals))$.

Conversely, suppose that $g\in \comm_G(K(\cals))$. Then  the subgroup $K(g\cals)$   is commensurable to $K(\cals)$, and hence $K(\cals)\cap K(g\cals)$ is  infinite. It follows that $\cals\cup g \cals$ is a simplex of $K(G,\calp)$. Since $\cals$ is a maximal simplex, we have that $g\cals$ is a maximal simplex as well. By maximality,  $g\cals = \cals$ and so $g$ stabilizes $\cals$ setwise.         
\end{proof}

\begin{proof}[Proof of Proposition~\ref{prop:MalnormalCoreChar}]
Since $(G,\calp)$ satisfies the infinite intersection property, a subset $\cals \subset G/\calp$ is a maximal simplex if and only if $\cals$ is maximal with respect to the  property that $K(\cals)$ is infinite. Thus when $\cals$ is a maximal simplex,  Lemma~\ref{lem:SetWiseStabilizers} implies that $\comm_G(K(\cals))$ is the setwise stabilizer of $\cals$.

Using these facts, we see that Property (1) holds if and only if $\calq$ contains representatives of all conjugacy classes of setwise stabilizers of maximal simplices of $\mc K(G,\calp)$.  Similarly, Property (2) holds if and only if every subgroup in $\calq$ is the setwise stabilizer of a maximal simplex of $\mc K(G,\calp)$.  Therefore, $\calq$ satisfies (1), (2), and (3) if and only if it is a commensurated core.
\end{proof}

It was pointed out by Agol, Groves and Manning that for pairs $(G,P)$ where $G$ is a hyperbolic group and $P$ is a quasi-convex subgroup, the commensurated core is an almost malnormal collection~\cite{AGM09}. The following proposition generalizes their result. 

\begin{proposition} \label{prop:HeightMalnormalCore} 
 A commensurated core of a group pair with finite height is an almost malnormal collection.    
\end{proposition}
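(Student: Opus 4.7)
The plan is to combine the algebraic characterization in Proposition~\ref{prop:MalnormalCoreChar} with a finite-index observation comparing pointwise and setwise stabilizers of simplices. First I would note that finite height implies the infinite intersection property: if $\calp$ has height $m$, then every simplex of $\mathcal K(G,\calp)$ has at most $m$ vertices, so every maximal simplex is finite and $K(\cals)$ is infinite by definition of being a simplex. Thus Proposition~\ref{prop:MalnormalCoreChar} applies, so every $Q\in\calq$ is of the form $Q=\comm_G(K(\cals))$ for some maximal simplex $\cals$, and by Lemma~\ref{lem:SetWiseStabilizers} this is precisely the setwise $G$-stabilizer of $\cals$.

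The key intermediate step is to show that, under finite height, the pointwise stabilizer $K(\cals)$ has finite index in the setwise stabilizer $\comm_G(K(\cals))$ for every maximal simplex $\cals$. This is where finite height is used essentially: the setwise stabilizer acts on the finite set $\cals$ (which has at most $m$ vertices), and the kernel of the induced homomorphism to $\mathrm{Sym}(\cals)$ is exactly the pointwise stabilizer $K(\cals)$, which therefore has finite index.

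With this in hand, I would verify almost malnormality directly. Take $Q_1,Q_2\in\calq$ and $g\in G$, and suppose $H=gQ_1g^{-1}\cap Q_2$ is infinite. Writing $Q_i=\comm_G(K(\cals_i))$ for maximal simplices $\cals_i$, and noting $gQ_1g^{-1}=\comm_G(K(g\cals_1))$, the finite-index fact above gives that both $K(g\cals_1)\cap H$ and $K(\cals_2)\cap H$ have finite index in $H$; consequently $K(g\cals_1)\cap K(\cals_2)\cap H$ has finite index in $H$ and is therefore infinite. Hence $g\cals_1\cup\cals_2$ is a simplex of $\mathcal K(G,\calp)$, and by maximality of both $g\cals_1$ and $\cals_2$ we must have $g\cals_1=\cals_2$. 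This forces $gQ_1g^{-1}=Q_2$, so Proposition~\ref{prop:MalnormalCoreChar}(3) yields $Q_1=Q_2$; and since $g$ stabilizes $\cals_1$ setwise, we conclude $g\in\comm_G(K(\cals_1))=Q_1$.

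The main obstacle is establishing the finite-index relation between $K(\cals)$ and $\comm_G(K(\cals))$; once that is in place, the rest is bookkeeping with the characterization of commensurated cores and maximality in $\mathcal K(G,\calp)$.
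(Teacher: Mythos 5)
Your proof is correct and follows essentially the same route as the paper: the finite-index relation between the pointwise stabilizer $K(\cals)$ and the setwise stabilizer $\comm_G(K(\cals))$ of a (necessarily finite) maximal simplex is exactly Lemma~\ref{lem:heightSimplices}(3), and the rest of your argument mirrors the paper's proof of Proposition~\ref{prop:HeightMalnormalCore}. The only hair to split is the final clause ``since $g$ stabilizes $\cals_1$ setwise'': from $g\cals_1=\cals_2$ and $Q_1=Q_2$ you still need the one-line observation that maximal simplices with the same setwise stabilizer coincide (or, alternatively, that $g\in N_G(Q_1)\leq\comm_G(Q_1)=Q_1$ by the finite-index fact), which is supplied by the same machinery you have already set up.
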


The proof of this proposition relies on the following lemma which, in particular, shows that if a pair $(G,\calp)$ has finite height ,then pointwise stabilizers of maximal simplices of $\mathcal{K}(G,\calp)$ are infinite.  Therefore the commensurated core of pairs with finite height can be characterized by Proposition~\ref{prop:MalnormalCoreChar}.

\begin{lemma}\label{lem:heightSimplices}
Suppose $(G,\calp)$ has finite height, and let $\cals \subset G/\calp$ be a maximal simplex of $\mathcal{K}(G,\calp)$. Then 
\begin{enumerate}
    \item $K(\cals)$ is infinite  and is the pointwise $G$-stabilizer of the simplex $\cals$.
    \item $\cals$ is a maximal subset of $G/\calp$ with the property that $K(\cals)$ is infinite.
    \item $K(\cals)$ is finite index in $\comm_G(K(\cals))$.
\end{enumerate}
\end{lemma}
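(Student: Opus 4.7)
My plan is to prove the three items in order, exploiting the fact that finite height forces $\mathcal{K}(G,\calp)$ to be finite dimensional, so that maximal simplices are in particular finite sets.

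For (1), since $(G,\calp)$ has finite height, Theorem~\ref{thm:alg&geom}\eqref{item:finitediml} implies that $\mathcal{K}(G,\calp)$ has finite dimension $m$. Thus any maximal simplex $\cals$ consists of at most $m+1$ cosets $g_1P_1,\ldots, g_kP_k$; by the very definition of a simplex in $\mathcal{K}(G,\calp)$, the intersection $K(\cals) = \bigcap_{i=1}^k g_iP_ig_i^{-1}$ is infinite. An element $g\in G$ fixes each $g_iP_i$ if and only if $g\in g_iP_ig_i^{-1}$ for all $i$, so the pointwise stabilizer of $\cals$ is precisely $K(\cals)$.

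For (2), suppose $\cals'\subset G/\calp$ contains $\cals$ and has $K(\cals')$ infinite. For every finite subset $\cals''\subset \cals'$, we have $K(\cals'')\supseteq K(\cals')$, so $K(\cals'')$ is infinite and hence $\cals''$ is a simplex. Finite dimensionality of $\mathcal{K}(G,\calp)$ gives a uniform bound on the size of finite simplices, which forces $\cals'$ itself to be a finite subset and therefore a simplex. Maximality of $\cals$ then yields $\cals'=\cals$.

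For (3), apply Lemma~\ref{lem:SetWiseStabilizers} (whose hypotheses are granted by (1)) to identify $\comm_G(K(\cals))$ with the setwise $G$-stabilizer of $\cals$. By (1), the pointwise stabilizer of $\cals$ is $K(\cals)$, and $K(\cals)$ is normal in $\comm_G(K(\cals))$. The quotient $\comm_G(K(\cals))/K(\cals)$ acts faithfully on the finite set $\cals$ by permutations, hence embeds in a finite symmetric group. Therefore $K(\cals)$ has finite index in $\comm_G(K(\cals))$.

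I do not anticipate a serious obstacle here; the only subtlety is being careful to distinguish a \emph{simplex} of $\mathcal{K}(G,\calp)$ (a finite set of vertices, by definition) from an arbitrary subset $\cals'\subset G/\calp$ with $K(\cals')$ infinite. Finite dimensionality of $\mathcal{K}(G,\calp)$ is exactly the tool that collapses this distinction and simultaneously makes the finiteness of $\cals$ available for the permutation-group argument in (3).
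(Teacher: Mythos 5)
Your proof is correct and follows essentially the same route as the paper: finite height makes maximal simplices finite, so $K(\cals)$ is infinite by the definition of a simplex, (2) becomes a restatement of maximality, and (3) follows from Lemma~\ref{lem:SetWiseStabilizers} together with the fact that the pointwise stabilizer of a finite set has finite index in its setwise stabilizer (you merely spell out the permutation-action argument that the paper leaves implicit). One small caveat: your parenthetical claim that a simplex of $\mathcal{K}(G,\calp)$ is ``a finite set of vertices, by definition'' conflicts with the paper's convention, which allows infinite-dimensional simplices (maximal subsets all of whose finite subsets span simplices); fortunately your argument for (2) handles arbitrary subsets $\cals'$ with $K(\cals')$ infinite, so nothing breaks.
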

\begin{proof}  Since $(G,\calp)$ has finite height, the simplex $\cals$ is a finite subset of $G/\calp$ and, in particular, $K(\cals)$ is an infinite subgroup. The first statement is then immediate,  
while the second statement re-states that $\cals$ is a maximal simplex. Since $\cals$ is a finite subset of $G/\calp$, its pointwise stabilizer $K(\cals)$ is a finite index subgroup of its setwise stabilizer which is $\comm_G(K(\cals))$ by Lemma~\ref{lem:SetWiseStabilizers}.     \end{proof}

\begin{proof}[Proof of Proposition~\ref{prop:HeightMalnormalCore}]
Let $Q,Q'\in \calq$ and $g\in G$.  There are maximal simplices $\cals$ and $\cals'$   of $\mathcal{K}(G,\calp)$ with setwise stabilizers $gQg^{-1}$ and $Q'$  respectively.    Suppose    $gQg^{-1}\cap  Q' =\comm_G(K(\cals)) \cap \comm_G(K(\cals'))$ is an infinite subgroup. By the third item of  Lemma~\ref{lem:heightSimplices},  we have that $K(\cals)\cap K(\cals') = K(\cals\cup\cals')$ is infinite.  By maximality, it follows that $\cals=\cals'$ and hence $gQg^{-1}=Q'$. It follows that $Q=Q'$ by definition of $\calq$, and so $g\in \comm_G(Q)=Q$.
\end{proof}

\begin{example}[Commensurated cores of  quasi-convex subgroups in hyperbolic groups are almost malnormal]\label{ex:hypQcMalnormalCore} Let $G$ be a hyperbolic group and $\calp$  a collection of infinite quasiconvex subgroups. Then  $(G,\calp)$ has finite height; see Example~\ref{ex:HypQc}. Therefore, by Proposition~\ref{prop:HeightMalnormalCore}, if $\calq$ is a commensurated core of $(G,\calp)$, then $\calq$ is an almost malnormal collection of subgroups of $G$, recovering a result from Agol, Groves and Manning~\cite{AGM09}.    In particular $\mathcal{K}(G,\calq)$ is a $0$-dimensional simplical $G$-complex.    In this case, the commensurated core coincides with the notion of the malnormal core  from~\cite{AGM09}. 
  
\end{example}

The following proposition exhibits a  relationship between the new properties that we have introduced: finite packing and a finite commensurated core. It also proves Theorem~\ref{thm:alg&geom}\eqref{item:Gcocompact}, as finite packing is equivalent to   cocompactness of the coset intersection complex; see Remark~\ref{rem:cocompactness}. 


\begin{proposition}\label{prop:FiniteCoreQI}
A group pair $(G,\calp)$ has finite packing if and only if   $(G,\calp)$ has finite height and a finite commensurated   core. 
\end{proposition}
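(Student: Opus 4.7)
My plan is to prove both directions by relating $G$-orbits of simplices in $\mathcal{K}(G,\calp)$ to conjugacy classes of setwise stabilizers of maximal simplices, with the finite height assumption ensuring maximal simplices are finite-dimensional and have well-behaved stabilizers (via Lemma~\ref{lem:heightSimplices}).

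For the forward direction, suppose $(G,\calp)$ has finite packing. By Remark~\ref{rem:cocompactness}, the $G$-action on $\mathcal{K}(G,\calp)$ is cocompact, so there are finitely many $G$-orbits of simplices. Since orbits exist in each dimension that is populated, finitely many orbits forces $\mathcal{K}(G,\calp)$ to be finite-dimensional, which by Theorem~\ref{thm:alg&geom}\eqref{item:finitediml} (already proved) is equivalent to finite height. In particular, among the finitely many orbits there are finitely many orbits of maximal simplices. Since the setwise stabilizer of $g\cals$ is $g\text{Stab}_G(\cals)g^{-1}$, two simplices in the same $G$-orbit have conjugate setwise stabilizers, so the commensurated core (a set of representatives of these conjugacy classes) is finite.

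For the reverse direction, suppose $(G,\calp)$ has finite height and a finite commensurated core $\calq$. By Lemma~\ref{lem:heightSimplices}, every maximal simplex is finite-dimensional, so finite height gives the infinite-intersection property and the algebraic description of Proposition~\ref{prop:MalnormalCoreChar} applies. The key step is to show that any two maximal simplices whose setwise stabilizers are conjugate lie in the same $G$-orbit. Suppose $\cals_1,\cals_2$ are maximal simplices with $\text{Stab}_G(\cals_2) = g\,\text{Stab}_G(\cals_1)\,g^{-1}$. Consider $g\cals_1$, which is a maximal simplex with the same setwise stabilizer as $\cals_2$, namely $\comm_G(K(\cals_2))$. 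By Lemma~\ref{lem:heightSimplices}\eqref{item:reducible}-style content, both $K(\cals_2)$ and $K(g\cals_1)$ are finite-index subgroups of this common commensurator, so their intersection $K(\cals_2 \cup g\cals_1) = K(\cals_2)\cap K(g\cals_1)$ is also of finite index, hence infinite. Therefore $\cals_2 \cup g\cals_1$ spans a simplex, and maximality of both $\cals_2$ and $g\cals_1$ forces $\cals_2 = g\cals_1$.

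Combining this with the fact (already used above) that simplices in the same orbit have conjugate stabilizers, we conclude that conjugacy classes of setwise stabilizers of maximal simplices are in bijection with $G$-orbits of maximal simplices. A finite commensurated core therefore gives finitely many $G$-orbits of maximal simplices. Since every simplex is a face of a maximal simplex (by Zorn's lemma, applied in the finite-dimensional setting where no issue arises), and each maximal simplex has only finitely many faces (bounded by $2^{h+1}$ where $h$ is the height), we obtain finitely many $G$-orbits of simplices overall. By Remark~\ref{rem:cocompactness}, $(G,\calp)$ has finite packing. The main obstacle to expect is the ``conjugate stabilizers implies same orbit'' step; everything else is organizational, but this step is precisely where the finite-height hypothesis is crucial (without it, one cannot guarantee that $K(\cals_2 \cup g\cals_1)$ remains infinite).
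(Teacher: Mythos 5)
Your proof is correct and follows essentially the same route as the paper: both directions reduce to showing that, under finite height, two maximal simplices with conjugate setwise stabilizers lie in the same $G$-orbit, which you establish exactly as the paper's Lemma~\ref{lem:coreQI} does (pointwise stabilizers are finite index in the common setwise stabilizer, hence commensurable, so the union is a simplex and maximality forces equality). The only blemish is the garbled citation ``Lemma~\ref{lem:heightSimplices}\eqref{item:reducible}-style content,'' where you mean item (3) of Lemma~\ref{lem:heightSimplices}; the mathematical content invoked is the right one.
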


We begin with a lemma.
\begin{lemma}\label{lem:coreQI}
If $\mathcal{K}(G,\calp)$ is finite dimensional, then the number of $G$-orbits of maximal simplices is finite if and only if the number of conjugacy classes of $G$-setwise stabilizers of maximal simplices is finite.    
\end{lemma}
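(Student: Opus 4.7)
The forward implication is immediate: if $\sigma' = g\sigma$ are $G$-equivalent maximal simplices, then their setwise stabilizers are related by conjugation, so the natural map sending a $G$-orbit of maximal simplices to the conjugacy class of its setwise stabilizer is well-defined and surjective. For the reverse, I plan to show that each conjugacy class is hit by only finitely many $G$-orbits.

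Fix a representative $Q$ of a conjugacy class of setwise stabilizers of maximal simplices, and let $X_Q$ denote the set of maximal simplices whose setwise stabilizer equals $Q$. Every $G$-orbit of maximal simplices with setwise stabilizer conjugate to $Q$ meets $X_Q$, so it suffices to show that $X_Q$ is finite. Since $\mathcal{K}(G,\calp)$ is finite-dimensional, each $\sigma \in X_Q$ is a finite set of cosets, and $Q$ acts on $\sigma$ by permutations with kernel the pointwise stabilizer $K(\sigma)$; hence $Q/K(\sigma)$ embeds in the finite symmetric group $\mathrm{Sym}(\sigma)$, so $K(\sigma)$ has finite index in $Q$. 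For each vertex $gP$ of $\sigma$ the subgroup $Q \cap gPg^{-1}$ then contains $K(\sigma)$ and so also has finite index in $Q$.

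Motivated by this, I would introduce
\[
V_Q \;=\; \{\, gP \in G/\calp \,:\, Q \cap gPg^{-1} \text{ has finite index in } Q\,\}
\]
and show that every finite subset of $V_Q$ spans a simplex of $\mathcal{K}(G,\calp)$. Indeed, for any finite $\{g_1P_1,\ldots,g_kP_k\}\subseteq V_Q$, the intersection $\bigcap_{i=1}^{k}(Q \cap g_iP_ig_i^{-1})$ is a finite-index subgroup of $Q$, which is infinite because it contains $K(\sigma)$; hence it is itself infinite, so $\bigcap_{i=1}^{k} g_iP_ig_i^{-1}$ is infinite and the vertices span a simplex. Finite-dimensionality of $\mathcal{K}(G,\calp)$ then caps the cardinality of $V_Q$ by $\dim\mathcal{K}(G,\calp)+1$, so $V_Q$ is finite. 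By the previous paragraph every $\sigma\in X_Q$ is contained in $V_Q$, so $X_Q$ injects into the power set of $V_Q$ and is finite, completing the argument. The main technical step is the simplex property of $V_Q$, which reduces to the elementary fact that a finite intersection of finite-index subgroups of $Q$ remains of finite index; everything else is bookkeeping.
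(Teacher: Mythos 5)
Your proof is correct, but it takes a genuinely different route from the paper's. The paper's argument is that, under the finite-height hypothesis, the natural surjection from $G$-orbits of maximal simplices to conjugacy classes of setwise stabilizers is in fact a \emph{bijection}: by Lemma~\ref{lem:heightSimplices}, the pointwise stabilizer $K(\cals)$ of a maximal simplex has finite index in its setwise stabilizer $\comm_G(K(\cals))$, so two maximal simplices with the same setwise stabilizer have commensurable (hence jointly infinite) pointwise stabilizers, and maximality forces them to coincide. You instead show only that the surjection is \emph{finite-to-one}, which suffices for the stated equivalence: fixing a stabilizer $Q$, the permutation action of $Q$ on a finite maximal simplex shows $Q\cap gPg^{-1}$ has finite index in $Q$ for every vertex $gP$, the set $V_Q$ of all such vertices has every finite subset spanning a simplex (a finite intersection of finite-index subgroups of the infinite group $Q$ is infinite --- note this, rather than containment of $K(\sigma)$, is the right justification, since an arbitrary finite subset of $V_Q$ need not contain any particular maximal simplex), so $|V_Q|\leq\dim\mathcal{K}(G,\calp)+1$ and $X_Q$ is finite. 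Your argument is more elementary and self-contained --- it does not invoke the identification of the setwise stabilizer with $\comm_G(K(\cals))$ from Lemma~\ref{lem:SetWiseStabilizers} nor the finite-index statement of Lemma~\ref{lem:heightSimplices}, re-deriving what it needs via the symmetric-group embedding --- at the cost of giving only a bound on the fibers rather than the sharper fact that each conjugacy class of stabilizers corresponds to exactly one orbit of maximal simplices.
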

\begin{proof}
 The only if part of the lemma is trivial. For the if part, first 
recall that $\mathcal{K}(G,\calp)$ is finite dimensional if and only if $(G,\calp)$ has finite height. 
By Lemma~\ref{lem:heightSimplices},   if $\Delta$ and $\Delta'$ are maximal simplices with the same setwise stabilizer, then their pointwise stabilizers are commensurable and then maximality implies that $\Delta=\Delta'$. Hence if two maximal simplices have conjugate setwise $G$-stabilizers, then they are in the same $G$-orbit, and the conclusion follows. 
\end{proof}

\begin{proof}[Proof of Proposition~\ref{prop:FiniteCoreQI}]
The pair $(G,\calp)$ having finite height is equivalent to $\mathcal{K}(G,\calp)$ being finite dimensional. 
Hence it is enough to prove that if $\mathcal{K}(G,\calp)$ is finite dimensional, then $\mathcal{K}(G,\calp)$ is $G$-cocompact if and only if there are finitely many conjugacy classes of setwiset stabilizers of maximal simplices. Since a finite dimensional simplicial $G$-complex is cocompact if and only if there are finitely many $G$-orbits of maximal simplices, the statement of the proposition follows from Lemma~\ref{lem:coreQI}. 
\end{proof}

If $\mc K(G,\calp)$ is infinite-dimensional, then the $G$-action is not cocompact.  However, the converse is unknown.  

\begin{question}
Suppose $\mathcal{K}(G,\calp)$ is finite dimensional. Is the $G$-action on $\mathcal{K}(G,\calp)$ cocompact?  Equivalently, is there an example of a group pair $(G,\calp)$ that has finite height but does not have a finite commensurated core?
\end{question}

\subsection{Fenced infinite intersections}

\begin{definition}\label{def:FencedInfInt}
Let $\tau\geq 0$. A group pair $(G,\calp)$ has   \emph{$\tau$-fenced infinite intersections} 
 if for any $g_1P_1, g_2P_2 \in G/\calp$  such that   $g_1P_1g_1^{-1} \cap g_2P_2g_2^{-1}$ is an infinite subgroup,   $\dist_G(g_1P_1, g_2P_2)\leq \tau$.  If $(G,\calp)$ has $\tau$-fenced infinite intersections for some $\tau$, we  say that
 $(G,\calp)$ has \emph{ fenced infinite intersections}.
\end{definition}

\begin{lemma}\label{lem:tauEdges}
Let $(G,\calp)$ be a group pair. For any $\tau\geq 0$,  
\[  E_\tau(G,\calp) = \left\{  \{g_1P_1,g_2P_2\}\mid g_1P_1,g_2P_2\in G/\calp \text{ and } 0<\dist_G(g_1P_1,g_2P_2
)\leq \tau  \right\}  \]
is a $G$-set with $G$-action  given by $g. \{g_1P_1,g_2P_2\} = \{gg_1P_1, gg_2P_2\}$.  Moreover, $E_\tau(G,\calp)$ has only finitely many distinct $G$-orbits. 
\end{lemma}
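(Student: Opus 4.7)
The plan is to verify the two assertions separately. For the first assertion, I would first check that the $G$-action on $E_\tau(G,\calp)$ is well-defined, which boils down to the observation that the word metric $\dist_G$ is invariant under left multiplication. In particular, for any $g \in G$ and any cosets $g_1P_1, g_2P_2 \in G/\calp$,
\[ \dist_G(gg_1P_1, gg_2P_2) = \dist_G(g_1P_1, g_2P_2), \]
so the property of lying in $E_\tau(G,\calp)$ is preserved under the proposed action. That this is indeed a group action (compatibility with the group operation and the identity) is then immediate from the analogous fact for the $G$-action on $G/\calp$.

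For the second assertion, the key observation is that each orbit has a representative with a specific normal form with only finitely many possibilities. Given a representative $\{g_1P_1, g_2P_2\} \in E_\tau(G,\calp)$, by definition of infimum distance there exist elements $p_1 \in g_1P_1$ and $p_2 \in g_2P_2$ with $\dist_G(p_1,p_2) \leq \tau$. Translating by $p_1^{-1}$ and using that $p_1 \in g_1P_1$ implies $p_1^{-1}g_1P_1 = P_1$, the pair becomes
\[ \{P_1, \, p_1^{-1}p_2 P_2\}, \]
where $h := p_1^{-1}p_2$ satisfies $\dist_G(e,h) = \dist_G(p_1,p_2) \leq \tau$. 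Thus each $G$-orbit in $E_\tau(G,\calp)$ contains a pair of the form $\{P, hQ\}$ with $P,Q \in \calp$ and $h$ in the closed ball of radius $\tau$ around $e$ in $(G,\dist_G)$.

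The conclusion follows because this set of possible representatives is finite: $\calp$ is finite by assumption, and balls in a finitely generated group with a word metric are finite. I do not anticipate any serious obstacle in this proof; the only subtlety worth stating carefully is the passage from ordered to unordered pairs (which does not affect finiteness), and the fact that the bound $\dist_G(p_1,p_2)\leq \tau$ on infimum distance is actually realized by elements of the cosets, which is automatic because $\dist_G$ takes integer values on $G$ so the infimum is a minimum.
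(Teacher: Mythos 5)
Your proof is correct and follows essentially the same route as the paper's: left-invariance of the word metric for the $G$-set structure, and translating each pair so that the distance is realized at the identity, leaving finitely many representatives because $\calp$ is finite and balls in $(G,\dist_G)$ are finite. Your explicit remark that the infimum distance is attained (since the word metric is integer-valued) is a detail the paper leaves implicit.
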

\begin{proof}
That $E_\tau(G,\calp)$ is a $G$-set follows from the  observation that   $\dist_G( g_1P_1, g_2P_2) =  \dist_G( gg_1P_1, gg_2P_2)$ for any  $\{g_1P_1, g_2P_2\}\in E_\tau(G,\calp)$ and any $g\in G$. On the other hand,  any $G$-orbit  has a representative of the form $\{P_1, g_2P_2\}$ such that $\dist_G(P_1, g_2P_2)=\dist_G(e, g_2)$. Since 
$(G,\dist_G)$ is locally finite and $\calp$ is finite, there are finitely many choices for triples  $(P_1, g_2, P_2) \in \calp\times G\times \calp$ such that $\dist_G(P_1, g_2P_2)=\dist_G(e, g_2)\leq \tau$. Therefore $E_\tau(G,\calp)$ has only finitely many distinct $G$-orbits.  
\end{proof}

The following proposition proves Theorem~\ref{thm:alg&geom}\eqref{item:Gcocompact1skel}.
\begin{proposition}[Geometric characterization of fenced infinite intersections]
A group pair $(G,\calp)$ has fenced infinite intersections if and only if the $G$-action on the $1$-skeleton of $\mathcal{K}(G,\calp)$ is cocompact.   
\end{proposition}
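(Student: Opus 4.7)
The plan is to prove both implications by working with $G$-orbits of edges in $\mathcal{K}(G,\calp)^{(1)}$ and exploiting the left-invariance of the word metric on $G/\calp$. Since $\calp$ is finite, the $G$-action on the vertex set $G/\calp$ always has $|\calp|$ orbits, so cocompactness of the $1$-skeleton is equivalent to finiteness of the set of $G$-orbits of edges. Both directions will use this reformulation.

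For the forward direction, assume $(G,\calp)$ has $\tau$-fenced infinite intersections. By the definition of the coset intersection complex, every edge $\{g_1P_1, g_2P_2\}$ of $\mathcal{K}(G,\calp)^{(1)}$ corresponds to a pair of distinct cosets whose conjugate intersection is infinite, hence by assumption $\dist_G(g_1P_1,g_2P_2) \leq \tau$. I would split the edge set into those with strictly positive infimum distance and those with infimum distance zero. Edges of the first kind lie in $E_\tau(G,\calp)$, which has finitely many $G$-orbits by Lemma~\ref{lem:tauEdges}. For an edge $\{g_1P_1,g_2P_2\}$ of the second kind, the cosets share an element $h\in G$; left-translating by $h^{-1}$ produces a $G$-equivalent edge of the form $\{P,P'\}$ with $P,P'\in\calp$, and finiteness of $\calp$ gives finitely many such orbits. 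Combining yields cocompactness.

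For the reverse direction, assume the $G$-action on $\mathcal{K}(G,\calp)^{(1)}$ is cocompact, and pick edge representatives $\{g_1^{(i)}P_1^{(i)}, g_2^{(i)}P_2^{(i)}\}$ for $i=1,\dots,n$ of the finitely many $G$-orbits. Using the trivial containments $g_j^{(i)} \in g_j^{(i)}P_j^{(i)}$, each representative satisfies
\[ \dist_G(g_1^{(i)}P_1^{(i)}, g_2^{(i)}P_2^{(i)}) \leq \dist_G(g_1^{(i)}, g_2^{(i)}) < \infty, \]
so the quantity $\tau := \max_{i} \dist_G(g_1^{(i)}, g_2^{(i)})$ is a finite nonnegative real. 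Since left multiplication preserves infimum distances on $G/\calp$, every edge of the $1$-skeleton has the same infimum distance as its representative, and hence infimum distance at most $\tau$. The only pairs $(g_1P_1,g_2P_2)$ with infinite conjugate intersection that are not edges of the $1$-skeleton are those with $g_1P_1 = g_2P_2$, which trivially have distance zero. Thus $(G,\calp)$ has $\tau$-fenced infinite intersections.

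Both implications are essentially book-keeping via Lemma~\ref{lem:tauEdges} and $G$-invariance of coset distances, so I do not anticipate a serious obstacle. The only subtle point is the forward direction's handling of edges of infimum distance zero, which Lemma~\ref{lem:tauEdges} excludes from $E_\tau$ by the strict inequality $0<\dist_G(g_1P_1,g_2P_2)$; this is resolved by the independent observation that such edges form finitely many $G$-orbits because their endpoints share a group element that can be translated to the identity.
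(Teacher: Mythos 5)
Your proof is correct and follows essentially the same route as the paper: the forward direction embeds the edge set into $E_\tau(G,\calp)$ and invokes Lemma~\ref{lem:tauEdges}, and the reverse direction takes the maximum of $\dist_G$ over finitely many orbit representatives using $G$-invariance. Your explicit handling of edges with $\dist_G(g_1P_1,g_2P_2)=0$ (which the strict inequality in the definition of $E_\tau$ excludes) is a small but genuine point of care that the paper's own proof glosses over, and your resolution --- translating a common element of the two cosets to the identity to land in the finite set of pairs from $\calp$ --- is the right one.
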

\begin{proof}
By definition $\{g_1P_1, g_2P_2\}$ is a $1$-cell of   $\mathcal{K}(G,\calp)$ if and only if the intersection $g_1P_1g_1^{-1}\cap g_2P_2g_2^{-1}$ is infinite. If $(G,\calp)$ has $\tau$-fenced intersections, then the $G$-set of $1$-cells of $\mathcal{K}(G,\calp)$ is a $G$-subset of the $G$-set $E_\tau(G,\calp)$ which has only  finitely many distinct $G$-orbits by  Lemma~\ref{lem:tauEdges}. Since $\calp$ is finite, $\mathcal{K}(G,\calp)$ also has only finitely many distinct $G$-orbits of $0$-cells, and so its $1$-skeleton is $G$-cocompact.   Conversely, if there are finitely many $G$-orbits of $1$-cells, then we let $\tau$ be the  maximum of $\dist_G( g_1P_1, g_2P_2)$ over all possible $1$-cells $\{g_1P_1, g_2P_2\}$ and observe that $(G,\calp)$ has $\tau$-fenced infinite intersections.   
\end{proof}

The following theorem proves Theorem~\ref{thm:GeomProps}\eqref{item:fencedint}.
\begin{theorem}\label{thme:FIIproperty}
Having fenced infinite intersections is a quasi-isometry invariant of group pairs.     
\end{theorem}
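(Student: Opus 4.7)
The plan is to use Lemma~\ref{lem:simplices} (which says the induced map $\dot q$ preserves the property that conjugates of peripherals have infinite intersection) together with the metric bound in Proposition~\ref{propF:InducedDot-q-03}(4) to transport the fenced bound across a quasi-isometry of pairs. No appeal to the geometric characterization above the theorem is needed, although it would give an alternative proof via cocompactness of $\mc K(G,\calp)^{(1)}$.

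Specifically, I will assume $q\colon (G,\calp) \to (H,\calq)$ is an $(L,C,M)$-quasi-isometry of pairs and that $(H,\calq)$ has $\tau$-fenced infinite intersections, and show that $(G,\calp)$ has $\tau'$-fenced infinite intersections for the explicit constant $\tau' = L\tau + (C+2M)L$. Given cosets $g_1P_1, g_2P_2 \in G/\calp$ with $g_1P_1g_1^{-1} \cap g_2P_2g_2^{-1}$ infinite, setting $h_iQ_i = \dot q(g_iP_i)$, Lemma~\ref{lem:simplices} delivers that $h_1Q_1h_1^{-1} \cap h_2Q_2h_2^{-1}$ is also infinite. The fenced hypothesis on $(H,\calq)$ then forces $\dist_H(h_1Q_1, h_2Q_2) \leq \tau$, and the second inequality of Proposition~\ref{propF:InducedDot-q-03}(4) converts this into
\[ \dist_G(g_1P_1, g_2P_2) \leq L\cdot\dist_H(\dot q(g_1P_1), \dot q(g_2P_2)) + (C+2M)L \leq L\tau + (C+2M)L = \tau', \]
which is exactly the required bound. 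Symmetry, via a quasi-inverse of $q$, promotes this to the two-sided statement and yields quasi-isometry invariance.

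I do not anticipate any serious obstacle in this argument; it is essentially a one-step transport along $\dot q$. The only minor subtlety is that $\dot q$ need not be injective, so $h_1Q_1$ and $h_2Q_2$ may coincide. This case is harmless, since $\dist_H(h_1Q_1, h_2Q_2) = 0 \leq \tau$ and the bound from Proposition~\ref{propF:InducedDot-q-03}(4) still applies unchanged, absorbing the collapse into the additive constant $(C+2M)L$.
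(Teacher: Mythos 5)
Your argument is essentially identical to the paper's proof: both apply Lemma~\ref{lem:simplices} to transport the infinite-intersection condition along $\dot q$, invoke the $\tau$-fenced hypothesis on $(H,\calq)$, and then use the distance inequality from Proposition~\ref{propF:InducedDot-q-03}(4) to obtain the same constant $L\tau+(C+2M)L$. The remark about possible non-injectivity of $\dot q$ is a harmless extra observation that the paper does not need to make explicit.
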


\begin{proof}
Suppose that $q\colon (G,\calp) \to (H,\calq)$ is a $(L,C,M)$-quasi-isometry of pairs and $(H,\calq)$ has $\tau$-fenced infinite intersections. Consider an induced function $\dot q\colon G/\calp \to H/\calq$ given by  Proposition~\ref{propF:InducedDot-q-03}.

Let $g_1P_1, g_2P_2 \in G/\calp$ be such that $g_1P_1g_1^{-1} \cap g_2P_2g_2^{-1}$ is an infinite subgroup, and let $h_iQ_i=\dot q(g_iP_i)$ for $i=1,2$. 
By Lemma~\ref{lem:simplices}, $h_1Q_1h_1^{-1}\cap h_2Q_2h_2^{-1}$ is infinite. Since $(H,\calq)$ has $\tau$-fenced infinite intersections, we have that $\dist_H(h_1Q_1,h_2Q_2)\leq \tau$. By  Proposition~\ref{propF:InducedDot-q-03}, we have that 
$\dist_G(g_1P_1, g_2P_2)\leq L\tau + (C+2M)L$. Hence $(G,\calp)$ has $(L\tau + (C+2M)L)$-fenced infinite intersections. 
\end{proof}

\subsection{Networks and thickness}\label{sec:thickness}

\begin{definition}[$\tau$-Network]
Let $X$ be a metric space and $\mathcal{L}$ a collection of subsets of $X$. Given $\tau\geq0$, the space $X$ is a $\tau$-network with respect to $\mathcal{L}$ if:
\begin{enumerate}
\item  $X=\bigcup_{L\in \mathcal{L}} \mathcal{N}_\tau(L)$, and
\item  For any $L,L'\in \mathcal{L}$ there exists a sequence  $L_1 = L, L_2, \ldots ,  L_n=L  $ of elements of $\mathcal{L}$ such that 
$\mathcal{N}_\tau (L_i)\cap \mathcal{N}_\tau (L_{i+1})$ is an unbounded subspace for all $i$.
\end{enumerate}
\end{definition}

\begin{definition}
A group pair $(G,\calp)$ is a $\tau$-network if $\calp$ is non-empty and $(G,\dist_G)$ is a $\tau$-network with respect to $G/\calp$.  
\end{definition}

\begin{definition}[The $\tau$-coset intersection complex $\mathcal{K}_\tau(G,\calp)$]
Let $(G,\calp)$ be a group pair and let $\tau\geq0$. The \emph{$\tau$-coset intersection complex} $\mathcal{K}_\tau(G,\calp)$ is the simplicial $G$-complex with vertex set $G/\calp$ and  such that a finite subset $\{g_1P_1,\ldots , g_kP_k\}$ of $G/\calp$ defines a simplex if and only if $\bigcap_{i=1}^k\mathcal{N}_\tau(g_iP_i)$ is unbounded in $(G,\dist_G)$.
\end{definition}

The $\tau$-coset intersection complexes define a filtration indexed by $\RR_+$ of the coset intersection complex. The next lemma shows that each term in this filtration is a $G$-complex with cocompact $1$-skeleton.

\begin{lemma}
Let $(G,\calp)$ be a group pair and $\tau\geq0$.
\begin{enumerate}
\item The  complex $\mathcal{K}_\tau(G,\calp)$ is a $G$-subcomplex of $\mathcal{K}(G,\calp)$.
\item The $1$-skeleton of $\mathcal{K}_\tau(G,\calp)$ is $G$-cocompact.
\end{enumerate}
\end{lemma}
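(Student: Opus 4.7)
The plan is to dispatch each part by reducing to results already in the paper. For part (1), the task is to verify both that every simplex of $\mathcal{K}_\tau(G,\calp)$ is a simplex of $\mathcal{K}(G,\calp)$ and that $\mathcal{K}_\tau(G,\calp)$ is closed under the $G$-action and under taking faces. The first of these is an immediate application of the implication $(2) \Rightarrow (1)$ in Lemma~\ref{prop:GeometryCosetIntersectionComplex2}: if $\{g_1P_1,\ldots,g_kP_k\}$ is a simplex of $\mathcal{K}_\tau(G,\calp)$ then $\bigcap_{i=1}^k \mathcal{N}_\tau(g_iP_i)$ is unbounded, so in particular $\bigcap_{i=1}^k \mathcal{N}_r(g_iP_i)$ is unbounded for some $r$, hence $\bigcap_{i=1}^k g_iP_ig_i^{-1}$ is infinite and the set is a simplex of $\mathcal{K}(G,\calp)$. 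Closure under faces is trivial, since shrinking the collection only enlarges the intersection of $\tau$-neighborhoods. For $G$-invariance, the $G$-action on itself is by isometries of $\dist_G$, so $g\cdot\mathcal{N}_\tau(A)=\mathcal{N}_\tau(gA)$ for every $A\subseteq G$; consequently the unboundedness condition defining simplices is preserved by the $G$-action.

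For part (2), the cocompactness of the $1$-skeleton comes down to two observations. First, the vertex set $G/\calp$ has exactly $|\calp|$ orbits, so we only need to bound the number of $G$-orbits of $1$-cells. Second, I will argue that every edge $\{g_1P_1, g_2P_2\}$ of $\mathcal{K}_\tau(G,\calp)$ satisfies
\[ \dist_G(g_1P_1, g_2P_2) \leq 2\tau. \]
Indeed, unboundedness of $\mathcal{N}_\tau(g_1P_1)\cap\mathcal{N}_\tau(g_2P_2)$ implies that this intersection is non-empty, so picking $x$ in it and choosing $p_i\in g_iP_i$ with $\dist_G(x,p_i)\leq\tau$ gives $\dist_G(p_1,p_2)\leq 2\tau$. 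Once this is established, the argument proving Lemma~\ref{lem:tauEdges} applies verbatim with $\tau$ replaced by $2\tau$: any edge has a $G$-translate of the form $\{P_1, hP_2\}$ with $P_1,P_2\in\calp$ and $\dist_G(e,h)\leq 2\tau$, and the local finiteness of $(G,\dist_G)$ together with finiteness of $\calp$ leaves only finitely many such triples $(P_1,h,P_2)$.

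There is no substantive obstacle here; the lemma is essentially a bookkeeping consequence of Lemma~\ref{prop:GeometryCosetIntersectionComplex2} and the argument of Lemma~\ref{lem:tauEdges}. The one mildly subtle point to not overlook is that unboundedness of $\mathcal{N}_\tau(g_1P_1)\cap\mathcal{N}_\tau(g_2P_2)$ is strictly stronger than what is needed to bound $\dist_G(g_1P_1,g_2P_2)$ by $2\tau$ (mere non-emptiness suffices), and it is precisely this gap that ensures cocompactness of the $1$-skeleton even though the full complex $\mathcal{K}_\tau(G,\calp)$ need not be cocompact in higher dimensions.
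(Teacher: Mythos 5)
Your proof is correct and follows essentially the same route as the paper: part (1) via Lemma~\ref{prop:GeometryCosetIntersectionComplex2} together with $G$-equivariance of $\dist_G$, and part (2) by embedding the edge set into a $G$-set of the form $E_\tau(G,\calp)$ and invoking the finiteness of orbits from Lemma~\ref{lem:tauEdges}. Your observation that non-emptiness of $\mathcal{N}_\tau(g_1P_1)\cap\mathcal{N}_\tau(g_2P_2)$ only yields $\dist_G(g_1P_1,g_2P_2)\leq 2\tau$ (so one should use $E_{2\tau}$ rather than $E_\tau$) is in fact slightly more careful than the paper, which asserts containment in $E_\tau$ itself; the conclusion is unaffected either way.
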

\begin{proof}
That $\mathcal{K}_\tau(G,\calp)$ is a $G$-subcomplex
of $\mathcal{K}(G,\calp)$ is  a consequence of the metric $\dist_G$ being $G$-equivariant and  Lemma~\ref{prop:GeometryCosetIntersectionComplex2}, which characterizes simplices of the coset intersection complex $\mathcal{K}(G,\calp)$ geometrically.  For the second statement, observe that the $G$-set of edges of $\mathcal{K}_\tau(G,\calp)$ is a $G$-subset of the $G$-set $E_\tau(G,\calp)$ of Lemma~\ref{lem:tauEdges}, which has only finitely many distinct $G$-orbits. Hence  of $\mathcal{K}_\tau(G,\calp)$ has only finitely many $G$-orbits of edges and, since $\calp$ is finite, only finitely many $G$-orbits of vertices. Therefore the $1$-skeleton of $\mathcal{K}_\tau(G,\calp)$ is $G$-cocompact.
\end{proof}

\begin{proposition} \label{prop:NetworkChar}
Let $(G,\calp)$ be a group pair with $\calp$ a non-empty collection.
Then  $(G,\calp)$ is a $\tau$-network  if and only if $\mathcal{K}_\tau(G,\calp)$ is  connected.
\end{proposition}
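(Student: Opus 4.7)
The plan is to observe that the two conditions in the definition of a $\tau$-network translate directly into properties of $\mathcal{K}_\tau(G,\calp)$, with condition (1) being automatic and condition (2) being precisely connectedness of the $1$-skeleton. Since connectedness of a simplicial complex coincides with connectedness of its $1$-skeleton, this will suffice.

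First, I would dispose of condition (1) of the $\tau$-network definition. Given any $g \in G$ and any $P \in \calp$ (which exists by the non-emptiness assumption on $\calp$), the element $g$ lies in the coset $gP \in G/\calp$, so $\dist_G(g, gP) = 0$. Hence
\[ G \;=\; \bigcup_{L\in G/\calp} L \;\subseteq\; \bigcup_{L \in G/\calp} \mathcal{N}_\tau(L), \]
and condition (1) holds automatically, for every $\tau \geq 0$, as soon as $\calp$ is non-empty.

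Second, I would identify condition (2) with connectedness of the $1$-skeleton of $\mathcal{K}_\tau(G,\calp)$. By the definition of the $\tau$-coset intersection complex, a pair $\{L, L'\} \subseteq G/\calp$ spans a $1$-simplex precisely when $\mathcal{N}_\tau(L) \cap \mathcal{N}_\tau(L')$ is unbounded in $(G,\dist_G)$. Hence a sequence $L = L_1, L_2, \ldots, L_n = L'$ in $G/\calp$ with $\mathcal{N}_\tau(L_i) \cap \mathcal{N}_\tau(L_{i+1})$ unbounded for each $i$ is exactly a combinatorial edge-path in $\mathcal{K}_\tau(G,\calp)^{(1)}$ from $L$ to $L'$, and conversely. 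Thus condition (2) of the $\tau$-network definition holds if and only if $\mathcal{K}_\tau(G,\calp)^{(1)}$ is path connected, which in turn is equivalent to $\mathcal{K}_\tau(G,\calp)$ being connected.

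Combining these two steps establishes both implications of the proposition. There is no real obstacle here: the argument is essentially a matter of unpacking definitions, and the one subtlety worth flagging is the role of the non-emptiness hypothesis on $\calp$, which is used solely to guarantee condition (1) of the network definition via the trivial containment $g \in gP$.
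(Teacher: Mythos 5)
Your proposal is correct and follows essentially the same route as the paper's proof: both arguments note that condition (1) of the network definition is automatic once $\calp$ is non-empty (since each $g$ lies in some coset $gP$), and that condition (2) is exactly the existence of edge-paths in $\mathcal{K}_\tau(G,\calp)$, whose edges are by definition the pairs with unbounded intersection of $\tau$-neighborhoods. No further comment is needed.
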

\begin{proof}
A sequence $g_1P_1,  \ldots ,  g_nP_n$ in $G/\calp$ is the sequence of vertices of an edge-path in $\mathcal{K}_\tau(G,\calp)$ if and only if   $\mathcal{N}_\tau (g_iP_i)\cap \mathcal{N}_\tau (g_{i+1}P_{i+1})$ is an unbounded subspace for  $1\leq i <n$.  Therefore $\mathcal{K}_\tau(G,\calp)$ is connected if and only if for any two $gP,g'P'$ there is a sequence $gP=g_1P_1, g_2P_2  \ldots ,  g_nP_n=g'P'$ in $G/\calp$ such that $\mathcal{N}_\tau (g_iP_i)\cap \mathcal{N}_\tau (g_{i+1}P_{i+1})$ is unbounded for $1\leq i<n$. 

Since $\calp$ is non-empty, it is always the case that $G=\bigcup_{L\in G/\calp} \mathcal{N}_{\tau}(L)$. Therefore $(G,\dist_G)$ is a $\tau$-network with respect to $G/\calp$ if and only if 
$\mathcal{K}_\tau(G,\calp)$ is connected.  
\end{proof}

A  consequence of Proposition~\ref{prop:NetworkChar} is the following  characterization of being a network, which proves Theorem~\ref{thm:alg&geom}\eqref{item:connected}.
\begin{corollary} \label{lem:NetworkChar2}
  Let $(G,\calp)$ be a group pair.  Then $(G,\dist_G)$ is a network with respect to the collection $G/\calp$ if and only if $\calp$ is non-empty and $\mathcal K(G,\calp)$ has a connected and $G$-cocompact subgraph $\Delta$ with vertex set $G/\calp$.   
\end{corollary}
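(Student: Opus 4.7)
My plan is to derive this corollary as an almost immediate consequence of Proposition~\ref{prop:NetworkChar}, using the subcomplexes $\mathcal{K}_\tau(G,\calp)$ as the bridge between the network condition and subgraphs of $\mathcal{K}(G,\calp)$. The key observation is that for any $\tau\geq 0$, the $1$-skeleton of $\mathcal{K}_\tau(G,\calp)$ is a $G$-cocompact subgraph of $\mathcal{K}(G,\calp)$ with vertex set $G/\calp$ (by the lemma preceding Proposition~\ref{prop:NetworkChar}), so one direction will be essentially tautological once the right $\tau$ is identified.

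For the forward direction, I would assume $(G,\dist_G)$ is a $\tau$-network with respect to $G/\calp$; note that the covering condition $G=\bigcup_{L\in G/\calp}\mathcal{N}_\tau(L)$ together with $G\neq\emptyset$ already forces $\calp$ to be non-empty. Proposition~\ref{prop:NetworkChar} then gives that $\mathcal{K}_\tau(G,\calp)$ is connected, and so $\Delta := \mathcal{K}_\tau(G,\calp)^{(1)}$ is a connected $G$-cocompact subgraph of $\mathcal{K}(G,\calp)$ with vertex set $G/\calp$, as required.

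For the converse, the substantive step is to find a single $\tau$ with $\Delta \subseteq \mathcal{K}_\tau(G,\calp)^{(1)}$. Since $\Delta$ is $G$-cocompact, there are finitely many $G$-orbits of edges; by translating a representative of each orbit, I can take them to have the form $\{P_i, h_i Q_i\}$ with $P_i, Q_i \in \calp$ and $h_i \in G$. Because each such pair is an edge of $\mathcal{K}(G,\calp)$, the subgroup $P_i \cap h_i Q_i h_i^{-1}$ is infinite. A direct computation shows that any $k$ in this subgroup lies in $P_i$ and satisfies $kh_i \in h_iQ_i$, hence $\dist_G(k, h_iQ_i)\leq \dist_G(1,h_i)$; thus $P_i \cap h_i Q_i h_i^{-1} \subseteq \mathcal{N}_{\dist_G(1,h_i)}(P_i)\cap \mathcal{N}_{\dist_G(1,h_i)}(h_iQ_i)$. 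Setting $\tau := \max_i \dist_G(1,h_i)$, each representative edge is an edge of $\mathcal{K}_\tau(G,\calp)$, and by $G$-equivariance of $\dist_G$ the same holds for every edge in its $G$-orbit. Hence $\Delta\subseteq \mathcal{K}_\tau(G,\calp)^{(1)}$, so $\mathcal{K}_\tau(G,\calp)$ is connected, and Proposition~\ref{prop:NetworkChar} yields that $(G,\calp)$ is a $\tau$-network.

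The only subtle point I anticipate is precisely this \emph{uniform} choice of $\tau$: connectedness of $\Delta$ only tells us that each individual edge $\{g_1P_1,g_2P_2\}$ is witnessed by some infinite intersection of conjugates, with no a priori uniform control on how far apart the cosets sit in $(G,\dist_G)$. Fortunately, $G$-cocompactness reduces the problem to finitely many orbit representatives, and the simple subgroup-containment estimate above then provides a uniform bound on each representative, closing the argument.
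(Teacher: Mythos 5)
Your proof is correct and follows essentially the same route as the paper: the forward direction is Proposition~\ref{prop:NetworkChar} applied to $\mathcal{K}_\tau(G,\calp)$, and the converse uses $G$-cocompactness to reduce to finitely many orbit representatives of edges, extracts a uniform $\tau$, and feeds the resulting connected $\mathcal{K}_\tau(G,\calp)$ back into Proposition~\ref{prop:NetworkChar}. The only cosmetic difference is that where the paper cites Lemma~\ref{prop:GeometryCosetIntersectionComplex2} to get a $\tau_i$ for each representative edge, you re-derive the explicit containment $P_i\cap h_iQ_ih_i^{-1}\subseteq\mathcal{N}_{\dist_G(1,h_i)}(P_i)\cap\mathcal{N}_{\dist_G(1,h_i)}(h_iQ_i)$ by hand (and then implicitly use that an infinite subgroup is unbounded in the locally finite space $(G,\dist_G)$ to conclude the neighborhood intersection is unbounded), which is exactly the easy half of that lemma.
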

\begin{proof}
If $(G,\calp)$ is a $\tau$-network, then $\calp$ is non-empty, and so Proposition~\ref{prop:NetworkChar} implies that the $1$-skeleton of $\mathcal{K}_\tau(G,\calp)$ is a connected and cocompact $G$-subgraph of $\mathcal K(G,\calp)$ with vertex set $G/\calp$.   

Conversely, suppose that $\calp$ is non-empty and $\Delta$ is a connected and $G$-cocompact subgraph of $\mathcal K(G,\calp)$ with vertex set $G/\calp$.  Let $e_1,\ldots, e_k$ be a collection of representatives of $G$-orbits of edges of $\Delta$. By Lemma~\ref{prop:GeometryCosetIntersectionComplex2}, for each $e_i=\{g_iP_i, g_i'P_i'\}$ there is $\tau_i\geq 0$  such that $\mathcal{N}_{\tau_i}(g_iP_i)\cap \mathcal{N}_{\tau_i}(g_i'P_i')$ is unbounded in $(G,\dist_G)$. Let $\tau=\max\{\tau_1,\ldots ,\tau_k\}$. By $G$-equivariance of $\dist_G$, it follows that for every edge $\{gP,g'P'\}$ of $\Delta$, the set $\mathcal{N}_{\tau}(gP)\cap \mathcal{N}_{\tau}(g'P')$ is unbounded in $(G,\dist)$. It follows that $\Delta$ is a connected subgraph of $\mathcal{K}_\tau(G,\calp)$, and since both graphs have the same vertex set, we have that $\mathcal{K}_\tau(G,\calp)$ is connected. Since $\calp$ is non-empty,  Proposition~\ref{prop:NetworkChar} implies that $(G,\calp)$ is a $\tau$-network.
\end{proof}

\begin{lemma}\label{lem:dotqsigmatau}
Let $q\colon (G,\calp) \to (H,\calq)$ be a  quasi-isometry of group pairs. For every $\sigma\geq0$ there is $\tau_0\geq0$ such that for every $\tau\geq\tau_0$  the simplicial map $\dot q\colon \mathcal{K}(G,\calp) \to \mathcal{K}(H,\calq)$ restricts to a simplicial map  $\dot q\colon \mathcal{K}_\sigma(G,\calp) \to \mathcal{K}_\tau(H,\calq)$.
\end{lemma}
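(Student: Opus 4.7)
The plan is to unpack the definition of the filtration $\mathcal{K}_\sigma$ and then chase an unbounded set through the quasi-isometry. Fix an $(L,C,M)$-quasi-isometry of pairs $q$, and let $\dot q$ be an induced simplicial map (which exists by Proposition~\ref{prop:SimplicalCosetIntersection02}), so that $\Hdist_H(q(gP),\dot q(gP)) \le M$ for every $gP \in G/\calp$. I would set
\[ \tau_0 := L\sigma + C + M, \]
and show that for any $\tau \ge \tau_0$ the map $\dot q$ sends simplices of $\mathcal{K}_\sigma(G,\calp)$ to simplices of $\mathcal{K}_\tau(H,\calq)$.

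First I would take an arbitrary simplex $\{g_1P_1,\ldots, g_kP_k\}$ of $\mathcal{K}_\sigma(G,\calp)$, which by definition means that $\bigcap_{i=1}^k \mathcal{N}_\sigma(g_iP_i)$ is unbounded in $(G,\dist_G)$. For any point $x$ in this intersection, the quasi-isometry inequality gives $\dist_H(q(x), q(g_iP_i)) \le L\sigma + C$, and then the triangle inequality together with $\Hdist_H(q(g_iP_i),\dot q(g_iP_i)) \le M$ yields $\dist_H(q(x), \dot q(g_iP_i)) \le L\sigma + C + M = \tau_0$. Hence $q(x) \in \bigcap_{i=1}^k \mathcal{N}_{\tau_0}(\dot q(g_iP_i))$.

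Next I would use that $q$ is a quasi-isometry of metric spaces to conclude that the image $q\bigl(\bigcap_{i=1}^k \mathcal{N}_\sigma(g_iP_i)\bigr)$ is unbounded in $(H,\dist_H)$. Combining the two previous observations, $\bigcap_{i=1}^k \mathcal{N}_{\tau_0}(\dot q(g_iP_i))$ is unbounded in $(H,\dist_H)$, so $\{\dot q(g_1P_1),\ldots,\dot q(g_kP_k)\}$ is a simplex of $\mathcal{K}_{\tau_0}(H,\calq)$, and a fortiori of $\mathcal{K}_\tau(H,\calq)$ since the filtration is monotone in $\tau$. This shows the restriction has the claimed codomain, and being the restriction of a simplicial map it is itself simplicial.

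I do not anticipate a serious obstacle: the lemma is essentially a bookkeeping exercise on quasi-isometry constants, with $\tau_0 = L\sigma + C + M$ being the natural scaling forced by the $(L,C)$-quasi-isometry constants of $q$ together with the Hausdorff-fudge $M$ from the induced map. The only subtle point to be careful about is that $\dot q$ has already been fixed before $\sigma$ enters the picture, so the same induced map $\dot q$ works simultaneously for all $\sigma$; the constant $\tau_0$ grows linearly in $\sigma$ but does not depend on the particular simplex chosen.
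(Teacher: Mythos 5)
Your proposal is correct and follows essentially the same route as the paper's proof: the same constant $\tau_0 = L\sigma + C + M$, the same chain of neighborhood inclusions from $\bigcap_i \mathcal{N}_\sigma(g_iP_i)$ through $q$ to $\bigcap_i \mathcal{N}_{\tau_0}(\dot q(g_iP_i))$, and the same use of unboundedness being preserved by the quasi-isometry. Your write-up simply spells out the triangle-inequality step and the $\Hdist_H(q(g_iP_i),\dot q(g_iP_i))\leq M$ correction a bit more explicitly than the paper does.
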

\begin{proof}
Let us assume that $q\colon (G,\calp) \to (H,\calq)$ is a $(L,C,M)$-quasi-isometry of pairs. Let $\tau_0=L\sigma+C+M$.
Suppose that $\{g_1P_1, \ldots, g_kP_k\}$ is a simplex of $\mathcal{K}_\sigma(G,\calp)$. It follows that  $\bigcap_{i=1}^k \mathcal{N}_\sigma(g_iP_i)$ is unbounded in $(G,\dist_G)$. Hence $\bigcap_{i=1}^k  \mathcal{N}_{L\sigma+C}(q(g_iP_i))$ is unbounded in $(H,\dist_H)$. As a consequence, $\bigcap_{i=1}^k  \mathcal{N}_{\tau}(\dot q(g_iP_i))$ is unbounded for all $\tau\geq \tau_0$. Therefore   $\{\dot q(g_1P_1), \ldots, \dot q (g_kP_k)\}$ is a simplex of $\mathcal{K}_\tau(H,\calq)$.
\end{proof}

The following theorem proves Theorem~\ref{thm:GeomProps}\eqref{item:network}.
\begin{theorem}\label{thm:network}
Being a network is a quasi-isometry invariant of group pairs.  
\end{theorem}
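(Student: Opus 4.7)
The plan is to use Proposition~\ref{prop:NetworkChar}, which equates being a $\tau$-network with connectedness of the subcomplex $\mathcal{K}_\tau$, and transport the connectedness along the induced simplicial map of a quasi-inverse. Suppose $q\colon(G,\calp)\to(H,\calq)$ is a quasi-isometry of pairs and that $(H,\calq)$ is a $\sigma$-network. Note first that $\calp$ is necessarily non-empty: since $\calq\neq\emptyset$, pick any $Q\in\calq$; the definition of a quasi-isometry of pairs produces some $gP\in G/\calp$ whose image is at finite Hausdorff distance from $Q$, forcing $\calp\neq\emptyset$.

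Fix a quasi-inverse $p\colon (H,\calq)\to(G,\calp)$ and an induced simplicial map $\dot p\colon \mathcal{K}(H,\calq)\to\mathcal{K}(G,\calp)$, which exists by Proposition~\ref{prop:SimplicalCosetIntersection02}. Applying Lemma~\ref{lem:dotqsigmatau} to $p$ with source parameter $\sigma$, I obtain some $\tau_1\geq 0$ such that $\dot p$ restricts to a simplicial map $\dot p\colon \mathcal{K}_\sigma(H,\calq)\to \mathcal{K}_{\tau_1}(G,\calp)$. By Proposition~\ref{prop:NetworkChar}, $\mathcal{K}_\sigma(H,\calq)$ is connected, and hence its simplicial image is a connected subcomplex of $\mathcal{K}_{\tau_1}(G,\calp)$.

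To handle the remaining vertices of $\mathcal{K}_{\tau_1}(G,\calp)$, I invoke the quasi-surjectivity of $\dot p$ as a quasi-isometry of coset spaces (Proposition~\ref{prop:QIofcosetspaces}): there is a constant $M\geq 0$ such that every $gP\in G/\calp$ satisfies $\Hdist_G(\dot p(hQ), gP)\leq M$ for some $hQ\in H/\calq$. Set $\tau=\max\{\tau_1, M\}$. For such a pair, either $gP=\dot p(hQ)$, or the Hausdorff bound together with the fact that $gP$ is an infinite and therefore unbounded subset of the finitely generated group $G$ gives
\[ gP\ \subseteq\ \mathcal{N}_\tau(gP)\cap \mathcal{N}_\tau(\dot p(hQ)), \]
so this intersection is unbounded in $(G,\dist_G)$ and $\{gP,\dot p(hQ)\}$ is an edge of $\mathcal{K}_\tau(G,\calp)$ by definition. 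Thus every vertex of $\mathcal{K}_\tau(G,\calp)$ lies in or is adjacent to the connected subcomplex $\dot p(\mathcal{K}_\sigma(H,\calq))$, so $\mathcal{K}_\tau(G,\calp)$ is connected, and a second application of Proposition~\ref{prop:NetworkChar} shows that $(G,\calp)$ is a $\tau$-network.

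The only non-routine step is ensuring that the $\tau$-parameter can be chosen uniformly under the induced map, and this is precisely what Lemma~\ref{lem:dotqsigmatau} provides; once that is in hand, the quasi-surjectivity argument goes through cleanly because cosets of infinite subgroups are automatically unbounded in the word metric, so bounded Hausdorff distance between two such cosets immediately yields the unbounded intersection of $\tau$-neighborhoods required for an edge of $\mathcal{K}_\tau(G,\calp)$.
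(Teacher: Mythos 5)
Your proof is correct and takes essentially the same route as the paper's: both rest on Proposition~\ref{prop:NetworkChar}, Lemma~\ref{lem:dotqsigmatau}, and the observation that coarse surjectivity of the induced coset map together with $\tau\geq M$ supplies the extra edges joining arbitrary vertices to the image of the connected subcomplex. The only differences are cosmetic — you run the argument through the quasi-inverse and phrase connectivity as ``every vertex is adjacent to a connected image'' rather than splicing explicit edge-paths — and your explicit check that $\calp\neq\emptyset$ is a nice detail the paper leaves implicit.
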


\begin{proof}
Suppose that $q\colon (G,\calp) \to (H,\calq)$ is a $(L,C,M)$-quasi-isometry of pairs and $(G,\calp)$ is a $\sigma$-network. By Lemma~\ref{lem:dotqsigmatau}, there is $\tau\geq0$ and an induced simplicial map  $\dot q\colon \mathcal{K}_\sigma(G,\calp) \to \mathcal{K}_{\tau}(H,\calq)$. By increasing $\tau$ if necessary, assume that $\tau\geq M$.

We claim that $\mathcal{K}_{\tau}(H,\calq)$ is connected. 
Let $hQ,h'Q' \in H/\calq$.  
Proposition~\ref{propF:InducedDot-q-03} implies that   there are $gP,g'P' \in G/\calp$ such that $\Hdist_H(\dot q(gP), hQ)\leq M$ and $\Hdist_H(\dot q(g'P'), h'Q')\leq M$ and therefore $\{\dot q(gP), hQ\}$ and  $\{\dot q(g'P'), h'Q'\}$ are edges of $\mathcal{K}_{\tau}(H,\calq)$.  Since $(G,\calp)$ is a $\tau$-network, Proposition~\ref{prop:NetworkChar} implies that there is an edge-path in $\mathcal{K}_\sigma(G,\calp)$ from $gP$ to $g'P'$. Since $\dot q\colon \mathcal{K}_\sigma(G,\calp) \to \mathcal{K}_{\tau}(H,\calq)$ is a simplicial map, there is a path from $\dot q(gP)$ to $\dot q(g'P')$. By considering the edges $\{q(gP), hQ\}$ and  $\{q(g'P'), h'Q'\}$  of $\mathcal{K}_{\tau}(H,\calq)$, it follows that there is path in $\mathcal{K}_{\tau}(H,\calq)$ from $hQ$ to $h'Q'$. This shows that $\mathcal{K}_{\tau}(H,\calq)$ is connected, and Proposition~\ref{prop:NetworkChar} then implies that $(H,\calq)$ is a $\tau$-network.
\end{proof}

Networks are essential in the notion of \textit{thickness}, which was introduced by Behrstock, Dru\c{t}u, and Mosher in \cite{BDM09}.  We first define thickness for a metric space, and then, as will be most relevant for this paper, define thickness for a group pair.  

\begin{definition}[{\cite[Defintion~7.1]{BDM09}}]
    A collection of metric spaces $\mathcal Y$ is \textit{uniformly thick of order 0} if $\mathcal Y$ is uniformly unconstricted, that is, if there is a constant $c\geq 0$ such that every point in every space $Y\in \mathcal Y$ is at distance at most $c$ from a bi-infinite geodesic in $Y$ and, roughly,  every sequence of spaces $Y_i\in \mathcal Y$ has  an asymptotic cone without a cut point.  See \cite[Definition~3.4]{BDM09} for a precise definition.
    
    The space $X$ is \textit{thick of order at most $n$} with respect to a collection of subspaces $\mathcal Y$ if there is a $\tau\geq 0$ such that $\mathcal Y$ forms a $\tau$--network in $X$ and each $Y\in \mathcal Y$ is thick of order at most $n-1$ when endowed with the subspace metric from $X$.  If $X$ is thick or order at most $n$ but not thick of order at most $n-1$,  we say $X$ is \textit{thick of order $n$}.  If the order of thickness is not important,  we simply say $X$ is \textit{thick}.

    Finally, we say a group pair $(G,\calp)$ is \textit{thick of order $n$} if $(G,d_G)$ is thick of order $n$ with respect to the collection of subspaces $G/\calp$.
\end{definition}

This definition of thickness is also called \textit{metric thickness} in \cite{BDM09}.  Our definition of thickness for a group pair $(G,\calp)$ is slightly stronger than simply saying that $G$ is (metrically) thick of order $n$, because we require the subspaces to be (cosets of) subgroups.  Behrstock, Dru\c{t}u, and Mosher also define the notion of \textit{algebraic thickness} for a group, which requires that the subspaces in the network be subgroups that satisfy certain additional algebraic conditions. 
 These additional conditions are not necessarily preserved by quasi-isometries of pairs, so we do not consider algebraic thickness in this paper.

The following corollary of Theorem~\ref{thm:network} proves Theorem~\ref{thm:GeomProps}\eqref{item:thickness}.
\begin{corollary}\label{cor:thick1}
Being thick of order one is a quasi-isometry invariant of group pairs.
\end{corollary}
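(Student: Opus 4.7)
The plan is to combine three ingredients: the quasi-isometry invariance of the network property established in Theorem~\ref{thm:network}; the quasi-isometry invariance of uniform unconstrictedness under uniform quasi-isometries of collections of metric spaces, from \cite{DS05, BDM09}; and the finiteness of the peripheral structures $\calp$ and $\calq$. Suppose $q\colon(G,\calp)\to(H,\calq)$ is an $(L,C,M)$-quasi-isometry of group pairs and $(G,\calp)$ is thick of order one. My goal is to show that $(H,\calq)$ is thick of order one as well.

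First, Theorem~\ref{thm:network} produces some $\tau'\geq 0$ such that $H/\calq$ forms a $\tau'$-network in $(H,\dist_H)$. Next, for each coset $hQ\in H/\calq$, the definition of a quasi-isometry of pairs supplies $gP\in G/\calp$ with $\Hdist_H(q(gP),hQ)\leq M$, and then the restriction $q|_{gP}$ is a quasi-isometric embedding whose image is cobounded in $hQ$, with quasi-isometry constants depending only on $(L,C,M)$. Since $\calp$ and $\calq$ are finite and every coset is isometric, via left translation, to a subgroup in its family, there are only finitely many isometry classes of cosets, and so the collections $G/\calp$ and $H/\calq$ (endowed with the subspace metrics from $G$ and $H$) are uniformly quasi-isometric. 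By hypothesis $G/\calp$ is uniformly unconstricted; since uniform unconstrictedness is preserved under uniform quasi-isometries of collections (see \cite[Section~3]{BDM09} and \cite{DS05}), $H/\calq$ is uniformly unconstricted as well, and hence uniformly thick of order zero.

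Combining the network property with uniform thickness of order zero of $H/\calq$ shows that $(H,\calq)$ is thick of order at most one. To rule out that it is thick of order zero, observe that $(G,\calp)$ being thick of order one implies that $G$ itself is not unconstricted, and since unconstrictedness of a single space is a quasi-isometry invariant, $H$ is not unconstricted either. Thus $(H,\calq)$ is thick of order exactly one. The main obstacle is to carefully track the uniformity of the quasi-isometries produced between the cosets in $G/\calp$ and $H/\calq$, so that the external invariance theorem applies to collections rather than to single spaces; the essential external input is the quasi-isometry invariance of uniform unconstrictedness, which is ultimately rooted in the quasi-isometry invariance of asymptotic cones and the detection of cut points therein.
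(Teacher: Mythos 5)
Your proof is correct and follows essentially the same route as the paper's: invoke Theorem~\ref{thm:network} for the network property, transfer uniform unconstrictedness of $G/\calp$ to $H/\calq$ via the uniform quasi-isometries between corresponding cosets (the paper cites \cite[Remark~3.5]{BDM09} for this), and rule out order zero using that $H$ is quasi-isometric to $G$ and $G$ is not thick of order $0$. The only difference is that you spell out the uniformity bookkeeping in more detail than the paper does.
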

\begin{proof}
    Let $q\colon (G,\calp)\to (H,\calq)$ be a quasi-isometry of pairs. Suppose that $(G,\calp)$ is thick of order one.  By the definition of thickness, $G/\calp$ is uniformly thick of order 0, and thus uniformly unconstricted. Since each element of $H/\calq$ is uniformly quasi-isometric to an element of $G/\calp$, it follows that $H/\calq$ is also uniformly thick of order 0; see also \cite[Remark~3.5]{BDM09}.  By Theorem~\ref{thm:network}, $(H,\calq)$ is $\tau$--network for some $\tau\geq 0$.  Therefore, $(H,\calq)$ is thick of order at most one.  As $H$ is quasi-isometric to $G$, and $G$ is not thick of order 0 by assumption, $H$ is thick of order one. 
\end{proof}

A collection of subgroups $\calp$ is qi-characteristic if the collection $G/\calp$ is preserved up  to uniform finite Hausdorff distance by every quasi-isometry of $G$, see~\cite{EMP21-qichar} for a precise definition.
\begin{question}\label{q:qichar}
Let $G$ be a finitely generated group which is thick of order $n$.  Is there a finite qi-characteristic collection of subgroups which respect to which $G$ is thick of order $n$?  
\end{question}

A positive answer to this question is first step towards using induction to prove that the class of group pairs that are thick of order $n$ is geometric.    The following example gives a blueprint for how such an argument might go.

\begin{example}
 Suppose $(G,\calp)$ is thick of order 2, that $G$ and each $P\in\calp$ is finitely generated, and that Question~\ref{q:qichar} has a positive answer.  If $q\colon (G,\calp)\to (H,\calq)$ is a quasi-isometry of pairs, then $(H,\calq)$ is thick of order 2.

   To prove this, note that by Theorem~\ref{thm:network}, the collection $H/\calq$ forms a network in $H$, and thus it suffices to show that each $Q\in\calq$ is thick of order 1.  Let $Q\in\calq$.  Then $Q$ is at finite Hausdorff distance from $q(gP)$ for some $gP\in \calp$.  By \cite[Proposition~2.16]{EMP21-qichar} there is a quasi-isometry $q'\colon P \to Q$.  By assumption,  $P$ is a finitely generated group that is thick of order one. A positive answer to  Question~\ref{q:qichar} thus gives a finite qi-characteristic collection of  subgroups $K_1,\dots, K_m\leq P$ so that $(P,\{K_i\})$ is thick of order one.  Since this collection is qi-characteristic, there is a collection $\{J_i\}$ of subgroups of $Q$ such that $q'\colon (P,\{K_i\})\to (Q,\{J_i\})$ is a quasi-isometry of pairs.  It follows from Corollary~\ref{cor:thick1} that $(Q,\{J_i\})$ is thick of order 1, which completes the argument.
 \end{example}




\section{Two dimensional right-angled Artin groups}\label{sec:RAAGs} 

Let $G$ be a right-angled Artin group whose defining graph $\Gamma$ has no vertices of valence $<2$ and no triangles.  Abusing notation, we identify the set of vertices of $\Gamma$ with the standard generating set of $G$. A subgroup of $G$ generated by a collection of standard generators is called a \emph{standard subgroup}.
For a standard generator $a$, let \[ \mathsf{Link}(a)=\{b\mid \text{$b$ is a standard generator, $a\neq b$, and $a$ and $b$ commute}\} \]
and 
\[ \mathsf{Star}(a)=\mathsf{Link}(a)\cup \{a\}.\]
A subgroup of the form $\langle \mathsf{Star}(a) \rangle$ is called a \emph{standard star subgroup.}

In this section, we will focus on the  collection  $\calp$  of maximal standard abelian subgroups, so that the elements in $\calp$ are subgroups generated by the vertices of maximal cliques in $\Gamma$.  Our assumption that $\Gamma$ has no triangles and no vertices of valence $<2$ implies that maximal cliques in $\Gamma$ are edges, and so maximal standard abelian subgroups are  $\mathbb Z^2$--subgroups of the form 
\[ \calp=\{\langle a,b \rangle  \mid \text{$a$ and $b$ are commuting standard generators and $a\neq b$} \}.\]

 The \emph{extension graph} $E(\Gamma)$ of Kim and Koberda~\cite{KK13} is the $G$-graph with vertex set $\{v^g\mid v\in V(\Gamma),\ g\in G\}$ such that two distinct vertices $u^g$ and $v^h$ are adjacent if and only if they commute in $G$. In this section we prove the following result.

\begin{theorem}
\label{thm:ExtensionComplex}
    Suppose $\Gamma$ is connected, has no vertices of valence $<2$, and no triangles. Then there is a quasi-isometry between $\mathcal{K}(G,\calp)$ and the extension graph $E(\Gamma)$.
\end{theorem}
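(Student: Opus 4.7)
The plan is to show that the 1-skeleton of $\mathcal{K}(G,\calp)$ is isomorphic to the line graph $L(E(\Gamma))$ of the extension graph, and then to invoke the standard quasi-isometry between a graph and its line graph (valid whenever the graph has no isolated vertices). Corollary~\ref{cor:QIofComplexes} then reduces $\mathcal{K}(G,\calp)$ to its 1-skeleton, completing the argument.

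First, I would construct a $G$-equivariant bijection $\Phi$ from the vertex set of $\mathcal{K}(G,\calp)$ to the edge set of $E(\Gamma)$ by
\[\Phi(g\langle a,b\rangle) = \{a^g, b^g\},\]
where $\{a,b\}$ is an edge of $\Gamma$. Well-definedness and bijectivity rest on the Servatius centralizer theorem for RAAGs: the centralizer of a standard generator $v$ equals $\langle\mathsf{Star}(v)\rangle$, so $v^g=v^{g'}$ if and only if $g^{-1}g'\in\langle\mathsf{Star}(v)\rangle$. Well-definedness then holds since $\langle a,b\rangle\subseteq\langle\mathsf{Star}(a)\rangle\cap\langle\mathsf{Star}(b)\rangle$. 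Injectivity uses the triangle-free hypothesis: $\mathsf{Star}(a)\cap\mathsf{Star}(b)=\{a,b\}$ on an edge, so $\langle\mathsf{Star}(a)\rangle\cap\langle\mathsf{Star}(b)\rangle=\langle a,b\rangle$, and hence the unordered pair $\{a^g,b^g\}$ determines the coset $g\langle a,b\rangle$. Surjectivity reduces to showing that two commuting conjugates $u^g, v^h$ of distinct standard generators can be rewritten as $u^k, v^k$ for a common $k$; this follows by observing that $v^h \in C_G(u^g) = g\langle \mathsf{Star}(u)\rangle g^{-1}$ and then analyzing conjugacy of $v$ inside the parabolic $\langle\mathsf{Link}(u)\rangle$.

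Next, I would verify that $\Phi$ converts adjacency in $\mathcal{K}(G,\calp)^{(1)}$ to the shared-endpoint relation on edges of $E(\Gamma)$, thereby giving a simplicial isomorphism $\mathcal{K}(G,\calp)^{(1)}\cong L(E(\Gamma))$. Two distinct cosets $g_i\langle a_i,b_i\rangle$ span an edge of $\mathcal{K}(G,\calp)$ if and only if the intersection of their conjugate $\ZZ^2$ subgroups is infinite. Centralizer analysis shows that any element of this intersection whose centralizer is a full $\ZZ^2$ would force the two conjugate subgroups to coincide, and then the triangle-free normalizer computation $N_G(\langle a,b\rangle)=\langle a,b\rangle$ would force the cosets themselves to be equal. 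Ruling that out, the intersection must be infinite cyclic, generated by some $v^h$, which translates precisely into $v\in\{a_1,b_1\}\cap\{a_2,b_2\}$ and $g_1^{-1}g_2\in\langle\mathsf{Star}(v)\rangle$, i.e., the two edges $\Phi(g_i\langle a_i,b_i\rangle)$ share the vertex $v^{g_1}=v^{g_2}$.

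Finally, since $\Gamma$ is connected with minimum valence at least two, every vertex of $E(\Gamma)$ has at least two neighbors, and in particular $E(\Gamma)$ has no isolated vertices. The classical map $L(E(\Gamma))\to E(\Gamma)$ sending each edge to one of its endpoints (via a fixed total order on the vertices) is then a $(1,1)$-quasi-isometry; combining with Corollary~\ref{cor:QIofComplexes} yields the theorem. The main obstacle is the centralizer analysis in the second step: one must apply Servatius' theorem with care to exclude exotic infinite intersections and confirm that every adjacency in $\mathcal{K}(G,\calp)$ genuinely comes from a shared conjugate of a standard generator. Both the triangle-free and the minimum-valence hypotheses on $\Gamma$ are essential throughout.
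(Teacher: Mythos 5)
Your argument is correct, but it takes a genuinely different route from the paper. The paper proves Theorem~\ref{thm:ExtensionComplex} in three steps: it identifies the commensurated core of $(G,\calp)$ as the collection $\calq$ of standard star subgroups (Proposition~\ref{prop:calQisStar}), shows $\mathcal{K}(G,\calp)$ is $G$-equivariantly quasi-isometric to $\mathcal{K}(G,\calq)$ via an auxiliary bipartite graph built from maximal simplices (Proposition~\ref{prop:RAAG-P-Q}), shows $\mathcal{K}(G,\calq)$ is quasi-isometric to the coned-off Cayley graph $\widehat\Gamma(G,\calq)$ (Propositions~\ref{prop:RAAGsConedOff} and~\ref{prop:RAAG-hatGamma-Q}), and then quotes Kim--Koberda's theorem that $\widehat\Gamma(G,\calq)$ is quasi-isometric to $E(\Gamma)$. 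You instead identify $\mathcal{K}(G,\calp)^{(1)}$ outright with the line graph $L(E(\Gamma))$ and finish with the elementary graph/line-graph quasi-isometry. Your route is more self-contained (it does not need \cite{KK14}) and yields the sharper conclusion that the $1$-skeleton is simplicially isomorphic to $L(E(\Gamma))$; the paper's route has the advantage of producing intermediate objects---the commensurated core and the coned-off Cayley graph---that it reuses in its comparison with the contact and transversality graphs. Both arguments ultimately rest on the same inputs: Servatius' centralizer theorem and the structure of intersections of parabolic subgroups. Two points where you should be careful. First, in your adjacency step, centralizer considerations alone do not show that an infinite intersection $P_1^{g_1}\cap P_2^{g_2}$ of two distinct conjugate $\ZZ^2$'s is generated by a conjugate of a standard generator; you need the fact that intersections of parabolics are again (conjugates of) parabolics, i.e.\ \cite[Corollary~2.5]{DKR07}, exactly as invoked in Lemma~\ref{lem:RAAG-maxSimplex}. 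Second, the reduction of $\mathcal{K}(G,\calp)$ to its $1$-skeleton lives in the \emph{proof} of Corollary~\ref{cor:QIofComplexes} rather than in its statement, and the edge-to-endpoint map $L(E(\Gamma))\to E(\Gamma)$ is a $(2,C)$-quasi-isometry rather than a $(1,1)$-quasi-isometry; neither issue affects the validity of your argument.
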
 

The proof of the theorem has three steps, each done in a separate subsection. 
First, we prove that the commensurated core   of $(G,\calp)$ is 
$\calq=\{\langle \mathsf{Star}(a)\rangle \mid a \textrm{ is a standard generator}\}$. Second, we show that the coset intersection complexes $\mathcal{K}(G,\calp)$ and $\mathcal{K}(G,\calq)$ are quasi-isometric. Finally, we conclude by proving that $ \mathcal{K}(G,\calq)$ and the coned-off Cayley graph $\widehat{\Gamma}(G,\calq)$ are quasi-isometric; see Section~\ref{sec:coned-off}.  Since $\langle \mathsf{Star}(a)\rangle$ contains the generator $a$, the collection $\calq$ generates the group $G$, and so we may assume the relative generating set is the empty set.  Theorem~\ref{thm:ExtensionComplex} then follows by a result of Kim and Koberda \cite[Theorem 15]{KK14} stating that $E(\Gamma)$ is quasi-isometric to   $\widehat{\Gamma}(G, \mathcal{Q})$ .  

\subsection{Commensurated cores in right-angled Artin groups} 

\begin{proposition} \label{prop:calQisStar}
Suppose $\Gamma$ has no vertices of valence $<2$ and no triangles.
The $G$-set of maximal simplices of $\mathcal{K}(G,\calp)$ is isomorphic to the $G$-set of left cosets  $G/\mathcal{Q}$ where 
\[ \mathcal{Q}=\{\langle \mathsf{Star}(a)\rangle\mid \text{$a$ is a standard generator}\}.\]
In particular, $\calq$  is the commensurated core of $(G,\calp)$.
\end{proposition}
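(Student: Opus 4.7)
The plan is to produce an explicit $G$-equivariant bijection
\[\Phi\colon G/\calq \to \{\text{maximal simplices of } \mc K(G,\calp)\},\qquad g\langle \mathsf{Star}(a)\rangle \mapsto g\mathcal{S}_a,\]
where $\mathcal{S}_a := \{\langle a, b\rangle \mid b \in \mathsf{Link}(a)\}$, and then to deduce the commensurated-core statement from Proposition~\ref{prop:MalnormalCoreChar}.

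First I would verify that $\mathcal{S}_a$ is a simplex with pointwise stabilizer $K(\mathcal{S}_a)=\langle a\rangle$. The inclusion $\langle a\rangle \subseteq K(\mathcal{S}_a)$ is immediate. Since $\Gamma$ is triangle-free, any two distinct $b,b' \in \mathsf{Link}(a)$ fail to commute, and the standard parabolic intersection formula for RAAGs (Servatius) gives $\langle a,b\rangle \cap \langle a,b'\rangle = \langle a\rangle$; because $|\mathsf{Link}(a)|\geq 2$ by the valence hypothesis, this yields $K(\mathcal{S}_a)=\langle a\rangle$, which is infinite, so $g\mathcal{S}_a$ is a simplex with $K(g\mathcal{S}_a)=\langle a^g\rangle$. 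For maximality, suppose $h\langle c,d\rangle \notin g\mathcal{S}_a$ but $\langle a^g\rangle \cap h\langle c,d\rangle h^{-1}$ is infinite; replacing $h$ by $g^{-1}h$, one has $a^n \in h\langle c,d\rangle h^{-1}$ for some $n\neq 0$. Then every element of $h\langle c,d\rangle h^{-1}$ centralizes $a^n$, hence $a$ (using $C_G(a^n)=C_G(a)=\langle \mathsf{Star}(a)\rangle$, which follows from uniqueness of roots in RAAGs), so $h\langle c,d\rangle h^{-1}$ is a rank-$2$ abelian subgroup of $\langle \mathsf{Star}(a)\rangle$ containing $\langle a\rangle$; this forces $h\langle c,d\rangle h^{-1}=\langle a,b\rangle$ for some $b\in \mathsf{Link}(a)$, contradicting our assumption.

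The main obstacle is surjectivity of $\Phi$: showing that every maximal simplex of $\mc K(G,\calp)$ has the form $h\mathcal{S}_a$. Given such a maximal simplex $\mathcal{S}$, the pointwise stabilizer $K(\mathcal{S})$ is an intersection of conjugates of $\ZZ^2$-subgroups, hence abelian of rank at most $2$. If $K(\mathcal{S})$ had rank $2$, every coset in $\mathcal{S}$ would contain the same rank-$2$ conjugate, forcing $|\mathcal{S}|=1$; but $g\langle a,b\rangle$ can always be enlarged by adjoining $g\langle a,b'\rangle$ for another $b'\in \mathsf{Link}(a)$, contradicting maximality. So $K(\mathcal{S})=\langle w\rangle$ is infinite cyclic. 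The crucial structural input is the centralizer theorem for $2$-dimensional RAAGs: since $w$ lies in at least two distinct subgroups of the form $gPg^{-1}$ with $gP\in\mathcal{S}$, its centralizer $C_G(w)$ is non-cyclic, and for triangle-free $\Gamma$ this forces $w$ to be conjugate to a power of a standard generator, $w = h a^n h^{-1}$ for some $h\in G$, standard generator $a$, and $n\neq 0$. Applying the maximality argument to $h^{-1}\mathcal{S}$ then shows $h^{-1}\mathcal{S}\subseteq \mathcal{S}_a$, and maximality of $\mathcal{S}$ forces equality.

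Finally, I would check injectivity of $\Phi$ and compute setwise stabilizers. If $g\mathcal{S}_a = g'\mathcal{S}_{a'}$, then the pointwise stabilizers agree, $\langle a^g\rangle = \langle (a')^{g'}\rangle$, so conjugacy classification forces $a=a'$ and $g^{-1}g' \in \comm_G(\langle a\rangle)$. One verifies $\comm_G(\langle a\rangle) = \langle \mathsf{Star}(a)\rangle$: the inclusion $\supseteq$ is clear from the centralizer, and for the converse, if $g$ commensurates $\langle a\rangle$ then there exist $k,m\neq 0$ with $ga^mg^{-1}=a^k$; translation lengths in the Salvetti complex force $|k|=|m|$, and the sign of $k/m$ must be positive because the abelianization of a RAAG is torsion-free; hence $g\in C_G(a)=\langle \mathsf{Star}(a)\rangle$. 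Lemma~\ref{lem:SetWiseStabilizers} then identifies the setwise stabilizer of $g\mathcal{S}_a$ with $\comm_G(\langle a^g\rangle) = g\langle \mathsf{Star}(a)\rangle g^{-1}$, giving $G$-equivariance of $\Phi$. Because every maximal simplex has infinite pointwise stabilizer, a Zorn's lemma argument together with the classification above verifies the infinite intersection property for $(G,\calp)$; moreover, bijectivity of $\Phi$ implies that distinct elements of $\calq$ are not conjugate in $G$. Proposition~\ref{prop:MalnormalCoreChar} then identifies $\calq$ as the commensurated core of $(G,\calp)$.
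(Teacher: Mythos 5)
There is a genuine gap: your candidate simplex $\mathcal{S}_a=\{\langle a,b\rangle \mid b\in\mathsf{Link}(a)\}$ is \emph{not} a maximal simplex of $\mathcal{K}(G,\calp)$, and consequently the map $\Phi$ is not even well defined on cosets of $\langle\mathsf{Star}(a)\rangle$. Take distinct $b,c\in\mathsf{Link}(a)$ (these exist by the valence hypothesis, and they do not commute since $\Gamma$ is triangle-free). For any $n\neq 0$ the coset $b^{n}\langle a,c\rangle$ satisfies $a=b^{n}ab^{-n}\in b^{n}\langle a,c\rangle b^{-n}$, so $\mathcal{S}_a\cup\{b^{n}\langle a,c\rangle\}$ is still a simplex with pointwise stabilizer $\langle a\rangle$; yet $b^{n}\langle a,c\rangle\neq\langle a,b'\rangle$ for every $b'$, so it is not in $\mathcal{S}_a$. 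Your maximality argument breaks precisely at the step ``this forces $h\langle c,d\rangle h^{-1}=\langle a,b\rangle$'': a rank-two abelian subgroup of $\langle\mathsf{Star}(a)\rangle$ containing $a$ need not be standard (e.g.\ $\langle a, b^{n}db^{-n}\rangle$), and even when the \emph{subgroup} is standard the \emph{coset} need not lie in $\mathcal{S}_a$. The same example shows $b\mathcal{S}_a\neq\mathcal{S}_a$ although $b\in\langle\mathsf{Star}(a)\rangle$, so $g\langle\mathsf{Star}(a)\rangle\mapsto g\mathcal{S}_a$ does not descend to cosets. The correct maximal simplex, which the paper identifies in Lemma~\ref{lem:RAAG-maxSimplex}, is the strictly larger set $\{w\langle a,c\rangle \mid w\in\langle\mathsf{Star}(a)\rangle,\ c\in\mathsf{Link}(a)\}=\{wP\in G/\calp \mid a\in wPw^{-1}\}$, whose setwise stabilizer really is $\langle\mathsf{Star}(a)\rangle$ (it is infinite-dimensional; the setwise stabilizer of your $\mathcal{S}_a$ is only $\langle a\rangle$). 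With this replacement the overall architecture of your argument goes through and matches the paper's.

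A secondary issue in your surjectivity step: the claim that, for triangle-free $\Gamma$, a non-cyclic centralizer forces $w$ to be conjugate to a power of a standard generator is false --- $w=ab$ with $[a,b]=1$ has centralizer $\langle a,b\rangle$ but is not conjugate to a power of a generator. What you should use (and what the paper uses, via \cite[Lemma 2.7]{DKR07}) is that $K(\mathcal{S})$ lies in the intersection of two \emph{distinct} conjugates of maximal standard abelian subgroups, and such an intersection is a parabolic of rank at most one, hence conjugate into some $\langle a\rangle$. Alternatively, note that $C_G(w)$ contains two distinct conjugates of $\ZZ^2$, which by the Centralizer Theorem is stronger than being non-cyclic and does force $w$ to be conjugate to a power of a generator.
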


The proposition is an immediate consequence of the following lemma.

\begin{lemma}\label{lem:RAAG-maxSimplex} Suppose $\Gamma$ has no vertices of valence $<2$ and no triangles.
If $\cals$ is a maximal simplex of $\mathcal{K}(G,\calp)$, then there is $g\in G$ and a standard generator $a$ such that the following hold.
    \begin{enumerate}
        \item $\cals=\{ gw\langle a,c \rangle \mid w\in\langle \mathsf{Star}(a)\rangle \text{ and $c\in\mathsf{Link}(a)$} \}$ .
        \item For any two distinct left cosets $g_1P_1,g_2P_2 \in \cals$, $g_1P_1g_1^{-1}\cap g_2P_2g_2^{-1}= g\langle a\rangle g^{-1}$. In particular, the pointwise stabilizer of $\cals$ is $g\langle a\rangle g^{-1}$.
        \item the setwise stabilizer of $\cals$ is $\comm_G(g\langle a \rangle g^{-1})$,  and
\[ \comm_G(g\langle a \rangle g^{-1}) =g\langle \mathsf{Star}(a)\rangle g^{-1}.\] 
        \item $\cals$ is infinite dimensional.
    \end{enumerate}

\end{lemma}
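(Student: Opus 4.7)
The plan is to identify the pointwise stabilizer $K=\bigcap_{hP\in\cals}hPh^{-1}$ of $\cals$ as a conjugate of the cyclic group generated by a standard generator, and then read $\cals$ off from $K$. The main tool will be the abelianization $\alpha\colon G\to\ZZ^{V(\Gamma)}$, which is conjugation-invariant, combined with the identity $\mathsf{Star}(a)\cap\mathsf{Star}(b)=\{a,b\}$ that follows from triangle-freeness whenever $a,b$ are adjacent. Standard centralizer arguments in RAAGs then yield the basic inputs I will need throughout: $C_G(a)=\langle\mathsf{Star}(a)\rangle$ for every standard generator $a$; each $\langle a,b\rangle$ is maximal abelian and self-normalizing in $G$; $a$ is not conjugate to $a^{-1}$; and two distinct conjugates of maximal standard $\ZZ^2$ subgroups intersect in a subgroup of rank at most one.

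First I will use the $G$-action to translate so that $\langle a_0,b_0\rangle\in\cals$ for some commuting standard generators $a_0,b_0$. The valence-at-least-$2$ hypothesis supplies some $b_1\in\mathsf{Link}(a_0)\setminus\{b_0\}$, so $\langle a_0,b_0\rangle\cap\langle a_0,b_1\rangle\supseteq\langle a_0\rangle$ is infinite, which makes $\{\langle a_0,b_0\rangle,\langle a_0,b_1\rangle\}$ an edge of $\mathcal{K}(G,\calp)$; by maximality of $\cals$, this forces $|\cals|\geq 2$. The rank-one-intersection fact then rules out $K$ having rank $2$, so $K\subseteq\langle a_0,b_0\rangle$ is infinite cyclic and I may write $K=\langle a_0^pb_0^q\rangle$ for some primitive pair $(p,q)$.

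Second I will show $(p,q)\in\{(\pm 1,0),(0,\pm 1)\}$. Assuming both $p$ and $q$ are nonzero, any coset $h\langle c,d\rangle\in\cals$ satisfies $h^{-1}a_0^pb_0^qh\in\langle c,d\rangle$; applying $\alpha$ gives $pe_{a_0}+qe_{b_0}\in\ZZ e_c+\ZZ e_d$, forcing $\{a_0,b_0\}=\{c,d\}$. Self-normalization of $\langle a_0,b_0\rangle$ then places $h\in\langle a_0,b_0\rangle$, so every coset in $\cals$ equals $\langle a_0,b_0\rangle$, contradicting $|\cals|\geq 2$. Hence, after possibly swapping $a_0$ and $b_0$, $K=\langle a_0\rangle$; undoing the translation yields $K=g\langle a\rangle g^{-1}$ for some $g\in G$ and standard generator $a$.

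Finally the four items follow. For Item~(1), a coset $h\langle c,d\rangle\in\cals$ requires $h^{-1}gag^{-1}h\in\langle c,d\rangle$, and the same $\alpha$-argument forces $a\in\{c,d\}$, say $c=a$ and $d\in\mathsf{Link}(a)$; since $a$ is the unique element of $\langle a,d\rangle$ with abelianization $e_a$, we get $h^{-1}gag^{-1}h=a$, so $g^{-1}h\in C_G(a)=\langle\mathsf{Star}(a)\rangle$. Item~(2) follows because two distinct cosets in $\cals$ yield distinct $\ZZ^2$ conjugates, whose intersection contains the maximal cyclic subgroup $g\langle a\rangle g^{-1}$ and has rank one. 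Item~(3) uses Lemma~\ref{lem:SetWiseStabilizers} together with the identity $\comm_G(\langle a\rangle)=\langle\mathsf{Star}(a)\rangle$, which follows from unique roots in RAAGs combined with $a\not\sim a^{-1}$. Item~(4) is witnessed by the family $\{c^n\langle a,c'\rangle:n\in\ZZ\}$ for distinct $c,c'\in\mathsf{Link}(a)$, the distinctness of cosets being immediate from $\alpha$. The main obstacle is packaging the RAAG-theoretic inputs from the first paragraph into usable form; after that, each step is a routine application of $\alpha$.
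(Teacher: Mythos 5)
Your overall strategy---replacing the Duncan--Kazachkov--Remeslennikov parabolic machinery by abelianization and centralizer computations---is a legitimate alternative to the paper's route, but as written it has a genuine gap at the first substantive step: you assert that $K=\bigcap_{hP\in\cals}hPh^{-1}$ is infinite cyclic after only ruling out rank $2$. Ruling out rank $2$ still leaves open the possibility that $K$ is trivial, and this is not a formality: by your own item (4) the simplex $\cals$ is infinite, so $K$ is an intersection of infinitely many conjugates, while the simplex condition only guarantees that \emph{finite} sub-intersections are infinite. The paper emphasizes exactly this subtlety (see Example~\ref{ex:BS12-2}, an infinite-dimensional simplex with trivial pointwise stabilizer), and much of the content of the lemma is that it does not occur here. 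Since your whole plan is to read $\cals$ off from $K$, the argument collapses if $K$ could be trivial. The paper's proof avoids this by first pinning down each \emph{pairwise} intersection $\langle a_0,b_0\rangle\cap hPh^{-1}$ (infinite because pairs are faces of $\cals$, parabolic by \cite{DKR07}, hence $\langle a_0\rangle$, $\langle b_0\rangle$ or $\langle a_0,b_0\rangle$) and then using the simplex condition on \emph{triples} to force all of them to equal one and the same $\langle a_0\rangle$; only then does $K=\langle a_0\rangle$ follow. Your argument can be repaired in the same spirit: each pairwise intersection is infinite of rank one, the triple condition forces them all onto a common primitive line $(p,q)$ in $\langle a_0,b_0\rangle\cong\ZZ^2$, and your abelianization argument then applies to that line.

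There is a secondary soft spot in the step excluding $p,q\neq 0$: after deducing $\{a_0,b_0\}=\{c,d\}$ you invoke ``self-normalization'' to place $h\in\langle a_0,b_0\rangle$, but at that point you only know that $h\langle a_0,b_0\rangle h^{-1}\cap\langle a_0,b_0\rangle$ contains a nontrivial element, which does not say that $h$ normalizes $\langle a_0,b_0\rangle$. What you actually need is that $C_G(a_0^pb_0^q)=\langle a_0,b_0\rangle$ when both exponents are nonzero (a Centralizer Theorem computation for non-generator elements, using triangle-freeness), or equivalently that $\langle a_0,b_0\rangle$ is its own commensurator; self-normalization alone is insufficient. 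Likewise, once the first gap is patched your item (1) deduction yields $h^{-1}a_0^nh=a_0^n$ for some $n\geq 1$, so you also need $C_G(a_0^n)=C_G(a_0)$ (or unique roots) to conclude $h\in\langle\mathsf{Star}(a_0)\rangle$. None of these auxiliary facts are false, but they are precisely the RAAG-specific inputs the paper imports from \cite{DKR07}, \cite{Go03} and \cite{Se85}, and your proposal leaves them as unproved assertions rather than eliminating them.
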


\begin{proof}
Up to translating $\cals$ by an element $g$, assume that $\langle a,b\rangle \in \cals$ where $a,b$ are commuting standard generators.  Observe that $\cals$ has more than one element. Indeed, since $a$ has valence $\geq 2$ in $\Gamma$, there is $c\in \mathsf{Link}(a)$ such that $b\neq c$.  The cosets $\langle a,b\rangle$ and $\langle a,c\rangle$ form a $1$-dimensional simplex, and hence if $\cals$ has a single element then it is not maximal.

Let $wP = w\langle x,y\rangle$ be an element of $\cals$  different than $\langle a,b\rangle$. The intersection $\langle a,b \rangle \cap w\langle x,y \rangle w^{-1} $ is an infinite parabolic subgroup, and the only possibilities  are $\langle a\rangle$, $\langle b\rangle$ or $\langle a,b \rangle$; see~\cite[Corollary~2.5]{DKR07}.  Let us argue that
 $\langle a,b \rangle \cap w\langle x,y \rangle w^{-1}$ is $\langle a\rangle$ or $\langle b\rangle$. 
 Suppose $\langle a,b\rangle \cap w\langle x,y\rangle w^{-1} = \langle a,b\rangle$. Then $\{x,y\}=\{a,b\}$ by \cite[Lemma~2.7]{DKR07}, and therefore $w$ is in the normalizer of $\langle a,b\rangle$. 
  In a right-angled Artin group, centralizers of parabolic subgroups coincide with normalizers~\cite{Go03}.  By the Centralizer Theorem~\cite{Se85}, the centralizer $C(u)$ of a standard generator $u$ is the subgroup generated by $\mathsf{Star}(u)$, and therefore the centralizer of $\langle a,b \rangle$ is $\langle\mathsf{Star}(a)\rangle \cap \langle\mathsf{Star}(b)\rangle$. By \cite[Lemma~2.7]{DKR07} and since $\Gamma$ has no triangles, we have  $C(\langle a,b \rangle) = \langle\mathsf{Star}(a)\rangle \cap \langle\mathsf{Star}(b)\rangle = \langle \mathsf{Star}(a) \cap \mathsf{Star}(b) \rangle = \langle a,b \rangle$. Hence $w\in \langle a,b\rangle$, showing that $\langle a,b\rangle = w\langle x,y\rangle$, contrary to our assumption.  

Since $\cals$ is a simplex, we can assume without loss of generality that  $\langle a,b\rangle \cap wPw^{-1} = \langle a \rangle$ for \textit{every} $wP\in\cals$ different than  $\langle a, b\rangle$. Therefore the pointwise stabilizer of $\cals$ satisfies $K(\cals)=\langle a \rangle$ and, by maximality of $\cals$, \begin{align*} \cals&=\{ wP\mid w\in G,\  P\in\calp,\  a\in wPw^{-1}  \}.  
\end{align*}

By \cite[Lemma~2.7]{DKR07}, if  $a,x,y$ are standard generators,  $h\in G$, and $a\in h\langle x,y\rangle h^{-1}$, then $a\in \{x,y\}$ and $h$ belongs to the product set $C(a)\cdot  \langle x,y\rangle$.  In particular, $h\langle x,y\rangle h^{-1} = w\langle x,y\rangle w^{-1}$ for some $w\in C(a)=\langle \mathsf{Star}(a)\rangle$. Therefore
\[ \cals = \{w\langle a,c\rangle \mid w\in\langle \mathsf{Star}(a)\rangle \text{ and } c\in\mathsf{Link}(a) \}.\]
The expression above shows  that $\cals$ is infinite. More explicitly, since $\Gamma$ has no vertices of valence $<2$, there are distinct standard generators $b$ and $c$ in $\mathsf{Link}(a)$. Then, for every $n\in\ZZ$, the coset $b^n\langle a,c \rangle$ is an element of $\cals$. Hence $\cals$ is infinite. 

It is also a consequence of~\cite[Lemma~2.7]{DKR07} that the  normalizer and commensurator of  $\langle a\rangle$ coincide, and, as stated above, they are also equal the centralizer $C(a)= \langle \mathsf{Star}(a) \rangle$. 
Hence, by Lemma~\ref{lem:SetWiseStabilizers} and the Centralizer theorem, the setwise stabilizer of $\cals$ is $\comm_G(K(S))=\comm_G(\langle a\rangle) = \langle \mathsf{Star}(a) \rangle$.
\end{proof}
   
\subsection{Coset intersection complex of the commensurated core}\label{sec:RAAGsCommCore}

In general, it is not known how replacing a collection $\calp$ of subgroups with its commensurated core will affect the quasi-isometry type of $\mc K(G,\calp)$.  However, for the right-angled Artin groups considered in this section, replacing the collection $\calp$ of maximal standard abelian subgroups with its commensurated core $\calq$ actually preserves the quasi-isometry type.  It remains to explore this relationship in general.

Recall from Proposition~\ref{prop:calQisStar} that 
\[
\calq=\{\langle \mathsf{Star}(a)\rangle \mid a \textrm{ is a standard generator}\}.
\]

 \begin{proposition}\label{prop:RAAG-P-Q}
    Suppose $\Gamma$ is connected, has no vertices of valence $<2$, and no triangles. Then $\mathcal{K}(G,\calp)$ is connected, and there is a quasi-isometry $\mathcal{K}(G,\calp) \to \mathcal{K}(G,\calq)$ that is $G$-equivariant.
\end{proposition}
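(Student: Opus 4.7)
The plan is to build an explicit $G$-equivariant simplicial map $\phi\colon\mathcal{K}(G,\calp)\to\mathcal{K}(G,\calq)$ and show that its restriction to $1$-skeleta is a quasi-isometry; Corollary~\ref{cor:QIofComplexes} then upgrades this to a quasi-isometry of the full complexes. Fix a total order $\preceq$ on the standard generators and set $\phi(g\langle a,b\rangle)=g\langle\mathsf{Star}(a)\rangle$ whenever $a\preceq b$. By \cite[Lemma~2.7]{DKR07} the unordered pair $\{a,b\}$ is an invariant of the coset $g\langle a,b\rangle$, and $\langle a,b\rangle\subseteq\langle\mathsf{Star}(a)\rangle$ ensures that $g\langle\mathsf{Star}(a)\rangle$ depends only on $g\langle a,b\rangle$; hence $\phi$ is well-defined and $G$-equivariant. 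It is simplicial because the same containment forces any infinite intersection $\bigcap_i g_i\langle a_i,b_i\rangle g_i^{-1}$ to sit inside $\bigcap_i g_i\langle\mathsf{Star}(\min\{a_i,b_i\})\rangle g_i^{-1}$, rendering the latter infinite.

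Connectedness of both complexes is verified by a chain argument. A $\Gamma$-path $a=c_0,\dots,c_n=a'$ produces sequences of adjacent vertices $\langle\mathsf{Star}(c_i)\rangle,\langle\mathsf{Star}(c_{i+1})\rangle$ in $\mathcal{K}(G,\calq)$ (sharing the infinite subgroup $\langle c_i,c_{i+1}\rangle$) and $\langle c_{i-1},c_i\rangle,\langle c_i,c_{i+1}\rangle$ in $\mathcal{K}(G,\calp)$ (sharing $\langle c_i\rangle$). To reach a translate by a single standard generator $s$, I exploit the identity $\langle\mathsf{Star}(s)\rangle=s\langle\mathsf{Star}(s)\rangle$: I splice a $\Gamma$-path from $\langle\mathsf{Star}(a)\rangle$ to $\langle\mathsf{Star}(s)\rangle$ with its $s$-translate from $\langle\mathsf{Star}(s)\rangle$ to $s\langle\mathsf{Star}(a)\rangle$, yielding the desired connection. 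The analogous splicing in $\mathcal{K}(G,\calp)$ uses some $s'\in\mathsf{Link}(s)$ (which exists by the valence hypothesis) together with $\langle s,s'\rangle=s\langle s,s'\rangle$. Iterating over the letters of a generator-word for $g$ completes the argument.

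Finally I argue $\phi$ is a quasi-isometry on $1$-skeleta. Being simplicial, $\phi$ is $1$-Lipschitz. Its image is $1$-dense: a potentially missed vertex $g\langle\mathsf{Star}(a)\rangle$ has $a$ equal to the $\preceq$-maximum of $\mathsf{Star}(a)$, and then any $c\in\mathsf{Link}(a)$ (existing by the valence hypothesis) provides $\phi(g\langle c,a\rangle)=g\langle\mathsf{Star}(c)\rangle$, which is adjacent to $g\langle\mathsf{Star}(a)\rangle$ since both subgroups contain $\langle a,c\rangle$. For the lower bound I use two observations: every fiber $\phi^{-1}(g\langle\mathsf{Star}(a)\rangle)$ is a clique in $\mathcal{K}(G,\calp)^{(1)}$, because the cosets $gw\langle a,c\rangle$ with $w\in\langle\mathsf{Star}(a)\rangle$, $c\in\mathsf{Link}(a)$, $a\preceq c$ pairwise share the infinite cyclic subgroup $g\langle a\rangle g^{-1}$ under conjugation; and every edge of $\mathcal{K}(G,\calq)^{(1)}$ lifts to an edge of $\mathcal{K}(G,\calp)^{(1)}$ between the corresponding fibers. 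Combining these with $G$-cocompactness of the edges of $\mathcal{K}(G,\calq)^{(1)}$ (verified via fenced infinite intersections; Theorem~\ref{thm:alg&geom}\eqref{item:Gcocompact1skel}) to obtain a uniform length bound, a geodesic in $\mathcal{K}(G,\calq)^{(1)}$ lifts to a comparable-length path in $\mathcal{K}(G,\calp)^{(1)}$. The main obstacle is the edge-lifting step: given an edge $\{g_1\langle\mathsf{Star}(a_1)\rangle,g_2\langle\mathsf{Star}(a_2)\rangle\}$ of $\mathcal{K}(G,\calq)^{(1)}$, one must analyze the infinite parabolic intersection $g_1\langle\mathsf{Star}(a_1)\rangle g_1^{-1}\cap g_2\langle\mathsf{Star}(a_2)\rangle g_2^{-1}$ using \cite[Corollary~2.5]{DKR07} and, from its structure, extract a compatible pair of maximal standard abelian cosets whose conjugates intersect infinitely.
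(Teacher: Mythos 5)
Your overall architecture (an explicit $G$-equivariant simplicial map $\phi$ determined by a total order on the generators, with cliques as fibers) is a genuinely different route from the paper, which never constructs a direct map: it instead compares both complexes to the nerve $\mathcal E$ of the maximal simplices of $\mathcal K(G,\calp)$ (and its barycentric subdivision), using the classification of maximal simplices as star-cosets and the fact that two maximal simplices share at most one vertex. Your well-definedness, equivariance, simpliciality, $1$-density, fiber-is-a-clique, and connectedness arguments are all correct (the connectedness argument is essentially the paper's Lemma~\ref{lem:RAAGConnected}). However, the proof is not complete: the lower quasi-isometry bound rests entirely on the edge-lifting claim, and you explicitly leave it as ``the main obstacle'' to be resolved by an unperformed analysis of the parabolic intersection. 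That analysis is precisely the content the paper extracts from \cite[Lemma~2.7]{DKR07} (in the proofs of Lemma~\ref{lem:RAAG-maxSimplex} and Proposition~\ref{prop:RAAGsConedOff}): writing $\langle \mathsf{Star}(a)\rangle\cap\langle\mathsf{Star}(b)\rangle^{g}=\langle\mathsf{Star}(a)\cap\mathsf{Star}(b)\cap T\rangle^{g_1}$ with $g=g_1dg_2$, one finds $x\in\mathsf{Star}(a)\cap\mathsf{Star}(b)$ centralized by $d$, and from it builds cosets $g_1\langle a,x\rangle$ and $g_1d\langle b,x\rangle$ in the two maximal simplices whose conjugates share $\langle x\rangle^{g_1}$. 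Note also that such a lift lands in the maximal simplex over each star-coset but not necessarily in your order-determined fiber (it depends on whether $x\prec a$); this is harmless because the whole maximal simplex is a clique, but it needs to be said.

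A second, more serious problem is your appeal to ``$G$-cocompactness of the edges of $\mathcal{K}(G,\calq)^{(1)}$ verified via fenced infinite intersections.'' This is false: $(G,\calq)$ does not have fenced infinite intersections. For example, when $\Gamma$ is the $4$-cycle $a$--$b$--$c$--$d$--$a$, the cosets $\langle\mathsf{Star}(a)\rangle$ and $c^{n}\langle\mathsf{Star}(a)\rangle$ are adjacent in $\mathcal K(G,\calq)$ for every $n$ (their conjugates both contain $\langle b,d\rangle$), yet $\dist_G\bigl(\langle\mathsf{Star}(a)\rangle, c^{n}\langle\mathsf{Star}(a)\rangle\bigr)\geq n$ since any element of $\langle a,b,d\rangle^{-1}c^{n}\langle a,b,d\rangle$ has word length at least $n$. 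This is consistent with the extension graph failing to be locally finite. So you cannot obtain your uniform length bound by reducing to finitely many $G$-orbits of edges; you must instead prove directly (via the DKR analysis above) that every edge of $\mathcal K(G,\calq)^{(1)}$ lifts to a path of uniformly bounded length, namely one lifted edge plus at most two clique moves. With that substitution the strategy goes through, but as written the proof has a gap at its decisive step and invokes a false finiteness statement to close it.
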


  Before proving the proposition, we gather two lemmas about the structure of $\mc K(G,\calp)$.
\begin{lemma}\label{lem:RAAGConnected}
   If $\Gamma$ is connected, then  $\mathcal{K}(G,\calp)$ is connected.
\end{lemma}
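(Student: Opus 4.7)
The plan proceeds in three stages. First, exhibit a common connected component $\mathcal{C}$ of $\mathcal{K}(G,\calp)$ containing every ``base'' vertex $P\in\calp$. Second, show that for every standard generator $s$ and every $P\in\calp$, the coset $sP$ also lies in $\mathcal{C}$. Third, induct on the word length of $g$ with respect to the generating set $V(\Gamma)$ to conclude that every coset $gP$ lies in $\mathcal{C}$, which yields connectedness of $\mathcal{K}(G,\calp)$.

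For stage one, note that $\Gamma$ has at least one edge (since every vertex has valence $\geq 2$), so connectedness of $\Gamma$ implies connectedness of its line graph. Given consecutive edges $\{x,y\}$ and $\{y,z\}$ of $\Gamma$ sharing a vertex $y$, the parabolic subgroups $\langle x,y\rangle$ and $\langle y,z\rangle$ both contain $\langle y\rangle$, so their intersection is infinite and $\{\langle x,y\rangle,\langle y,z\rangle\}$ is an edge of $\mathcal{K}(G,\calp)$. Chaining along a line-graph path shows any two base vertices lie in a common component $\mathcal{C}$.

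For stage two, fix $P=\langle a,b\rangle\in\calp$ and a standard generator $s$. If $s\in\{a,b\}$ then $sP=P\in\mathcal{C}$. If $s$ is adjacent in $\Gamma$ to $a$ or $b$, say $a$, then $sas^{-1}=a\in P\cap sPs^{-1}$, so $\{P,sP\}$ is an edge of $\mathcal{K}(G,\calp)$. The substantive case is when $s$ commutes with neither $a$ nor $b$; here connectedness of $\Gamma$ supplies a path $s=v_0,v_1,\ldots,v_k=a$ with $k\geq 2$, and I would consider the chain of cosets
\[
sP,\ s\langle v_{k-1},v_k\rangle,\ s\langle v_{k-2},v_{k-1}\rangle,\ \ldots,\ s\langle v_1,v_2\rangle,\ \langle s,v_1\rangle.
\]
Consecutive vertices are adjacent in $\mathcal{K}(G,\calp)$: the first edge uses $s\langle a\rangle s^{-1}\subseteq s\langle a,b\rangle s^{-1}\cap s\langle v_{k-1},a\rangle s^{-1}$; each middle edge uses $s\langle v_i\rangle s^{-1}\subseteq s\langle v_{i-1},v_i\rangle s^{-1}\cap s\langle v_i,v_{i+1}\rangle s^{-1}$; and the final edge uses that $s$ and $v_1$ commute, so $v_1=sv_1s^{-1}$ lies in both $s\langle v_1,v_2\rangle s^{-1}$ and $\langle s,v_1\rangle$. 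Since the chain terminates at the base vertex $\langle s,v_1\rangle\in\calp\subseteq\mathcal{C}$, we get $sP\in\mathcal{C}$.

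For stage three, I induct on the word length $|g|$ in the generating set $V(\Gamma)$. Writing $g=sg'$ with $|g'|=|g|-1$, the inductive hypothesis gives a path in $\mathcal{K}(G,\calp)$ from $g'P$ to some fixed base vertex $P_\star$; left-translation by $s$, which acts on $\mathcal{K}(G,\calp)$ by simplicial automorphisms, maps this to a path from $gP$ to $sP_\star$, and $sP_\star\in\mathcal{C}$ by stage two. The main obstacle is the substantive case of stage two: constructing the explicit chain tracking a path in $\Gamma$ and confirming each intersection is infinite, which crucially relies on $\Gamma$ being triangle-free and on the standard parabolic-intersection fact $\langle X\rangle\cap\langle Y\rangle\supseteq\langle X\cap Y\rangle$ in a right-angled Artin group.
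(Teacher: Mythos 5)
Your proposal is correct and follows essentially the same route as the paper: embed the line graph of $\Gamma$ into $\mathcal{K}(G,\calp)$ as the base vertices, observe it is connected, and induct on word length using left translation. The only difference is that your stage two builds an explicit chain from $sP$ back to a base vertex, whereas the paper gets this for free by noting that the translate $s\Delta$ of the (connected) base copy $\Delta$ already meets $\Delta$ in the vertex $\langle s,v_1\rangle$ for any neighbor $v_1$ of $s$, since $s\langle s,v_1\rangle=\langle s,v_1\rangle$.
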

\begin{proof}
 Let $\Delta$ be the graph with vertex set $\calp$ and such that two subgroups are adjacent if they have infinite intersection. Equivalently, $\Delta$ is the graph with vertex set the set of edges of $\Gamma$ and such that two edges are adjacent if they have an endpoint in common. Since $\Gamma$ is connected, $\Delta$ is  connected. Moreover $\Delta$ has a natural embedding into $\mathcal{K}(G,\calp)$ such that $\Delta$ contains representatives of all $G$-orbits of vertices of  $\mathcal{K}(G,\calp)$. Observe that for any standard generator $a$, the subgraphs $\Delta$ and $a\Delta$ of $\mathcal{K}(G,\calp)$ intersect in at least one vertex, namely, a vertex corresponding to subgroup in $\calp$ containing $a$. It follows that for any $g\in G$, there is a path from $\Delta$ to $g\Delta$ in  $\mathcal{K}(G,\calp)$. Therefore $\mathcal{K}(G,\calp)$  
 is connected.
\end{proof}

\begin{lemma}\label{lem:IntofSxs}
If $\Gamma$ has no vertices of valence $<2$ and no triangles, then any two distinct maximal simplices of $\mathcal{K}(G,\calp)$ intersect in at most one vertex.
\end{lemma}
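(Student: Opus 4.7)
The plan is to leverage the explicit structure of maximal simplices provided by Lemma~\ref{lem:RAAG-maxSimplex}. Recall that each maximal simplex $\cals$ of $\mathcal{K}(G,\calp)$ has pointwise stabilizer $K(\cals)$ of the form $g\langle a\rangle g^{-1}$ for some $g\in G$ and standard generator $a$, and moreover any two distinct vertices of $\cals$ have the property that the intersection of the corresponding conjugate subgroups equals $K(\cals)$. The key observation I want to exploit is that, for these group pairs, a maximal simplex is completely determined by its pointwise stabilizer.

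To see this determination claim, I will argue that whenever $K(\cals) = g\langle a\rangle g^{-1}$, the simplex $\cals$ equals $\{hP \in G/\calp : g\langle a\rangle g^{-1} \subseteq hPh^{-1}\}$. The inclusion $\cals \subseteq \{\cdots\}$ follows from the definition of $K(\cals)$. For the reverse inclusion, if $g\langle a\rangle g^{-1} \subseteq hPh^{-1}$, then adding $hP$ to $\cals$ preserves the pointwise stabilizer $g\langle a\rangle g^{-1}$, keeping it an infinite subgroup; by maximality of $\cals$, we must have $hP \in \cals$. (This is precisely the argument used in the proof of Lemma~\ref{lem:RAAG-maxSimplex} to derive the explicit description of maximal simplices.)

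With this in hand, the main argument is short. Suppose for contradiction that two distinct maximal simplices $\cals_1$ and $\cals_2$ share two distinct vertices $g_1P_1$ and $g_2P_2$. Applying Lemma~\ref{lem:RAAG-maxSimplex}(2) to each of $\cals_1$ and $\cals_2$ gives
\[ K(\cals_1) = g_1P_1g_1^{-1} \cap g_2P_2g_2^{-1} = K(\cals_2). \]
Hence the pointwise stabilizers agree, and by the uniqueness observation of the previous paragraph, $\cals_1 = \cals_2$, contradicting our assumption.

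The main subtlety, and what I would be careful to verify, is that the ``uniqueness from pointwise stabilizer'' property really does follow from Lemma~\ref{lem:RAAG-maxSimplex} without any additional work: the explicit parametrization $\cals = \{gw\langle a,c\rangle \mid w\in\langle\mathsf{Star}(a)\rangle, c\in\mathsf{Link}(a)\}$ depends only on the coset $g\langle\mathsf{Star}(a)\rangle$ together with the generator $a$, and $g\langle\mathsf{Star}(a)\rangle$ is in turn determined by the conjugate $g\langle a\rangle g^{-1}$ since $\comm_G(\langle a\rangle)=\langle\mathsf{Star}(a)\rangle$. So no further case analysis is needed; the entire argument reduces to identifying intersections of vertex stabilizers with the pointwise stabilizer of each maximal simplex.
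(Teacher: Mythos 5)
Your proof is correct and follows essentially the same route as the paper's: both apply Lemma~\ref{lem:RAAG-maxSimplex}(2) to the two shared vertices to conclude that the pointwise stabilizers of the two maximal simplices coincide, and then use maximality to rule this out. The paper phrases the final step as ``pointwise stabilizers of distinct maximal simplices intersect trivially'' while you phrase it as ``a maximal simplex is determined by its pointwise stabilizer,'' but these are the same maximality argument.
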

\begin{proof}
 Let $\cals$ and $\cals'$ be two distinct maximal simplices. Suppose without loss of generality that $K(\cals)=\langle a \rangle$ and $K(\cals')=w\langle b\rangle w^{-1}$ for some $w\in G$. Since $\cals \neq \cals'$, we have that $\langle a \rangle \cap w\langle b\rangle w^{-1}$ is trivial by Lemma~\ref{lem:RAAG-maxSimplex}. Moreover, if $\cals \cap \cals'$ contains two distinct left cosets $g_1P_1$ and $g_2P_2$, then again applying Lemma~\ref{lem:RAAG-maxSimplex}, we obtain that $\langle a \rangle=g_1P_1g_1^{-1}\cap g_2P_2g_2^{-1} = w \langle b \rangle w^{-1}$, which is impossible.     
\end{proof}

We are now ready to prove Proposition~\ref{prop:RAAG-P-Q}.

\begin{proof}[Proof of Proposition~\ref{prop:RAAG-P-Q}]
We begin by defining two auxiliary quasi-isometric $G$-graphs, $\mc E$ and its barycentric subdivision $\mc E'$.  We will show that show that $\mc K(G,\calp)$ is quasi-isometric to $\mc E'$ and that $\mc K(G,\calq)$ is quasi-isometric to $\mc E$.  All of these quasi-isometries will be $G$-equivariant, which will prove the result.

Let $\mc E$ be the $G$-graph with vertex set in bijection with the set of maximal simplices in $\mc K(G,\calp)$ and an edge between two vertices if the corresponding maximal simplices share a common vertex.   Let $\mc E'$ be the barycentric subdivision of $\mc E$, so that $\mc E$ is $G$-equivariantly quasi-isometric to $\mc E'$.

We first show that $\mc K(G,\calp)$ is quasi-isometric to $\mc E'$.   
 Since $\Gamma$ has no vertices of valence $<2$, the $G$-sets $G/\calp$ and $G/\calq$ are disjoint.  By Proposition~\ref{prop:calQisStar}, we can equivalently define $\mc E'$ to be the bipartite $G$-graph with vertex set $G/\calp\sqcup G/\calq$ and edge set given by inclusion of cosets.  Using this alternative description of $\mc E'$ and the fact that maximal simplices of $\mc K(G,\calp)$ are in bijection with $G/\calq$ by Proposition~\ref{prop:calQisStar}, we see that $\mc E'$ is formed from the 1-skeleton of $\mc K(G,\calp)$ by coning off each maximal simplex with a vertex and then removing the original edges of $\mc K(G,\calp)$.  Since maximal simplices have finite diameter, coning off each one does not change the quasi-isometry type.  Deleting the edges of $\mc K(G,\calp)$ also does not change the quasi-isometry type, since there is a 2-path in $\mc E'$ between the endpoints of each edge removed.

We next show that $\mc K(G,\calq)$ is quasi-isometric to $\mc E$.  
 By Lemmas~\ref{lem:RAAG-maxSimplex} and \ref{lem:IntofSxs}, the graph $\mc E$ can be equivalently defined as the graph with vertex set $G/\calq$ and such that $g_1Q_1, g_2Q_2$ are adjacent if and only if $Q_1^{g_1}\cap Q_2^{g_2}$ has rank two.  To prove $\mc K(G,\calq)$ and $\mc E$ are quasi-isometric, first note that both have the same vertex set and every edge of $\mathcal E$ is an edge of $\mathcal{K}(G,\calq)$. For distinct vertices $g_1Q_1, g_2Q_2\in G/\calq$, the intersection 
 $Q_1^{g_1}\cap Q_2^{g_2}$ is a free abelian group of rank at most two. If the rank is two, then this intersection is an edge of $\mathcal E$. 
 If
$Q_1^{g_1}\cap Q_2^{g_2}$ has rank one, then $g_1Q_2$ and $g_2Q_2$ are vertices at distance two in $\mathcal{E}$. 
Therefore  the inclusion $\mathcal{E} \hookrightarrow \mathcal{K}(G,\calq)$ is a $G$-equivariant quasi-isometry.    
\end{proof}

\subsection{The extension graph}

In this subsection, we prove that $\mathcal{K}(G,\calq)$ and the coned-off Cayley graph $\widehat{\Gamma}(G,\calq)$ are quasi-isometric.  We then  conclude with the proof of Theorem~\ref{thm:ExtensionComplex}. 


The following proposition is an application of~\cite[Lemma~2.7]{DKR07} which describes the intersection of parabolic subgroups in right-angled Artin groups. Recall that $\widetriangle\Gamma(G,\calq)$ denotes the extended coned-off Cayley graph; see Section~\ref{sec:coned-off}.

\begin{proposition}\label{prop:RAAGsConedOff}
If $\Gamma$ has no vertices of valence $<2$ and no triangles, then the inclusion $\widehat\Gamma(G,\calq) \hookrightarrow \widetriangle\Gamma (G,\calq)$ is a quasi-isometry. 
\end{proposition}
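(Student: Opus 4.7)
The plan is to invoke Lemma~\ref{lem:boring} once for each $G$-orbit of edges in $\widetriangle\Gamma(G,\calq) \setminus \widehat\Gamma(G,\calq)$. Since $\widehat\Gamma(G,\calq)$ is a subgraph of $\widetriangle\Gamma(G,\calq)$ and the two share the vertex set $G \cup G/\calq$, the inclusion is $1$-Lipschitz and the edges to be removed are exactly the edges of $\mathcal{K}(G,\calq)^{(1)}$; each of these joins two coset vertices in $G/\calq$, whereas every edge of $\widehat\Gamma(G,\calq)$ (using $S = \emptyset$, since each $Q \in \calq$ already contains a standard generator) is incident to a vertex in $G$. Thus the proposition reduces to showing that \emph{(i)} the edges of $\mathcal{K}(G,\calq)^{(1)}$ fall into finitely many $G$-orbits, and \emph{(ii)} for a representative of each orbit, the two endpoints lie at uniformly bounded distance in $\widehat\Gamma(G,\calq)$.

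For (i), up to the $G$-action, any edge has the form $\{Q_a, gQ_b\}$ with $Q_a = \langle \mathsf{Star}(a)\rangle$, $Q_b = \langle \mathsf{Star}(b)\rangle$, and $Q_a \cap gQ_bg^{-1}$ infinite. Applying \cite[Lemma 2.7]{DKR07} to this pair of parabolic subgroups, such $g$ must lie in a finite union of double cosets $Q_a \cdot w \cdot Q_b$, with the representatives $w$ taken from an explicit finite subset of $G$ depending only on $a$ and $b$. Since $\calq$ is finite, this produces finitely many $G$-orbits of edges of $\mathcal{K}(G,\calq)^{(1)}$.

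For (ii), write such a $g$ as $g = u w v$ with $u \in Q_a$, $v \in Q_b$, and $w = c_1 c_2 \cdots c_n$ a word of bounded length in standard generators; each $c_i$ belongs to $Q_{c_i} = \langle \mathsf{Star}(c_i)\rangle \in \calq$. Then in $\widehat\Gamma(G,\calq)$ there is a path
\[ Q_a \to u \to uQ_{c_1} \to uc_1 \to uc_1 Q_{c_2} \to \cdots \to uw \to uw Q_b = gQ_b \]
of length at most $2n+2$, which is uniformly bounded since $w$ ranges over a finite set.

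The main obstacle is (i): extracting from \cite[Lemma 2.7]{DKR07} the precise statement that, in the special case of star subgroups in a triangle-free right-angled Artin group, the elements $g$ yielding an infinite intersection $Q_a \cap gQ_b g^{-1}$ decompose into finitely many $(Q_a, Q_b)$-double cosets with bounded-length representatives. Granted (i), step (ii) and the iterated application of Lemma~\ref{lem:boring} are routine.
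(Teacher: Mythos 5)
Your overall reduction is sound in outline, but step (i) --- which you correctly flag as the crux --- is in fact \emph{false}, so the proof as structured does not go through. The set of edges of $\mathcal{K}(G,\calq)^{(1)}$ need not fall into finitely many $G$-orbits. Take $\Gamma$ a pentagon with vertices $a,b,c,d,e$ in cyclic order, so $Q_a=\langle e,a,b\rangle$ and $Q_c=\langle b,c,d\rangle$. Every $g\in\langle \mathsf{Star}(b)\rangle=\langle a,b,c\rangle$ centralizes $b$, so $\langle b\rangle\subseteq Q_a\cap gQ_cg^{-1}$ and $\{Q_a,gQ_c\}$ is an edge for every such $g$; but the $G$-orbit of this edge is determined by the double coset $Q_agQ_c$, and for $g=(ac)^n$ one can only absorb the outermost $a$ and $c$, so $Q_a(ac)^nQ_c=Q_a(ca)^{n-1}Q_c$ has shortest representative of length growing linearly in $n$. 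These edges lie in infinitely many $G$-orbits (equivalently, $(G,\calq)$ does not have fenced infinite intersections), so Lemma~\ref{lem:boring} cannot be iterated finitely often, and your claim in (ii) that the middle piece of the \cite[Lemma~2.7]{DKR07} decomposition can be taken to be a word of bounded length is likewise false.

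What the paper does instead avoids counting orbits entirely: it shows directly that the two endpoints of \emph{every} added edge are at distance at most $4$ in $\widehat\Gamma(G,\calq)$, with a bound uniform over all edges, which suffices for the inclusion to be a quasi-isometry. The key point is that \cite[Lemma~2.7]{DKR07} writes $g=g_1dg_2$ with $g_1\in\langle\mathsf{Star}(a)\rangle$, $g_2\in\langle\mathsf{Star}(b)\rangle$, and $d\in\langle\mathsf{Star}(T)\rangle$; nontriviality of the intersection produces a generator $x\in T\cap\mathsf{Star}(a)\cap\mathsf{Star}(b)$ with $\langle\mathsf{Star}(T)\rangle\subseteq\langle\mathsf{Star}(x)\rangle$. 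Although $d$ may have unbounded word length (as the pentagon example shows), it lies in a \emph{single} subgroup $\langle\mathsf{Star}(x)\rangle\in\calq$, so it is at distance $2$ from $x$ in the coned-off graph via the cone vertex $\langle\mathsf{Star}(x)\rangle$; this yields the path $\langle\mathsf{Star}(a)\rangle,\ x,\ \langle\mathsf{Star}(x)\rangle,\ d,\ d\langle\mathsf{Star}(b)\rangle$ of length $4$, and translating by $g_1$ gives the claim. If you want to salvage your approach, replace (i)--(ii) by this uniform distance bound: once every added edge joins vertices at uniformly bounded $\widehat\Gamma$-distance, the quasi-isometry statement is immediate without any orbit count.
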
 
\begin{proof}
    Up to translating, any edge of $\mathcal{K}(G,\calq)$ corresponds to a pair of left cosets of the form $\langle \mathsf{Star}(a) \rangle$ and $g\langle \mathsf{Star}(b) \rangle$ such that the corresponding subgroups have infinite intersection, that is, $\langle \mathsf{Star}(a) \rangle \cap  \langle \mathsf{Star}(b) \rangle^g$ is not trivial.
Duncan, Kazachkov, and Remeslennikov \cite[Lemma~2.7]{DKR07} provide a subset $T$ of the standard generators and a decomposition $g=g_1dg_2$ such that 
\[\langle \mathsf{Star}(a) \rangle \cap  \langle \mathsf{Star}(b) \rangle^g = \langle \mathsf{Star}(a) \cap \mathsf{Star}(b) \cap T \rangle^{g_1},\] where  $g_1\in \langle \mathsf{Star}(a)\rangle$,
$g_2\in \langle \mathsf{Star}(b)\rangle$,    and $d\in \langle   \mathsf{Star}(T)$. 
Since $\langle \mathsf{Star}(a) \rangle \cap  \langle \mathsf{Star}(b) \rangle^g$ is not trivial, there exists  an element $x\in T\cap \mathsf{Star}(a) \cap \mathsf{Star}(b)$.  Since $x\in T$, we have $\mathsf{Star}(T)\subseteq \mathsf{Star}(x)$, and so $d\in \langle \mathsf{Star}(x)\rangle$.  It follows that the sequence $\langle \mathsf{Star}(a)\rangle$, $x$, $\langle \mathsf{Star}(x)\rangle$, $d$, $d\langle\mathsf{Star}(b)\rangle = dg_2 \langle\mathsf{Star}(b)\rangle$ is a path of length four in   $\widehat{\Gamma}(G, \mathcal{Q})$. Therefore the left cosets    
$\langle \mathsf{Star}(a) \rangle = g_1\langle \mathsf{Star}(a) \rangle$ and $g\langle \mathsf{Star}(b) \rangle=g_1dg_2 \langle \mathsf{Star}(b) \rangle$ are also at distance at most four in $\widehat{\Gamma}(G, \mathcal{Q})$. Hence any pair of adjacent vertices in $\mathcal{K}(G,\calq)$ are at distance at most four in $\widehat{\Gamma}(G, \mathcal{Q})$, and therefore 
$\widehat{\Gamma}(G, \mathcal{Q})   \hookrightarrow \widetriangle{\Gamma}(G, \mathcal{Q})$  is a quasi-isometry.
\end{proof}

\begin{proposition}\label{prop:RAAG-hatGamma-Q}
 If $\Gamma$ has no vertices of valence $<2$ and no triangles, then  $\mathcal{K}(G,\calq)$ and $\widehat\Gamma(G,\calq)$ are quasi-isometric.    
\end{proposition}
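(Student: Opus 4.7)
The plan is to chain together two results already established in the paper: Proposition~\ref{prop:QI-coned-off}, which relates the coset intersection complex to the extended coned-off Cayley graph, and Proposition~\ref{prop:RAAGsConedOff}, which in this setting shows that the extended coned-off Cayley graph is quasi-isometric to the ordinary coned-off Cayley graph. Composing will give $\mathcal{K}(G,\calq) \simeq \widetriangle\Gamma(G,\calq) \simeq \widehat\Gamma(G,\calq)$.

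First I would verify the hypotheses needed to apply Proposition~\ref{prop:QI-coned-off} to the pair $(G,\calq)$ with relative generating set $S = \emptyset$. Since every $Q\in\calq$ has the form $\langle \mathsf{Star}(a)\rangle$ for some standard generator $a$, the union $\bigcup \calq$ contains every standard generator, so $\calq$ generates $G$ and the empty set is a finite relative generating set. Next I would argue that $\mathcal{K}(G,\calq)$ is connected: Proposition~\ref{prop:RAAG-P-Q} exhibits a $G$-equivariant quasi-isometry $\mathcal{K}(G,\calp) \to \mathcal{K}(G,\calq)$, and $\mathcal{K}(G,\calp)$ is connected by Lemma~\ref{lem:RAAGConnected} (using that $\Gamma$ is connected); since connectedness is preserved under quasi-isometry of graphs, $\mathcal{K}(G,\calq)$ is connected as well. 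With both hypotheses verified, Proposition~\ref{prop:QI-coned-off} yields a quasi-isometry $\mathcal{K}(G,\calq) \simeq \widetriangle\Gamma(G,\calq)$.

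Next, Proposition~\ref{prop:RAAGsConedOff}, whose hypotheses hold since $\Gamma$ has no vertices of valence $<2$ and no triangles, states that the inclusion $\widehat\Gamma(G,\calq) \hookrightarrow \widetriangle\Gamma(G,\calq)$ is a quasi-isometry. Composing this with the previous quasi-isometry produces the desired quasi-isometry between $\mathcal{K}(G,\calq)$ and $\widehat\Gamma(G,\calq)$.

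Since all the technical work is contained in the two cited propositions, there is no serious obstacle here; the only pitfall is to check the hypotheses of Proposition~\ref{prop:QI-coned-off} carefully, namely that $\calq$ admits $S = \emptyset$ as a finite relative generating set and that $\mathcal{K}(G,\calq)$ is connected. Both follow immediately from the structure of $\calq$ and from results already established for $\mathcal{K}(G,\calp)$ in Section~\ref{sec:RAAGsCommCore}.
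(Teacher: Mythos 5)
Your proof is correct and essentially identical to the paper's: connectedness of $\mathcal{K}(G,\calq)$ via Proposition~\ref{prop:RAAG-P-Q}, then Proposition~\ref{prop:QI-coned-off} with $S=\emptyset$, then Proposition~\ref{prop:RAAGsConedOff}, composed. The one caveat (shared with the paper's own proof) is that invoking Lemma~\ref{lem:RAAGConnected} and Proposition~\ref{prop:RAAG-P-Q} requires $\Gamma$ to be connected, a hypothesis not listed in the statement of Proposition~\ref{prop:RAAG-hatGamma-Q} itself but present wherever it is applied.
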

\begin{proof}
By  Proposition~\ref{prop:RAAG-P-Q}, the coset intersection complex   $\mathcal{K}(G,\calq)$ is connected. Then  Proposition~\ref{prop:QI-coned-off} implies that    $\mathcal{K}(G,\calq)$ and $\widetriangle\Gamma(G,\calq)$ are quasi-isometric. The conclusion follows from Proposition~\ref{prop:RAAGsConedOff}.
\end{proof}

\begin{proof}[Proof of Theorem~\ref{thm:ExtensionComplex}] 
The complexes $\mathcal{K}(G,\calp)$ and $\mathcal{K}(G,\calq)$ are quasi-isometric by Proposition~\ref{prop:RAAG-P-Q}, and $\mathcal{K}(G,\calq)$ and $\widehat{\Gamma}(G,\calq)$ are quasi-isometric by Proposition~\ref{prop:RAAG-hatGamma-Q}. Moreover, $\widehat{\Gamma}(G,\calq)$ is quasi-isometric to 
$E(\Gamma)$ by~\cite[Theorem 15]{KK14}. Therefore, $\mathcal{K}(G,\calp)$ and $E(\Gamma)$ are quasi-isometric.    
\end{proof}


\subsection{Other related complexes}

There are a number of $G$-complexes that have been introduced in the literature in order to understand algebraic and geometric properties of a right-angled Artin group $G$ with defining graph $\Gamma$. Some of these complexes relate to the coset intersection  complex $\mathcal{K}(G,\calp)$ of $G$ with respect to the collection $\calp$ of maximal standard abelian subgroups.

One of the spaces that concerns us arises from the \emph{Salvetti complex $S(\Gamma)$}~\cite{ChDA95}. This is an Eilenberg-MacLane space of $G$ with the structure of a locally CAT(0) cube complex whose $2$-skeleton coincides with the usual presentation complex of $G$. The universal cover $X(\Gamma)$ of $S(\Gamma)$ is a  CAT(0) cube complex.

The  \emph{contact graph $\mathcal{C}(\Gamma)$},  introduced by Hagen~\cite{HagenThesis}, is the graph with vertex set    in one-to-one correspondence with the hyperplanes of $X(\Gamma)$, where distinct vertices  are adjacent if the carriers of the corresponding hyperplanes intersect.
 Hagen showed that the contact graph is quasi-isometric to a tree~\cite{Ha14}.
Kim and Koberda remark that if $\Gamma$ has no isolated vertices, then the extension graph $E(\Gamma)$ is quasi-isometric to the contact graph  $\mathcal{C}(\Gamma)$; see~\cite[Section 7]{KK14}. We note that Huang has found examples of commensurable RAAGs with different extension graphs, and a pair of non-quasi-isometric RAAGs with isomorphic extensions graphs~\cite[Sec. 5.3]{Hua16}.

In~\cite{Hu17}, Huang introduced the \emph{extension complex  $E^\blacktriangle(\Gamma)$} via two equivalent definitions. First, as the flag complex of the extension graph; and alternatively as the flag complex of the graph whose vertices are in one-to-one correspondence with the parallelism classes of standard geodesics in the universal cover $X(\Gamma)$ of the Salvetti complex and such that two distinct vertices are connected by an edge if and only if there are geodesic representatives that span a 2-flat. Huang remarks that $E^\blacktriangle(\Gamma)$ can be viewed as a \emph{simplified Tits boundary} for $X(\Gamma)$. In the class of RAAGs with transvection-free outer automorphim groups, a quasi-isometry between two groups induces an isomorphism between their extension complexes~\cite[Lemma 4.5]{Hu17}, an statement that resembles Theorem~~\ref{thm:SimplicialMap} via Theorem~\ref{thm:RAAG}. 

Quasi-median graphs introduced by Mulder in the 1980's turned out to be  generalizations of 1-skeletons of CAT(0) cubical complexes~\cite{Mu80, Ge17}. This perspective has been explored in the context of geometric group theory by Genevois~\cite{Ge17}, who shows that quasi-median graphs admit a notion of hyperplanes  generalizing  the theory of hyperplanes in nonpositively curved cube complexes. In this context, for a quasi-median graph $M$, the  \emph{transversality graph~\cite{GeMa19}} (called the \emph{crossing graph} in~\cite{Ge17}) is the complex  $T(M)$ with vertex set in one-to-one correspondence with the collection of hyperplanes of $M$ and whose simplices correspond to finite collections of pairwise transverse hyperplanes; see~\cite{Ge17} for details. The Cayley graph $\Gamma(G,\bigcup \mathcal{O})$ of a RAAG $G$ with respect to the generating set $\bigcup\mathcal{O}=\bigcup \{\langle a \rangle \mid \text{$a$ is an standard generator} \}$ is a quasi-median graph. The  transversality graph $T(\Gamma)$ of $\Gamma(G,\bigcup\mathcal O)$ is  isomorphic to the extension graph; see~\cite[Fact 8.25 and Proof of Cor. 8.49]{Ge17}. 

Putting together Theorem~\ref{thm:ExtensionComplex}, Proposition~\ref{prop:RAAG-P-Q}, and the results in the literature described above, we obtain the following.

\begin{corollary}
  Suppose $\Gamma$ is connected, has no vertices of valence $<2$, and no triangles. The contact graph $\mathcal{C}(\Gamma)$, the extension graph $E(\Gamma)$, the transversality graph $T(\Gamma)$,  the coset intersection complexes $\mathcal{K}(G,\calp)$ and $\mathcal{K}(G,\calq)$, and the coned-off Cayley graph $\widehat\Gamma(G,\calq)$ are  pairwise quasi-isometric, and they are all quasi-isometric to a tree.    
\end{corollary}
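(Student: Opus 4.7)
The plan is to assemble the corollary entirely from results stated in the excerpt and from the three external facts already cited in the surrounding discussion: Kim--Koberda's quasi-isometry between $E(\Gamma)$ and $\mathcal{C}(\Gamma)$, Genevois's identification of $T(\Gamma)$ with $E(\Gamma)$, and Hagen's theorem that $\mathcal{C}(\Gamma)$ is quasi-isometric to a tree. Since quasi-isometry is an equivalence relation, it suffices to exhibit a chain of quasi-isometries running through all six spaces, and then invoke Hagen's tree result at any one vertex of the chain.

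First I would point out that the hypotheses on $\Gamma$ (connected, triangle-free, no vertex of valence $<2$) are precisely the running hypotheses under which Theorem~\ref{thm:ExtensionComplex} and Propositions~\ref{prop:RAAG-P-Q} and~\ref{prop:RAAG-hatGamma-Q} have been proved. Applying these in turn yields the chain
\[
\mathcal{K}(G,\calp) \ \sim_{\mathrm{QI}}\ \mathcal{K}(G,\calq) \ \sim_{\mathrm{QI}}\ \widehat\Gamma(G,\calq) \ \sim_{\mathrm{QI}}\ E(\Gamma),
\]
where the first equivalence is Proposition~\ref{prop:RAAG-P-Q}, the second is Proposition~\ref{prop:RAAG-hatGamma-Q}, and the third follows from Kim--Koberda~\cite[Theorem 15]{KK14} (whose hypothesis that $\Gamma$ have no isolated vertices is implied by our assumption that every vertex has valence $\geq 2$); alternatively the composition of the three is Theorem~\ref{thm:ExtensionComplex} itself.

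Next I would append the contact graph and the transversality graph to this chain. By Kim--Koberda's remark (see the discussion preceding this corollary, citing~\cite[Section 7]{KK14}), the hypothesis that $\Gamma$ has no isolated vertices gives $E(\Gamma)\sim_{\mathrm{QI}} \mathcal{C}(\Gamma)$. For the transversality graph, the Cayley graph of $G$ with respect to $\bigcup\mathcal{O}$ is quasi-median, and Genevois's identification~\cite[Fact 8.25, proof of Cor.~8.49]{Ge17} shows that its transversality graph $T(\Gamma)$ is \emph{isomorphic} (in particular quasi-isometric) to the extension graph $E(\Gamma)$. Concatenating these equivalences with the chain above establishes pairwise quasi-isometry of all six spaces.

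Finally, to obtain the ``quasi-isometric to a tree'' conclusion, I would invoke Hagen's theorem~\cite{Ha14}, which asserts that the contact graph $\mathcal{C}(\Gamma)$ of any CAT(0) cube complex is quasi-isometric to a tree; applied to the universal cover of the Salvetti complex $S(\Gamma)$, this gives the statement for $\mathcal{C}(\Gamma)$, and by the chain of quasi-isometries the same holds for every other space in the list. There is no real obstacle here: the main work has been done in Sections~\ref{sec:RAAGsCommCore} and the subsection on the extension graph; the corollary is a bookkeeping exercise of composing equivalences, so the only care needed is to verify that each cited external result applies under the stated hypotheses on $\Gamma$ (in particular that no-valence-$<2$ suffices for Kim--Koberda's ``no isolated vertex'' hypothesis and for Genevois's setup).
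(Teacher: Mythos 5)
Your proposal is correct and matches the paper's own argument, which simply assembles Theorem~\ref{thm:ExtensionComplex}, Propositions~\ref{prop:RAAG-P-Q} and~\ref{prop:RAAG-hatGamma-Q}, the Kim--Koberda and Genevois identifications, and Hagen's tree theorem into a chain of quasi-isometries. Your checks that the hypotheses on $\Gamma$ cover the cited external results (in particular that valence $\geq 2$ implies no isolated vertices) are the right points of care and are all that is needed.
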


We expect the  above result will hold  for  more general right-angled Artin groups.  We also expect that there are relationships between the homotopy types of the coset intersection complex,   the extension complex, and the flag completion of the transversality graph.  We explore some of these relationships in forthcoming work with Genevois.

We end our discussion of right-angled Artin groups with several general questions about the associated coset intersection complex.

\begin{question}\label{que:SimplyConnected}
  Let $G$ be a 2-dimensional right-angled Artin group with defining graph $\Gamma$, and $\mathcal P$  the collection of  maximal standard abelian subgroups.
  Suppose the induced flag complex $\Gamma^\blacktriangle$ is simply-connected. 
  Is $\mathcal{K}(G, \calp)$ simply connected?
\end{question}

\begin{question}
Under the assumptions of Question~\ref{que:SimplyConnected}, is $\mc K(G,\calp)$ contractible? 
\end{question}

More generally, one can ask the following. 

\begin{question}
    Let $G$ be a 2-dimensional RAAG and $\mathcal P$  the collection of maximal standard abelian subgroups.  Let $\mathcal{F}$ be the family generated by $\calp$ that is closed under conjugation and subgroups. In which cases is $\mc K(G,\calp)$ a classifying space for $G$ with respect to $\mathcal{F}$? 
\end{question}

\bibliographystyle{alpha} \bibliography{xbib}

\end{document}